\newbox\mybox
\def\centerfigure#1{%
    \setbox\mybox\hbox{#1}%
    \raisebox{-0.5\dimexpr\ht\mybox+\dp\mybox}{\copy\mybox}%
}
\newcommand{\blue}[1]{\color[rgb]{0.25,0.25,1}{#1}}
\newcommand{\red}[1]{\color[rgb]{1,0.0,0.0}{#1}}
\definecolor{darkgrey}{rgb}{0.6,0.6,0.6}
\newtheorem{theorem}{Theorem}[section]
\newtheorem*{theorem*}{Theorem}
\newtheorem{corollary}[theorem]{Corollary}
\newtheorem{proposition}[theorem]{Proposition}
\newtheorem{lemma}[theorem]{Lemma}
\theoremstyle{definition}
\newtheorem{definition}[theorem]{Definition}
\newtheorem{remark}[theorem]{Remark}
\newtheorem{example}[theorem]{Example}
\newcommand{\N}{\mathbf{N}}
\newcommand{\Z}{\mathbf{Z}}
\newcommand{\Q}{\mathbf{Q}}
\renewcommand{\epsilon}{\varepsilon}
\DeclareMathOperator{\GL}{GL}
\DeclareMathOperator{\SL}{SL}
\DeclareMathOperator{\Sym}{Sym}
\DeclareMathOperator{\Tr}{Tr}
\DeclareMathOperator{\sgn}{sgn}
\DeclareMathOperator{\SSYT}{SSYT}
\DeclareMathOperator{\RSSYT}{RSSYT}
\DeclareMathOperator{\rank}{R}
\DeclareMathOperator{\erank}{E}
\DeclareMathOperator{\srank}{S}
\DeclareMathOperator{\epar}{{\mathcal{E}\hskip-0.5pt\mathcal{P}}}
\DeclareMathOperator{\spar}{{\mathcal{SP}}}
\DeclareMathOperator{\shape}{sh}
\renewcommand{\theta}{\vartheta}
\newcommand{\sh}{d}
\newcommand{\shp}{d'}
\newcommand{\Par}{\mathrm{Par}}
\newcommand{\MGL}[1]{\mathrm{MGL}^{(#1)}}
\newcommand{\PP}{\mathcal{PP}}
\newcommand{\qbinom}[2]{\genfrac{[}{]}{0pt}{}{#1}{#2}}
\newcommand{\sqbinom}[2]{\genfrac{|}{|}{0pt}{}{#1}{#2}}
\newcommand{\jj}{k}
\newcommand{\E}{\! E}
\renewcommand{\b}{b}
\newcommand{\Se}[3]{\mathcal{S}_{#1}^{#2}(#3)}
\newcommand{\Seb}[3]{\mathcal{S}_{#1}^{#2}\bigl(#3\bigr)}
\newcommand{\z}{z}
\newcommand{\SSym}{\hskip-0.5pt\Sym}
\newcommand{\SSSym}{\hskip-1.5pt\Sym}
\newcommand{\bigdot}[1]{\draw[fill=black] #1 circle (3pt);}
\newcommand{\twidth}{0.15}
\newcommand{\gscale}{0.65}
\newcommand{\ggscale}{0.575}
\newcommand{\axessep}{0.5}
\newcommand{\vaxessep}{0.1}
\newcommand{\htick}[1]{\draw (#1,-\twidth)--(#1,\twidth);}
\newcommand{\vtick}[1]{\draw (-\twidth,#1)--(\twidth,#1);}
\renewcommand{\k}{m}
\newcommand{\mfrac}[2]{{\textstyle\frac{#1}{#2}}}
\newcounter{thmlistcnt}
\newenvironment{thmlist}%
	{\setcounter{thmlistcnt}{0}%
	\begin{list}{\emph{(\roman{thmlistcnt})}}{%
		\usecounter{thmlistcnt}%
		\setlength{\topsep}{0pt}%
		\setlength{\leftmargin}{0pt}%
		\setlength{\itemsep}{0pt}%
		\setlength{\labelwidth}{24pt}
		\setlength{\itemindent}{36pt}}%
	}%
	{\end{list}}%
\newcounter{thmlistcnta}
\newenvironment{thmlista}%
	{\setcounter{thmlistcnta}{0}%
	\begin{list}{\emph{(\alph{thmlistcnta})}}{%
		\usecounter{thmlistcnta}%
		\setlength{\topsep}{0pt}%
		\setlength{\leftmargin}{0pt}%
		\setlength{\itemsep}{0pt}%
		\setlength{\labelwidth}{24pt}
		\setlength{\itemindent}{36pt}}%
	}%
	{\end{list}}%
\newcounter{rectanglethm}
\newlength{\hatchspread}
\newlength{\hatchthickness}
\newlength{\hatchshift}
\newcommand{\hatchcolor}{}
\tikzset{hatchspread/.code={\setlength{\hatchspread}{#1}},
         hatchthickness/.code={\setlength{\hatchthickness}{#1}},
         hatchshift/.code={\setlength{\hatchshift}{#1}},
         hatchcolor/.code={\renewcommand{\hatchcolor}{#1}}}
\tikzset{hatchspread=3pt,
         hatchthickness=0.4pt,
         hatchshift=0pt,
         hatchcolor=black}
\definecolor{grey}{rgb}{0.8,0.8,0.8}	
\newcommand{\youngbox}[2]{\draw  (#2,-#1) rectangle (#2-1,-#1-1);}
\newcommand{\fillbox}[3]{\draw[fill=#3] (#2,-#1) rectangle (#2-1,-#1-1);}
\newcommand{\hatchbox}[3]{\draw[pattern=custom north east lines, hatchspread=#3] (#2,-#1) rectangle (#2-1,-#1-1);}
\newcommand{\younglabel}[3]{\node at (#1-0.5,-#2-0.5) {#3};}
\newcommand{\lambdastarp}{\lambda^{\star'}}
\newcommand{\clambda}[1]{c(\lambda / \lambda^\star)_{#1}}
\newcommand{\C}{\mathbf{C}}
\newcommand{\eqv}[2]{\!\tensor*[^{}_{#1\hskip1pt}]{\sim}{^{}_{#2}}\hskip-0.5pt}
\newcommand{\eqvs}[2]{\hbox{$\!\tensor*[^{}_{#1\hskip1pt}]{\sim}{^{}_{#2}}\hskip0.5pt$}}
\newcommand{\nlambda}{{\lambda}}
\newcommand{\nmu}{\mu}
\newcommand{\m}{m}
\newcommand{\elld}{{\ell^\dagger}}
\newcommand{\Ft}[2]{F\bigl( \,\young(#1,#2) \,\bigr)}
\numberwithin{equation}{section}
\newcommand{\bh}{\ell}
\newcommand{\ah}{n}
\renewcommand{\b}{b}
\renewcommand{\c}[2]{c_{#1}(#2)}
\newcommand{\ellp}{\ell^\dagger}
\newcommand{\ellpp}{\ell^\ddagger}
\renewcommand{\mp}{m^\dagger}
\newcommand{\mpp}{m^\ddagger}
\newcommand{\np}{n^\dagger}
\newcommand{\ap}{a'}
\newcommand{\bp}{b'}
\newcommand{\cp}{c'}
\newcommand{\p}{p}
\renewcommand{\P}[2]{P^{(#1)}_{#2}}
\newcommand{\QQ}[2]{Q^{(#1)}_{#2}}
\newcommand{\lambdae}[2]{\lambda_{#2#1}}
\newcommand{\mue}[2]{\mu_{#2#1}}
\newcommand{\etae}[2]{\eta_{#2#1}}
\newcommand{\bP}{\mathbf{p}}
\newcommand{\bQ}{\mathbf{q}}
\newcommand{\Hc}{u}
\subjclass[2010]{05E05, Secondary: 05E10, 20C30, 22E46, 22E47}
\begin{document}
\title[Plethysms of symmetric functions]{Plethysms of symmetric functions 
and representations of $\SL_2(\C)$}
\author{Rowena Paget and Mark Wildon}
\date{\today}

\begin{abstract}
Let $\nabla^\lambda$ denote the Schur functor
 labelled by the partition~$\lambda$ and let $E$ be the natural representation
 of $\mathrm{SL}_2(\mathbb{C})$.
 We make a systematic study of when there is an isomorphism
 $\nabla^\lambda \!\Sym^\ell \!E \cong \nabla^\mu \!\Sym^m \! E$ of representations of $\mathrm{SL}_2(\mathbb{C})$.
 Generalizing
 earlier results of King and Manivel, we classify all
 such isomorphisms when $\lambda$ and $\mu$ are conjugate partitions and
 when one of $\lambda$ or $\mu$ is a rectangle. We give a complete classification when
$\lambda$ and $\mu$ each have at most two rows or columns or is a hook partition and
 a partial classification when $\ell = m$. 
 As a corollary of a more general result on Schur functors labelled by skew partitions
 we also determine all cases when $\nabla^\lambda \!\Sym^\ell \!E$ is irreducible.
 The methods used are from representation
 theory and combinatorics; in particular, we make explicit the close connection
 with MacMahon's enumeration of plane partitions, and prove a new $q$-binomial identity in this setting.
\end{abstract}

\maketitle
\thispagestyle{empty}
\section{Introduction}

Let $\SL_2(\C)$ be the special linear group of $2 \times 2$ complex
matrices of determinant~$1$ 
and let $E$ be its natural $2$-dimensional representation.
The irreducible complex representations of $\SL_2(\C)$ are, up to isomorphism, precisely the symmetric
powers $\Sym^n \E$ for $n \in \N_0$.
A classical result, discovered by Cayley and Sylvester in the setting of invariant theory, 
states that if $a$, $b \in \N$ then 
the representations
$\Sym^a \Sym^b \E$ and 
$\Sym^b \Sym^a \E$ of $\SL_2(\C)$ are isomorphic. 
More recently, King and Manivel independently proved that $\nabla^{(a^b)} \Sym^{b+c-1} \E$ is invariant,
up to $\SL_2(\C)$-isomorphism, under permutation of $a$, $b$ and $c$. Here $\nabla^{(a^b)}$ 
is an instance of the Schur functor $\nabla^\lambda$, defined in \S\ref{subsec:Schur}.
Motivated by these results, the purpose of this article is to make
a systematic study of when there is a \emph{plethystic isomorphism}
\begin{equation} \nabla^\lambda \SSym^\ell \E \cong \nabla^\mu \SSSym^m \E \label{eq:SLiso} \end{equation} 
of $\SL_2(\C)$-representations. By taking the characters of each side,~\eqref{eq:SLiso} is equivalent to
\begin{equation} 
q^{-\ell |\nlambda|/2} s_\nlambda(1,q,\ldots, q^\ell) = q^{-\m |\nmu|/2} s_\nmu(1,q,\ldots,q^{\m}). 
\label{eq:SchurEq}\end{equation}
where $s_\nlambda$ is the Schur function for the partition $\nlambda$. 
By Remark~\ref{remark:plethysm}, we also have $s_\lambda(1,q,\ldots, q^\ell) =
(s_\lambda \circ s_\ell)(1,q)$ where $\circ$ is the plethysm product.
Thus~\eqref{eq:SLiso} can be investigated using a circle of powerful combinatorial ideas;
these include Stanley's Hook Content Formula \cite[Theorem 7.12.2]{StanleyII}.

Our results
reveal numerous surprising isomorphisms, not predicted by any existing results in the literature,
and a number of new obstacles to plethystic isomorphism. In particular,
we 
prove a converse to the King and Manivel result. We also note Lemma~\ref{lemma:removable},
which implies that, in the typical case for~\eqref{eq:SchurEq}, the Young diagrams of
$\lambda$ and $\mu$ have
the same number of removable boxes. Borrowing from the title of \cite{KacDrum}, these are all cases
where 
one may `hear the shape of a partition'.

\subsection*{Main results}
Let $\ell(\nlambda)$ denote the number of parts of a partition $\nlambda$
and let $a(\lambda)$ denote its first part, setting $a(\varnothing) = 0$. 

\begin{definition}\label{defn:eqv}
Given non-empty partitions $\nlambda$ and $\nmu$ and $\ell$, $\m \in \N$ such that
\hbox{$\ell \ge \ell(\nlambda)-1$} and $\m \ge \ell(\nmu) -1$,
we set $\nlambda \eqv{\ell}{\m} \nmu$ if and only if
$ \nabla^\lambda \SSym^\ell \E \cong \nabla^\mu \SSSym^m \E$ as representations of $\SL_2(\C)$.
\end{definition}

We refer to the relation $\eqv{\ell}{m}$ as \emph{plethystic equivalence}.
By Lemma~\ref{lemma:columnRemoval}, we have $\lambda \eqvs{\ell(\lambda)-1}{\ell(\lambda)-1} \overline{\lambda}$,
where, by definition, 
$\overline{\lambda}$ is $\lambda$ with its columns of length $\ell(\lambda)$ removed.
Such plethystic equivalences arise from the triviality of the representation $\bigwedge^{\ell+1} 
\SSym^{\ell}$
of $\SL(E)$; as we show in Example~\ref{ex:ex} below, they can be dispensed
with by using  Lemma~\ref{lemma:transitive} and Proposition~\ref{prop:prime} to reduce
to the following `prime' case. 

\begin{definition}
A plethystic equivalence $\lambda \eqv{\ell}{m} \mu$ is \emph{prime} if 
$\ell \ge \ell(\lambda)$ and $m \ge \ell(\mu)$.
\end{definition}

To avoid technicalities, we state our first main theorem
in a slightly weaker form than in the main text. 
Given a partition $\lambda$ with Durfee square of size $d$,
let $\epar(\lambda)$ be the partition, shown in Figure~1 in~\S\ref{sec:conjugates}, 
obtained from the first $d$ rows
of $\lambda$ by deleting the maximal rectangle containing the Durfee square of $\lambda$. 
Let $\spar(\lambda)$ be defined analogously,
replacing rows with columns. Thus $\spar(\lambda) = \epar(\lambda')'$ where $\lambda'$ is the conjugate partition to~$\lambda$.
The `if' direction of the theorem below
was proved by King in \hbox{\cite[\S 4.2]{KingSU2Plethysms}}.

\begin{theorem}\label{thm:conjugates}
Let $\lambda$ and $\mu$ be partitions.
There exist infinitely many pairs $(\ell, \m)$ such that $\nlambda \eqv{\ell}{\m} \nmu$ if and only if
$\epar(\nlambda) = \spar(\nlambda)'$ and $\nmu = \nlambda'$. In this case, the pairs are 
all $(\ell, \m)$ such that $\ell = \ell(\nlambda)-1 + k$ and $\m = \ell(\nmu) - 1 + k$ 
for some $k \in \N_0$.
\end{theorem}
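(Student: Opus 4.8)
The plan is to turn everything into a statement about multisets of integers via Stanley's Hook Content Formula \cite[Theorem 7.12.2]{StanleyII}. Writing $q = t^2$, a routine rearrangement of that formula shows that the character of $\nabla^\lambda \Sym^\ell E$ as an $\SL_2(\C)$-representation equals
\[
\frac{\prod_{u \in \lambda}\bigl(t^{\ell+1+c(u)}-t^{-\ell-1-c(u)}\bigr)}{\prod_{u \in \lambda}\bigl(t^{h(u)}-t^{-h(u)}\bigr)},
\]
where $c(u)$ and $h(u)$ are the content and hook length of the box~$u$. Since $\{h(u) : u \in \lambda'\} = \{h(u) : u \in \lambda\}$ as multisets and the contents of $\lambda'$ are the negatives of those of $\lambda$, the fact that the denominator depends only on the unordered multiset of hook lengths means that $\nabla^\lambda \Sym^\ell E \cong \nabla^{\lambda'}\Sym^m E$ if and only if the multisets $\{\,\ell+1+c(u) : u \in \lambda\,\}$ and $\{\,m+1-c(u):u\in\lambda\,\}$ coincide. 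Comparing the largest (or smallest) elements of these two multisets forces $m-\ell = a(\lambda)-\ell(\lambda)$, and the remaining condition is precisely that the content multiset $\cont(\lambda)$ is invariant under the reflection $c \mapsto \bigl(a(\lambda)-\ell(\lambda)\bigr)-c$.

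The combinatorial heart of the theorem is therefore the equivalence: $\cont(\lambda)$ is invariant under $c\mapsto \bigl(a(\lambda)-\ell(\lambda)\bigr)-c$ if and only if $\epar(\lambda) = \spar(\lambda)'$. I would prove this directly from the decomposition of $\lambda$ into its Durfee square, the skew piece $\epar(\lambda)$ lying to the right of the maximal rectangle through the Durfee square, and the skew piece $\spar(\lambda)$ lying below it (Figure~1): the Durfee square and maximal rectangle contribute a diagonal-length function already symmetric about the correct centre, so after removing their contribution the identity reduces to matching the diagonal multiplicities coming from $\epar(\lambda)$ against those coming from $\spar(\lambda)$, and since conjugation negates contents this match is exactly $\epar(\lambda)=\spar(\lambda)'$. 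Together with the first paragraph this proves the ``if'' direction for the stated pairs, reproving King's result \cite[\S4.2]{KingSU2Plethysms}; used in reverse it will deliver the shape condition in the ``only if'' direction, once we know $\mu=\lambda'$.

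For the ``only if'' direction, suppose $\nlambda \eqv{\ell}{m} \nmu$ for infinitely many pairs $(\ell,m)$ (the degenerate case $\mu=\lambda$, where the equivalences are exactly the diagonal pairs, is dealt with separately). The top power of $t$ in the centred character of $\nabla^\lambda \Sym^\ell E$ is the affine function $|\lambda|\ell - 2n(\lambda)$ of~$\ell$, so equality for infinitely many pairs puts all of them on a line $|\lambda|\ell - |\mu|m = \mathrm{const}$; comparing dimensions (the $t\to1$ limit of the character) and then degrees in $\ell$ forces $|\lambda| = |\mu|$, whence $m = \ell - c_0$ for a fixed $c_0 \in \Z$, and comparing leading coefficients forces $\prod_{u\in\lambda}h(u) = \prod_{u\in\mu}h(u)$. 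Substituting $m = \ell - c_0$ into the character identity and clearing denominators, and regarding $t^\ell$ as an indeterminate (the identity holds for infinitely many $\ell$, hence formally), the top $t^\ell$-homogeneous component together with the symmetry $t\leftrightarrow t^{-1}$ of a product of factors $t^k-t^{-k}$ upgrades the equality of hook products to an equality of hook \emph{multisets} $\{h(u):u\in\lambda\}=\{h(u):u\in\mu\}$; cancelling these leaves $\{\,\ell+1+c(u):u\in\lambda\,\}=\{\,\ell+1-c_0+c(u):u\in\mu\,\}$ for all large~$\ell$, i.e.\ $\cont(\mu)=\cont(\lambda)+c_0$. A partition is recovered from its content multiset (read off the successive diagonal lengths), so $\mu$ is now a function of $\lambda$ and $c_0$, and it remains to see that the hook-multiset equality allows only the shift $c_0 = \ell(\lambda)-a(\lambda)$, for which $\cont(\lambda)+c_0=-\cont(\lambda)$ and hence $\mu=\lambda'$. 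The ``if'' lemma then supplies $\epar(\lambda)=\spar(\lambda)'$, and since $m = \ell - \ell(\lambda) + \ell(\mu)$ the constraints $\ell\ge\ell(\lambda)-1$, $m\ge\ell(\mu)-1$ of Definition~\ref{defn:eqv} reduce to $\ell\ge\ell(\lambda)-1$, giving exactly the pairs $\ell=\ell(\lambda)-1+k$, $m=\ell(\mu)-1+k$ with $k\in\N_0$.

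The step I expect to be the main obstacle is pinning down $c_0$, equivalently proving $\mu=\lambda'$, from the hook-multiset equality. The content multiset already fixes $\mu$ given $c_0$, but $\cont(\lambda)+c_0$ is the content multiset of a partition for every $c_0$ in an interval $[\,-(\lambda_d-d),\ \lambda'_d-d\,]$ ($d$ the size of the Durfee square), and this interval is large precisely when $\lambda$ has many rows or columns of length~$d$; one must show that the associated partition $\mu(c_0)$ shares the hook multiset of $\lambda$ only when $\mu(c_0)\in\{\lambda,\lambda'\}$. I would attack this by expressing both the content multiset and the hook multiset through the first-column hook lengths ($\beta$-set) of $\lambda$, turning ``same hook multiset together with content multisets differing by a shift'' into a rigid system whose only solutions are $c_0=0$ and $c_0=\ell(\lambda)-a(\lambda)$. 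Everything else — the rearrangement of the Hook Content Formula, the degree and leading-coefficient comparisons, and the Möbius-type argument needed to pass from a polynomial identity to an equality of multisets — should be routine once it has been set up.
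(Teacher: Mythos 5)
Your overall strategy is essentially the paper's: pass through Stanley's Hook Content Formula to reduce plethystic equivalence to an identity of difference multisets of hook lengths and shifted contents (this is Theorem~\ref{thm:eqvConds}(h) and Corollary~\ref{cor:uniqueFactorization}), show that ``infinitely many pairs'' forces $H(\lambda)=H(\mu)$ and $C(\mu)=C(\lambda)+c_0$ (the paper's Proposition~\ref{prop:King}, which you replace with a degree-in-$t^\ell$ argument --- both work, and your version is fine modulo some care about the phrase ``regarding $t^\ell$ as an indeterminate''), and then characterize which pairs of partitions can satisfy both multiset conditions. Your sketch of the content step --- decomposing by the Durfee square and maximal rectangle --- is in substance the paper's Lemma~\ref{lemma:contentShift}, and is sound.

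The genuine gap is the step you yourself flag as ``the main obstacle'': you must show that if $\cont(\mu)=\cont(\lambda)+c_0$ for some $c_0\ne 0$ and also $H(\mu)=H(\lambda)$, then $\mu=\lambda'$. You outline a hoped-for $\beta$-set rigidity argument but do not carry it out, and this is exactly where the real work lies; the paper needs a dedicated lemma (Lemma~\ref{lemma:hooksAfterContent}) with a careful hook-length computation over the Durfee-square decomposition, followed by an inductive cancellation argument, to extract $\erank(\lambda)=\srank(\mu)$ and $\spar(\lambda)=\epar(\lambda)'$. Note also a small logical slip in your phrasing: you write that the hook-multiset equality ``allows only the shift $c_0=\ell(\lambda)-a(\lambda)$, for which $\cont(\lambda)+c_0=-\cont(\lambda)$''. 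The implication goes the other way: for a general $\lambda$ the shift $c_0=\ell(\lambda)-a(\lambda)$ need not even lie in the admissible interval $[-\erank(\lambda),\srank(\lambda)]$, and even when it does, $\cont(\lambda)+c_0=-\cont(\lambda)$ is not automatic --- it is precisely the symmetry condition $\epar(\lambda)=\spar(\lambda)'$ that you are trying to deduce. The correct logic is that the rigidity argument must directly produce $\mu=\lambda'$ (or $\mu=\lambda$), from which both the value of $c_0$ and the symmetry of $\cont(\lambda)$ follow. As it stands, the proposal correctly maps out the proof but omits the central combinatorial lemma.
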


Our second main theorem sharpens this result to show that `infinitely many' may be replaced with `three', 
and, in the case of prime equivalences, with `two'.

\vbox{
\begin{theorem}\label{thm:multipleEquivalences} Let $\lambda$ and $\mu$ be partitions.
\begin{thmlist} 
\item
There are two distinct pairs $(\ell,m)$, $(\ellp,\mp)$
such that $\lambda \eqv{\ell}{m} \mu$ and $\lambda \eqvs{\ellp}{\mp} \mu$ 
are prime plethystic equivalences 
if and only if \emph{either} $\lambda = \mu$
\emph{or} $\epar(\lambda) = \spar(\lambda)'$ and $\lambda = \mu'$.
\item 
There are three distinct pairs
$(\ell,m)$, $(\ellp,\mp)$, $(\ellpp,\mpp)$ 
such that $\lambda \eqv{\ell}{m} \mu$, $\lambda \eqvs{\ellp}{\mp} \mu$ 
and $\lambda \eqvs{\ellpp}{\mpp} \mu$ 
if and only if \emph{either} $\lambda = \mu$
\emph{or} $\epar(\lambda) = \spar(\lambda)'$ and $\lambda = \mu'$.
\item 
There exist distinct $n$, $\np \in \N$ such that $\lambda\hskip-1pt \eqv{n}{n} \hskip-0.5pt\mu$ 
and $\lambda \eqvs{\np}{\np} \mu$ if and only if $\lambda = \mu$.
\end{thmlist}
\end{theorem}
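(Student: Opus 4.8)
\quad Everything rests on Stanley's Hook Content Formula \cite[Theorem~7.12.2]{StanleyII}, which in the present normalisation reads
\[
s_\nlambda(1,q,\ldots,q^\ell)=q^{n(\nlambda)}\prod_{b\in\nlambda}\frac{1-q^{\,\ell+1+\ct(b)}}{1-q^{\,h(b)}},
\]
where $\ct(b)=j-i$ and $h(b)$ denote the content and hook length of $b=(i,j)\in\nlambda$ and $n(\nlambda)=\sum_{i\geq1}(i-1)\nlambda_i$; the hypothesis $\ell\geq\ell(\nlambda)-1$ forces every exponent here to be a \emph{positive} integer. Since $\prod_a(1-q^a)$ over a multiset of positive integers determines that multiset (compare logarithmic derivatives and M\"obius-invert), \eqref{eq:SchurEq} shows that $\nlambda\eqv{\ell}{m}\nmu$ holds if and only if
\begin{gather*}
\ell|\nlambda|-2n(\nlambda)=m|\nmu|-2n(\nmu),\\
\bigl\{\ell+1+\ct(b):b\in\nlambda\bigr\}\medcup\bigl\{h(b):b\in\nmu\bigr\}=\bigl\{m+1+\ct(b):b\in\nmu\bigr\}\medcup\bigl\{h(b):b\in\nlambda\bigr\}.
\end{gather*}
Call the multiset identity $(\ast)$; writing $m_\nu(c)=\#\{b\in\nu:\ct(b)=c\}$ (the diagonal-length function, which is unimodal, finitely supported, with all steps of absolute value at most $1$) and $H_\nu(x)=\#\{b\in\nu:h(b)=x\}$, it says $m_\nlambda(\,\cdot-\ell-1)+H_\nmu=m_\nmu(\,\cdot-m-1)+H_\nlambda$ on $\Z$. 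The `if' directions of (i)--(iii) are immediate: $\nlambda=\nmu$ gives $\nlambda\eqv{n}{n}\nmu$ for every admissible $n$, and when $\epar(\nlambda)=\spar(\nlambda)'$ and $\nmu=\nlambda'$ Theorem~\ref{thm:conjugates} supplies the family $\bigl(\ell(\nlambda)-1+k,\ \ell(\nmu)-1+k\bigr)$, $k\in\N_0$, whose members with $k\geq1$ are prime.

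For the converses I record two structural facts. By the degree equation, $m$ is a strictly increasing affine function of $\ell$ on the set of equivalent pairs, so this set is the graph of a strictly increasing partial function; and a pair can fail to be prime only when $\ell=\ell(\nlambda)-1$ or $m=\ell(\nmu)-1$, which by the same equation can only be the pair of smallest $\ell$. Hence at most one equivalent pair is non-prime, so among any three there are two prime ones, and (ii) follows from (i). For (iii): if $\nlambda\eqv{n}{n}\nmu$ and $\nlambda\eqvs{n'}{n'}\nmu$ with $n\neq n'$, subtracting the two copies of $(\ast)$ cancels the hook terms and makes $m_\nlambda-m_\nmu$ periodic of period $|n-n'|\geq1$; being finitely supported it vanishes, so $\nlambda$ and $\nmu$ have the same diagonal-length function, hence the same Frobenius coordinates, hence $\nlambda=\nmu$.

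The work is the converse of (i). Let $(\ell,m)$, $(\ellp,\mp)$ be distinct prime equivalent pairs. The degree equation forces $\ell\neq\ellp$ and $m\neq\mp$; after interchanging $\nlambda\leftrightarrow\nmu$ (using $\nlambda\eqv{\ell}{m}\nmu\iff\nmu\eqv{m}{\ell}\nlambda$, which preserves primeness) and relabelling we may assume $\ell>\ellp$, $m>\mp$, and $(\ell-\ellp)|\nlambda|=(m-\mp)|\nmu|$. Subtracting the two copies of $(\ast)$ cancels the hook terms, leaving an identity between a difference of two shifts of $m_\nlambda$ and a difference of two shifts of $m_\nmu$. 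Matching tops and bottoms of the supports (using that each $m_\nu$ equals $1$ at its extreme diagonals and $0$ beyond) gives $\ell+a(\nlambda)=m+a(\nmu)$ and $\ellp-\ell(\nlambda)=\mp-\ell(\nmu)$, and telescoping rewrites the identity as: for every $X$, a window sum of $m_\nlambda$ of width $\ell-\ellp$ equals a window sum of $m_\nmu$ of width $m-\mp$.

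The crux---and the step I expect to be the main obstacle---is to deduce $\ell-\ellp=m-\mp$ from this, equivalently $|\nlambda|=|\nmu|$, equivalently that $\nlambda$ and $\nmu$ have the same largest hook length. One should argue by comparing the two sliding-window-sum functions: they are unimodal with total masses $(\ell-\ellp)|\nlambda|$ and $(m-\mp)|\nmu|$, and their common maximum equals $|\nlambda|$, resp.\ $|\nmu|$, exactly when the window width reaches the width of the relevant diagonal support; combined with $(\ell-\ellp)|\nlambda|=(m-\mp)|\nmu|$ and the relations just found, this should rule out $\ell-\ellp\neq m-\mp$. Granting $\ell-\ellp=m-\mp$, feeding it back into the telescoped identity shows $m_\nmu$ is the shift of $m_\nlambda$ by $\ell-m=a(\nmu)-a(\nlambda)$, and then a single copy of $(\ast)$ collapses to $H_\nmu=H_\nlambda$. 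These two facts together with $|\nlambda|=|\nmu|$ make both parts of the equivalence criterion propagate from $(\ell,m)$ to every $(\ell+k,m+k)$, $k\in\N_0$, so there are infinitely many equivalent pairs; if $\nlambda\neq\nmu$, Theorem~\ref{thm:conjugates} then forces $\epar(\nlambda)=\spar(\nlambda)'$ and $\nmu=\nlambda'$, while $\nlambda=\nmu$ is the remaining alternative. This establishes (i), and with it (ii).
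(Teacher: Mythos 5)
Your parts (ii) and (iii), and the reduction of (ii) to (i), are correct and essentially the paper's argument: the paper likewise uses Lemma~\ref{lemma:selfEquivalence} and the degree equation to see that all the $\ell$'s (and all the $m$'s) are distinct and that only the pair with smallest $\ell$ can fail to be prime, and it proves (iii) by cancelling the hook multisets and invoking the periodicity argument (Lemma~\ref{lemma:diff2}) to get $C(\lambda)=C(\mu)$. The second half of your proof of (i) --- once $\ell-\ellp=m-\mp$ is known, periodicity gives $C(\lambda)+\ell+1=C(\mu)+m+1$, hence $H(\lambda)=H(\mu)$, hence Theorem~\ref{thm:KingStrongest} applies --- is also exactly the paper's route.

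The gap is the step you yourself flag as the crux: deducing $\ell-\ellp=m-\mp$. Your sketch (sliding window sums, total masses, unimodality) is not carried out, and more importantly it nowhere uses the primeness of the \emph{second} pair, so it cannot work as stated: Example~\ref{ex:ex} exhibits $\lambda=\bigl((b+c)^b,b^d\bigr)$ and $\mu=\bigl((b+c)^c,c^d\bigr)$ with two distinct equivalent pairs $(b+c+d-1,b+c+d-1)$ and $(b+d-1,c+d-1)$, for which $\ell-\ellp=c$ and $m-\mp=b$; here exactly one pair is prime, and your relations from matching tops and bottoms ($\ell+a(\lambda)=m+a(\mu)$ and $\ellp-\ell(\lambda)=\mp-\ell(\mu)$) both hold while $\ell-\ellp\neq m-\mp$. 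The difficulty evaporates if you use primeness where it is needed: Proposition~\ref{prop:aRelation} (i.e.\ comparing the unique greatest element of $\bigl(C(\lambda)+\ell+1\bigr)/H(\lambda)$ with that of $\bigl(C(\mu)+m+1\bigr)/H(\mu)$ in a \emph{single} copy of your identity $(\ast)$, which requires $\ell\ge\ell(\lambda)$ and $m\ge\ell(\mu)$ so that the content term dominates the hook term) gives $a(\lambda)+\ell=a(\mu)+m$ \emph{and} $a(\lambda)+\ellp=a(\mu)+\mp$ separately; subtracting yields $\ell-\ellp=m-\mp$ at once. Your "matching tops of the subtracted identity" only recovers the first of these two relations. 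With that repair the proof closes and coincides with the paper's.
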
}

It is clear that no still sharper result
can hold in (i) or (iii); Example~\ref{ex:ex} below shows that
the same is true for (ii).

If $\ell(\lambda) \le r$, 
let $\lambda^{\circ r}$ denote the complementary partition
to $\lambda$ in the $r \times a(\lambda)$ box, defined formally by
$\lambda^{\circ r}_{r+1-i} = a(\lambda) - \lambda_{i}$ for $1 \le i \le r$.
The `if' direction of the following theorem was proved in \cite[\S 4.2]{KingSU2Plethysms}.

\begin{theorem}\label{thm:complements}
Let $r \in \N$ and let  $\nlambda$ be a partition with $\ell(\nlambda) \le r$. 
Then $\nlambda \eqv{\ell}{\ell} \nlambda^{\circ r}$ if and only
if $r = \ell + 1$ or $\nlambda = \nlambda^{\circ r}$.
\end{theorem}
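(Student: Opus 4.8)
The plan is to reduce the claim to the character identity \eqref{eq:SchurEq} and then exploit the elementary identity $s_\nu(1,q,\ldots,q^r) = q^{r|\nu|}\, s_{\nu'}(1,q,\ldots,q^{a(\nu)-1})$ dual to column removal — more precisely the reflective symmetry of the principal specialization under complementation in a box. First I would observe that $\nlambda \eqv{\ell}{\ell} \nlambda^{\circ r}$ holds if and only if
\[
q^{-\ell|\nlambda|/2} s_\nlambda(1,q,\ldots,q^\ell) = q^{-\ell|\nlambda^{\circ r}|/2} s_{\nlambda^{\circ r}}(1,q,\ldots,q^\ell),
\]
and that $|\nlambda| + |\nlambda^{\circ r}| = r\, a(\nlambda)$. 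The `if' direction is already known (it is King's result cited before the statement), and the case $\nlambda = \nlambda^{\circ r}$ is trivial, so the content is the `only if' direction: assuming infinitely many — in fact assuming one, since $\ell$ is fixed here — that the equality holds for the given $\ell$, deduce $r = \ell+1$ or $\nlambda = \nlambda^{\circ r}$.

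The key step is the combinatorial input: by the Hook Content Formula (Stanley, \cite[Theorem 7.12.2]{StanleyII}), $s_\nlambda(1,q,\ldots,q^\ell) = q^{n(\nlambda)} \prod_{(i,j)\in\nlambda} \frac{1-q^{\ell+1+j-i}}{1-q^{h(i,j)}}$, where $n(\nlambda)=\sum (i-1)\nlambda_i$ and $(i,j)$ runs over boxes with content $j-i$ and hook length $h(i,j)$. Comparing the two sides, the polynomial $\prod_{(i,j)\in\nlambda}(1-q^{\ell+1+c(i,j)})/\prod_{(i,j)\in\nlambda}(1-q^{h(i,j)})$, suitably shifted, must equal the analogous expression for $\nlambda^{\circ r}$. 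The contents of $\nlambda^{\circ r}$ inside the $r\times a(\nlambda)$ box are obtained from those of the complement of $\nlambda$ by the reflection $(i,j)\mapsto(r+1-i,\, a(\nlambda)+1-j)$, so $\ell+1+c$ becomes $\ell+1 + (a(\nlambda)-\nlambda_i) - (r-i) $ type expressions; when $\ell+1 = r$ these reflect perfectly onto the `content numerators' for the complement-in-box, which is exactly why the King isomorphism occurs. The task is to show this perfect cancellation forces either $r = \ell+1$ or self-complementarity. I would argue by examining the multiset of numerator exponents $\{\ell+1+c(i,j) : (i,j)\in\nlambda\}$ versus $\{\ell+1+c(i,j):(i,j)\in\nlambda^{\circ r}\}$: since the multiset of hook lengths is a genuine shape invariant and $\nlambda$, $\nlambda^{\circ r}$ need not have the same hook multiset, the denominators may differ, but after clearing them the numerator multisets of the two polynomials (which each have $|\nlambda|$ and $|\nlambda^{\circ r}|$ factors respectively) must match up to the overall power of $q$ recorded by the $q^{-\ell|\cdot|/2}$ and $q^{n(\cdot)}$ prefactors; tracking the largest and smallest exponents on each side should pin down $a(\nlambda^{\circ r}) = a(\nlambda)$ and $\ell(\nlambda^{\circ r}) = \ell(\nlambda)$ unless $r = \ell+1$, and then a short induction (peeling the first row and column, using Lemma~\ref{lemma:removable} on removable boxes) upgrades these equalities to $\nlambda = \nlambda^{\circ r}$.

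An alternative, likely cleaner, route is to feed this into Theorem~\ref{thm:multipleEquivalences}(iii): if $\nlambda \eqv{\ell}{\ell} \nlambda^{\circ r}$ and also (by King) $\nlambda \eqvs{\ell'}{\ell'} \nlambda^{\circ r}$ for the value $\ell' = r-1$ — note these are both diagonal ($n=m$) prime equivalences — then unless these two pairs coincide, part (iii) forces $\nlambda = \nlambda^{\circ r}$. The two pairs coincide exactly when $\ell = r-1$, i.e. $r = \ell+1$. So the whole theorem collapses to checking that $(\ell,\ell)$ and $(r-1,r-1)$ are genuinely distinct pairs when $r \neq \ell+1$ and that both are \emph{prime} equivalences — the latter requires $\ell \ge \ell(\nlambda)$ and $r-1 \ge \ell(\nlambda^{\circ r}) = $ (the number of nonzero parts among $a(\nlambda)-\nlambda_i$), which needs a small argument (one can always pass to the prime case by Lemma~\ref{lemma:transitive} and Proposition~\ref{prop:prime}, absorbing the column-removal ambiguity). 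The main obstacle is precisely this bookkeeping: ensuring the hypotheses of Theorem~\ref{thm:multipleEquivalences}(iii) are met after reducing to prime equivalences, and handling the degenerate boundary cases (e.g.\ $\nlambda$ empty, $a(\nlambda)=0$, or $\ell(\nlambda^{\circ r}) = 0$) separately. Once that is in place the result is immediate.
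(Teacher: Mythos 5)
Your second route is the paper's proof: combine the King equivalence $\lambda \eqv{r-1}{r-1} \lambda^{\circ r}$ (the `if' direction, proved via the bijection $t\mapsto t^{\circ r}$ of Proposition~\ref{prop:KingBijection}) with the hypothesis $\lambda \eqv{\ell}{\ell} \lambda^{\circ r}$, and feed the two diagonal equivalences into Theorem~\ref{thm:multipleEquivalences} to force $\ell = r-1$ unless $\lambda = \lambda^{\circ r}$. Part (iii) of that theorem is exactly tailored for this, as you note. (The paper's own proof cites part (ii), but (ii) concerns three distinct pairs, and here there are only two, so (iii) is the correct reference — your instinct here is sound.)

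The caveat you raise about ``ensuring the hypotheses of Theorem~\ref{thm:multipleEquivalences}(iii) are met after reducing to prime equivalences'' is not needed. Only part (i) of Theorem~\ref{thm:multipleEquivalences} requires the pairs to be prime; part (iii) requires only that the relations are well-formed in the sense of Definition~\ref{defn:eqv}, namely that $n$, $\np \ge \ell(\lambda)-1$ and $n$, $\np \ge \ell(\lambda^{\circ r})-1$. Both hold: for $n = \ell$ this is implicit in the statement $\lambda\eqv{\ell}{\ell}\lambda^{\circ r}$ being well-posed, and for $\np = r-1$ it follows from the hypothesis $\ell(\lambda) \le r$ together with $\ell(\lambda^{\circ r}) \le r-1$ (since $\lambda^{\circ r}_r = a(\lambda)-\lambda_1 = 0$). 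The degenerate boundary cases are likewise vacuous: Definition~\ref{defn:eqv} is stated only for non-empty partitions, which excludes $\lambda = \varnothing$ and $\lambda^{\circ r} = \varnothing$ (the latter being the full-rectangle case).

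Your first route, via direct multiset comparison through the Hook Content Formula, is not carried out and would be harder than you suggest: Lemma~\ref{lemma:removable} only gives equality of the number of removable boxes, which falls well short of $\lambda=\lambda^{\circ r}$, and ``tracking the largest and smallest exponents'' does not obviously close the argument. The paper sidesteps this entirely by deriving the strong structural conclusion from the count of diagonal equivalences rather than from the multisets themselves.
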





Our fourth  main result includes the converse of the King and Manivel six-fold symmetries mentioned at the outset.
Again, to avoid technicalities, we state it in a slightly weaker form below.

\newcommand{\rectangleNoCRtext}{Let $\nlambda$ be a partition and let $a$, $b$, $c \in \N$.
If $\ell \ge \ell(\nlambda)$ then $\nlambda \eqv{\ell}{b+c-1} (a^b)$ if and only if $\nlambda$ 
is rectangular,
with $\lambda = (\ap^\bp)$, and $(\ap, \bp, \ell -\bp + 1)$ is a permutation of $(a,b,c)$.}
\setcounter{rectanglethm}{\value{theorem}}

\begin{theorem}\label{thm:rectangles}
\rectangleNoCRtext
\end{theorem}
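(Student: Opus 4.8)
The plan is to prove the \lq only if\rq\ direction; the \lq if\rq\ direction is King's theorem \cite[\S 4.2]{KingSU2Plethysms}, since with $\lambda=(\ap^\bp)$ and $\cp=\ell-\bp+1$ (so $\cp\ge\ell(\lambda)-\bp+1=1$ and $\bp+\cp-1=\ell$) the six-fold symmetry gives $\nabla^\lambda\SSym^\ell\E=\nabla^{(\ap^\bp)}\SSym^{\bp+\cp-1}\E\cong\nabla^{(a^b)}\SSym^{b+c-1}\E$ whenever $(\ap,\bp,\cp)$ is a permutation of $(a,b,c)$. So suppose now $\lambda\eqv{\ell}{b+c-1}(a^b)$ with $\ell\ge\ell(\lambda)$. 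As $b+c-1\ge b=\ell((a^b))$, this is a prime plethystic equivalence, and by~\eqref{eq:SchurEq} it is the assertion $q^{-\ell|\lambda|/2}s_\lambda(1,q,\ldots,q^\ell)=q^{-(b+c-1)ab/2}s_{(a^b)}(1,q,\ldots,q^{b+c-1})$. I would attack this in two stages: first show $\lambda$ is a rectangle, then identify the parameters.

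For the first stage I would compare the multiplicity of the second-highest $\SL_2(\C)$-weight on the two sides. By Lemma~\ref{lemma:removable} these multiplicities are the numbers of removable boxes of $\lambda$ and of $(a^b)$ respectively, save in the exceptional conjugate-pair case of Theorem~\ref{thm:conjugates}; and since $\epar((a^b))=\emptyset=\spar((a^b))'$, that exceptional case can only produce $\lambda=(a^b)'=(b^a)$, which is again a rectangle, so it does no harm. Thus $\lambda$ has a unique removable box, just as $(a^b)$ does, and a partition with a unique removable box is a rectangle; write $\lambda=(\ap^\bp)$ with $\bp=\ell(\lambda)$. A small point to verify here is that the extremal semistandard tableaux, and hence this weight-multiplicity comparison, behave as expected at the borderline $\ell=\ell(\lambda)$.

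For the second stage, set $\cp=\ell-\bp+1\ge 1$, so that $\ell=\bp+\cp-1$. By Stanley's Hook Content Formula \cite[Theorem 7.12.2]{StanleyII}, equivalently by MacMahon's box formula (which is exactly what makes the King symmetry manifest), the identity of normalized characters is equivalent, both sides being centred palindromes of the same width, to the equality
\[
\prod_{i=1}^{\ap}\prod_{j=1}^{\bp}\prod_{k=1}^{\cp}\frac{1-q^{i+j+k-1}}{1-q^{i+j+k-2}}
=\prod_{i=1}^{a}\prod_{j=1}^{b}\prod_{k=1}^{c}\frac{1-q^{i+j+k-1}}{1-q^{i+j+k-2}}
\]
of generating functions for plane partitions in an $\ap\times\bp\times\cp$ box and in an $a\times b\times c$ box. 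To finish I would recover the box dimensions from this product: reading off the multiplicities of roots of unity, and applying M\"obius inversion, one recovers from either side the multiset $\{\,i+j+k:1\le i\le\ap,\ 1\le j\le\bp,\ 1\le k\le\cp\,\}$, hence the polynomial $(1-q^{\ap})(1-q^{\bp})(1-q^{\cp})$, and hence the multiset $\{\ap,\bp,\cp\}$; the same computation on the right gives $\{a,b,c\}$. Therefore $\{\ap,\bp,\cp\}=\{a,b,c\}$, so $\lambda=(\ap^\bp)$ is rectangular with $(\ap,\bp,\ell-\bp+1)$ a permutation of $(a,b,c)$, as required.

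The step I expect to be the main obstacle is this last one: deducing $\{\ap,\bp,\cp\}=\{a,b,c\}$ from the equality of MacMahon products. Although it reduces to elementary cyclotomic bookkeeping once the MacMahon product has been recognised on the $\lambda$-side via the Hook Content Formula, one must extract $(1-q^{\ap})(1-q^{\bp})(1-q^{\cp})$ carefully and treat the smallest boxes (where the centred characters degenerate) by hand. The other point needing attention is the validity of Lemma~\ref{lemma:removable} at the borderline $\ell=\ell(\lambda)$.
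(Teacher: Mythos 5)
Your proposal is correct, but its \lq only if\rq\ argument runs on a genuinely different track from the paper's. Both begin the same way: Lemma~\ref{lemma:removable} (which needs no caveat here --- your worry about a conjugate-pair exception is spurious, since that lemma is unconditional under the hypotheses $\ell\ge\ell(\lambda)$, $m\ge\ell(\mu)$, and it also holds without issue at the borderline $\ell=\ell(\lambda)$) forces $\lambda$ to have a unique removable box, hence $\lambda=(\ap^{\bp})$. After that the two arguments diverge. The paper (\S\ref{sec:rectangles}) applies Theorem~\ref{thm:eqvConds}(h) to get a multiset identity between contents and hooks, reduces by Lemma~\ref{lemma:complementReduction} to $2b\le\ell+1$, $2\bp\le\ell'+1$, and then carries out a case-by-case comparison of the piecewise-linear graphs of the content-hook functions $\mathrm{ch}^{(\sh)}_{(a^b)}$ (Lemmas~\ref{lemma:chgraph}--\ref{lemma:chGraphs} and the table that follows). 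Your route instead recognises, via~\eqref{eq:SSYTtoPP} and MacMahon's formula~\eqref{eq:MacMahon}, that the equivalence is precisely $M(\ap,\bp,\cp)=M(a,b,c)$ for the MacMahon products, and then argues that a MacMahon product determines its box dimensions as a multiset. That extraction step, which you flag as the main obstacle, actually goes through cleanly using the tools the paper already supplies: by Corollary~\ref{cor:uniqueFactorization} the equality of products forces the equality of the difference multisets of exponents, so the counting functions $N$, $N'$ of $\{i+j+k\}$ satisfy $N(m+1)-N(m+2)=N'(m+1)-N'(m+2)$ for all $m$ and hence $N=N'$ by downward telescoping; this gives
\[
\Bigl(\sum_{i=1}^{\ap}q^i\Bigr)\Bigl(\sum_{j=1}^{\bp}q^j\Bigr)\Bigl(\sum_{k=1}^{\cp}q^k\Bigr)
=\Bigl(\sum_{i=1}^{a}q^i\Bigr)\Bigl(\sum_{j=1}^{b}q^j\Bigr)\Bigl(\sum_{k=1}^{c}q^k\Bigr),
\]
equivalently $(1-q^{\ap})(1-q^{\bp})(1-q^{\cp})=(1-q^a)(1-q^b)(1-q^c)$, and Lemma~\ref{lemma:uniqueFactorization} then yields $\{\ap,\bp,\cp\}=\{a,b,c\}$; no explicit cyclotomic/M\"obius bookkeeping is needed, and no cases for small boxes. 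What each approach buys: yours is shorter and exploits the closed-form product more fully, with the rigidity concentrated in one appeal to the unique-factorization lemma; the paper's is longer but stays entirely within the hook--content framework of Theorem~\ref{thm:eqvConds}(h), and the intermediate classification of content-hook graphs is reused to make the six constituent equivalences and their precise parameters explicit.
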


Extending a remark of King and part of Manivel's proof, we show that the `if' direction of 
Theorem~\ref{thm:rectangles}
is the representation-theoretic realization of the six-fold symmetry group
of plane partitions; these symmetries generalize conjugacy for ordinary partitions.
MacMahon \cite{MacMahon1896} found a beautiful closed form for the generating
function of plane partitions that makes these symmetries algebraically obvious. We use
this to prove a new $q$-binomial identity that implies the symmetry by swapping $b$ and $c$.

Taking $b=1$ in the full version of Theorem~\ref{thm:rectangles} we 
obtain the following classification, notable 
because of the connection with Hermite reciprocity and Foulkes' Conjecture discussed
later in the introduction.

\begin{corollary}\label{cor:oneRow}
Let $\nlambda$ be a partition and let $a$, $c \in \N$. There is an
isomorphism $\nabla^\nlambda \SSym^\ell \E \cong \Sym^a \Sym^c \E$ of $\SL_2(\C)$-representations
if and only if $\nlambda$ is obtained by adding columns of length
$\ell+1$ to one of the partitions $(a)$, $(1^a)$, $(c)$, $(1^c)$, $(a^c)$, $(c^a)$,
and $\ell$ is respectively $c$, $a+c-1$, $a$, $a+c-1$, $c$, $a$.
\end{corollary}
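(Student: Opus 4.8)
The plan is to specialize the full version of Theorem~\ref{thm:rectangles} to the case $b = 1$, so that the role of the rectangular partition $(a^b)$ is played by a single row $(a) = \Sym^a$. First I would recall that with $b = 1$ the right-hand side of the isomorphism in Theorem~\ref{thm:rectangles} becomes $\nabla^{(a^1)} \SSym^{1 + c - 1} \E = \nabla^{(a)} \Sym^c \E$, which is just $\Sym^a \Sym^c \E$ since $\nabla^{(a)} = \Sym^a$. So the statement $\nabla^\lambda \SSym^\ell \E \cong \Sym^a \Sym^c \E$ is precisely the specialization $\nabla^\lambda \eqv{\ell}{c} (a^1)$ after possibly reducing to a prime equivalence; I would first handle the hypothesis $\ell \ge \ell(\lambda)$ (needed to invoke the theorem) by using Lemma~\ref{lemma:columnRemoval}, Lemma~\ref{lemma:transitive} and Proposition~\ref{prop:prime} exactly as in Example~\ref{ex:ex} to strip columns of length $\ell + 1$ from $\lambda$ and reduce to the prime case, recording that the original $\lambda$ is recovered by re-adding such columns.

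Next I would apply Theorem~\ref{thm:rectangles} with $b = 1$: it tells us that $\nabla^\lambda \eqv{\ell}{c} (a^1)$ (in the prime/reduced situation) holds if and only if $\lambda$ is rectangular, say $\lambda = (\ap^\bp)$, and the triple $(\ap, \bp, \ell - \bp + 1)$ is a permutation of $(a, 1, c)$. The core of the argument is then a short combinatorial case analysis: since one of $\ap$, $\bp$, $\ell - \bp + 1$ must equal $1$, there are exactly three cases, and within each the remaining two values are $\{a, c\}$ in one of two orders, giving six subcases in all. I would tabulate these: $(\ap, \bp) = (a, 1)$ forces $\ell - 1 + 1 = \ell = c$, giving $\lambda = (a)$ with $\ell = c$; $(\ap,\bp) = (c,1)$ forces $\ell = a$, giving $\lambda = (c)$ with $\ell = a$; $\bp = 1$ cannot recur, so the case $\ell - \bp + 1 = 1$, i.e. $\ell = \bp$, splits according to whether $(\ap, \bp) = (a, c)$ (so $\ell = c$, $\lambda = (a^c)$) or $(\ap, \bp) = (c, a)$ (so $\ell = a$, $\lambda = (c^a)$); and the case $\ap = 1$ splits into $(\ap,\bp,\ell-\bp+1) = (1, a, c)$, i.e. $\lambda = (1^a)$ with $\ell = a + c - 1$, or $(1, c, a)$, i.e. $\lambda = (1^c)$ with $\ell = a + c - 1$. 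These are exactly the six partitions $(a)$, $(1^a)$, $(c)$, $(1^c)$, $(a^c)$, $(c^a)$ with the stated values of $\ell$.

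Finally I would translate back through the column-removal reduction: the classified $\lambda$ in the reduced case is one of the six base partitions with $\ell$ as above, and the original $\lambda$ is obtained by adding columns of length $\ell + 1$, with the original $\ell$ unchanged. This is precisely the statement of Corollary~\ref{cor:oneRow}, so the proof is complete once the bookkeeping is checked. The main obstacle is not conceptual but organizational: one must be careful that the reduction to the prime case via Example~\ref{ex:ex} is applied correctly and that the value of $\ell$ tracked through column removal matches the six values listed, and one should double-check that the degenerate coincidences (e.g. $a = c$, or $a = 1$, or $c = 1$) do not produce spurious or missing solutions — these collapse some of the six partitions together but never change the list of which partitions and $\ell$-values occur, so the corollary remains literally correct as stated.
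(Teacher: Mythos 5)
Your proof is correct and follows essentially the same route as the paper: reduce to the prime case via column removal and Lemma~\ref{lemma:transitive}, then specialize Theorem~\ref{thm:rectanglesFull} (equivalently, Theorem~\ref{thm:rectangles}) to $b=1$ and enumerate the permutations of $(a,1,c)$, which is exactly what the paper does. Your remark that degenerate coincidences merely collapse some of the six cases without adding or removing solutions is a correct and appropriate sanity check.
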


The entirely new results begin in \S\ref{sec:irreducible} where we 
consider skew Schur functions and prove a necessary and sufficient condition
for $s_{\nlambda / \nlambda^\star}(1,q,\ldots, q^\ell)$ to be equal, 
up to a power of $q$, to $1+q+\cdots + q^n$ for some $n \in \N_0$.
This is equivalent to the irreducibility of 
$\nabla^{\nlambda/\nlambda^\star}\E$. 
(We outline a construction of skew Schur functors in Remark~\ref{remark:skewNabla} below.)
Specializing this result to partitions, we characterize all irreducible plethysms.

\begin{corollary}\label{cor:irreducible}
Let $\nlambda$ be a partition and let $\ell \in \N$. There exists $n \in \N_0$ 
such that $\nabla^\nlambda \Sym^\ell \E \cong \Sym^n \E$
if and only if one of:
\begin{thmlist}
\item $\ell = 1$ and $\ell(\nlambda) \le 2$;
\item $\ell \ge 2$ and either $\lambda = (\p^{\ell+1})$ or
$\nlambda = (\p,(\p-1)^\ell)$ or $\lambda = (\p^\ell,\p-1)$ for some $\p \in \N$.
\end{thmlist}
\end{corollary}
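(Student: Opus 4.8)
The plan is to derive Corollary~\ref{cor:irreducible} from the skew classification of \S\ref{sec:irreducible} by taking the skew shape to be empty, and to translate that $q$-series statement into the explicit list above. First I would record the reduction. Since the irreducible $\SL_2(\C)$-representations are exactly the $\Sym^n\E$, asking that there be \emph{some} $n\in\N_0$ with $\nabla^\nlambda\Sym^\ell\E\cong\Sym^n\E$ is the same as asking that $\nabla^\nlambda\Sym^\ell\E$ be irreducible, and by~\eqref{eq:SchurEq} --- in which $\Sym^n\E$ has $\SL_2(\C)$-character $q^{-n/2}(1+q+\cdots+q^n)$ --- this holds if and only if $s_\nlambda(1,q,\ldots,q^\ell)$ is equal, up to a power of $q$, to $1+q+\cdots+q^n$. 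Applying Stanley's Hook Content Formula \cite[Theorem~7.12.2]{StanleyII},
\[
s_\nlambda(1,q,\ldots,q^\ell)=q^{\,n(\nlambda)}\prod_{(i,j)\in\nlambda}\frac{1-q^{\,\ell+1+c(i,j)}}{1-q^{\,h(i,j)}}
\]
with $c(i,j)=j-i$ the content, $h(i,j)$ the hook length and $n(\nlambda)=\sum_i(i-1)\nlambda_i$, the condition becomes the polynomial identity
\[
(1-q)\prod_{(i,j)\in\nlambda}\bigl(1-q^{\,\ell+1+c(i,j)}\bigr)=\bigl(1-q^{\,n+1}\bigr)\prod_{(i,j)\in\nlambda}\bigl(1-q^{\,h(i,j)}\bigr)
\]
for the value $n=\ell|\nlambda|-2n(\nlambda)$ dictated by matching lowest-order terms. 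Because $\ell\ge\ell(\nlambda)-1$ every shifted content $\ell+1+c(i,j)$ is positive, so each side is a product of exactly $|\nlambda|+1$ factors $1-q^{\,k}$ with $k\ge1$; as the multiset of exponents of such a product is recoverable from it (e.g.\ from its logarithmic derivative, by M\"obius inversion), the identity is equivalent to the multiset equality
\[
\{\,\ell+1+c(i,j):(i,j)\in\nlambda\,\}\cup\{1\}=\{\,h(i,j):(i,j)\in\nlambda\,\}\cup\{n+1\}.
\]

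With this reformulation the classification is combinatorial. The case $\ell=1$ is immediate: $\nabla^\nlambda\Sym^1\E=\nabla^\nlambda\E$ equals $\Sym^{\nlambda_1-\nlambda_2}\E$ when $\ell(\nlambda)\le2$ and is $0$ otherwise, which gives~(i). For $\ell\ge2$, that the three families in~(ii) are solutions follows from Lemma~\ref{lemma:columnRemoval}: stripping the columns of length $\ell+1$ from $(p^{\ell+1})$, from $(p,(p-1)^\ell)$ and from $(p^\ell,p-1)$ leaves $\varnothing$, $(1)$ and $(1^\ell)$ respectively, so the corresponding plethysms are $\nabla^\varnothing\Sym^\ell\E\cong\C$, $\nabla^{(1)}\Sym^\ell\E=\Sym^\ell\E$ and $\nabla^{(1^\ell)}\Sym^\ell\E=\bigwedge^\ell\Sym^\ell\E\cong\Sym^\ell\E$, all irreducible.

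The converse when $\ell\ge2$ --- that there are no further solutions --- is the step I expect to be the main obstacle. Since $\nabla^\nlambda\Sym^\ell\E\ne0$ forces $\ell(\nlambda)\le\ell+1$, I would dispose of the case $\ell(\nlambda)=\ell+1$ by stripping columns of length $\ell+1$: Lemma~\ref{lemma:columnRemoval} gives $\nlambda\eqv{\ell}{\ell}\overline{\nlambda}$ with $\ell(\overline{\nlambda})\le\ell$, so it is enough to classify the irreducible $\nabla^\nlambda\Sym^\ell\E$ with $\ell(\nlambda)\le\ell$ and then re-attach length-$(\ell+1)$ columns, which converts $\varnothing,(1),(1^\ell)$ into the three families. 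When $\ell(\nlambda)\le\ell$ every shifted content is at least $2$, so matching the multiplicities of $1$ in the two multisets forces $\nlambda$ to have a single removable box, i.e.\ to be a rectangle $(a^b)$ with $b\le\ell$; then $\ell(\nlambda)<\ell+1$, so equating the \emph{largest} entries of the two multisets gives $n=\ell+a-1$, and combined with $n=\ell|\nlambda|-2n(\nlambda)=ab(\ell-b+1)$ this yields $ab(\ell-b+1)=\ell+a-1$, equivalently $abf=a+b+f-2$ with $f=\ell-b+1\ge1$, whose only solutions have $a=1$ and $b\in\{1,\ell\}$. Hence the reduced partition lies in $\{\varnothing,(1),(1^\ell)\}$, and unwinding the column-stripping gives precisely the families in~(ii). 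The genuinely delicate points are this final elimination and, underneath it all, the general skew form of the multiset criterion proved in \S\ref{sec:irreducible}; everything else is unwinding definitions.
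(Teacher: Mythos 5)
Your proof is correct, but you take a genuinely different route from the paper. The paper proves Corollary~\ref{cor:irreducible} by specializing the much more general Theorem~\ref{thm:irreducible} on skew Schur functions (whose proof needs the bumping machinery of Lemma~\ref{lemma:oneBump} and Proposition~\ref{prop:oneBump}) and then simply unwinding the definitions of skew $\ell$-rectangle and skew $\ell$-near rectangle for straight shapes. You instead give a self-contained argument for straight shapes only: you use Stanley's Hook Content Formula together with the unique-factorization lemma (in the paper this is Corollary~\ref{cor:uniqueFactorization}, feeding Theorem~\ref{thm:eqvConds}(h)) to turn irreducibility into a multiset identity, strip length-$(\ell+1)$ columns via Lemma~\ref{lemma:columnRemoval} to reduce to $\ell(\lambda)\le\ell$, observe that the multiplicity of $1$ forces a unique removable box so that $\lambda=(a^b)$ (or $\lambda=\varnothing$), and then close the argument with the short Diophantine analysis of $ab(\ell-b+1)=\ell+a-1$, equivalently $(a-1)(b-1)(f-1)$ vanishing after reshuffling, whose only $\ell\ge 2$ solutions give $\lambda\in\{\varnothing,(1),(1^\ell)\}$. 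Both approaches are correct; the paper's buys the stronger skew classification as a byproduct, while yours is shorter, avoids Theorem~\ref{thm:irreducible} entirely, and is closer in spirit to the paper's alternative remark that the corollary also follows from the rectangle Theorem~\ref{thm:rectangles}. Two small imprecisions worth noting: the multiplicity-of-one argument literally gives ``$\lambda=\varnothing$ or $\lambda$ is a rectangle'' (since $n+1$ can only contribute a $1$ when $n=0$, which forces $\lambda=\varnothing$), so the empty case should be split off explicitly before talking about $(a^b)$ with $a,b\ge 1$; and the value of $n$ comes from matching \emph{degrees} (central symmetry: $n=\ell|\lambda|-2b(\lambda)$), not lowest-order terms, though the formula you use is correct.
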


Since $\nabla^\nlambda \SSym^\ell \E$ is irreducible if and only if $\lambda \eqv{\ell}{1}{(n)}$ 
for some $n \in \N_0$, Corollary~\ref{cor:irreducible} can also be obtained 
 from the full version of Theorem~\ref{thm:rectangles}, or, more directly,
from Corollary~\ref{cor:oneRow}.


In \S\ref{sec:twoRowAndHook} we classify all equivalences 
$\nlambda \eqv{\ell}{\m} \nmu$
when $\nlambda$ and $\nmu$ are two-row, two-column or hook partitions. 
To give a good flavour of this, we state the result for
equivalences between two-row and hook partitions.

\begin{theorem}
Let $\nlambda$ be a non-hook partition with exactly two parts and let~$\nmu$
 be a hook partition with 
non-zero arm length and leg length. If $\ell \ge \ell(\nlambda)$ 
and $\m \ge \ell(\nmu)$ then
$\nlambda \eqv{\ell}{\m} \nmu$ if and only if the relation is one~of
\smallskip
\begin{thmlist}
\item $(a,b) \raisebox{3pt}{$\begin{matrix*}[l]\eqv{a-b+1}{a} (a-b+1,1^b) \\ 
                                               \eqv{a-b+1}{2(a-b)} (b+1,1^{a-b}),\end{matrix*}$}$
\item $(3\ell-3, 2\ell-1) 
\raisebox{3pt}{$\begin{matrix*}[l]\eqv{\ell}{3\ell-4} (\ell+1,1^{\ell-2}) \\
                                  \eqv{\ell}{3\ell-2} (\ell-1,1^{\ell}).\end{matrix*}$}$
\end{thmlist}
\end{theorem}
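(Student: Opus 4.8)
The plan is to reduce the classification to the identity of two one-variable polynomials via equation~\eqref{eq:SchurEq}, and then exploit the very restrictive shape of a two-row Schur polynomial $s_{(a,b)}(1,q,\ldots,q^\ell)$. First I would apply Stanley's Hook Content Formula to each side: for the two-row partition $\lambda=(a,b)$ the resulting product over hooks has only two rows of factors and telescopes into a clean ratio of $q$-factorials, namely (up to a power of $q$) a product of the form $\prod \frac{[\ell+1+c]}{[h]}$ with a small, explicit set of contents $c$ and hook lengths $h$; for a hook partition $\mu=(p+1,1^{q})$ with arm length $p$ and leg length $q$ one gets the analogous but differently shaped product, essentially $\qbinom{m+1}{q+1}\cdot[\text{a short factor}]$ coming from the single long arm and single long leg. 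Having both sides as explicit rational functions in $q$, the equivalence $\lambda\eqv{\ell}{m}\mu$ becomes a polynomial identity that can be cleared of denominators into an equality of products of cyclotomic-type factors $[k]=1+q+\cdots+q^{k-1}$.

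The key combinatorial step is then \emph{unique factorization into cyclotomic polynomials}: both sides, after clearing denominators, are products of terms $[k]=\Phi_d$-products, and matching the largest factor forces the leading parameters to coincide. Concretely, the largest block on the hook side is $[m+1][m]\cdots[m-q+1]/[q+1]!$ times a single extra linear-in-$q$ factor; on the two-row side the largest block is comparably $[\ell+1]\cdots$ over $[a-b+1]$-type denominators. Comparing degrees of the two polynomials gives one equation (this recovers the ``dimension'' constraint $\binom{\ell+2}{2}$-type count, i.e.\ $|\nabla^\lambda\Sym^\ell E|=|\nabla^\mu\Sym^m E|$), comparing the multiplicity of the cyclotomic polynomial $\Phi_k$ for the largest relevant $k$ pins down $\ell$ and $m$ in terms of $a,b,p,q$, and then a short finite case-check among the remaining small factors isolates exactly the two families in (i) and the two sporadic relations in (ii). I would organize this as: (1) write both Hook Content products explicitly; (2) equate degrees to get a Diophantine relation between $(a,b,\ell)$ and $(p,q,m)$; (3) use $q\to1$ specialization (giving ordinary dimensions) together with one more specialization such as $q\to$ a primitive root of unity, or direct cyclotomic bookkeeping, to force $\ell$ (hence $m$) and reduce to finitely many shapes; (4) verify the surviving shapes are precisely the listed ones, and conversely check each listed relation satisfies~\eqref{eq:SchurEq} — the converse direction being a direct, if slightly tedious, computation with $q$-binomials, plausibly the relation $(a,b)\eqv{a-b+1}{a}(a-b+1,1^b)$ being an instance of Corollary~\ref{cor:oneRow} or of conjugation after column removal.

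I expect the main obstacle to be step~(3): ruling out spurious solutions to the degree/dimension equation. The equation $\binom{\ell+2}{2}\cdot(\text{something}) = \binom{m+2}{2}\cdot(\text{something})$ has many integer solutions, and only a few of them extend to an actual polynomial identity; the cyclotomic-factorization argument must be pushed far enough to see that, for instance, once $\ell$ exceeds a small bound the factor $\Phi_{\ell}$ appears on the two-row side with a multiplicity that cannot be matched on the hook side unless $m$ and the hook parameters satisfy a rigid linear relation, which then collapses the whole family. A clean way to handle this uniformly is to compare the two polynomials at $q=1$ and at $q=-1$ (and possibly $q=$ primitive cube root of unity), turning the identity into a few numerical equations whose only solutions in positive integers are the stated ones; this trades delicate cyclotomic bookkeeping for elementary number theory. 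The hook-vs-two-row asymmetry — one side governed by two long hooks meeting at a corner, the other by two short rows — is exactly what makes the solution set so sparse, so I would foreground that structural point and let it drive the casework.
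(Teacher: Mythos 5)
Your main line of attack --- specialize via the Hook Content Formula, clear denominators, and invoke unique factorization of the products $\prod(q^k-1)$ so that the polynomial identity becomes a finite combinatorial condition, then pin down parameters by matching the top factor and finish with casework on the small factors --- is exactly the paper's approach. The paper has already packaged these steps: Theorem~\ref{thm:eqvConds}(h) converts the plethystic equivalence into the multiset identity $(C(\lambda)+\ell+1)/H(\lambda) = (C(\mu)+m+1)/H(\mu)$ (Lemma~\ref{lemma:uniqueFactorization} is precisely your unique-factorization step), Proposition~\ref{prop:aRelation} is your ``match the largest cyclotomic factor'' step, and the proof of Theorem~\ref{thm:hook2row} then cancels contiguous blocks from the two multisets. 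One harmless simplification you miss: the paper first invokes Theorem~\ref{thm:conjugates} to assume the hook has arm length $\ge$ leg length, cutting the casework in half.

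The genuine gap is in your fallback for step~(3). You suggest evaluating both sides at $q=1$, $q=-1$, and perhaps a primitive cube root of unity, ``turning the identity into a few numerical equations whose only solutions in positive integers are the stated ones.'' This cannot work. The degrees $\ell|\lambda|$ and $m|\mu|$ are unbounded as $a,b,\ell$ range, so there is no fixed finite set of evaluation points that determines the polynomials uniformly across the family; any such scheme produces only a handful of necessary Diophantine conditions, and there is no reason those conditions should cut out the correct solution set rather than a much larger one. In other words, the step you hope to avoid --- tracking the multiplicity of \emph{every} cyclotomic factor, or equivalently the full multiset $(C(\lambda)+\ell+1)\cup H(\mu)$ versus $(C(\mu)+m+1)\cup H(\lambda)$ --- is not optional. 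What the paper exploits instead is that for a two-row $\lambda$ and a hook $\mu$ these multisets are unions of at most four short intervals of consecutive integers, so the comparison can be done by repeatedly cancelling an interval from each side and comparing extremes; this is short and completely explicit, and it is the honest version of the ``delicate cyclotomic bookkeeping'' your proposal defers. Until you either carry out that bookkeeping or replace it with an argument that actually determines the polynomials, the proposal is an outline rather than a proof.
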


In \S\ref{sec:equalDegree} we consider
the case of prime equivalences in which $\ell = \m$. 
Building on Theorem~\ref{thm:complements}, we obtain the following partial classification.

\begin{theorem}\label{thm:equalDegree}
Let $\nlambda$ and $\nmu$ be partitions and let $\ell \ge \ell(\lambda), \ell(\mu)$. 
\begin{thmlist}
\item[\emph{(a)}] If $\ell \le 4$ then $\nlambda \eqv{\ell}{\ell} \nmu$ if and only if $\nlambda = \nmu$
or $\nlambda^{\circ (\ell+1)} = \nmu$.
\item[\emph{(b)}]
For all $\ell \ge 5$ there exist infinitely many distinct pairs 
$(\nlambda, \nmu)$ such that $\nlambda \not= \mu$, $\nlambda 
\not= \nmu^{\circ (\ell+1)}$, and $\nlambda \eqv{\ell}{\ell} \nmu$.
\end{thmlist}
\end{theorem}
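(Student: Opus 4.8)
The plan is to translate the problem into the language of characters via~\eqref{eq:SchurEq}: a prime equivalence $\nlambda \eqv{\ell}{\ell} \nmu$ is equivalent to the polynomial identity $q^{-\ell|\nlambda|/2} s_\nlambda(1,q,\ldots,q^\ell) = q^{-\ell|\nmu|/2} s_\nmu(1,q,\ldots,q^\ell)$, and by the Hook Content Formula each side factors as an explicit product of quantum integers $[k]_q = 1 + q + \cdots + q^{k-1}$ indexed by the hook lengths and contents of the respective Young diagrams. For part~(a) I would argue by a direct analysis of these products when $\ell \le 4$. The key observation is that $\ell(\nlambda), \ell(\nmu) \le \ell+1$, so $\nlambda$ and $\nmu$ fit inside an $(\ell+1)$-row strip, and the symmetrized character $q^{-\ell|\nlambda|/2} s_\nlambda(1,q,\ldots,q^\ell)$ is a palindromic polynomial in $q$ whose set of roots (with multiplicity), as cyclotomic data, is determined by the multiset of contents $\{\,c : \text{box of }\nlambda\,\}$ together with $\{\,0,1,\ldots,\ell\,\}$ minus the hook-length multiset; equivalently one compares the two sides as Laurent polynomials and reads off, coefficient by coefficient, strong constraints on the first few parts. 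Because $\ell$ is small, the bookkeeping is finite: the degree $\ell|\nlambda|/2$ being forced equal gives $|\nlambda| = |\nmu|$, and then a short induction on the number of boxes---peeling off the first column or using the column-removal rule of Lemma~\ref{lemma:columnRemoval} to pass between $\nlambda$ and $\overline{\nlambda}$---reduces everything to the complementation phenomenon already identified in Theorem~\ref{thm:complements}, namely $r = \ell+1$ forces $\nlambda^{\circ(\ell+1)} = \nmu$ up to the equality case $\nlambda = \nmu$.

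For part~(b) the strategy is constructive: I want an infinite family of genuinely new equal-degree equivalences for every $\ell \ge 5$. The natural source is Theorem~\ref{thm:rectangles} applied to rectangular partitions, exploiting the six-fold symmetry of plane partitions: if $\lambda = (\ap^\bp)$ then $\nabla^\lambda \Sym^\ell E$ depends only on the unordered triple $\{\ap, \bp, \ell - \bp + 1\}$, so two rectangles $(\ap^\bp)$ and $(\ap'^{\bp'})$ with the \emph{same} value of $\ell$ give a plethystic equivalence whenever $\{\ap,\bp,\ell-\bp+1\} = \{\ap',\bp',\ell-\bp'+1\}$ as multisets. Concretely, taking $\ap' = \ell - \bp + 1$, $\bp' = \bp$ (when $\ap \ne \ap'$) already produces, for fixed $\ell$, many pairs $(\lambda,\mu)$ of distinct rectangles with $\lambda \eqv{\ell}{\ell}\mu$; to get infinitely many I would add a common border of columns of length $\ell+1$ to both sides, which preserves the equivalence by Lemma~\ref{lemma:columnRemoval} (or directly by the triviality of $\bigwedge^{\ell+1}\Sym^\ell$), yielding an infinite family. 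The remaining task is to check that these examples are honestly new, i.e.\ that $\nlambda \ne \nmu$ and $\nlambda \ne \nmu^{\circ(\ell+1)}$: the first is immediate by construction, and for the second one computes $\nmu^{\circ(\ell+1)}$ explicitly for a rectangle-plus-border and verifies it is a different shape, which is a finite check once one picks the parameters (and here one needs $\ell \ge 5$ so that the triple $\{\ap,\bp,\ell-\bp+1\}$ has enough room to admit two inequivalent rectangular representatives that are also not related by complementation).

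The main obstacle I anticipate is part~(a): proving that for $\ell \le 4$ there are \emph{no} equivalences beyond $\nlambda = \nmu$ and complementation. Unlike the constructions in~(b), this is a non-existence statement and the Hook Content Formula products, while explicit, involve cancellation of cyclotomic factors $\Phi_d(q)$ that is delicate to control uniformly; the honest approach is probably to fix $\ell \in \{1,2,3,4\}$ and, after reducing to $\overline{\nlambda} = \overline{\nmu}$-free representatives with $\ell(\nlambda) = \ell+1$ (or a clean sub-case), bound the number of boxes and parts using the coefficient of the lowest and second-lowest powers of $q$, then enumerate. Cases $\ell = 1, 2$ should be quite short; $\ell = 3$ and especially $\ell = 4$ may require a careful but still elementary argument, possibly organised around the number of distinct part-sizes of $\nlambda$ and a comparison of the two characters at a primitive root of unity of small order (e.g.\ $q = \zeta_3$ or $\zeta_5$), where the Hook Content Formula values become products that are either zero or small and force strong arithmetic conditions on the hook lengths.
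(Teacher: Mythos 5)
Your plan for part (b) is incorrect, and this is the more serious issue. You propose to generate exceptional equal-degree equivalences from the six-fold rectangle symmetry of Theorem~\ref{thm:rectangles}. But the only permutations of the triple $(a,b,c)$ that preserve $\ell = b+c-1$ are the identity and the transposition $b \leftrightarrow c$, giving $\lambda = (a^b)$ and $\mu = (a^c)$ with $b + c = \ell + 1$. Now compute the complement: $(a^b)^{\circ(\ell+1)} = (a^b)^{\circ(b+c)} = (a^c)$. So every rectangular equal-degree equivalence coming from Theorem~\ref{thm:rectangles} \emph{is} a complementation in the sense of Theorem~\ref{thm:complements}; none is exceptional. (Indeed your suggested parameter choice $\ap' = \ell - \bp + 1$, $\bp' = \bp$ only yields the required multiset equality when $\ap = \ap'$, i.e.\ when the two rectangles coincide.) Your back-up device of padding with columns of length $\ell+1$ fails on a second count: it produces partitions with $\ell+1$ parts, violating the hypothesis $\ell \ge \ell(\lambda)$, so these are not prime equivalences and do not witness the theorem. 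Consequently $\ell \ge 5$ plays no role in the construction you sketch, which should have been a warning sign. The paper's proof of (b) is by a genuinely different route: it exhibits an explicit non-rectangular family $\lambdae{k}{\ell}$, $\mue{k}{\ell}$ for each $\ell \ge 5$ and $k \in \N_0$ (found by computer search) and verifies $\Delta_\ell(\lambdae{k}{\ell}) = \Delta_\ell(\mue{k}{\ell})$ by a pyramid computation using Theorem~\ref{thm:eqvConds}(i); there seems to be no way to manufacture such examples from rectangles alone.

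For part (a) your sketch is in roughly the right territory but is too vague to constitute a proof. The paper works not with the hook-content characterization \ref{thm:eqvConds}(h) but with the difference-multiset characterization \ref{thm:eqvConds}(i), $\lambda \eqv{\ell}{\ell} \mu$ if and only if $\Delta_\ell(\lambda) = \Delta_\ell(\mu)$, and supplements it with the linear statistic $d(\lambda) = \sum_j \frac{j(\ell+1-j)}{2}\delta(\lambda)_j - \frac12\binom{\ell+2}{3}$, which Lemma~\ref{lemma:dStat} identifies with $\frac{\ell n}{2} - b(\lambda)$ and which therefore must agree for $\lambda$ and $\mu$. This extra linear relation among the differences $\delta(\lambda)_j$, combined with the sum constraint from Proposition~\ref{prop:aRelation} and a direct comparison of the smallest and largest entries of $\Delta_\ell$, is exactly what makes the case analysis for $\ell = 1,2,3,4$ terminate. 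Your proposal gestures at ``coefficient-by-coefficient'' comparison, peeling off columns, and evaluation at small roots of unity, but does not identify the invariant that actually closes the argument; without something like $d(\lambda)$ the finite bookkeeping is not obviously finite. You should replace the rectangle construction in (b) entirely, and in (a) you should pin down what specific constraint on the difference sequences forces the difference sequence of $\mu$ to be either that of $\lambda$ or its reverse.
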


We end in \S\ref{sec:solitary} where we show that there exist infinitely many
partitions whose only plethystic equivalences are the inevitable column removals
from Lemma~\ref{lemma:columnRemoval}
and the complement equivalences from Theorem~\ref{thm:complements}.

\begin{theorem}\label{thm:solitary}
Let $\delta(k) = (k,k-1,\ldots, 1)$ and let $\ell$, $m \in \N$. 
Let $\mu$ be a partition.
Suppose that $\ell \ge k$ and $m \ge \ell(\mu)$ and that $\mu\not= \delta(k)$.
Then $\delta(k) \eqv{\ell}{m} \mu$ if and only if $\ell = m$, $\ell > k$ and 
$\mu = \delta(k)^{\circ (\ell+1)}$.
\end{theorem}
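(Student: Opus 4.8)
The plan is to combine the structural constraints from the earlier main theorems with the very rigid combinatorial shape of the staircase $\delta(k)$. First I would observe that since $\lambda = \delta(k) = \delta(k)'$ is self-conjugate and $\ell(\delta(k)) = k$, the hypothesis $\ell \ge k$ means the prospective equivalence $\delta(k) \eqv{\ell}{m} \mu$ is prime. The core idea is that $\delta(k)$ has a highly restricted invariant attached to it — in particular, its Young diagram has $k$ removable boxes, and by Lemma~\ref{lemma:removable} (cited in the introduction as the statement that, in the typical case, $\lambda$ and $\mu$ have the same number of removable boxes) we expect $\mu$ to have $k$ removable boxes as well. This should already force $\mu$ to be strict, i.e. $\mu = (\mu_1 > \mu_2 > \cdots > \mu_k)$ with exactly $k$ parts; I would then need to rule out the non-typical ``exceptional'' cases of that lemma separately, which for a self-conjugate staircase should be a short check.

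Next I would bring in Theorem~\ref{thm:solitary}'s natural companions: the only equivalences guaranteed \emph{a priori} for $\delta(k)$ are the column-removal equivalence of Lemma~\ref{lemma:columnRemoval} (vacuous here, since $\delta(k)$ has no column of length $k$ except the single first column — actually it does, the first column has length $k$, so $\overline{\delta(k)} = \delta(k-1)$ shifted; I would handle this carefully) and the complement equivalences of Theorem~\ref{thm:complements}, which give $\delta(k) \eqv{\ell}{\ell} \delta(k)^{\circ(\ell+1)}$. So the ``if'' direction is immediate from Theorem~\ref{thm:complements} once one checks $\delta(k)^{\circ(\ell+1)} \ne \delta(k)$ when $\ell > k$ (true, since the complement in an $(\ell+1)\times k$ box has first part $k < \ell+1-1$ only when... one computes $\delta(k)^{\circ(\ell+1)}$ has $\ell+1$ rows, hence differs from $\delta(k)$ unless $\ell+1 = k$, excluded). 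The substance is the ``only if'' direction.

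For the ``only if'' direction, after reducing to $\mu$ strict with $k$ parts, I would split on whether $\ell = m$. If $\ell \ne m$: I would invoke Theorem~\ref{thm:multipleEquivalences}(i) or the analysis behind Theorem~\ref{thm:conjugates}. Concretely, $\delta(k)$ satisfies $\epar(\delta(k)) = \spar(\delta(k))'$ trivially by self-conjugacy, so $\delta(k)$ does admit the conjugate-type family of equivalences — but those all have $\mu = \delta(k)' = \delta(k)$, contradicting $\mu \ne \delta(k)$. Any equivalence with $\ell \ne m$ beyond these would, via the rigidity in Theorem~\ref{thm:multipleEquivalences}, force $\lambda = \mu$ or $\lambda = \mu'$, i.e. $\mu = \delta(k)$, a contradiction. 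Hence $\ell = m$. Then by Theorem~\ref{thm:complements} (or rather its proof, giving uniqueness of the equal-degree partner), the only prime equal-degree equivalence of $\delta(k)$ with a partition other than itself is with $\delta(k)^{\circ(\ell+1)}$, and this requires $r = \ell+1$ to be a genuine ``complement'' relation, i.e. $\ell+1 = r$ with $\ell(\delta(k)) = k \le r$, forcing $\ell \ge k$, and $\ell > k$ to ensure $\mu \ne \delta(k)$. This yields $\mu = \delta(k)^{\circ(\ell+1)}$ as claimed.

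The main obstacle I anticipate is that Theorem~\ref{thm:complements} only classifies equivalences of the specific form $\lambda \eqv{\ell}{\ell} \lambda^{\circ r}$; it does not by itself assert that an \emph{arbitrary} equal-degree prime equivalence $\delta(k) \eqv{\ell}{\ell} \mu$ must be of complement type. Closing that gap is the crux: I would need to show that the character identity~\eqref{eq:SchurEq} with $\lambda = \delta(k)$, $\ell = m$ forces $\mu$ into complement form. I expect to do this by exploiting the Hook Content Formula to write $q^{-\ell|\delta(k)|/2}s_{\delta(k)}(1,q,\ldots,q^\ell)$ as an explicit product of Gaussian binomials (the staircase content/hook data is especially clean), comparing the multiset of ``hook–content'' data — equivalently the roots of the polynomial on each side — and using that a strict partition $\mu$ with $k$ parts is determined by this data together with the constraint that the total degree and the ``span'' match. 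The staircase's symmetry should make the matching of these data essentially force $\mu = \delta(k)^{\circ(\ell+1)}$; the bookkeeping of which $q$-factorials cancel is the one genuinely computational step, but it is finite and driven entirely by the shape of $\delta(k)$.
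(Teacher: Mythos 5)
Your reduction to the ``only if'' direction is correct, and you are right that Lemma~\ref{lemma:removable} gives $\mu$ the same number ($k$) of removable boxes as $\delta(k)$ in this prime situation. However there are three problems with what follows.

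First, having $k$ removable boxes means $\mu$ has $k$ \emph{distinct part sizes}, not that $\mu$ is strict. Your claim that $\mu = (\mu_1 > \cdots > \mu_k)$ is false, and indeed the answer $\mu = \delta(k)^{\circ(\ell+1)} = (k^{\ell-k}, k-1, \ldots, 1)$ is not strict: it has $k$ repeated $\ell-k$ times. Second, your dispatch of the case $\ell \neq m$ via Theorem~\ref{thm:multipleEquivalences}(i) does not work: that theorem constrains the situation where there are \emph{two distinct} prime equivalences between the same pair of partitions; a single prime equivalence with $\ell \neq m$ gives no contradiction. (There is no obvious second prime equivalence between $\delta(k)$ and $\mu$ to invoke.) Third, in the $\ell = m$ case you correctly identify the gap---Theorem~\ref{thm:complements} does not say that an arbitrary equal-degree equivalence is of complement type---but the proposed repair is only a sketch, and it misses the decisive observation.

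The paper's argument is a single uniform analysis (no split on $\ell = m$) built on a much sharper fact about the staircase: \emph{every hook length of $\delta(k)$ is odd}. Plugging into the multiset identity of Theorem~\ref{thm:eqvConds}(h), this means the multiplicity of $2$ on the hook-length side is the number of $2$-hooks of $\mu$, while on the content side the multiplicity of $2$ is at most $1$ (it is $1$ precisely when $m = \ell(\mu)$, $0$ otherwise), forcing $\mu$ to have at most one $2$-hook. Combined with $\mu$ having $k$ removable boxes and $\mu \neq \delta(k)$, $\mu$ must have exactly one $2$-hook and hence be $\delta(k)$ with a block of rows (or columns) of fixed length inserted, and $m = \ell(\mu)$, $\ell > k$. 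Comparing the greatest hook length $2k+d-1$ (which has multiplicity $1$) rules out the added-columns case and in the added-rows case forces $\mu = \delta(k)^{\circ(\ell+1)}$ and $\ell = m$. Your HCF/$q$-binomial idea points in the right direction, since Theorem~\ref{thm:eqvConds}(h) is exactly the hook-content reformulation, but without the odd-hook-length observation and the multiplicity-of-$2$ count the computation has no clear way to close; I would encourage you to look for a distinguished small integer whose multiplicity on both sides can be controlled, which is precisely what $2$ accomplishes here.
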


The following example is chosen to illustrate many of our main results.

\begin{example}
\label{ex:ex}
Let $b$, $c$, $d \in \N$. Since $\bigl( (b+c)^c, c^d \bigr)$ is the complement
of $\bigl( (b+c)^b, b^d \bigr)$ in the $(b+c+d) \times (b+c)$ box, 
Theorem~\ref{thm:complements} implies that 
\[ \bigl( (b+c)^b, b^d \bigr) \eqv{b+c+d-1}{b+c+d-1} \bigl( (b+c)^c, c^d \bigr). \]
The column removals relevant to Lemma~\ref{lemma:columnRemoval} are
$\overline{\bigl( (b+c)^b, b^d \bigr)} = (c^b)$ and 
$\overline{\bigl( (b+c)^c, c^d \bigr)} = (b^c)$.
By Lemma~\ref{lemma:columnRemoval} there are non-prime plethystic equivalences
$\bigl((b+c)^b, b^d \bigr) \eqv{b+d-1}{b+d-1} (c^b)$ and
$\bigl((b+c)^c, c^d \bigr) \eqv{c+d-1}{c+d-1} (b^c)$. 
By either Theorem~\ref{thm:conjugates} or Theorem~\ref{thm:rectangles}, we
have $(c^b) \eqv{b+d-1}{c+d-1} (b^c)$. Thus
\[ \bigl( (b+c)^b, b^d \bigr) \eqvs{b+d-1}{b+d-1} (c^b) \eqvs{b+d-1}{c+d-1} (b^c) \eqvs{c+d-1}{c+d-1}
\bigl( (b+c)^c, c^d \bigr). \]
By Lemma~\ref{lemma:transitive} this chain can be read as the factorization of
a non-prime plethystic equivalence
\[ \bigl( (b+c)^b, b^d \bigr) \eqv{b+d-1}{c+d-1} \bigl( (b+c)^c, c^d \bigr). \]
By Theorem~\ref{thm:multipleEquivalences}(ii), there are precisely two plethystic equivalences
between $\bigl( (b+c)^b, b^d \bigr)$ and $\bigl( (b+c)^c, c^d \bigr)$, namely the two found above.
As expected from this theorem, only one of these equivalences is prime.

\end{example}

\subsection{Outline}
In the remainder of this introduction we illustrate the critical Theorem~\ref{thm:eqvConds}, 
which collects a number of equivalent conditions for 
the plethystic equivalence in Definition~\ref{defn:eqv} 
by giving
two short proofs that $\Sym^a \SSym^b \E \cong \Sym^b\SSym^a \E$ for all $a$, $b \in \N_0$. 
In the spirit of
this work, one proof also gives a converse. 
We then give a brief literature survey, organized around the different generalizations
of this isomorphism.

In \S\ref{sec:background} we construct the irreducible representations of $\SL_2(\C)$ as the
symmetric powers $\Sym^\ell \E$, and give other basic results from representation theory.
We then give an explicit model for the representations $\nabla^\lambda \SSym^\ell \E$.
While $\nabla^\nlambda E$ is non-zero only if \hbox{$\ell(\nlambda) \le 2$}, 
the representation $\nabla^\nlambda \SSym^\ell \E$ is non-zero
whenever $\ell \ge \ell(\nlambda)-1$. This explains the ubiquity of
this condition in this work, and why we require the generality
of Schur functors, despite working only with representations of $\GL_2(\C)$ and
its subgroups. To
make the paper largely self-contained, we 
end by defining Schur functions.

The reader may prefer to treat \S\ref{sec:background} as a reference
and begin reading in~\S\ref{sec:eqvConds}, where we state and prove
Theorem~\ref{thm:eqvConds}. 
In \S\ref{sec:basicEqvs}
we collect some useful basic properties of the relations~$\!\!\eqv{\ell}{\m}$.
In \S\S\ref{sec:conjugates}--\ref{sec:solitary} we prove
the main results, as already outlined. 
Theorem~\ref{thm:complements} requires the statement of Theorem~\ref{thm:multipleEquivalences},
which in turn uses the statement of Theorem~\ref{thm:conjugates}; several later
theorems need the statement of Theorem~\ref{thm:complements}.
Apart from this, the sections may be read independently.

\subsection{Hermite reciprocity}\label{subsec:Hermite}

The isomorphism $\Sym^\ah \SSym^\bh \E \cong \Sym^\bh \SSym^\ah \E$ of $\GL_2(\C)$-representations
for $\ah$, $\bh \in \N$
was first discovered, in the context of invariant theory, by
Hermite \cite[end \S 1]{Hermite}. It was
observed by Cayley in \cite[\S 20]{CayleyQuantics}, where he acknowledges Hermite's prior discovery;
 some special cases may be seen in Sylvester \cite{SylvesterForms}, published in the same 
journal issue as~\cite{Hermite}. Thus it is also known (for instance in the title of \cite{Manivel})
as the Cayley--Sylvester formula. An invariant theory proof in modern
language may be found in \cite[3.3.4]{SpringerInvariantTheory}. Another elegant proof,
using the symmetric group, is in
\cite[Corollary~2.12]{GiannelliArchMath}.
We offer two proofs that illustrate
different conditions in Theorem~\ref{thm:eqvConds}. Each shows
that $(\ah)\eqv{\bh}{\ah} (\bh)$, or equivalently, Hermite reciprocity for representations of
~$\SL_2(\C)$.
Then, since the degrees on each side are equal,
it follows from Proposition~\ref{prop:eqvCondsMoreover} that there is a $\GL_2(\C)$-isomorphism.
 The first proof is well known, and is sketched 
in \cite[Exercise 6.19]{FH}; later in~\S\ref{subsec:rectanglesIf} we give its generalization to plane partitions.
Yet another proof (including the converse) can be given
using Theorem~\ref{thm:eqvConds}(i).

\begin{proof}[Proof by tableaux]
By~Theorem~\ref{thm:eqvConds}(g), we have $(\ah) \eqv{\bh}{\ah} (\bh)$ if and only if
$|\Seb{e}{\bh}{\ah}| = |\Seb{e}{\ah}{\bh}|$ for all $e \in \N_0$, where, by definition,
$\Se{e}{\bh}{\ah}$ is the set of semistandard tableaux of shape $(\ah)$ with entries
from $\{0,1,\ldots, \bh\}$ whose sum of entries is~$e$. 
Let $t$ be such a tableau, having exactly $c_i$ entries of $i$ for each $i \in \{0,1,\ldots, b\}$.
Then, reading its unique row from right to left, and ignoring any zeros,~$t$ encodes the partition 
$(\bh^{c_\bh},\ldots, 1^{c_1}\hskip-0.5pt)$ of $e$.
Hence $|\Se{e}{\bh}{\ah}|$ is the number of partitions of $e$
whose diagram is contained in the $\ah \times \bh$ box. By conjugating  partitions, this
number is invariant under swapping $\ah$ and~$\bh$.
\end{proof}

\begin{proof}[Proof by Stanley's Hook Content Formula]
The content of the partition
$(\ah)$ is $\{0,1,\ldots, \ah-1\}$, and its hook lengths are $\{1,2,\ldots, \ah\}$.
(These terms are defined in  Definition~\ref{defn:hlContent}.)
By Theorem~\ref{thm:eqvConds}(h),
$(\ah) \eqv{\bh}{m} (n')$ if and only if
\[ \{\bh+1,\bh+2, \ldots, \ah+\bh\} \bigl/ \{1,2,\ldots, \ah\} = \{m+1,m+2,\ldots, 
m+n'\} 
\bigl/ \{1,2,\ldots, n'\}. \]
where $/$ denotes a difference multiset, as defined in \S\ref{subsec:differenceMultiset}. 
Equivalently, the multiset unions
$\{\bh+1,\bh+2, \ldots, \ah+\bh\} \,\cup\, \{1,2,\ldots, n'\}$ and 
$\{m+1,m+2,\ldots,m+n'\} \,\cup\, \{1,2,\ldots, \ah\}$ are equal.
If $\ah = n'$ then, cancelling $\{1,2,\ldots, \ah\}$ from each side, we see that
$\bh = m$, giving a trivial solution.
Otherwise we may suppose by symmetry that $\ah < n'$. Now $\ah+1$ is in the first union 
and so $m = \ah$;  comparing greatest elements we see that $n' = \bh$. We
therefore have $n'= \bh$ and $m = \ah$, corresponding to Hermite
reciprocity.
\end{proof}

We remark that the first proof shows that that partitions contained 
in the $n \times \ell$ box are enumerated, according to their size, 
by a character of $\SL_2(\mathbb{C})$. In particular by Theorem~\ref{thm:eqvConds},
the sequence is unimodal that is,
first weakly increasing and then weakly decreasing.

\subsection{Literature on $\SL_2(\C)$-plethysms}
By Hermite reciprocity, the multiplicity of any Schur function
$s_{(\ell n - d, d)}$ labelled by a two-part partition
is the same in $s_{(n)} \circ s_{(\ell)}$ and $s_{(\ell)} \circ s_{(n)}$.
More generally, Foulkes conjectured in \cite{Foulkes} that if $n \ge \ell$ then
$s_{(n)} \circ s_{(\ell)} - s_{(\ell)} \circ s_{(n)}$ is a non-negative integral linear combination
of Schur functions; Foulkes' Conjecture has been proved only when $n \le 5$ (see \cite{CheungIkenmeyerMkrtchyan})
and when $n$ is very large compared to $\ell$ (see \cite{Brion}).
A number of `stability' results on plethysm are relevant to 
this setting. For example, a special case of the theorem on page 354 of \cite{Brion}
implies that the multiplicity of $\Sym^r \E$ in $\Sym^n \Sym^\ell \E$ 
is at most the multiplicity of $\Sym^{r+n} \E$ in $\Sym^n \Sym^{\ell+1} \E$.
The first proof of Hermite reciprocity above translates this into a non-trivial combinatorial
result comparing partitions of $r$ in the $n \times \ell$ box and partitions of $r+n$
in the $n \times (\ell+1)$ box.

In \cite{KingSU2Plethysms}, King proves the `if' direction of Theorem~\ref{thm:rectangles},
and sketches a proof of a weaker version of the converse. He mentions
as one motivation the Wronskian
isomorphism $\bigwedge^{b} \SSym^{b+c-1} \E \cong \Sym^b \SSym^c \E$ of representations
of the compact subgroup $\mathrm{SU}_2(\C)$ of $\SL_2(\C)$. This is interpreted by Wybourne
in \cite{Wybourne} as an equality between the number of completely antisymmetric
states of $b+c-1$ identical bosons each of angular momentum $c/2$ and the number
of symmetric states of $b$ identical bosons each of angular momentum $c/2$. 
(There is a typographic error between (13) and (14) in \cite{Wybourne}; $m+1+n$ should be
$m+1-n$, as in (13).)
This realizes the well-known
equality between the number of $c$-multisubsets of $\{1,\ldots, b\}$
and the number of $c$-subsets of $\{1,\ldots, b+c-1\}$. 
By Lemma 4.1 in \cite{McDowellI}, the special
case of the Wronskian isomorphism $\bigwedge^2 \SSym^{c+1} \E \cong \Sym^2 \SSym^c \E$
holds when $E$ is the natural representation of any finite special
linear group $\SL_2(\mathbb{F}_q)$. It would be interesting to have further examples of such
`modular plethysms'.

The second main result of \cite{BowmanPaget} classifies all partitions $\lambda$ and $\nu$
such that the plethysm $s_\lambda \circ s_\nu$ is equal to a single Schur function.
Apart from the obvious $s_\lambda \circ s_{(1)} = s_\lambda$, the
only examples are $s_{(1,1)} \circ s_{(1,1)} = s_{(2,1,1)}$ and $s_{(1,1)} \circ s_{(2)} = 
s_{(3,1)}$. By Remark~\ref{remark:plethysm} and~\eqref{eq:SchurSpec},
the formal character of $\nabla^\lambda \SSym^\ell \E$, evaluated at $1$ and $q$ is 
$(s_\lambda \circ s_{(\ell)})(1,q)$.
Our Corollary~\ref{cor:irreducible} therefore shows that
there are more irreducible plethysms when we work with symmetric functions truncated
to two variables.
The equality $(s_{(1^5)} \circ s_{(2)})(x_1,x_2,x_3) = s_{(2,2)}(x_1,x_2,x_3)$,
corresponding to the isomorphism $\bigwedge^5 \SSym^2 U \cong \nabla^{(2,2)} U$ 
where $U$ is a
$3$-dimensional complex vector space, gives a similar `non-generic' example for three variables.

Corollary~\ref{cor:irreducible} is itself a special case of Theorem~\ref{thm:irreducible}
on skew Schur functors. While we do not require it in this work, we note that
a combinatorial formula
for the corresponding plethysm $s_{\lambda/\lambda^\star}(1,q,\ldots,q^\ell)
= (s_{\lambda / \lambda^\star} \circ s_{(\ell)})(1,q)$ is given by Morales, Pak and Panova
in \cite[Theorem 1.4]{MoralesPakPanovaI} in terms of certain `excited' Young diagrams
of shape $\lambda / \lambda^\star$ first defined by
Ikeda and Naruse in \cite{IkedaNaruse}. This result is a generalization
of Stanley's Hook Content Formula (see \cite[Theorem 7.21.2]{StanleyII}),
one of the main tools in this work.
As a corollary the authors obtain a formula
due to Naruse \cite{NaruseTalk} for the number of standard tableaux of shape 
$\lambda / \lambda^\star$.

For further general
background on plethysms we refer the reader to \cite{LoehrRemmel} and
to the survey in~\cite{PagetWildonGeneralizedFoulkes}.

\section{Background}
\label{sec:background}

\subsection{Representations of $\SL_2(\C)$}

Let $G$ be a subgroup of $\GL_2(\C)$ containing $\SL_2(\C)$.
A representation $\rho : G \rightarrow \GL(V)$ is said to be \emph{polynomial}
if, with respect to a chosen basis of $V$, each matrix entry in $\rho(g)$ is a polynomial
in the matrix entries of $g \in G$. If these polynomials
all have the same degree $r$, we say that $V$ has \emph{degree} $r$.
We define the \emph{character} of a polynomial representation $V$ of $G$ to be
the unique two variable polynomial~$\Phi_V$ such that
\[\Tr_V   \left( \begin{matrix} \alpha & 0 \\  0 & \beta \end{matrix} \right)  = \Phi_V(\alpha,
\beta) \]
for all non-zero $\alpha$, $\beta \in \C$. We define the \emph{$Q$-character} of $V$
to be the Laurent polynomial $\Psi_V$ such that $\Psi_V(Q) = \Phi_V(Q^{-1}, Q)$.

Remarkably every smooth representation of $\SL_2(\C)$ is polynomial. Thus the following
summary theorem is a restatement of a basic result in Lie Theory. 

\begin{theorem}\label{thm:SLbasic} Let $V$
be a polynomial representation of $\SL_2(\C)$.
Then~$V$ is isomorphic to a direct
sum of irreducible representations. 
Moreover, if $V$ is irreducible then there exists a unique $\ell \in \N_0$
such that $V \cong \Sym^\ell \E$.
\end{theorem}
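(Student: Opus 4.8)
The plan is to reduce the statement to two classical facts about polynomial representations of $\SL_2(\C)$: complete reducibility, and the classification of the irreducible ones by their highest weight.

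First I would establish complete reducibility by the unitarian trick. Given a polynomial representation $\rho\colon\SL_2(\C)\to\GL(V)$, restrict it to the compact subgroup $K=\mathrm{SU}_2(\C)$ and average an arbitrary positive-definite Hermitian form on $V$ over $K$ (which carries a finite Haar measure) to obtain a $K$-invariant form $\langle\,,\,\rangle$. If $W\le V$ is $\SL_2(\C)$-invariant then $W^\perp$ is $K$-invariant; to promote this to $\SL_2(\C)$-invariance, differentiate the $K$-action to a representation of the real Lie algebra $\mathfrak{su}_2$ preserving $W^\perp$, complexify to a representation of $\mathfrak{sl}_2(\C)=\mathfrak{su}_2\otimes_\R\C$ still preserving $W^\perp$, and exponentiate, using that $\SL_2(\C)$ is connected and generated by the image of $\exp$. (Polynomiality makes $\rho$ real-analytic, so the differentiation and exponentiation are legitimate; alternatively one may invoke Weyl's theorem on complete reducibility for the semisimple Lie algebra $\mathfrak{sl}_2(\C)$ via the Casimir operator.) Then $V=W\oplus W^\perp$ as $\SL_2(\C)$-modules, and induction on $\dim V$ yields the decomposition into irreducibles.

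Next I would classify the irreducibles. Let $V$ be irreducible and polynomial. Decompose $V=\bigoplus_n V_n$ into weight spaces for the diagonal torus, where $\left(\begin{smallmatrix}\alpha&0\\0&\alpha^{-1}\end{smallmatrix}\right)$ acts on $V_n$ as $\alpha^n$; only finitely many $n$ occur because the entries of $\rho$ are polynomial, and conjugating by $\left(\begin{smallmatrix}0&-1\\1&0\end{smallmatrix}\right)$ shows the multiset of weights is symmetric under $n\mapsto -n$, so the top weight $\ell$ satisfies $\ell\ge 0$. Differentiating the standard one-parameter unipotent and diagonal subgroups gives operators $e,f,h$ on $V$ obeying the $\mathfrak{sl}_2$-relations, with $e$ raising and $f$ lowering weight by $2$. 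Choosing $0\ne v^+\in V_\ell$ forces $ev^+=0$, and the standard identity $ef^kv^+=k(\ell-k+1)f^{k-1}v^+$ shows that $v^+,fv^+,f^2v^+,\dots$ span an invariant subspace — hence all of $V$ — and that this chain terminates exactly at $f^{\ell+1}v^+=0$. So $\dim V=\ell+1$, and mapping $v^+$ to a highest-weight vector of $\Sym^\ell\E$ and extending $\mathfrak{sl}_2$-equivariantly gives $V\cong\Sym^\ell\E$.

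Finally, uniqueness is immediate: $\dim\Sym^\ell\E=\ell+1$, so $\Sym^\ell\E\cong\Sym^m\E$ forces $\ell=m$ (equivalently, the characters $\Phi_{\Sym^\ell\E}(\alpha,\beta)=\sum_{i=0}^\ell\alpha^i\beta^{\ell-i}$, $\ell\in\N_0$, are linearly independent). The main obstacle is the first step: passing from $K$-invariance of $W^\perp$ back to $\SL_2(\C)$-invariance, i.e.\ making the unitarian trick rigorous, or equivalently deploying Weyl's theorem. The weight-space/ladder argument in the second step and the dimension count in the third are routine.
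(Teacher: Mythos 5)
Your proof is correct, and it is essentially the standard argument (unitarian trick for complete reducibility, plus the $\mathfrak{sl}_2$ highest-weight ladder for the classification) that the paper delegates entirely to Fulton--Harris, Chapter~12, without reproducing it. You simply write out the details of the cited argument, including the one genuinely delicate point --- promoting $K$-invariance of $W^\perp$ to $\SL_2(\C)$-invariance via the Lie algebra and connectedness, or via Weyl's theorem --- which you correctly flag as the crux.
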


\begin{proof}
See~\cite[Chapter~12]{FultonHarrisReps}.
\end{proof}

Let $E$ be a $2$-dimensional
complex vector space with basis $e_1$, $e_2$.
The diagonal matrix with entries $1/\alpha$ and $\alpha$ acts on the
canonical basis element $e_1^{l-k} e_2^k$ of $\Sym^\ell \E$
by multiplication by $\alpha^{2k-\ell}$. Therefore
\begin{align}
\Phi_{\Sym^\ell \E}(x,y) &= x^\ell + \cdots + x y^{\ell-1} + y^\ell,\label{eq:GLirredChar} 
\intertext{and so}
\Psi_{\Sym^\ell \E}(Q) &= Q^{-\ell} + \cdots + Q^{\ell-2} + Q^\ell.\label{eq:SLirredChar}
\end{align}

\begin{lemma}\label{lemma:SLchar}
Let $V$ be a polynomial representation of $\SL_2(\C)$. Then $V$ is determined up to isomorphism
by its $Q$-character $\Psi_V$. Moreover, $\Psi_V(Q) = \Psi_V(Q^{-1})$.
\end{lemma}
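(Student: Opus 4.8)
**The plan is to deduce both assertions from Theorem~\ref{thm:SLbasic} together with the character formula \eqref{eq:SLirredChar}.**

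First I would establish that $\Psi_V$ determines $V$ up to isomorphism. By Theorem~\ref{thm:SLbasic}, any polynomial representation $V$ of $\SL_2(\C)$ decomposes as $V \cong \bigoplus_{\ell \ge 0} (\Sym^\ell \E)^{\oplus m_\ell}$ for uniquely determined multiplicities $m_\ell \in \N_0$, almost all zero. Passing to $Q$-characters, which are additive on direct sums, gives $\Psi_V(Q) = \sum_{\ell \ge 0} m_\ell \bigl( Q^{-\ell} + Q^{-\ell+2} + \cdots + Q^\ell \bigr)$ by \eqref{eq:SLirredChar}. It therefore suffices to show that the Laurent polynomials $\Psi_{\Sym^\ell \E}$ are linearly independent over $\Q$; indeed they are, since $\Psi_{\Sym^\ell \E}$ is the unique one among them whose top-degree term is $Q^\ell$, so one recovers the $m_\ell$ from $\Psi_V$ by descending induction on $\ell$ (peel off the highest power of $Q$ occurring, read off its coefficient as $m_\ell$, subtract $m_\ell \Psi_{\Sym^\ell\E}$, and repeat). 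Hence $\Psi_V$ determines all the $m_\ell$ and so determines $V$ up to isomorphism.

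The symmetry $\Psi_V(Q) = \Psi_V(Q^{-1})$ is then immediate: each summand $Q^{-\ell} + Q^{-\ell+2} + \cdots + Q^\ell$ is visibly invariant under $Q \mapsto Q^{-1}$, so any nonnegative integer combination of them is too. Alternatively, and perhaps more conceptually, one can argue directly at the level of $V$: conjugating a diagonal matrix $\mathrm{diag}(\alpha,\beta)$ by $\left(\begin{smallmatrix} 0 & 1 \\ -1 & 0 \end{smallmatrix}\right) \in \SL_2(\C)$ yields $\mathrm{diag}(\beta,\alpha)$, so $\Phi_V(\alpha,\beta) = \Phi_V(\beta,\alpha)$ by invariance of the trace under conjugation; substituting $\alpha = Q^{-1}$, $\beta = Q$ gives $\Psi_V(Q) = \Phi_V(Q^{-1},Q) = \Phi_V(Q,Q^{-1}) = \Psi_V(Q^{-1})$.

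**The only real content is the linear independence step**, and even that is routine once one observes the distinct top degrees; there is no serious obstacle here, as the lemma is essentially a repackaging of Theorem~\ref{thm:SLbasic}. I would present the triangularity argument as the main line and mention the conjugation argument as a remark, since it is the one that generalizes cleanly and explains why the symmetry is "really" a reflection coming from the Weyl group of $\SL_2$.
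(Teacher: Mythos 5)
Your main line is exactly the paper's argument: the paper's proof is the one-liner ``Since the Laurent polynomials in~\eqref{eq:SLirredChar} are linearly independent, the result is immediate from Theorem~\ref{thm:SLbasic},'' and your triangularity/peeling argument is the standard way to spell out that linear independence. The alternative derivation of $\Psi_V(Q)=\Psi_V(Q^{-1})$ via conjugation by $\left(\begin{smallmatrix}0&1\\-1&0\end{smallmatrix}\right)$ is a correct and slightly more conceptual route not taken in the paper, but it does not change the substance.
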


\begin{proof}
Since the Laurent polynomials in~\eqref{eq:SLirredChar} are linearly independent, the result 
is immediate
from Theorem~\ref{thm:SLbasic}.
\end{proof}

\subsection{Partitions} Let $\Par(r)$ denote the set of partitions of $r \in \N$.
We write $|\lambda| = r$ if $\lambda \in \Par(r)$. We have already introduced
the notation $\ell(\lambda)$ for the number of parts of $\lambda$. If $i > \ell(\lambda)$
then we set $\lambda_i = 0$.
The \emph{Young diagram} of $\lambda$ is
the set
$\{ (i,j) : 1 \le i \le \ell(\lambda), 1 \le j \le \lambda_i\}$; we refer
to its elements as \emph{boxes}, and draw $[\lambda]$ using the `English' convention
with its longest row at the top of the page, as in Example~\ref{ex:polytabloid} below.

\subsection{Tableaux}\label{subsec:tableaux}
A \emph{$\lambda$-tableau} is a function
$t : [\lambda] \rightarrow \N_0$. If $t(i,j) = b$,
then we say that $t$ has \emph{entry} $b$ in box $(i,j)$,
and write $t_{(i,j)} = b$. If the entries of each row of $t$ are weakly increasing
when read from left to right we say that $t$ is \emph{row-semistandard}.
If the entries of each column of $t$ are strictly increasing when read from
top to bottom, we say that $t$ is \emph{column standard}.
If both conditions hold, we say that $t$ is \emph{semistandard}.
Let $\RSSYT_{\le \ell}(\lambda)$ 
and $\SSYT_{\le \ell}(\lambda)$ be the sets of row semistandard
and semistandard $\lambda$-tableaux respectively,
with entries in $\{0,1,\ldots, \ell\}$. Note that $0$ is permitted as an entry.
Given a permutation $\sigma$ of the boxes $[\lambda]$, and a $\lambda$-tableau $t$,
we define $\sigma \cdot t$ by $(\sigma \cdot t)(i,j) = t\bigl( \sigma^{-1}(i,j) \bigr)$.
Thus if $t$ has entry $b$ in box $(i,j)$ then $\sigma \cdot t$ has entry $b$ in
box $\sigma (i,j)$. Let $C(\lambda)$ be the group of all permutations that permute
within themselves boxes in the same column of $[\lambda]$.

We define the \emph{weight} of tableau $t$, denoted $|t|$, to be the sum of its entries.


\subsection{A construction of $\nabla^\lambda \SSym^\ell \E$}
\label{subsec:Schur}

Fix $\ell \in \N$ and let $V = \langle v_0, \ldots, v_\ell\rangle$
be an $(\ell+1)$-dimensional complex vector space.
Given a $\lambda$-tableau $t$ with entries from $\{0,1,\ldots, \ell \}$, define
\begin{equation}
\label{eq:tabloid} f(t) = \bigotimes_{i=1}^{\ell(\lambda)} \prod_{j=1}^{\lambda_i} v_{t(i,j)} \in
\bigotimes_{i=1}^{\ell(\lambda)} \Sym^{\lambda_i}V.\end{equation}
Define 
\begin{equation}
\label{eq:polytabloid} F(t) = \sum_{\tau \in C(t)} \sgn(\tau) f(\tau \cdot t).\end{equation}
We say that
$F(t)$ is the \emph{$\GL$-polytabloid corresponding to $t$}.
Observe that if $\sigma \in C(t)$ then 
\begin{equation}\label{eq:columnRelation} F(\sigma \cdot t) = \sgn(\sigma) F(t). \end{equation}
Hence $F(t) = 0$ if $t$ has a repeated entry in a column.

It is clear that $\{f(t) : t \in \RSSYT_{\le \ell}(\lambda) \}$  is
a  basis of $\bigotimes_{i=1}^{\ell(\lambda)} \Sym^{\lambda_i}V$.
Thus given any $g \in \GL(V)$, there exist unique coefficients 
$\alpha_s \in \C$ for $s \in \RSSYT_{\le \ell}(\lambda)$
such that
\[ g f(t) = \sum_{s \in \RSSYT_{\le \ell}(\lambda)} \alpha_s f(s). \]
It is routine to check that if $\sigma$ is a permutation of $[\lambda]$ then
$g f(\sigma \cdot t) = \sum_{s \in \RSSYT_{\le \ell}(\lambda)} \alpha_s f(\sigma \cdot s)$.
It now follows from the definition in~\eqref{eq:polytabloid}
that the linear span of the $F(t)$ for~$t$ a 
$\lambda$-tableau with
entries from $\{0,1,\ldots, \ell \}$ is a $\GL(V)$-subrepresentation of
\smash{$\bigotimes_{i=1}^{\ell(\lambda)} \Sym^{\lambda_i}V$}; this is  $\nabla^{\lambda}V$.
In particular, it is clear that $\nabla^{(n)}V \cong \Sym^n V$ for each $n \in \N_0$.

\begin{example}\label{ex:oneColumn}
By~\eqref{eq:columnRelation} the representation $\nabla^{(1^n)}V$
has as a basis all $\GL$-polytabloids $F(t)$ where $t$ is a standard $(1^n)$-tableau
with entries from $\{0,1,\ldots, \ell\}$. Moreover, the linear map 
$\nabla^{(1^n)} V \rightarrow \bigwedge^n V$ 
defined by
$F(t) \mapsto v_{t_{(1,1)}} \wedge \cdots \wedge v_{t_{(n,1)}}$
is an isomorphism of representations of $\GL(V)$. In particular, if $n = \ell+1$
then $\nabla^{(1^n)}$ is the determinant representation of $\GL(V)$.
\end{example}

More generally we have the following theorem.

\begin{theorem}\label{thm:standardBasis}
The $\GL$-polytabloids $F(s)$ for $s \in \SSYT_{\le \ell}(\lambda)$ are a
$\C$-basis of $\nabla^\lambda V$.
\end{theorem}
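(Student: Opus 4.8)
The plan is to establish the classical standard basis theorem for the Schur functor $\nabla^\lambda V$ in the form adapted to our construction, where tableaux are allowed the entry $0$ and entries run over $\{0,1,\ldots,\ell\}$ with $\dim V = \ell+1$. First I would show that the $\GL$-polytabloids $F(s)$ for $s \in \SSYT_{\le \ell}(\lambda)$ \emph{span} $\nabla^\lambda V$. By definition $\nabla^\lambda V$ is spanned by all $F(t)$ with $t$ a $\lambda$-tableau with entries in $\{0,\ldots,\ell\}$, and by~\eqref{eq:columnRelation} and the remark following it we may assume $t$ is column standard (if $t$ has a repeated entry in a column then $F(t)=0$; otherwise sorting each column introduces only a sign). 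It then remains to express an arbitrary column-standard tableau in terms of semistandard ones. This is the standard \emph{straightening} argument: if $t$ is column standard but not row-semistandard, there is a column descent, i.e.\ a pair of adjacent columns and a row~$i$ with $t_{(i,j)} > t_{(i,j+1)}$; one applies the Garnir relation associated to the relevant subset of boxes in these two columns to rewrite $F(t)$ as a signed sum of $F(t')$ where each $t'$ is obtained by shuffling entries between the two columns. One checks, via the usual dominance/lexicographic order on the multiset of column-reading words (or on the weight together with a tie-break), that every $t'$ appearing is strictly smaller, so the process terminates and expresses $F(t)$ as a $\C$-combination of $F(s)$ with $s$ semistandard. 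The Garnir relations themselves follow formally from~\eqref{eq:polytabloid}: summing $F(\pi \cdot t)$ over coset representatives $\pi$ of $C(t)$ in the larger column-type symmetric group acting on a chosen set of $|A|$ boxes spanning two columns, where $|A|$ exceeds the length of one of the columns, yields zero because some term has a repeated column entry.

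Next I would prove linear independence of $\{F(s) : s \in \SSYT_{\le \ell}(\lambda)\}$. The cleanest route is a weight (character) argument combined with a triangularity observation. Recall from~\eqref{eq:tabloid} that $f(t) = \bigotimes_i \prod_j v_{t(i,j)}$ and that $\{f(r) : r \in \RSSYT_{\le \ell}(\lambda)\}$ is a basis of $\bigotimes_i \Sym^{\lambda_i} V$. Expanding $F(s) = \sum_{\tau \in C(s)} \sgn(\tau) f(\tau \cdot s)$ and sorting the rows of each $\tau \cdot s$ into weakly increasing order (which does not change $f$, since each factor $\prod_j v_{(\cdot)}$ is a commutative product in $\Sym^{\lambda_i}V$), one sees that the row-sorted version of $s$ itself — call it $\hat s$, which equals $s$ because $s$ is already row-semistandard — appears in $F(s)$ with coefficient $+1$, and I claim it is the unique maximal row-semistandard tableau occurring, with respect to the dominance order on column-reading words read from each column top to bottom then columns left to right (equivalently: the identity $\tau$ gives the lexicographically greatest contribution). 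Granting this triangularity, distinct semistandard $s$ have distinct `leading terms' $f(s)$, and since these $f(s)$ are part of the basis $\{f(r)\}$, the $F(s)$ are linearly independent. The combinatorial heart is verifying that for $\tau \in C(s)$ with $\tau \ne \id$, after row-sorting, the resulting row-semistandard tableau is strictly dominated by $s$ in the chosen order; this is the familiar fact that permuting within columns and then re-sorting rows can only decrease the column word, and is proved by an exchange argument on the smallest column where $\tau$ acts nontrivially.

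The main obstacle is getting the straightening / Garnir machinery to run cleanly in the present non-standard conventions — in particular the presence of $0$ as an allowed entry and the cap $\ell$ on entries, which interacts with the constraint $\dim V = \ell+1$. Concretely, one must check that the Garnir relations do not push entries outside $\{0,\ldots,\ell\}$ and that column-standard tableaux with entries exceeding $\ell$ never arise (they cannot, since a strictly increasing column of length $> \ell+1$ is impossible, which is exactly why $\nabla^\lambda V = 0$ unless $\ell(\lambda) \le \ell+1$, consistent with the constraint $\ell \ge \ell(\lambda)-1$ emphasized in the introduction), and that the termination order is genuinely well-founded. Once this bookkeeping is pinned down, both spanning and independence are the standard arguments; alternatively, one may cite that this construction of $\nabla^\lambda V$ agrees with the usual Schur functor (e.g.\ via the column Schur module / image-of-Young-symmetrizer description) and quote the classical theorem, but I would prefer to give the self-contained straightening proof sketched above to keep the paper's stated `largely self-contained' promise.
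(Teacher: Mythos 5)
The paper gives no proof, citing instead \cite[Proposition~2.11]{deBoeckPagetWildon} and \cite[Chapter~8]{FultonYT}; your sketch reconstructs the standard argument found there, and the spanning half (column relations plus Garnir straightening, together with your correct observation that the entry range $\{0,\ldots,\ell\}$ and the constraint $\ell\ge\ell(\lambda)-1$ create no difficulties) is sound.

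The triangularity lemma driving your independence argument is, however, false as stated, and not merely reversed in sign. You claim $s$ yields the lexicographically \emph{greatest} column-reading word among the row-sorted tableaux $\tau\cdot s$ for $\tau\in C(s)$. Take $\lambda=(3,1)$. For $s=\young(012,1)$ the nonidentity $\tau$ (swapping column one) gives, after row-sorting, $\young(112,0)$ with column-reading word $(1,0,1,2)$, which is lex-\emph{greater} than $s$'s word $(0,1,1,2)$; but for $s=\young(001,1)$ the same $\tau$ gives $\young(011,0)$ with column word $(0,0,1,1)$, lex-\emph{smaller} than $s$'s $(0,1,0,1)$. So $s$ is not consistently extremal in the column-reading order: the trouble is that row-sorting can shuffle entries between columns, corrupting even the leftmost columns. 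A usable order is the \emph{row}-reading word (each row left to right, rows top to bottom), with $s$ the unique \emph{minimum}. Indeed, if $i$ is the first row moved by $\tau\neq\id$, then rows $1,\ldots,i-1$ of $\tau\cdot s$ coincide with those of $s$, and every entry in row $i$ of $\tau\cdot s$ is $\geq$ the corresponding entry of $s$, with at least one strict inequality, because $\tau$ carries entries up from lower boxes of strictly increasing columns; this entrywise domination is preserved by row-sorting, so the first discrepancy in the concatenated word favours the row-sorted $\tau\cdot s$. With the order corrected, your triangularity argument and hence the proof go through.
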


\begin{proof}
See either \cite[Proposition~2.11]{deBoeckPagetWildon} or \cite[Chapter 8]{FultonYT}.
\end{proof}

\begin{definition}\label{defn:nablaSym}
Let $\lambda$ be a partition and let $\ell \in \N$. 
Let $E$ be the natural representation of $\GL_2(\C)$.
Let $\rho : \GL_2(\C) \rightarrow \GL(V)$ be the
representation corresponding to $V$.
We define $\nabla^\lambda \SSym^\ell \E$ to be the 
restriction of the representation $\nabla^\lambda V$ of $\GL(V)$ to the image of 
$\rho$.
\end{definition}

Let $v_k = e_1^{\ell-k}e_2^k$ for $0 \le k \le \ell$ be the
canonical basis of $\Sym^\ell E$. Using this basis in Definition~\ref{defn:nablaSym},
the action of $g \in \GL(E)$ on a $\GL$-polytabloid
$F(s)$
may be computed by the following device:
formally replace each entry $b$ of $s$ with $gv_b$, expressed as a linear combination
of $v_0, v_1, \ldots, v_\ell$.
Then expand multilinearly, and use
the column relation~\eqref{eq:columnRelation} followed by Garnir relations
(see \cite[Corollary 2.6]{deBoeckPagetWildon} or \cite[Chapter 8]{FultonYT})
to express
the result as a linear combination of $\GL$-polytabloids $F(t)$ for semistandard tableaux~$t$.

\begin{example}\label{ex:polytabloid}
Take $\ell = 2$ so $V = \Sym^2 \E = \langle e_1^2, e_1e_2, e_2^2 \rangle$.
The action of a  lower-triangular matrix $g \in \GL_2(\C)$ on $V$ is given, with respect
to the chosen basis, by
\[ \left( \begin{matrix} \alpha & 0 \\ \gamma & \delta \end{matrix} \right) \longmapsto
\left( \begin{matrix} \alpha^2 & 0 & 0 \\ 2\alpha\gamma & \alpha\delta & 0 \\
\gamma^2 & \gamma\delta & \delta^2 \end{matrix} \right). \]
In its action on $\nabla^{(2,1)}\SSym^2 \E$ we have
\begin{align*} g \Ft{02}{1} &= F\Bigl(\, 
\raisebox{-0.2in}{\begin{tikzpicture}[x=1.6cm,y=-0.6cm]
\draw[line width=0.7pt] (0,0)--(2.85,0)--(2.85,1)--(1.75,1)--(1.75,2)--(0,2)--(0,0); 
\draw[line width=0.7pt] (1.75,0)--(1.75,1); \draw[line width=0.7pt] (0,1)--(1.75,1); 
\node at (0.85,0.5) {$\scriptstyle \alpha^2 v_0 + 2\alpha\gamma v_1 + \gamma^2 v_2$};
\node at (0.8,1.5) {$\scriptstyle \alpha\delta v_1 + \gamma\delta v_2$}; 
\node at (2.3,0.5) {$\scriptstyle \delta^2 v_2$};
\end{tikzpicture}}\,\Bigr) \\
&= \alpha^3 \delta^3 \Ft{02}{1} + \alpha^2 \gamma \delta^3 \Ft{02}{2} 
\\
& \hskip0.8in + 2\alpha \gamma^2 \delta^3 \Ft{12}{2} + \alpha \gamma^2 \delta^3 \Ft{22}{1} \\
&= \alpha^3 \delta^3 \Ft{02}{1} + \alpha^2 \gamma \delta^3 \Ft{02}{2} 
+ (2\alpha \gamma^2 \delta^3  - \alpha \gamma^2 \delta^3) \Ft{12}{2}.
 \end{align*}
 where the third line uses the column relation in~\eqref{eq:columnRelation}.
\end{example}

\begin{lemma}\label{lemma:SchurChar}
Let $\lambda$ be a partition and let $\ell \in \N_0$. We have
\[ \Phi_{\nabla^\lambda \SSym^\ell\E}(1,q) = \sum_{t \in \SSYT_{\le \ell}(\lambda)} q^{|t|}. \]
\end{lemma}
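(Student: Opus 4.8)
The plan is to compute the character $\Phi_{\nabla^\lambda \Sym^\ell E}(1,q)$ directly from the explicit model of $\nabla^\lambda \Sym^\ell E$ given in \S\ref{subsec:Schur}, using the standard basis from Theorem~\ref{thm:standardBasis}. Recall that by Definition~\ref{defn:nablaSym} we take $V = \Sym^\ell E$ with canonical basis $v_0, \ldots, v_\ell$ where $v_k = e_1^{\ell-k}e_2^k$, and $\nabla^\lambda \Sym^\ell E$ is the restriction to $\GL_2(\C)$ of the $\GL(V)$-representation $\nabla^\lambda V$. The character $\Phi$ is by definition determined by evaluating the trace of a diagonal element; here we need the trace of $\mathrm{diag}(1,q) \in \GL_2(\C)$ acting on $\nabla^\lambda \Sym^\ell E$.

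First I would record how $\mathrm{diag}(1,q)$ acts on the basis of $V$: since $g = \mathrm{diag}(1,q)$ sends $e_1 \mapsto e_1$ and $e_2 \mapsto q e_2$, it sends $v_k = e_1^{\ell-k}e_2^k \mapsto q^k v_k$. So $g$ acts diagonally on $V$ with eigenvalue $q^k$ on $v_k$. Next, using the device described after Definition~\ref{defn:nablaSym} for computing the action of $g \in \GL(E)$ on a $\GL$-polytabloid $F(s)$: one replaces each entry $b$ of $s$ with $g v_b = q^b v_b$, expands multilinearly, and rewrites in terms of semistandard polytabloids. Since $g v_b$ is just a scalar multiple of $v_b$, no Garnir relations or column straightening are needed — the replacement simply multiplies $F(s)$ by $\prod_{(i,j) \in [\lambda]} q^{s(i,j)} = q^{|s|}$, where $|s|$ is the weight (sum of entries) of $s$. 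Hence $g F(s) = q^{|s|} F(s)$ for every semistandard $s \in \SSYT_{\le \ell}(\lambda)$.

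Then, since by Theorem~\ref{thm:standardBasis} the polytabloids $F(s)$ for $s \in \SSYT_{\le \ell}(\lambda)$ form a $\C$-basis of $\nabla^\lambda V$, and we have just shown each is an eigenvector of $g = \mathrm{diag}(1,q)$ with eigenvalue $q^{|s|}$, the trace of $g$ on $\nabla^\lambda \Sym^\ell E$ is $\sum_{s \in \SSYT_{\le \ell}(\lambda)} q^{|s|}$. By the definition of $\Phi$, this trace equals $\Phi_{\nabla^\lambda \Sym^\ell E}(1,q)$, which is exactly the claimed identity. One should note that $\Phi$ is defined via the trace at $\mathrm{diag}(\alpha,\beta)$ for nonzero $\alpha,\beta$, so strictly one evaluates at $\alpha = 1$, $\beta = q$ with $q \neq 0$; since both sides are polynomials in $q$ agreeing for all nonzero $q$, they agree identically.

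There is no serious obstacle here: the only thing to be careful about is the justification that the action of a diagonal matrix on a polytabloid is literally diagonal in the semistandard basis — i.e. that no straightening is required — which follows because $g v_b$ is a scalar multiple of $v_b$ rather than a genuine linear combination, so $f(\tau \cdot s)$ is scaled by the same factor $q^{|s|}$ for every $\tau$ (the weight is preserved under permuting boxes), and hence $F(s)$, being a signed sum of such terms, is scaled by $q^{|s|}$ and stays semistandard. This is the step I would state explicitly, but it is routine.
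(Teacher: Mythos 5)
Your argument is correct and is essentially the paper's own proof: both use the standard basis of $\GL$-polytabloids from Theorem~\ref{thm:standardBasis}, observe that a diagonal element of $\GL_2(\C)$ scales each $v_k$ and hence each $f(\tau\cdot t)$ by the same factor $q^{|t|}$, and conclude that each $F(t)$ is an eigenvector with eigenvalue $q^{|t|}$. The paper keeps both diagonal entries $\alpha,\delta$ general before specializing, whereas you set $\alpha=1$, $\delta=q$ from the start, but this is a cosmetic difference.
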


\begin{proof}
By Theorem~\ref{thm:standardBasis}, the $F(t)$ for $t \in \SSYT_{\le \ell}(\lambda)$ are
a basis of $\nabla^\lambda \Sym^\ell \E$. Let $g \in \GL_2(\C)$ be diagonal with
entries $\alpha$ and $\delta$. Let $\tau \in C(t)$ and let $u = \tau \cdot t$.
By~\eqref{eq:tabloid}, 
\[ g \cdot f(u) = \bigotimes_{i=1}^{\ell(\lambda)} \prod_{j=1}^{\lambda_i} (g \cdot v_{t(i,j)}).\]
Since $g \cdot v_k = \alpha^{\ell -k} \delta^k v_k$, we have $g \cdot f(u) =
\alpha^{\ell |\lambda| - |u|}\delta^{|u|} f(u)$ where 
$|u| = \sum_{(i,j) \in [\lambda]} u(i,j)$ 
is the weight $|u|$ defined above. This is also the weight of $t$.
Therefore each $F(t)$ is an eigenvector for $g$ with eigenvalue
$\alpha^{\ell |\lambda| - |t|}\delta^{|t|}$. The lemma follows. 
\end{proof}

\subsection{Symmetric functions and plethysm}
\label{subsec:plethysm}

Let $\C[x_0,x_1,\ldots ]$ be the polynomial ring in the indeterminates $x_0, x_1, \ldots $.
We define a \emph{symmetric function} $f$ to be a family $f^{(n)}(x_0,\ldots, x_n)$
of symmetric polynomials in $\C[x_0,x_1,\ldots ]$ such that
\begin{equation}\label{eq:compat} 
f^{(n)}(x_0,\ldots, x_m,0,\ldots, 0) = f^{(m)}(x_0,\ldots, x_m) \end{equation}
for all $m$, $n \in \N_0$ with $m \le n$.
The notation is simplified (without introducing any ambiguity) by writing 
$f(x_0,\ldots, x_\ell)$ for $f^{(\ell)}(x_0,\ldots, x_\ell)$. 

\begin{definition}\label{defn:SchurFunction}
Let $\lambda$ be a partition. Given a $\lambda$-tableau $t$ with entries
from $\N_0$, let $x^t = x_0^{a_0(t)}x_1^{a_1(t)} \ldots $ where $a_k(t)$ is the
number of entries of $t$ equal to $k \in \N_0$.
The \emph{Schur function} $s_{\lambda}$ is the symmetric function defined by
\[ s_{\lambda}(x_0,x_1,\ldots, x_\ell) = \sum_{t \in \SSYT_{\le \ell}(\lambda)} x^t. \]
\end{definition}

The compatibility condition~\eqref{eq:compat} is easily checked. Let $\C[q]$ be a polynomial ring.
Observe that 
when $x_k$ is specialized to $q^k$, the monomial $x^t$ becomes $q^{|t|}$, where, as usual,
$|t|$ is the weight of $t$. Therefore 
\begin{equation}\label{eq:SchurSpecTableaux}
s_\lambda(1,q,\ldots, q^\ell) = \sum_{t \in \SSYT_{\le \ell}(\lambda)} q^{|t|}.
\end{equation}
It follows immediately 
from our definition and Lemma~\ref{lemma:SchurChar} that
\begin{equation}\label{eq:SchurSpec}
\Phi_{\nabla^\lambda \SSym^\ell \E}(1,q) = s_\lambda(1,q,\ldots, q^\ell) .
\end{equation}
This equation is the main bridge we need between representation theory and combinatorics.

\begin{remark}\label{remark:plethysm}
The plethysm product of symmetric functions is defined in
\cite{LoehrRemmel}, \cite[Ch.~1, Appendix A]{MacDonald}
and \cite[Ch.~7, Appendix 2]{StanleyII}.
For our purposes, we may define
$(s_\lambda \circ s_{(\ell)})(x,y)$ 
 by formally
substituting the monomials summands of $s_{(\ell)}(x,y) = x^\ell + x^{\ell-1} y + \cdots
+ y^\ell$ for the $\ell+1$ variables in $s_\lambda(x_0, \ldots, x_\ell)$. 
That is, $(s_\lambda \circ s_{(\ell)})(x,y) = s_\lambda(x^\ell, x^{\ell-1}y, \ldots,
y^\ell)$. Hence $\Phi_{\nabla^\lambda \SSym^\ell \E}(1,q) =
(s_\lambda \circ s_{(\ell)})(1,q)$, as mentioned after~\eqref{eq:SchurEq} earlier. 
\end{remark}

For Theorem~\ref{thm:eqvConds}(i) we require the original  definition of 
Schur polynomials using determinants and antisymmetric polynomials. Given a 
sequence $(\gamma_0,\gamma_1,\ldots, \gamma_\ell)$ of non-negative integers, define
\begin{equation}
\label{eq:a} a_\gamma(x_0,x_1,\ldots, x_\ell) = \det \left( \begin{matrix}
x_0^{\gamma_0} & x_0^{\gamma_1} & \cdots & x_0^{\gamma_\ell} \\ x_1^{\gamma_0} & x_1^{\gamma_1} & \cdots &
x_1^{\gamma_\ell} \\ \vdots & \vdots & \ddots & \vdots \\ x_\ell^{\gamma_0} & x_\ell^{\gamma_1} & \cdots 
& x_\ell^{\gamma_\ell}
\end{matrix} \right). \end{equation}
By~\cite[Theorem 7.15.1]{StanleyII}, if $\ell \ge \ell(\gamma)-1$ then
\begin{equation}\label{eq:antiSym}
s_\lambda(x_0,x_1,\ldots, x_\ell) = \frac{a_{\lambda + (\ell,\ell-1,\ldots, 0)}(x_0,x_1,
\ldots, x_\ell)}{a_{(\ell,\ell-1,\ldots, 0)}(x_0,x_1,\ldots, x_\ell)}.
\end{equation}

\subsection{Stanley's Hook Content Formula}

\begin{definition}\label{defn:b}
Let $\lambda$ be a partition. We define the \emph{minimum weight} of $\lambda$, denoted $\b(\lambda)$, by
$\b(\lambda) = \sum_{j=1}^{a(\lambda)} \binom{\lambda'_j}{2}$.
%
\end{definition}

Equivalently, $\b(\lambda) = \sum_{i=1}^{\ell(\lambda)} (i-1)\lambda_i$.
Observe that $\b(\lambda)$ is the weight of the semistandard $\lambda$-tableau
having~$\lambda_i$ entries of $i-1$ in row $i$; as the terminology suggests,
this tableau has the minimum weight
of any tableau in $\SSYT_{\le \ell}(\lambda)$. 
It follows that $q^{\b(\lambda)}$ is the summand of $s_\lambda(1,q,\ldots, q^\ell)$ of minimum degree.

\begin{definition}\label{defn:hlContent}
Let $\lambda$ be a partition. The \emph{hook length} of $(i,j) \in [\lambda]$,
denoted $h_{(i,j)}(\lambda)$, is $(\lambda_i - i) + (\lambda'_j - j) + 1$.
The \emph{content} of $(i,j) \in [\lambda]$ is $j-i$.  Let $H(\lambda) = \{h_{(i,j)}(\lambda) : (i,j)
\in [\lambda]\}$ and $C(\lambda) = \{j-i : (i,j) \in [\lambda]\}$ be the corresponding
multisets. 
\end{definition}

For example, the unique greatest hook length of a non-empty partition $\lambda$ is
$h_{(1,1)}(\lambda) = \bigl(a(\lambda)-1\bigr) + 
\bigl(\ell(\lambda)-1 \bigr) + 1 = a(\lambda) + \ell(\lambda)-1$.
The least element of $C(\lambda)$ is $1-\ell(\lambda)$.
Therefore whenever $\ell \ge \ell(\lambda) - 1$ we have $C(\lambda) + l+1 \subseteq \N$.

For $m \in \N$, let $[m]_q$ be the \emph{quantum integer} defined~by
\begin{equation}\label{eq:quantumInteger} [m]_q = \frac{q^m-1}{q-1} = 1 + q + \cdots + q^{m-1}. \end{equation}




\begin{theorem}[Stanley's Hook Content Formula]\label{thm:HCF}
Let $\lambda$ be a partition and let $\ell \in \N$. Then
\[ s_\lambda(1,q,\ldots, q^\ell) = q^{\b(\lambda)}\frac{\prod_{(i,j) \in [\lambda]} [j-i+\ell+1]_q}
{\prod_{(i,j) \in [\lambda]} [h_{(i,j)}(\lambda)]_q} . \] 
\end{theorem}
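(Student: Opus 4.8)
The plan is to derive the formula by specializing the bialternant identity~\eqref{eq:antiSym} at $x_i = q^i$ and evaluating the two determinants as Vandermonde products. Write $n = \ell+1$, and for a partition $\nu$ with at most $n$ parts set $\beta_i(\nu) = \nu_i + n - i$ for $1 \le i \le n$, so that $\beta_1(\nu) > \cdots > \beta_n(\nu) \ge 0$. Putting $x_i = q^i$ in~\eqref{eq:a} turns $a_{\nu+\delta}(x_0,\ldots,x_\ell)$, where $\delta = (\ell,\ell-1,\ldots,0)$, into the Vandermonde determinant in the variables $q^{\beta_1(\nu)}, \ldots, q^{\beta_n(\nu)}$, so
\[ a_{\nu+\delta}(1,q,\ldots,q^\ell) = \prod_{1 \le i < j \le n} \bigl( q^{\beta_j(\nu)} - q^{\beta_i(\nu)} \bigr) . \]
Applying~\eqref{eq:antiSym}, I would pair the factors of the numerator ($\nu = \lambda$) and denominator ($\nu = \emptyset$, where $\beta_i(\emptyset) = n-i$) over each index pair $i < j$. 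Using $q^b - q^a = -(q-1)q^b[a-b]_q$ for $a > b$, the numerator factor is $-(q-1)q^{\beta_j(\lambda)}[\beta_i(\lambda)-\beta_j(\lambda)]_q$ and the denominator factor is $-(q-1)q^{n-j}[j-i]_q$, so their ratio is $q^{\lambda_j}[\beta_i(\lambda)-\beta_j(\lambda)]_q/[j-i]_q$. Since $\sum_{1 \le i < j \le n}\lambda_j = \sum_{j=1}^n (j-1)\lambda_j = \b(\lambda)$ by the description of $\b(\lambda)$ recorded after Definition~\ref{defn:b}, writing $\beta_i = \beta_i(\lambda)$ this gives
\[ s_\lambda(1,q,\ldots,q^\ell) = q^{\b(\lambda)} \prod_{1 \le i < j \le n} \frac{[\beta_i-\beta_j]_q}{[j-i]_q} . \]

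It then remains to identify this product with the hook-content quotient. Write $[m]_q! = [m]_q[m-1]_q\cdots[1]_q$. For fixed $i$, the contents $j - i$ of the boxes $(i,1),\ldots,(i,\lambda_i)$ of $[\lambda]$ make $j - i + \ell + 1$ run over the $\lambda_i$ consecutive integers $\{n-i+1,\ldots,\beta_i\}$, whence
\[ \prod_{(i,j) \in [\lambda]} [j-i+\ell+1]_q = \prod_{i=1}^n \frac{[\beta_i]_q!}{[n-i]_q!} . \]
On the other hand, by the classical $\beta$-number description of hook lengths, the hook lengths occurring in row $i$ of $[\lambda]$ are exactly the elements of $\{1,2,\ldots,\beta_i\}$ that are not of the form $\beta_i - \beta_j$ with $j > i$, so
\[ \prod_{(i,j) \in [\lambda]} [h_{(i,j)}(\lambda)]_q = \prod_{i=1}^n \frac{[\beta_i]_q!}{\prod_{j>i}[\beta_i-\beta_j]_q} = \frac{\prod_{i=1}^n [\beta_i]_q!}{\prod_{1 \le i < j \le n}[\beta_i-\beta_j]_q} . \]
Dividing the two displays cancels $\prod_i [\beta_i]_q!$, and since $\prod_{i=1}^n [n-i]_q! = \prod_{d=1}^{n-1}[d]_q^{\,n-d} = \prod_{1 \le i < j \le n}[j-i]_q$, the hook-content quotient equals $\prod_{i<j}[\beta_i-\beta_j]_q \big/ \prod_{i<j}[j-i]_q$, which is precisely the product appearing above. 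This proves the formula.

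The only ingredient that is not routine manipulation of $q$-factorials is the $\beta$-number description of the hook lengths in a single row, so that is the step on which I would concentrate; it can be verified directly from the definition of hook length (reading off row $i$ against the boundary path of $[\lambda]$, equivalently against the first-column hook lengths of the columns that row meets), and is in any case standard. As an alternative to the whole argument one may simply cite~\cite[Theorem 7.21.2]{StanleyII}; the proof above is sketched only to keep the account self-contained.
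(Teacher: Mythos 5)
Your proof is correct. The paper itself does not prove Theorem~\ref{thm:HCF}: its ``proof'' is a one-line citation of \cite[Theorem~7.21.2]{StanleyII} (noting only the shift $\ell \mapsto \ell-1$ in conventions). You instead supply a self-contained derivation, and the computation checks out: the principal specialization of~\eqref{eq:a} is a Vandermonde determinant in $q^{\beta_1},\ldots,q^{\beta_n}$; pairing numerator and denominator factors over each index pair gives the prefactor $q^{\sum_{j}(j-1)\lambda_j}=q^{\b(\lambda)}$ and the quotient $\prod_{i<j}[\beta_i-\beta_j]_q/\prod_{i<j}[j-i]_q$; and the translation to the hook–content quotient via $\prod_{(i,j)}[j-i+\ell+1]_q=\prod_i[\beta_i]_q!/[n-i]_q!$ together with the $\beta$-number description of row-$i$ hook lengths is exactly the classical argument. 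It is worth noting that your computation closely parallels the paper's own treatment of the ``pyramid'' analogue in \S\ref{subsec:pyramids}: Lemma~\ref{lemma:antiSymSpec} and Corollary~\ref{cor:delta} also specialize the bialternant ratio~\eqref{eq:antiSym} at $x_i = q^i$, but then organize the resulting factors by the consecutive differences $\delta(\lambda)_j$ rather than by the $\beta$-numbers $\beta_i = \lambda_i+n-i$, so they arrive at a quotient indexed by the multiset $\Delta_\ell(\lambda)$ instead of hooks and contents. What your route buys is independence from Stanley's text (at the cost of invoking the standard, but not entirely trivial, $\beta$-number characterization of hook lengths, which you rightly flag as the one non-routine ingredient); what the paper's citation buys is brevity, with the Vandermonde machinery reserved for the less standard pyramid formula where no convenient reference exists. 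Either way, the statement is established, and your computation could serve as an alternative self-contained proof.
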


\begin{proof}
This is a restatement of \cite[Theorem 7.21.2]{StanleyII}
using the quantum integer notation. Note that our $\ell$ appears in~\cite{StanleyII} as $\ell-1$.
\end{proof}

\subsection{Pyramids}\label{subsec:pyramids}

In this subsection we prove an antisymmetric analogue of Stanley's Hook Content Formula. Most
of the ideas may be found in \cite[\S 7.21]{StanleyII}, so no originality is claimed.

\begin{definition}\label{defn:differences}
We define the \emph{differences} $\delta(\lambda)$ of a partition $\lambda$
by $\delta(\lambda)_j =  \lambda_j - \lambda_{j+1} + 1$ for each $j \in \N$.
For $\ell \ge \ell(\lambda) - 1$,
let $\Delta_\ell(\lambda)$ be the multiset whose elements are all 
$\delta(\lambda)_j + \cdots + \delta(\lambda)_{k-1}$ for $1 \le j < k \le \ell+1$.
\end{definition}

Observe that if $j < k$ then $\lambda_j - \lambda_k + k - j = \delta(\lambda)_j + \cdots + \delta(\lambda)_{k-1}$.

\begin{lemma}\label{lemma:antiSymSpec}
Let $\lambda$ be a partition such that $\ell \ge \ell(\lambda)-1$.
There exists $c \in \N_0$ such that 
\[ s_\lambda(1,q,\ldots, q^\ell) = q^c
\frac{\prod_{1 \le j < k \le \ell+1} [\delta(\lambda)_j + \cdots + \delta(\lambda)_{k-1}]_q }{\prod_{1 \le j < k \le \ell+1}
[k-j]_q}. \]
\end{lemma}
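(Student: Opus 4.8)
The plan is to derive Lemma~\ref{lemma:antiSymSpec} from the determinantal formula~\eqref{eq:antiSym}, rewriting both the numerator $a_{\lambda + (\ell,\ldots,0)}$ and the denominator $a_{(\ell,\ldots,0)}$ as products of quantum integers after specializing $x_i \mapsto q^i$. The key observation is that for any strictly decreasing sequence $\gamma_0 > \gamma_1 > \cdots > \gamma_\ell$ of non-negative integers, the determinant $a_\gamma(1,q,\ldots,q^\ell)$ is a Vandermonde-type determinant: evaluating~\eqref{eq:a} at $x_i = q^i$ gives $\det(q^{i\gamma_j})_{0 \le i,j \le \ell}$, which is a Vandermonde determinant in the variables $q^{\gamma_0}, q^{\gamma_1}, \ldots, q^{\gamma_\ell}$. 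Hence
\[ a_\gamma(1,q,\ldots,q^\ell) = \prod_{0 \le j < k \le \ell} \bigl( q^{\gamma_k} - q^{\gamma_j} \bigr) = \pm \prod_{0 \le j < k \le \ell} q^{\min(\gamma_j,\gamma_k)}\bigl(q^{|\gamma_j - \gamma_k|} - 1\bigr), \]
and each factor $q^{\gamma_j} - q^{\gamma_k}$ with $\gamma_j > \gamma_k$ equals $q^{\gamma_k}(q-1)[\gamma_j - \gamma_k]_q$ by~\eqref{eq:quantumInteger}.

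First I would apply this with $\gamma = \lambda + (\ell,\ell-1,\ldots,0)$, so that $\gamma_{j-1} = \lambda_j + \ell - (j-1)$ for $1 \le j \le \ell+1$ (re-indexing to match the paper's convention where parts are indexed from $1$); then for $1 \le j < k \le \ell+1$ the difference $\gamma_{j-1} - \gamma_{k-1} = \lambda_j - \lambda_k + k - j = \delta(\lambda)_j + \cdots + \delta(\lambda)_{k-1}$, using the observation recorded just before the lemma. This is positive since $\lambda_j \ge \lambda_k$ and $k > j$, so the numerator of~\eqref{eq:antiSym} becomes, up to a power of $q$ and a factor $(q-1)^{\binom{\ell+1}{2}}$ and a sign, the product $\prod_{1 \le j < k \le \ell+1} [\delta(\lambda)_j + \cdots + \delta(\lambda)_{k-1}]_q$. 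Applying the same computation with $\lambda = \varnothing$ gives the denominator: $\gamma_{j-1} - \gamma_{k-1} = k - j$, yielding $\prod_{1 \le j < k \le \ell+1} [k-j]_q$, again up to a power of $q$, the same factor $(q-1)^{\binom{\ell+1}{2}}$, and a sign. The powers of $(q-1)$ and the signs cancel in the quotient~\eqref{eq:antiSym}.

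The remaining point is that the leftover power of $q$ is a non-negative integer. Since $s_\lambda(1,q,\ldots,q^\ell)$ is a polynomial in $q$ with non-negative integer coefficients and constant term $0$ unless $\lambda = \varnothing$ (indeed its lowest-degree term is $q^{\b(\lambda)}$ by the remark following Definition~\ref{defn:b}), and the quotient of the two quantum-integer products is a Laurent polynomial in $q$ whose value at $q = 0$ (after clearing) is a unit, the exponent $c$ with $s_\lambda(1,q,\ldots,q^\ell) = q^c \cdot (\text{ratio of products})$ is forced to be a non-negative integer — in fact one can identify $c$ explicitly by comparing lowest-degree terms, but the lemma only asserts existence, so it suffices to note that the ratio of products is a rational function whose numerator and denominator both have non-zero constant term, hence its $q$-adic valuation is $0$, and therefore $c$ equals the $q$-adic valuation of $s_\lambda(1,q,\ldots,q^\ell)$, which is $\b(\lambda) \ge 0$.

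I expect the main obstacle to be purely bookkeeping: getting the index shifts between the paper's $1$-based partition indexing and the $0$-based matrix indexing in~\eqref{eq:a} exactly right, and carefully tracking that the sign $(-1)$ picked up from reordering rows or columns of the Vandermonde determinant is identical in numerator and denominator so that it cancels. There is no conceptual difficulty — this is, as the paper's remark says, essentially the argument behind Stanley's Hook Content Formula, specialized and with the hook-length product replaced by the ``pyramid'' product $\prod [k-j]_q$ coming directly from the denominator Vandermonde rather than from a rewriting of hook lengths. One should double-check the claim $\ell \ge \ell(\lambda) - 1$ guarantees $\gamma = \lambda + (\ell,\ldots,0)$ is genuinely strictly decreasing (it is, since $\lambda_{j} \ge \lambda_{j+1}$ forces consecutive entries of $\gamma$ to differ by $\lambda_j - \lambda_{j+1} + 1 \ge 1$), which is exactly what makes~\eqref{eq:antiSym} applicable and the Vandermonde factors non-zero.
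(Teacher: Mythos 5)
Your proposal is correct and follows essentially the same route as the paper: specialize the bialternant formula~\eqref{eq:antiSym} at $x_i = q^i$, recognize $a_\gamma(1,q,\ldots,q^\ell)$ as a Vandermonde determinant in the variables $q^{\gamma_j}$, factor out powers of $q$ to reduce to a product of $(q^{\gamma_j-\gamma_k}-1)$ terms, identify $\gamma_{j-1}-\gamma_{k-1} = \delta(\lambda)_j+\cdots+\delta(\lambda)_{k-1}$, and divide by the $\lambda=\varnothing$ case. Your extra care over signs and the $q$-adic-valuation argument for $c \ge 0$ are both fine and slightly more explicit than the paper, which leaves those points implicit.
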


\begin{proof}
Taking $\gamma = (0,1,\ldots, \ell)$ in~\eqref{eq:a} and transposing the matrix
we get the Vandermonde identity
\[ \prod_{0 \le j < k \le \ell} (x_j-x_k) = \det \left( \begin{matrix}
1 & 1 & \ldots & 1 \\
x_0 & x_1 & \ldots & x_\ell \\
\vdots & \vdots & \ddots & \vdots \\
x_0^\ell & x_1^\ell & \ldots & x_\ell^\ell \end{matrix} \right). \] 
By~\eqref{eq:a} we have
\[ a_{\gamma}(1,q,\ldots,q^\ell) = \det \left(\begin{matrix}
1 & 1 & \ldots & 1 \\
q^{\gamma_0} & q^{\gamma_1} & \ldots & q^{\gamma_\ell} \\
\vdots & \vdots & \ddots & \vdots \\
q^{\ell \gamma_0} & q^{\ell \gamma_1} & \ldots & q^{\ell \gamma_\ell} \end{matrix} \right). \]
Therefore, specializing $x_j$ to $q^{\gamma_j}$ in the Vandermonde determinant, we obtain
\[ a_\gamma(1,q,\ldots, q^\ell) = \!\!\prod_{0 \le j < k \le \ell} q^{\gamma_k}(q^{\gamma_j-\gamma_k}-1). \]
Set $\gamma_{j} = \lambda_{j+1} + \ell - j$ for $0 \le j \le \ell$, and use the observation
just before the lemma to get
\[ a_{\lambda + (\ell,\ldots,1,0)}(1,q,\ldots, q^\ell) = 
q^{C(\lambda)} \!\!\prod_{1 \le j < k \le \ell+1} (q^{\delta(\lambda)_j + \cdots + \delta(\lambda)_{k-1}} - 1)\]
for some $C(\lambda) \in \N_0$. The special case $\lambda = \varnothing$ gives the specialized
Vandermonde identity 
\[ a_{(\ell,\ldots,1,0)} = q^{C(\varnothing)} \!\!\prod_{1 \le j < k \le \ell+1} (q^{k-j} - 1). \]
Taking the ratio of these two equations and using~\eqref{eq:antiSym} 
we obtain
\[ s_{\lambda}(1,q,\ldots, q^\ell) = q^c \frac{\prod_{1 \le j < k \le \ell+1} (q^{\delta(\lambda)_j 
+ \cdots + \delta(\lambda)_{k-1}} - 1)}{\prod_{1 \le j < k \le \ell+1} (q^{k-j}-1)} \]
where $c = C(\lambda) - C(\varnothing)$. This is equivalent to the claimed identity.
\end{proof}

\begin{corollary}\label{cor:delta}
Let $\lambda$ be a partition and let $\ell \ge \ell(\lambda) - 1$. Then
\[ s_\lambda(1,q,\ldots, q^\ell) = q^{\b(\lambda)}\frac{\prod_{x \in \Delta_\ell(\lambda)} [x]_q}{[1]_q^\ell [2]_q^{\ell-1} \ldots [\ell]_q}.\]
\end{corollary}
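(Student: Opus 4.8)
*Let $\lambda$ be a partition and let $\ell \ge \ell(\lambda) - 1$. Then*
$$ s_\lambda(1,q,\ldots, q^\ell) = q^{\b(\lambda)}\frac{\prod_{x \in \Delta_\ell(\lambda)} [x]_q}{[1]_q^\ell [2]_q^{\ell-1} \ldots [\ell]_q}.$$

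Let me plan the proof.

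The plan is to deduce this directly from Lemma~\ref{lemma:antiSymSpec}, which already expresses $s_\lambda(1,q,\ldots,q^\ell)$ as $q^c$ times (essentially) the claimed ratio, for some unspecified $c \in \N_0$. The remaining work is therefore twofold: first, to match the numerator and denominator of the two expressions, and second, to pin down $c = \b(\lambda)$.

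The numerator is immediate: by Definition~\ref{defn:differences}, the multiset $\Delta_\ell(\lambda)$ consists precisely of the numbers $\delta(\lambda)_j + \cdots + \delta(\lambda)_{k-1}$ for $1 \le j < k \le \ell+1$, so $\prod_{x \in \Delta_\ell(\lambda)}[x]_q = \prod_{1 \le j < k \le \ell+1}[\delta(\lambda)_j + \cdots + \delta(\lambda)_{k-1}]_q$, which is exactly the numerator appearing in Lemma~\ref{lemma:antiSymSpec}. For the denominator, I would count multiplicities in $\prod_{1 \le j < k \le \ell+1}[k-j]_q$: for each $m$ with $1 \le m \le \ell$, the pairs $(j,k)$ with $1 \le j < k \le \ell+1$ and $k-j = m$ are $(1,1+m),(2,2+m),\ldots,(\ell+1-m,\ell+1)$, so $m$ occurs with multiplicity $\ell+1-m$. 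Hence $\prod_{1 \le j < k \le \ell+1}[k-j]_q = \prod_{m=1}^{\ell}[m]_q^{\ell+1-m} = [1]_q^{\ell}[2]_q^{\ell-1}\cdots[\ell]_q$, exactly the denominator in the statement.

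It remains to identify the constant. By the remark following Definition~\ref{defn:b}, the summand of $s_\lambda(1,q,\ldots,q^\ell)$ of minimum degree is $q^{\b(\lambda)}$. On the other hand, each quantum integer $[x]_q = 1+q+\cdots+q^{x-1}$ has constant term $1$, so both $\prod_{x\in\Delta_\ell(\lambda)}[x]_q$ and $[1]_q^{\ell}\cdots[\ell]_q$ are polynomials with constant term $1$; the latter is in particular invertible as a formal power series, and Lemma~\ref{lemma:antiSymSpec} guarantees that the ratio of the two is (up to the factor $q^c$) an honest polynomial, so no cancellation occurs. Consequently $q^c$ times this ratio has lowest-degree term $q^c$, and comparison with $q^{\b(\lambda)}$ forces $c = \b(\lambda)$, which finishes the proof. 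The only place requiring a moment's care is precisely this last comparison — one must invoke the polynomiality built into Lemma~\ref{lemma:antiSymSpec} to rule out spurious low-order cancellation — but beyond that bookkeeping there is no real obstacle.
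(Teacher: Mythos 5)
Your proof is correct and takes essentially the same route as the paper's: deduce the statement from Lemma~\ref{lemma:antiSymSpec} by identifying the numerator with $\Delta_\ell(\lambda)$, counting multiplicities in the denominator to get $[1]_q^\ell\cdots[\ell]_q$, and then pinning down $c=\b(\lambda)$ by comparing lowest-degree terms. The paper compresses the last step into the remark that each quantum integer is congruent to $1$ modulo $q$; your constant-term argument (numerator and denominator both have constant term $1$, so the ratio is a power series with constant term $1$, so $q^cR$ has lowest term $q^c$, which must equal $q^{\b(\lambda)}$) is the same reasoning spelled out in full.
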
 

\begin{proof}
By Lemma~\ref{lemma:antiSymSpec} there exists $c \in \N_0$ such that
\[ s_\lambda(1,q,\ldots, q^\ell) = q^c \frac{\prod_{1 \le j < k \le \ell+1} [\delta(\lambda)_j + \cdots + \delta(\lambda)_{k-1}]_q }{\prod_{1 \le j < k \le \ell+1}
[k-j]_q}. \]
The factors in the numerator are the quantum integers from $\Delta_\ell(\lambda)$.
The factors in the denominator are the quantum integers from
$\{1^\ell, 2^{\ell-1}, \ldots, \ell \}$, where the exponents indicate multiplicities. By~\eqref{eq:SchurSpecTableaux},
$q^{\b(\lambda)}$ is the monomial of least degree in $s_\lambda(1,q,\ldots, q^\ell)$. Since each quantum
integer is congruent to $1$ modulo $q$, the result follows.
\end{proof}

It is convenient to display the elements of the multiset $\Delta_\ell(\lambda)$ in a \emph{pyramid}
of $\ell$ rows, numbered from $1$, in which row $i$ has entries 
\smash{$\delta(\lambda)_j + \cdots + \delta(\lambda)_{j+i-1}$},
for $j \in \{1,\ldots, \ell-(i-1)\}$. Thus, writing $P^{(i)}_j$ for the entry in position~$j$ of row~$i$
of the pyramid $P$, we have
 \smash{$P^{(i+1)}_j = P^{(i)}_j + P^{(i)}_{j+1} - P^{(i-1)}_{j+1}$}. By convention, we
set \smash{$P^{(0)}_j = 0$} for each $j$.


\begin{example}\label{ex:pyramid}
We take $\ell = m = 5$. The  partitions $(8,7,2,2)$ and $(8,6,3)$ have 
differences $(2,6,1,3,1)$ and $(3,4,4,1,1)$, respectively. Their
pyramids are
\[ \scalebox{0.9}{$\begin{matrix} 
2  & 6  & 1 & 3 & 1 \\ 
8  & 7  & 4 & 4 \\
9 & 10 & 5 \\
12 & 11 \\
13 \end{matrix},$} \qquad\qquad
\scalebox{0.9}{$\begin{matrix}
3  & 4  & 4 & 1 & 1 \\
7  & 8  & 5 & 2 \\
11 & 9  & 6 \\
12 & 10 & \\
13
 \end{matrix}$.}
\]
Each pyramid has multiset of entries $\{1^2,2,3,4^2,5,6,7,8,9,10,11,12,13\}$.  
By Corollary~\ref{cor:delta}, cancelling the equal denominators
 $[1]_q^5[2]_q^4[3]_q^3[4]_q^2[5]_q$ we see that
$s_{(8,7,2,2)}(1,q,q^2,q^3,q^4, q^5)$ and 
$s_{(8,6,3)}(1,q,q^2,q^3,q^4, q^5)$ are equal up to a power of~$q$.
By Theorem~\ref{thm:eqvConds}(e) below, 
$(8,7,2,2)\eqv{5}{5} (8,6,3)$.
\end{example}

This example is generalized in Proposition~\ref{prop:exceptionalEqv}.

\section{Equivalent conditions for the plethystic isomorphism}
\label{sec:eqvConds}

\subsection{Difference multisets}
\label{subsec:differenceMultiset}
The following formalism simplifies the main results of this section and is convenient
throughout this paper.

\begin{definition}
A \emph{difference multiset} is a pair 
$(X,Z)$ of finite multisubsets of $\N$, denoted $X / Z$.
If $x \in \N$ has multiplicity $a$ in $X$ and $b$ in $Z$, 
then the \emph{multiplicity} of $x$ in $X/Z$ is $a - b$. Two difference
multisets are \emph{equal} if their multiplicities agree for all $x \in \N$.
\end{definition}

Alternatively, a difference multiset may be regarded as an element of the
free abelian group on $\N$. This point of view justifies our definition
of equality and makes obvious many simple
algebraic rules for manipulating difference multisets.
For example, $X/Z = Y/W$ 
if and only if $X/Y = Z/W$. 

The following lemma is used implicitly in (4.8) in \cite{KingSU2Plethysms}.

\begin{lemma}\label{lemma:uniqueFactorization}
Let $X$ and $Y$ be finite multisubsets of $\N$. In the polynomial ring $\C[q]$, we have
$\prod_{x \in X} (q^x - 1) = \prod_{y \in Y} (q^{y} - 1)$ if and only if $X = Y$.
\end{lemma}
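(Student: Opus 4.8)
The plan is to prove $\prod_{x \in X}(q^x-1) = \prod_{y \in Y}(q^y - 1) \iff X = Y$ by unique factorization in $\C[q]$. The reverse implication is trivial, so the content is in the forward direction. The natural tool is cyclotomic polynomials: each $q^n - 1 = \prod_{e \mid n} \Phi_e(q)$, where $\Phi_e$ is the $e$-th cyclotomic polynomial, and the $\Phi_e$ are distinct irreducible (over $\Q$, hence over $\C$ only up to scalars — so one should instead track roots). Since $\C[q]$ is a UFD and each $\Phi_e$ is irreducible over $\Q$, the multiset of irreducible factors (with multiplicity) of the left side determines and is determined by $X$; matching factors on both sides forces $X = Y$. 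Concretely, for a primitive $e$-th root of unity $\zeta_e$, the multiplicity of the factor $(q - \zeta_e)$ in $\prod_{x \in X}(q^x - 1)$ equals $\#\{x \in X : e \mid x\}$, i.e. $\sum_{x \in X, \, e \mid x} 1$.

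First I would set $m_X(e) = \#\{x \in X : e \mid x\}$ and observe from $q^x - 1 = \prod_{\zeta^x = 1}(q - \zeta)$ that the hypothesis gives $m_X(e) = m_Y(e)$ for every $e \in \N$, by comparing the order of vanishing at $\zeta_e$. Then I would recover the multiplicities of $X$ from the function $e \mapsto m_X(e)$ by Möbius inversion: if $n_X(d)$ denotes the multiplicity of $d$ in $X$, then $m_X(e) = \sum_{d \,:\, e \mid d} n_X(d)$, and since $X$ is a finite multiset this sum is finite, so Möbius inversion on the divisibility poset (restricted to the finite set of divisors of elements of $X$) yields $n_X(d) = \sum_{e \,:\, d \mid e} \mu(e/d)\, m_X(e)$. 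Hence $m_X = m_Y$ forces $n_X = n_Y$, i.e. $X = Y$.

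There is an even more elementary route avoiding Möbius inversion that I would likely prefer for brevity: induct on $\max(X \cup Y)$ (with the base case where both are empty being vacuous). Let $N$ be this maximum. Counting the order of the pole/zero at a primitive $N$-th root of unity $\zeta_N$ — noting $\zeta_N^x = 1$ iff $N \mid x$ iff $x = N$ for $x \le N$ — shows $N$ has the same multiplicity $k$ in $X$ as in $Y$. Cancel $(q^N-1)^k$ from both sides, obtaining an equality of the same form with multisets $X', Y'$ of strictly smaller maximum, and apply the inductive hypothesis to get $X' = Y'$, hence $X = Y$. The one point needing care is the degenerate case $k = 0$ or $N$ undefined, i.e. one of the multisets empty; then both products are $1$, forcing $X = Y = \emptyset$ since every $q^x - 1$ with $x \in \N$ is a non-unit. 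The main (very mild) obstacle is simply being precise that a primitive $N$-th root of unity is a root of $q^x - 1$ exactly when $N \mid x$, and that for $x \in X \cup Y$ with $x \le N$ this happens only for $x = N$; everything else is routine UFD bookkeeping.

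Overall I expect this to be a short proof — a couple of sentences — with essentially no real obstacle, since it is a standard unique-factorization argument; the only decision is stylistic (cyclotomic polynomials + Möbius inversion versus induction on the largest element), and I would present the inductive version as the cleanest.
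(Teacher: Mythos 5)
Your preferred inductive argument---look at a primitive $N$-th root of unity where $N$ is the largest element, deduce that $N$ occurs with the same multiplicity in $X$ and $Y$, cancel $(q^N-1)$ and induct---is essentially the same as the paper's proof, which identifies $\max X = \max Y$ as the largest $u$ for which $e^{2\pi i/u}$ is a root of either side and then inducts. The cyclotomic-plus-M\"obius-inversion alternative you sketch is also valid but is not the route the paper takes.
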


\begin{proof}
If either $X$ or $Y$ is empty the result is obvious. In the remaining cases, 
let $u$ be greatest such that $\prod_{x \in X} (q^x - 1)$ has $\mathrm{e}^{2\pi \mathrm{i}/u}$
as a root. By choice of $u$, $q^u-1$ is a factor in the left-hand side. 
Since $\mathrm{e}^{2\pi \mathrm{i}/u}$
is also a root of $\prod_{y \in Y} (q^{y} - 1)$, 
the same argument shows that $q^u-1$ is a factor in the right-hand side.
Therefore  $u = \max X = \max Y$ and
it follows inductively that $X = Y$.
\end{proof}

\begin{corollary}\label{cor:uniqueFactorization}
Let $X/Z$ and $Y/W$ be difference multisets. Working in the field of fractions of $\Q[q]$, we have
\[ \frac{\prod_{x \in X} (q^x-1)}{\prod_{z \in Z} (q^z-1)} = \frac{\prod_{y \in Y} (q^y-1)}{\prod_{w \in W} (q^w-1)} \]
if and only if $X/Z = Y/W$. 
\end{corollary}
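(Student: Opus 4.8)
The plan is to deduce Corollary~\ref{cor:uniqueFactorization} directly from Lemma~\ref{lemma:uniqueFactorization} by clearing denominators. First I would observe that the identity
\[ \frac{\prod_{x \in X} (q^x-1)}{\prod_{z \in Z} (q^z-1)} = \frac{\prod_{y \in Y} (q^y-1)}{\prod_{w \in W} (q^w-1)} \]
holds in the field of fractions of $\Q[q]$ if and only if the cross-multiplied polynomial identity
\[ \prod_{x \in X} (q^x-1) \prod_{w \in W} (q^w-1) = \prod_{y \in Y} (q^y-1) \prod_{z \in Z} (q^z-1) \]
holds in $\Q[q]$ (and hence, since all the polynomials have integer coefficients, in $\C[q]$); this is legitimate because each factor $q^n - 1$ is a nonzero element of the integral domain $\Q[q]$, so no denominator vanishes. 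The left-hand side is $\prod_{t \in X \uplus W}(q^t - 1)$ and the right-hand side is $\prod_{t \in Y \uplus Z}(q^t - 1)$, where $\uplus$ denotes multiset union.

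Next I would apply Lemma~\ref{lemma:uniqueFactorization} with the finite multisubsets $X \uplus W$ and $Y \uplus Z$ of $\N$ in place of $X$ and $Y$: the polynomial identity above holds if and only if $X \uplus W = Y \uplus Z$ as multisets. Finally I would translate this multiset equality into the language of difference multisets: $X \uplus W = Y \uplus Z$ means that for every $n \in \N$ the multiplicity of $n$ in $X$ plus its multiplicity in $W$ equals its multiplicity in $Y$ plus its multiplicity in $Z$, which rearranges to say that the multiplicity of $n$ in $X$ minus that in $Z$ equals the multiplicity of $n$ in $Y$ minus that in $W$ — that is, $X/Z = Y/W$ by the definition of equality of difference multisets. (This is exactly the equivalence ``$X/Z = Y/W$ if and only if $X/Y = Z/W$'' type of manipulation noted just after the definition, applied in the form $X/Z = Y/W \iff X \uplus W = Y \uplus Z$, which is immediate once one regards difference multisets as elements of the free abelian group on $\N$.)

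There is essentially no obstacle here: the only point requiring a word of care is the passage between the rational-function identity and the polynomial identity, and this is routine because $\Q[q]$ is an integral domain and every factor $q^n-1$ with $n \in \N$ is nonzero, so one may clear denominators freely and then note that a polynomial identity over $\Q$ with integer coefficients is equivalent to the same identity over $\C$. The remainder is a direct quotation of Lemma~\ref{lemma:uniqueFactorization} together with unwinding the definition of a difference multiset, so the proof is short.
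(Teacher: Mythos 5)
Your proposal is correct and follows exactly the paper's approach: clear denominators to reduce to a polynomial identity, apply Lemma~\ref{lemma:uniqueFactorization} to the multiset unions $X \cup W$ and $Y \cup Z$, and translate the resulting multiset equality back into the difference-multiset language. The paper states this in one line; your write-up simply makes the routine steps explicit.
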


\begin{proof}
Multiply through by $\prod_{z \in Z} (q^z-1) \prod_{w \in W} (q^w-1)$ and then
apply Lemma~\ref{lemma:uniqueFactorization}.
\end{proof}

We apply this corollary to the polynomial quotients in Theorem~\ref{thm:HCF}
and Corollary~\ref{cor:delta} in the proof of Theorem~\ref{thm:eqvConds} below.

\subsection{Portmanteau Theorem}
Recall from~\S\ref{subsec:tableaux} that the weight of a tableau, denoted $|t|$, is its sum of entries.
The minimum weight $b(\lambda)$ is defined in Definition~\ref{defn:b}.
Given a partition $\lambda$ and $\ell \in \N_0$, let 
$\Se{e}{\ell}{\lambda}$ be the
set of all semistandard $\lambda$-tableaux with entries from $\{0,1,\ldots, \ell\}$ 
whose weight is $e$. Thus~\eqref{eq:SchurSpecTableaux} can be restated as
\begin{equation}\label{eq:Se} 
s_\lambda(1,q,\ldots, q^\ell) = \sum_{e \in \N_0} |\Se{e}{\ell}{\lambda}| q^e. 
\end{equation}
In~(h) and (i) below the notation indicates a difference
multiset, as defined in the previous subsection. The multisets
$C(\lambda)$ and $H(\lambda)$ are defined in Definition~\ref{defn:hlContent}
and $\Delta_\ell(\lambda)$ is defined in Definition~\ref{defn:differences}.

\begin{theorem}\label{thm:eqvConds}
Let $\lambda$ and $\mu$ be partitions and let $\ell$, $m \in \N$ be such that $\ell \ge \ell(\lambda)-1$
and $m \ge \ell(\mu) - 1$.
The following are equivalent:
\begin{thmlista}
\item $\lambda \eqv{\ell}{m} \mu$;
\item $\nabla^\lambda \SSym^\ell \E \cong \nabla^\mu \SSym^m \E$ as representations
of $\SL_2(\C)$;
\item $\Psi_{\nabla^\lambda \SSym^\ell \E}(Q) = \Psi_{\nabla^\mu \SSym^m \E}(Q)$; 
\item $q^{-\ell |\lambda|/2} s_\lambda(1,q,\ldots, q^\ell) = q^{-m |\mu|/2} s_\mu(1,q,\ldots, q^m)$;
\item $s_\lambda(1,q,\ldots, q^\ell) = q^d s_\mu(1,q,\ldots, q^m)$ for some $d \in \Z$;
\item $q^{-\b(\lambda)} s_\lambda(1,q,\ldots, q^\ell) = q^{-\b(\mu)} s_\mu(1,q,\ldots, q^m)$;
\item there exists $d \in \Z$ such that $|\Se{e+d}{\ell}{\lambda}| = |\Se{e}{m}{\mu}|$ for all $e \in \N_0$;
\item $(C(\lambda) + \ell + 1) / H(\lambda) = (C(\mu) + m + 1) / H(\mu)$;
\item $\Delta_\ell(\lambda) / \{1^\ell, 2^{\ell -1}, \ldots, \ell \} = 
\Delta_m(\mu) / \{1^{m}, 2^{m - 1}, \ldots, m \}$.
\end{thmlista}
Moreover if any of these conditions hold then $-\frac{\ell|\lambda|}{2} + \b(\lambda) = -\frac{m|\mu|}{2} +\b(\mu)$,
each side in \emph{(d)} is unimodal  and centrally symmetric and about its constant term,
and the constant $d$ in \emph{(e)} and \emph{(g)} is $b(\lambda) - b(\mu)$.
\end{theorem}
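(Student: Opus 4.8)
The plan is to establish the chain of equivalences (a) $\Leftrightarrow$ (b) $\Leftrightarrow$ $\cdots$ $\Leftrightarrow$ (i) by travelling through the conditions in a convenient order, using the representation-theoretic dictionary already built in \S\ref{sec:background}, and then to extract the ``moreover'' statements from the bookkeeping that accumulates along the way. First I would dispose of the easy links at the top. The equivalence (a) $\Leftrightarrow$ (b) is just Definition~\ref{defn:eqv}. The equivalence (b) $\Leftrightarrow$ (c) is Lemma~\ref{lemma:SLchar}: a polynomial representation of $\SL_2(\C)$ is determined up to isomorphism by its $Q$-character. For (c) $\Leftrightarrow$ (d) I would unwind the definitions: by~\eqref{eq:SchurSpec} we have $\Phi_{\nabla^\lambda \SSym^\ell \E}(1,q) = s_\lambda(1,q,\ldots,q^\ell)$, and the $Q$-character is obtained from $\Phi$ by the substitution $(\alpha,\beta) = (Q^{-1},Q)$; since $\nabla^\lambda \SSym^\ell \E$ has degree $\ell|\lambda|$ (each $v_k$ contributes degree $\ell$ and there are $|\lambda|$ of them), the homogenization gives $\Phi(\alpha,\beta) = \alpha^{\ell|\lambda|} \Phi(1,\beta/\alpha)$, hence $\Psi(Q) = Q^{\ell|\lambda|}\, s_\lambda(1,Q^2,\ldots,Q^{2\ell}) \cdot Q^{-\ell|\lambda|}$... more cleanly, $\Psi_{\nabla^\lambda\SSym^\ell\E}(Q) = Q^{-\ell|\lambda|} s_\lambda(1,Q^2,\ldots,Q^{2\ell})$, and setting $q = Q^2$ identifies (c) with (d) after clearing the common substitution. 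The symmetry $\Psi_V(Q) = \Psi_V(Q^{-1})$ from Lemma~\ref{lemma:SLchar} is exactly the statement that each side of (d) is centrally symmetric about its constant term.

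Next I would handle the purely combinatorial layer (d) $\Leftrightarrow$ (e) $\Leftrightarrow$ (f) $\Leftrightarrow$ (g). The implication (d) $\Rightarrow$ (e) is immediate with $d = (\ell|\lambda| - m|\mu|)/2$; conversely, if (e) holds with some $d \in \Z$, then comparing the minimal-degree terms — which are $q^{\b(\lambda)}$ on the left and $q^{\b(\mu)+d}$ on the right by the remark after Definition~\ref{defn:b} — forces $d = \b(\lambda) - \b(\mu)$, and this is precisely condition (f) after dividing through; simultaneously, since (d) is equivalent to (e) via a specific power of $q$, we must have $\b(\lambda) - \b(\mu) = (\ell|\lambda| - m|\mu|)/2$, which is the first ``moreover'' assertion and pins down that the $d$ of (e) equals $\b(\lambda)-\b(\mu)$. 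For (e) $\Leftrightarrow$ (g) I would use~\eqref{eq:Se}: the coefficient of $q^e$ in $s_\lambda(1,q,\ldots,q^\ell)$ is $|\Se{e}{\ell}{\lambda}|$, so multiplying one side by $q^d$ shifts indices by $d$; matching coefficients is exactly the statement in (g). Unimodality of each side then follows from the theorem once we know (d) holds: each side, being the $Q$-character of a genuine representation of $\SL_2(\C)$ (after substituting $q = Q^2$), is a non-negative combination of the palindromic unimodal polynomials in~\eqref{eq:SLirredChar}, and a non-negative sum of centrally-symmetric unimodal polynomials sharing the same centre is unimodal.

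Finally I would nail down (f) $\Leftrightarrow$ (h) $\Leftrightarrow$ (i), which is where the difference-multiset machinery of \S\ref{subsec:differenceMultiset} earns its keep. Starting from Stanley's Hook Content Formula (Theorem~\ref{thm:HCF}), condition (f) says
\[
\frac{\prod_{(i,j)\in[\lambda]} [j-i+\ell+1]_q}{\prod_{(i,j)\in[\lambda]} [h_{(i,j)}(\lambda)]_q}
= \frac{\prod_{(i,j)\in[\mu]} [j-i+m+1]_q}{\prod_{(i,j)\in[\mu]} [h_{(i,j)}(\mu)]_q},
\]
and since $[n]_q = (q^n-1)/(q-1)$ differs from $q^n - 1$ only by a unit in the field of fractions of $\Q[q]$, Corollary~\ref{cor:uniqueFactorization} converts this identity of rational functions into the multiset equality $(C(\lambda)+\ell+1)/H(\lambda) = (C(\mu)+m+1)/H(\mu)$, which is (h); here I should note that the shift $C(\lambda)+\ell+1 \subseteq \N$ is legitimate because $\ell \ge \ell(\lambda)-1$, as observed after Definition~\ref{defn:hlContent}, so we really are working with multisubsets of $\N$ and Corollary~\ref{cor:uniqueFactorization} applies. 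The equivalence (f) $\Leftrightarrow$ (i) is the same argument run through Corollary~\ref{cor:delta} instead: the denominators $[1]_q^\ell[2]_q^{\ell-1}\cdots[\ell]_q$ are exactly the quantum integers of the multiset $\{1^\ell, 2^{\ell-1},\ldots,\ell\}$, and the numerator is $\prod_{x\in\Delta_\ell(\lambda)}[x]_q$, so Corollary~\ref{cor:uniqueFactorization} gives $\Delta_\ell(\lambda)/\{1^\ell,\ldots,\ell\} = \Delta_m(\mu)/\{1^m,\ldots,m\}$. I expect the main obstacle to be bookkeeping rather than ideas: one must be careful that the powers of $q$ stripped off in Theorem~\ref{thm:HCF} and Corollary~\ref{cor:delta} are accounted for consistently (they are both $q^{\b(\cdot)}$, which is why condition (f) is the natural normalization to route (h) and (i) through), and that the degree and central-symmetry claims are stated for the correct variable ($q$ versus $Q = q^{1/2}$). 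None of the individual steps is deep, but assembling them so that the ``moreover'' clause drops out cleanly — in particular the identity $-\tfrac{\ell|\lambda|}{2}+\b(\lambda) = -\tfrac{m|\mu|}{2}+\b(\mu)$, which is forced by comparing (d) and (f) — requires threading the constant through every link.
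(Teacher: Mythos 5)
Your chain of implications matches the paper's almost exactly — (a) $\Leftrightarrow$ (b) by definition, (b) $\Leftrightarrow$ (c) by Lemma~\ref{lemma:SLchar}, (c) $\Leftrightarrow$ (d) by unwinding $\Psi$ and homogeneity, (e) $\Leftrightarrow$ (g) via~\eqref{eq:Se}, (f) $\Leftrightarrow$ (h) via Theorem~\ref{thm:HCF} and Corollary~\ref{cor:uniqueFactorization}, (f) $\Leftrightarrow$ (i) via Corollary~\ref{cor:delta} — but there is a genuine gap at the crucial return step (e) $\Rightarrow$ (d). You correctly note that (d) $\Rightarrow$ (e) with $d = (\ell|\lambda|-m|\mu|)/2$, and that any $d$ realising (e) must equal $\b(\lambda)-\b(\mu)$ by comparing lowest-degree terms, which gives (e) $\Rightarrow$ (f). But you then write ``since (d) is equivalent to (e) via a specific power of $q$, we must have $\b(\lambda)-\b(\mu)=(\ell|\lambda|-m|\mu|)/2$'', which is circular: that equality of the two candidate $d$'s is precisely what is needed to conclude (e) $\Rightarrow$ (d), and you cannot appeal to the equivalence you are in the middle of proving. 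Without this link your argument only establishes (a)--(d) pairwise equivalent and (d) $\Rightarrow$ (e) $\Leftrightarrow$ (f) $\Leftrightarrow$ (g) $\Leftrightarrow$ (h) $\Leftrightarrow$ (i), with no route back from the combinatorial conditions to the representation-theoretic ones.

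The missing ingredient is one you already mention but apply in the wrong place: the central symmetry of $q^{-\ell|\lambda|/2}s_\lambda(1,q,\ldots,q^\ell)$ about its constant term holds \emph{unconditionally}, not ``once we know (d) holds'', because this is always the $Q$-character of an honest $\SL_2(\C)$-representation and Lemma~\ref{lemma:SLchar} gives $\Psi_V(Q)=\Psi_V(Q^{-1})$ for every polynomial $V$. Given that, if (e) holds then rewriting $s_\lambda(1,q,\ldots,q^\ell)=q^d s_\mu(1,q,\ldots,q^m)$ as
\[
q^{-\ell|\lambda|/2}s_\lambda(1,q,\ldots,q^\ell)
= q^{\,d-\ell|\lambda|/2+m|\mu|/2}\cdot q^{-m|\mu|/2}s_\mu(1,q,\ldots,q^m)
\]
expresses one centrally-symmetric Laurent polynomial as a power of $q$ times another; comparing the centres of symmetry forces that power to vanish, so $d=(\ell|\lambda|-m|\mu|)/2$, which is (d) and simultaneously yields the ``moreover'' identity $-\tfrac{\ell|\lambda|}{2}+\b(\lambda)=-\tfrac{m|\mu|}{2}+\b(\mu)$ once you combine it with $d=\b(\lambda)-\b(\mu)$. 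This is exactly how the paper closes the loop; once inserted, your proof is complete and follows the paper's route throughout.
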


\begin{proof}
By Definition~\ref{defn:eqv}, (a) and (b) are equivalent.
By Lemma~\ref{lemma:SLchar}, (b) and (c) are equivalent.
By definition $\Psi_{\nabla^\lambda \SSym^\ell \E}(Q) = 
\Phi_{\nabla^\lambda \SSym^\ell \E}(Q^{-1},Q)$. Hence, by~\eqref{eq:SchurSpec}
and the homogeneity of $\nabla^\lambda \SSym^\ell \E$,
\[ \Phi_{\nabla^\lambda \SSym^\ell \E}(q^{-1/2}, q^{1/2}) = q^{-\ell |\lambda|/2} s_\lambda(1,q,\ldots, q^\ell) . \]
Therefore (c) and (d) are equivalent.
Clearly (d) implies (e).
Conversely, suppose that (e) holds, with $q^d s_\lambda(1,q,\ldots, q^\ell) = s_\mu(1,q,\ldots, q^m)$. 
By the previous displayed equation
and Lemma~\ref{lemma:SLchar}, $q^{-\ell |\lambda|/2} s_\lambda(1,q,\ldots, q^\ell)$ 
is a linear combination of polynomials of the form $q^{-b/2} + q^{-(b-1)/2} + \cdots + q^{b/2}$.
Hence it is centrally symmetric and unimodal about its constant term 
Now, comparing points of central symmetry, (e) implies that $d + \ell|\lambda|/2 = m |\mu|/2$. 
Multiplying both sides of $q^d s_\lambda(1,q,\ldots, q^\ell) = s_\mu(1,q,\ldots, q^m)$ by $q^{-d - \ell|\lambda|/2}
= q^{-m |\mu|/2}$, we obtain (d).
We noted before~Definition~\ref{defn:hlContent} 
that $s_\lambda(1,q,\ldots, q^\ell)$ has minimum degree 
summand $q^{\b(\lambda)}$.
Therefore (e) and~(f) are equivalent. 
By~\eqref{eq:Se},
(e) and (g) are equivalent.
The remainder of the `moreover' part now follows by comparing the $q$ powers in (d) and (f).
By Stanley's Hook Content Formula, as stated in Theorem~\ref{thm:HCF}, (f)
holds if and only if
\[ \frac{\prod_{(i,j) \in [\lambda]} [j-i+\ell+1]_q}
{\prod_{(i,j) \in [\lambda]} [h_{(i,j)}(\lambda)]_q}
=
\frac{\prod_{(i,j) \in [\mu]} [j-i+m+1]_q}
{\prod_{(i,j) \in [\mu]} [h_{(i,j)}(\mu)]_q}. \]
By Corollary~\ref{cor:uniqueFactorization}, this is equivalent to (h).
Finally (f) and (i) are equivalent by the same argument with
Corollary~\ref{cor:uniqueFactorization}
 applied
to the right-hand side in Corollary~\ref{cor:delta}.
\end{proof}

\subsection{Extending plethystic isomorphisms}
We end by considering when an $\SL_2(\C)$-isomorphism $\nabla^\lambda \SSym^\ell \E \cong \nabla^\mu \SSym^m \E$ 
extends to an overgroup of $\SL_2(\C)$. 
The following lemma is used to show
that the only obstruction is the determinant representation of $\GL(E)$.
 

\begin{lemma}\label{lemma:eqvCondsMoreover}
Let $\lambda$ and $\mu$ be partitions and let $\ell$, $m \in \N$ be such that $\ell \ge \ell(\lambda)-1$
and $m \ge \ell(\mu) - 1$ and $\lambda \eqv{\ell}{m} \mu$.
Set $D = -\frac{\ell |\lambda|}{2} + \frac{m |\mu|}{2}$. Then $D \in \Z$
and 
\[ {\det}^D \otimes \nabla^\lambda \SSym^\ell \E \cong  \nabla^\mu \SSym^m \E \]
as representations of $\GL_2(\C)$.
\end{lemma}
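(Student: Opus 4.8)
The plan is to leverage Theorem~\ref{thm:eqvConds} to reduce the claim to a statement about $Q$-characters and then account for the determinant twist by a bookkeeping argument on degrees. First I would recall that $\nabla^\lambda \SSym^\ell \E$ is a polynomial representation of $\GL_2(\C)$ of degree $\ell|\lambda|$: indeed by~\eqref{eq:tabloid} it sits inside $\bigotimes_i \Sym^{\lambda_i} V$ where $V = \Sym^\ell E$ has degree $\ell$, so each $\GL$-polytabloid $F(t)$ is a sum of degree-$\ell|\lambda|$ products; similarly $\nabla^\mu \SSym^m \E$ has degree $m|\mu|$. The hypothesis $\lambda \eqv{\ell}{m} \mu$ gives, by the `moreover' clause of Theorem~\ref{thm:eqvConds}, that $-\tfrac{\ell|\lambda|}{2} + \b(\lambda) = -\tfrac{m|\mu|}{2} + \b(\mu)$, and since $\b(\lambda), \b(\mu) \in \Z$ this already shows $D = -\tfrac{\ell|\lambda|}{2} + \tfrac{m|\mu|}{2} \in \Z$.

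Next I would identify the character of the determinant twist. The determinant representation of $\GL_2(\C)$ sends $\mathrm{diag}(\alpha,\delta) \mapsto \alpha\delta$, so $\Phi_{\det^D}(\alpha,\delta) = (\alpha\delta)^D$ and hence $\Phi_{\det^D \otimes \nabla^\lambda \SSym^\ell \E}(\alpha,\delta) = (\alpha\delta)^D \Phi_{\nabla^\lambda \SSym^\ell \E}(\alpha,\delta)$. Setting $\alpha = q^{-1/2}$, $\delta = q^{1/2}$ and using homogeneity of degree $\ell|\lambda|$ together with~\eqref{eq:SchurSpec}, the $Q$-character of $\det^D \otimes \nabla^\lambda \SSym^\ell \E$ evaluated at $q^{1/2}$ is $q^{-\ell|\lambda|/2 + D} s_\lambda(1,q,\ldots,q^\ell) = q^{-m|\mu|/2}\,q^{-\ell|\lambda|/2 + m|\mu|/2 - D + D}\cdots$ — more cleanly, it equals $q^{D - \ell|\lambda|/2} s_\lambda(1,q,\ldots,q^\ell)$, and by the definition of $D$ this is $q^{-m|\mu|/2} \cdot q^{(m|\mu|/2 - \ell|\lambda|/2) + D} s_\lambda(\cdots)$; since $D = -\ell|\lambda|/2 + m|\mu|/2$ gives $D - \ell|\lambda|/2 = -\ell|\lambda| + m|\mu|/2$, I will instead just compare directly with condition~(d): condition~(d) of Theorem~\ref{thm:eqvConds} says $q^{-\ell|\lambda|/2} s_\lambda(1,q,\ldots,q^\ell) = q^{-m|\mu|/2} s_\mu(1,q,\ldots,q^m)$, so multiplying by $q^D$ yields $q^{D - \ell|\lambda|/2} s_\lambda(1,q,\ldots,q^\ell) = q^{-m|\mu|/2} s_\mu(1,q,\ldots,q^m) = \Phi_{\nabla^\mu \SSym^m \E}(q^{-1/2},q^{1/2})$. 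Thus $\det^D \otimes \nabla^\lambda \SSym^\ell \E$ and $\nabla^\mu \SSym^m \E$ have the same $\GL_2(\C)$-character on diagonal matrices.

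Finally I would invoke the standard fact that a polynomial representation of $\GL_2(\C)$ is determined up to isomorphism by its character $\Phi_V(\alpha,\delta)$ on the diagonal torus (equivalently, decompose both sides as direct sums of irreducible polynomial $\GL_2(\C)$-representations $\det^a \otimes \Sym^n E$, whose characters $(\alpha\delta)^a(\alpha^n + \cdots + \delta^n)$ are linearly independent as Laurent polynomials in $\alpha,\delta$). The computation above shows the characters agree, so the two representations are isomorphic as $\GL_2(\C)$-representations, which is exactly the assertion. I expect the only real subtlety to be the bookkeeping with the half-integer exponents $q^{\pm 1/2}$ and making sure the degree/homogeneity claims are cited correctly; there is no deep obstacle, since all the analytic content is already packaged in Theorem~\ref{thm:eqvConds}.
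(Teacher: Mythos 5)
Your high-level strategy (compare full two-variable characters, account for the $\det^D$ twist via $\Phi_{\det}(\alpha,\delta)=\alpha\delta$, then use complete reducibility of polynomial $\GL_2(\C)$-representations) is the paper's approach, and you have correctly extracted $D\in\Z$ from the `moreover' clause. The problem is in the bookkeeping you flag as the `only real subtlety'. You substitute $\alpha=q^{-1/2}$, $\delta=q^{1/2}$ (that is, you pass to the $Q$-character) and then assert the $\det^D$ factor contributes $q^D$. But at this specialization $\alpha\delta = q^{-1/2}q^{1/2} = 1$, so $(\alpha\delta)^D = 1$: the $Q$-character of $\det^D \otimes V$ is identically equal to the $Q$-character of $V$, and the twist is invisible. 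Comparing $Q$-characters therefore only reproduces the hypothesis that the two representations are $\SL_2(\C)$-isomorphic; it never sees the value of $D$ and cannot establish the $\GL_2(\C)$-isomorphism. (Relatedly, the line ``multiplying by $q^D$ yields $q^{D-\ell|\lambda|/2}s_\lambda = q^{-m|\mu|/2}s_\mu$'' drops a factor of $q^D$ from the right-hand side.)

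The fix is to compare characters at $(\alpha,\delta)=(1,q)$, \emph{not} at $(q^{-1/2},q^{1/2})$. There the determinant factor genuinely contributes, giving $\Phi_{\det^D \otimes \nabla^\lambda\SSym^\ell\E}(1,q) = q^D\,\Phi_{\nabla^\lambda\SSym^\ell\E}(1,q) = q^D s_\lambda(1,q,\ldots,q^\ell)$, which by condition~(e) of Theorem~\ref{thm:eqvConds} (with the identified constant $d = \b(\lambda)-\b(\mu) = -D$) equals $s_\mu(1,q,\ldots,q^m) = \Phi_{\nabla^\mu\SSym^m\E}(1,q)$. You then apply homogeneity where you only gesture at it: both characters are homogeneous of the same degree $\ell|\lambda|+2D = m|\mu|$, so $\Phi(\alpha,\delta)=\alpha^{m|\mu|}\Phi(1,\delta/\alpha)$ and the equality at $\alpha=1$ upgrades to equality for all $(\alpha,\delta)\in(\C^\times)^2$, whence the $\GL_2(\C)$-isomorphism follows. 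This is precisely the computation carried out in the paper's proof.
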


\begin{proof}
By the `moreover' part in Theorem~\ref{thm:eqvConds}, $D = -\b(\lambda) + \b(\mu)$, and hence $D \in \Z$.
By~\eqref{eq:SchurSpec}, we have
$s_\lambda(1,q,\ldots, q^\ell) = \Phi_{\nabla^\lambda \SSym^\ell \E}(1,q)$. Therefore
\begin{equation}
\label{eq:eqvCondsMoreover1} 
q^D \Phi_{\nabla^\lambda \SSym^\ell \E}(1,q) = \Phi_{\nabla^\mu \SSym^m \E}(1,q). \end{equation}
Since representations of $\GL_2(\C)$ are completely reducible,
they are determined by their characters. Since $\Phi_{\det}(\alpha, \beta) = \alpha\beta$,
the $\GL_2(\C)$ representations
$\det^D \otimes \nabla^\lambda \SSym^\ell \E$ and
to $\nabla^{\mu}\SSym^{\m} \E$ are isomorphic if and only if
\[ (\alpha \beta)^D \Phi_{\nabla^\lambda \SSym^\ell \E}(\alpha, \beta) =  
\Phi_{\nabla^\mu \SSym^m \E}(\alpha, \beta) \]
for all $\alpha$, $\beta \in \C^\times$. Since $\Phi_{\nabla^\lambda \SSym^\ell \E}$ is homogeneous
of degree $\ell |\lambda|$ and $\Phi_{\nabla^\mu\SSym^m \E}$ is homogeneous of degree $m|\mu|$, 
this holds if and only if
\[ (\alpha \beta)^D \alpha^{\ell|\lambda|} \Phi_{\nabla^\lambda \SSym^\ell \E}(1, \beta/\alpha) 
= \alpha^{m |\mu|} \Phi_{\nabla^\mu \SSym^m \E}(1,\beta/\alpha).\]
for all $\alpha$, $\beta \in \C^\times$. Using~\eqref{eq:eqvCondsMoreover1} to rewrite
$\Phi_{\nabla^\mu \SSym^m \E}(1,\beta/\alpha)$ on the right-hand side we get
the equivalent condition that $(\alpha\beta)^D \alpha^{\ell |\lambda|} = 
\alpha^{m|\mu|} (\beta/\alpha)^D$ 
for all $\alpha$, $\beta \in \C^\times$. This holds by our choice of $D$.
\end{proof}

For $d \in \N_0$, let
$U_d = \{ \omega \in \C : \omega^d = 1\}$ and
let 
\[ \MGL{d}_2(\C) = \{ g \in \GL_2(\C) : \det g \in U_d\}. \] 
If $\SL_2(\C) \le G \le \GL_2(\C)$ then 
either $\{\det g : g \in G \}$ is one of the subgroups $U_d$
or it is dense (in the Zariski topology) on $\C^\times$. 
In the latter case, if $V$ and~$W$ are polynomial
representations of $\GL_2(\C)$ and $V \cong W$ as representations of $\SL_2(\C)$, then the isomorphism
extends to $G$ if and only if it extends to~$\GL_2(\C)$. 

\begin{proposition}\label{prop:eqvCondsMoreover}
Let $\lambda$ and $\mu$ be partitions and let $\ell$, $m \in \N$ be such that $\ell \ge \ell(\lambda)-1$
and $m \ge \ell(\mu) - 1$. Suppose that $\lambda\eqv{\ell}{m} \mu$.
Set $D = -\frac{\ell |\lambda|}{2} + \frac{m |\mu|}{2}$. Then $D \in \Z$ 
and $\nabla^\lambda \SSym^\ell \E$ is isomorphic to $\nabla^{\mu}\SSym^{\m} \E$
as representations of $\MGL{d}_2(\C)$
if and only if $d$ divides $D$.
\end{proposition}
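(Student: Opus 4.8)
The plan is to reduce the claim for $\MGL{d}_2(\C)$ to the already-established cases $\SL_2(\C)$ and $\GL_2(\C)$ by analysing the ``extra'' freedom coming from the character of the determinant. Concretely, $\MGL{d}_2(\C)$ is generated by $\SL_2(\C)$ together with any scalar matrix $\zeta I$ for $\zeta$ a primitive $d$-th root of unity. A polynomial representation of $\GL_2(\C)$ restricted to $\MGL{d}_2(\C)$ is still completely reducible (since $\SL_2(\C) \le \MGL{d}_2(\C)$ and the relevant averaging/character arguments go through on the compact form), so two such restrictions are isomorphic over $\MGL{d}_2(\C)$ if and only if their characters agree; and a character on $\MGL{d}_2(\C)$ is pinned down by its restriction to the diagonal torus $\left\{ \left(\begin{smallmatrix}\alpha & 0 \\ 0 & \beta\end{smallmatrix}\right) : \alpha\beta \in U_d\right\}$.

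First I would record that $D \in \Z$: this is exactly the ``moreover'' clause of Theorem~\ref{thm:eqvConds}, which gives $D = -\b(\lambda) + \b(\mu)$. Next, using~\eqref{eq:SchurSpec} and the homogeneity of $\nabla^\lambda \SSym^\ell \E$ (degree $\ell|\lambda|$) and $\nabla^\mu \SSym^m \E$ (degree $m|\mu|$), I would write, for all $\alpha,\beta \in \C^\times$,
\[ \Phi_{\nabla^\lambda \SSym^\ell \E}(\alpha,\beta) = \alpha^{\ell|\lambda|} s_\lambda(1,\beta/\alpha,\ldots,(\beta/\alpha)^\ell), \qquad \Phi_{\nabla^\mu \SSym^m \E}(\alpha,\beta) = \alpha^{m|\mu|} s_\mu(1,\beta/\alpha,\ldots,(\beta/\alpha)^m), \]
and, invoking $\lambda \eqv{\ell}{m} \mu$ in the form of Theorem~\ref{thm:eqvConds}(d) with the exponent bookkeeping from Lemma~\ref{lemma:eqvCondsMoreover}, deduce that
\[ \Phi_{\nabla^\mu \SSym^m \E}(\alpha,\beta) = (\alpha\beta)^D\, \Phi_{\nabla^\lambda \SSym^\ell \E}(\alpha,\beta) \quad\text{for all } \alpha,\beta \in \C^\times. \]
Thus $\nabla^\mu \SSym^m \E \cong \det^D \otimes \nabla^\lambda \SSym^\ell \E$ over $\GL_2(\C)$, which is Lemma~\ref{lemma:eqvCondsMoreover}. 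The whole question therefore becomes: for which $d$ is $\det^D \otimes W \cong W$ as $\MGL{d}_2(\C)$-representations, where $W = \nabla^\lambda \SSym^\ell \E$?

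The key step is then the following equivalence: $\det^D$ restricts to the trivial representation of $\MGL{d}_2(\C)$ if and only if $d \mid D$. For the ``if'' direction this is immediate, since $\Phi_{\det^D}(g) = (\det g)^D$ and $\det g \in U_d$ forces $(\det g)^D = 1$ when $d \mid D$; so $\det^D \otimes W \cong W$ as $\MGL{d}_2(\C)$-representations, hence $\nabla^\mu \SSym^m \E \cong W$ over $\MGL{d}_2(\C)$. For ``only if'', suppose $\det^D \otimes W \cong W$ over $\MGL{d}_2(\C)$; evaluating characters at the scalar matrix $\zeta I$ with $\zeta$ a primitive $d$-th root of unity gives $\zeta^{2D}\Phi_W(\zeta,\zeta) = \Phi_W(\zeta,\zeta)$, and since $\Phi_W(\zeta,\zeta) = \zeta^{\ell|\lambda|}\,(\dim W) \ne 0$, we get $\zeta^{2D} = 1$. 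This only yields $d \mid 2D$, so a little more care is needed: I would instead evaluate at a diagonal element of $\MGL{d}_2(\C)$ of the form $\left(\begin{smallmatrix}\zeta & 0 \\ 0 & 1\end{smallmatrix}\right)$, whose determinant is the primitive $d$-th root $\zeta$, obtaining $\zeta^D \Phi_W(\zeta,1) = \Phi_W(\zeta,1)$; since $\Phi_W(\zeta,1) = s_\lambda(1,\zeta,\ldots,\zeta^\ell) \ne 0$ (its constant coefficient, as a polynomial in the eigenvalue, is $1$ by the minimum-weight observation before Definition~\ref{defn:hlContent}, and more simply it is $\ge 1$ as a sum over the nonempty set $\SSYT_{\le\ell}(\lambda)$ — wait, one must check nonvanishing at a root of unity, which follows because $s_\lambda(1,q,\ldots,q^\ell)$ has nonnegative coefficients and constant term $1$ is not quite enough; instead use that $s_\lambda(1,\zeta,\ldots,\zeta^\ell) = \Phi_W(\zeta,1)$ and $W$ restricted to the cyclic group generated by $\left(\begin{smallmatrix}\zeta & 0 \\ 0 & 1\end{smallmatrix}\right)$ decomposes into characters, and the multiplicity-weighted sum is a genuine algebraic integer that we only need to know is a unit times the relevant power — here I would simply fall back on evaluating at $\left(\begin{smallmatrix}\zeta & 0 \\ 0 & \zeta^{-1}\end{smallmatrix}\right) \cdot \left(\begin{smallmatrix}\zeta^{k} & 0 \\ 0 & \zeta^{k}\end{smallmatrix}\right)$ type elements to separate the $\SL_2$ and determinant contributions cleanly), and cancelling a nonzero factor forces $\zeta^D = 1$, i.e.\ $d \mid D$.

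The main obstacle I anticipate is precisely this nonvanishing/cancellation point in the ``only if'' direction: one must choose the test element of $\MGL{d}_2(\C)$ so that its determinant is a primitive $d$-th root of unity while the evaluated character of $W$ (a nonzero polynomial in the torus variables) does not vanish there, so that the factor $\zeta^D$ can legitimately be cancelled. The cleanest route is to use the completely reducible decomposition of $W\big\downarrow \SL_2(\C) = \bigoplus_j \Sym^{n_j}\E$ from Theorem~\ref{thm:SLbasic}: on the diagonal torus $\Phi_W(Q^{-1},Q) = \sum_j \Psi_{\Sym^{n_j}\E}(Q)$, and on scalars $zI$ one has $\Phi_W(z,z) = z^{\ell|\lambda|}\dim W$, which is manifestly nonzero; combining an equation from scalars with one from an $\SL_2$-element then isolates $\zeta^D = 1$. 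With that nonvanishing secured, the rest is the routine character bookkeeping sketched above, and the proposition follows.
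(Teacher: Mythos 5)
Your plan is the paper's plan: invoke Lemma~\ref{lemma:eqvCondsMoreover} to reduce to the question of whether $\det^D \otimes W \cong W$ over $\MGL{d}_2(\C)$ with $W = \nabla^\lambda \SSym^\ell \E$, equivalently whether $\det^D$ is trivial on $\MGL{d}_2(\C)$. Your ``if'' direction is fine. The ``only if'' direction, however, is never actually closed, and the source of the trouble is a misidentification of which scalars lie in $\MGL{d}_2(\C)$. A scalar $zI$ belongs to $\MGL{d}_2(\C)$ precisely when $\det(zI) = z^2 \in U_d$, i.e.\ when $z^{2d} = 1$, not when $z^d = 1$. Taking $z$ a \emph{primitive $2d$-th root of unity}, the character identity at $zI$ reads $z^{2D}\,\Phi_W(z,z) = \Phi_W(z,z)$, and since $\Phi_W(z,z) = z^{\ell|\lambda|}\dim W \ne 0$ by homogeneity and $W \ne 0$, you get $z^{2D} = 1$, hence $2d \mid 2D$, hence $d \mid D$. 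That finishes the ``only if'' using only the centre.

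Once you see this, the non-vanishing concern you raised about $\Phi_W(\zeta,1)$ — which is a genuine worry, since $s_\lambda(1,q,\ldots,q^\ell)$ can vanish at roots of unity — simply does not arise, and your closing sketch about ``combining an equation from scalars with one from an $\SL_2(\C)$-element'' is both unnecessary and not actually carried out. (A minor side point, not load-bearing: $\MGL{d}_2(\C)$ is not generated by $\SL_2(\C)$ together with $\zeta I$ for $\zeta$ a primitive $d$-th root of unity when $d$ is even; for $d = 2$ that subgroup is just $\SL_2(\C)$, since $-I \in \SL_2(\C)$.)
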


\begin{proof}
By Lemma~\ref{lemma:eqvCondsMoreover}, we have the required isomorphism
if and only if ${\det}^D$ is the trivial representation of $\MGL{d}_2(\C)$.
This holds if and only if $U_d$ has exponent dividing $D$, so if and only if $d$ divides $D$.
\end{proof}

In particular, $\nabla^\lambda \SSym^\ell \E$ and $\nabla^\mu \SSym^m \E$ are isomorphic as representations
of $\GL_2(\C)$ if and only if they are isomorphic as representations of $\SL_2(\C)$ and
the degrees $\ell |\lambda|$ and $m |\mu|$ are equal.

It is worth noting that the proof of Lemma~\ref{lemma:eqvCondsMoreover} 
made essential use of the fact that the representations involved are homogeneous.
For example, if $V = \det \oplus \det^2$
and $W = E \otimes \det$ then $\Phi_V(1,q) = \Phi_W(1,q) = q+ q^2$.
But~$V$ and~$W$ are not isomorphic, even after restriction to $\SL_2(\C)$. 

\section{Basic properties of equivalence}
\label{sec:basicEqvs}

Given a non-empty partition $\lambda$ let $\overline{\lambda}$ be the partition
obtained by removing all columns of length $\ell(\lambda)$ from $\lambda$.

\begin{lemma}\label{lemma:columnRemoval}
If $\lambda$ is a partition then
either $\overline{\lambda}$ is empty or
$\lambda \eqv{\ell(\lambda)-1}{\ell(\lambda)-1} \overline{\lambda}$.
\end{lemma}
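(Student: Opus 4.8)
The plan is to prove the equivalence $\lambda \eqv{\ell(\lambda)-1}{\ell(\lambda)-1} \overline{\lambda}$ by verifying condition~(h) of Theorem~\ref{thm:eqvConds}, namely that the difference multisets
\[
\bigl(C(\lambda) + \ell(\lambda)\bigr)\bigm/ H(\lambda) \quad\text{and}\quad \bigl(C(\overline{\lambda}) + \ell(\lambda)\bigr)\bigm/ H(\overline{\lambda})
\]
are equal, where both $\ell$ and $m$ equal $\ell(\lambda)-1$, so that $\ell+1 = m+1 = \ell(\lambda)$. (Note that since $\overline{\lambda}$ has the same number of parts as $\lambda$ — removing full-length columns does not change $\ell(\lambda)$ — the hypothesis $\ell \ge \ell(\overline{\lambda})-1$ is automatic.) Write $r = \ell(\lambda)$ and suppose $\lambda$ has exactly $k$ columns of length $r$, so that $\overline{\lambda}$ is obtained by deleting these $k$ leftmost columns; thus $\overline{\lambda}_i = \lambda_i - k$ for $1 \le i \le r$, and $[\overline{\lambda}]$ is $[\lambda]$ with its first $k$ columns removed.

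First I would set up a box-by-box bijection between $[\lambda]$ and the disjoint union of $[\overline{\lambda}]$ with the $k\times r$ rectangle $R$ formed by the deleted columns. For a box $(i,j) \in [\overline{\lambda}]$, the corresponding box of $[\lambda]$ is $(i, j+k)$; its content in $[\lambda]$ is $(j+k) - i = (j-i) + k$, so contents shift uniformly by $k$, and since the full columns $1,\dots,k$ of $[\lambda]$ all have length $r = \lambda'_{j+k}$ for these boxes, the hook length is $h_{(i,j+k)}(\lambda) = (\lambda_i - i) + (r - (j+k)) + 1 = (\overline{\lambda}_i - i) + (r - j) + 1 = h_{(i,j)}(\overline{\lambda})$ — that is, hook lengths are preserved. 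For a box $(i,j) \in R$ with $1 \le j \le k$, the column has length $r$, so $h_{(i,j)}(\lambda) = (\lambda_i - i) + (r-j) + 1$, and its content is $j - i$.

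Next I would assemble the multiset identities. From the content shift, $C(\lambda) = \bigl(C(\overline{\lambda}) + k\bigr) \cup \{\, j - i : 1 \le j \le k,\ 1 \le i \le r\,\}$, hence $C(\lambda) + r = \bigl(C(\overline{\lambda}) + r\bigr) + k$ as a difference-multiset statement after I account for the extra rectangle contribution; more precisely
\[
C(\lambda) + r \;=\; \bigl(C(\overline{\lambda}) + r\bigr) \,\cup\, \bigl\{\, (j-i) + r : 1 \le j \le k,\ 1 \le i \le r \,\bigr\}.
\]
From the hook-length facts, $H(\lambda) = H(\overline{\lambda}) \,\cup\, \bigl\{\, h_{(i,j)}(\lambda) : 1\le j \le k,\ 1 \le i \le r\,\bigr\}$. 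Substituting into condition~(h), the contributions of $[\overline{\lambda}]$ cancel, and it remains to check that the rectangle contributes zero to the difference multiset, i.e.
\[
\bigl\{\, (j-i)+r : 1 \le j \le k,\ 1 \le i \le r\,\bigr\} \;=\; \bigl\{\, (\lambda_i - i) + (r-j) + 1 : 1 \le j \le k,\ 1 \le i \le r \,\bigr\}.
\]
The left side is immediate: for fixed $i$, as $j$ ranges over $1,\dots,k$ we get $r - i + 1, \dots, r - i + k$, an interval of $k$ consecutive integers. The right side, for fixed $i$, as $j$ ranges over $1,\dots,k$, also gives an interval of $k$ consecutive integers, namely $\lambda_i - i + 1, \dots, \lambda_i - i + k$; but $\lambda_i = \overline{\lambda}_i + k$, hmm — I should instead fix $i$ and compare directly: the left interval for row $i$ is $[\,r-i+1,\ r-i+k\,]$ while the right is $[\,\lambda_i - i - k + 2,\ \lambda_i - i + 1\,]$; these agree iff $\lambda_i - i + 1 = r - i + k$, i.e. $\lambda_i = r + k - 1$? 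That is false in general, so the correct bookkeeping is not row-by-row but a global count. The honest approach: recognize the right-hand multiset as exactly the hook lengths of the boxes in the first $k$ columns of $\lambda$, which is the same as the set $\Delta$ appearing in the standard column-removal argument, and the matching of these two $k r$-element multisets is precisely the combinatorial heart of the classical fact that $\bigwedge^{r} \Sym^{r-1}$ is one-dimensional — equivalently, that $s_\lambda(1,q,\dots,q^{r-1}) = q^{?}\, s_{\overline\lambda}(1,q,\dots,q^{r-1})$ up to a power of $q$.

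\textbf{Main obstacle.} The genuine difficulty, as the false start above exposes, is verifying the rectangle multiset identity cleanly. The cleanest route I expect to take is \emph{not} condition~(h) but condition~(e) together with an algebraic manipulation of Stanley's Hook Content Formula: writing $s_\lambda(1,q,\dots,q^{r-1})$ via Theorem~\ref{thm:HCF} and factoring out the contribution of the first $k$ columns, one shows the column factors telescope against the content factors $[\,j-i+r\,]_q$ over the rectangle, producing exactly a monomial $q^d$ times $s_{\overline{\lambda}}(1,q,\dots,q^{r-1})$ with $d = \b(\lambda) - \b(\overline{\lambda})$; this is a finite computation using that, for each row $i$ of the rectangle, the hook lengths $\{h_{(i,j)}(\lambda) : 1 \le j \le k\}$ and the shifted contents $\{(j-i)+r : 1\le j\le k\}$ become equal \emph{after summing over all rows}, because $\sum_i$ of the former counts each integer in $\{1,\dots,r-1+k\}\setminus(\text{something})$ with the same multiplicity as $\sum_i$ of the latter — the bookkeeping being precisely that hook lengths in a full-height column of height $r$ range over $\{1,\dots,r\}$ offset by the arm, which is exactly the Vandermonde/antisymmetric structure already isolated in Corollary~\ref{cor:delta}. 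In fact the slickest proof uses Corollary~\ref{cor:delta} directly: the differences satisfy $\delta(\lambda)_j = \delta(\overline\lambda)_j$ for $j \ge k+1$, while for $1 \le j \le k$ we have $\delta(\lambda)_j = \lambda_j - \lambda_{j+1} + 1 = (\overline\lambda_j + k) - (\overline\lambda_{j+1} + k) + 1 = \delta(\overline\lambda)_j$ as well (columns removed are equally long so these differences vanish appropriately when $j<k$, and the single boundary term at $j=k$ must be checked separately) — after which $\Delta_{r-1}(\lambda) = \Delta_{r-1}(\overline\lambda)$ follows because the pyramid entries $\delta(\lambda)_j + \cdots + \delta(\lambda)_{j'-1}$ with $1 \le j < j' \le r$ that involve any of the first $k$ positions contribute the \emph{same} values as their counterparts in $\overline{\lambda}$, the deleted columns of length exactly $r$ contributing $\delta$-values that sum to prescribed amounts. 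Condition~(i) of Theorem~\ref{thm:eqvConds} then finishes it immediately, and the only real work is the boundary-term check at column $k$, which I anticipate is a one-line calculation once the notation is fixed.
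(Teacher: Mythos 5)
The plan falls into two halves, and they deserve different verdicts.

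Your first attempt, via Theorem~\ref{thm:eqvConds}(h), has a concrete bookkeeping error. Under the bijection $(i,j) \mapsto (i,j+k)$ from $[\overline{\lambda}]$ into $[\lambda]$, hook lengths are indeed preserved, but contents are shifted by $+k$: the content of $(i,j+k)$ in $[\lambda]$ is $(j-i)+k$. So the correct decomposition is
\[
C(\lambda) + r = \bigl( (C(\overline{\lambda}) + r) + k \bigr) \;\cup\; \{\, j-i+r : 1 \le j \le k,\ 1 \le i \le r \,\},
\]
not the one you wrote, which omits the $+k$ on the first term. Because of this, the contributions of $[\overline{\lambda}]$ do \emph{not} simply cancel from both sides of condition (h), and the residual identity to verify is \emph{not} rect-contents $=$ rect-hooks. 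You correctly notice that the row-by-row comparison fails, but you misdiagnose the reason: it is not that the bookkeeping has to be global, it is that the equation you derived was wrong. (In fact the corrected residual identity,
$\bigl( (C(\overline{\lambda}) + r) + k \bigr) \cup \text{rect-contents} = (C(\overline{\lambda}) + r) \cup \text{rect-hooks}$,
does hold, but verifying it directly is no easier than the original problem.)

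Your pivot to Theorem~\ref{thm:eqvConds}(i) and Corollary~\ref{cor:delta} is the right idea and, once cleaned up, gives a one-line proof — but your sketch misplaces where the subtlety is. Since $\overline{\lambda}_i = \lambda_i - k$ for all $1 \le i \le r$ (using the convention $\overline{\lambda}_i = 0$ for $i > \ell(\overline{\lambda})$, and $\lambda_r = k$), we get $\delta(\lambda)_j = \delta(\overline{\lambda})_j$ uniformly for every $j$ with $1 \le j \le r-1$: there is no special boundary at $j = k$, and nothing needs to be "checked separately" there. The one index where the differences disagree is $j = r$, where $\delta(\lambda)_r = k+1$ while $\delta(\overline{\lambda})_r = 1$. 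The key point you do not state explicitly — and which makes the argument work — is that for $\ell = r - 1$ the multiset $\Delta_{r-1}$ is built only from $\delta_1, \ldots, \delta_{r-1}$ (since $1 \le j < k \le r$ forces $k-1 \le r-1$), so the disagreement at $j=r$ is invisible. Hence $\Delta_{r-1}(\lambda) = \Delta_{r-1}(\overline{\lambda})$ on the nose, and condition (i) concludes.

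Executed correctly, this is a genuinely different route from the paper's. The paper works at the level of representations: it constructs the explicit linear isomorphism $F(t) \mapsto F(\overline{t})$ removing the full-length columns and checks it intertwines the $\GL(E)$-actions up to a power of the determinant, then invokes Theorem~\ref{thm:eqvConds}(b); it also remarks that a bijective tableau count via (g) works. Your route via (i) is purely computational with the difference sequence, avoids any representation theory, and is arguably the shortest — but as written the proposal has a wrong intermediate step, a misidentified boundary, and does not isolate the crucial observation that $\Delta_{r-1}$ omits $\delta_r$. One small further slip: you assert that removing full-length columns does not change $\ell(\lambda)$, which is false — it strictly decreases the number of parts — though this does not affect the validity of the hypothesis $m \ge \ell(\overline{\lambda}) - 1$.
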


\begin{proof}
Let $\ell = \ell(\lambda) - 1$ and suppose that $\lambda$ has precisely
$c$ columns of length $\ell(\lambda)$.
We may suppose that $a(\lambda) > c$.
If~$t$ is a semistandard tableau of shape $\lambda$ with entries from $\{0,1,\ldots, \ell\}$
then the first $c$ columns of $t$ each  have entries $0$, $1$, \ldots, $\ell$ read from
top to bottom. Let $\overline{t}$ be the tableau obtained from $t$ by removing these columns.
Using the model for $\nabla^\lambda \SSym^\ell \E$ in Definition~\ref{defn:nablaSym},
we see that there is a linear isomorphism 
$\phi : \nabla^\lambda \SSym^\ell E \rightarrow \nabla^{\overline{\lambda}} \SSym^\ell E$ 
defined by $F(t) \mapsto F(\overline{t})$. Moreover, by a routine generalization
of Example~\ref{ex:oneColumn},
if $h \in \GL(\SSym^\ell \E)$ then
\[ \phi \bigl( h F(t) \bigr) = (\det h)^c h \phi \bigl( F(t) \bigr). \]
If $g \in \GL(E)$, each matrix coefficient of $g$ in its 
action on  $\SSym^\ell \E$ is a polynomial of degree $\ell$, and so the
determinant of $g$ acting on $\SSym^\ell \E$ has degree $\ell(\ell + 1)$.
We deduce that $\phi$ is an isomorphism
\[ \nabla^\lambda \SSym^\ell E \cong (\det E)^{c \ell(\ell+1)/2} \otimes \nabla^\lambda \SSym^\ell \E \]
of representations of $\GL(E)$.  Hence by Theorem~\ref{thm:eqvConds}(b), we have
$\lambda \eqv{\ell}{\ell} \overline{\lambda}$, as required.
\end{proof}

An alternative, but we believe less conceptual,
proof of Lemma~\ref{lemma:columnRemoval} can be
given by applying Theorem~\ref{thm:eqvConds}(g) to the bijection $t \mapsto \overline{t}$.

When $\ell\not=m$ the relation $\eqv{\ell}{m}$ is neither reflexive nor transitive.
The following lemma is the correct replacement for transitivity.

\begin{lemma}\label{lemma:transitive}
Let $k$, $\ell$, $m \in \N$ and let $\lambda$, $\mu$, $\nu$ be
partitions. If $\lambda \eqv{k}{\ell} \mu$ and \smash{$\mu\eqv{\ell}{m} \nu$} then
\smash{$\lambda \eqv{k}{m} \nu$}.
\end{lemma}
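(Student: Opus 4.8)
The plan is to reduce the statement to a purely combinatorial identity using the characterization of plethystic equivalence provided by Theorem~\ref{thm:eqvConds}. Since $\lambda \eqv{k}{\ell} \mu$ holds, part~(e) of that theorem gives $s_\lambda(1,q,\ldots,q^k) = q^{d_1} s_\mu(1,q,\ldots,q^\ell)$ for some $d_1 \in \Z$; similarly $\mu \eqv{\ell}{m} \nu$ gives $s_\mu(1,q,\ldots,q^\ell) = q^{d_2} s_\nu(1,q,\ldots,q^m)$ for some $d_2 \in \Z$. Substituting the second equation into the first yields $s_\lambda(1,q,\ldots,q^k) = q^{d_1+d_2} s_\nu(1,q,\ldots,q^m)$, which is exactly condition~(e) of Theorem~\ref{thm:eqvConds} for the pair $(\lambda,\nu)$ with parameters $(k,m)$.

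Before invoking Theorem~\ref{thm:eqvConds}(e) to conclude $\lambda \eqv{k}{m} \nu$, I must check that its hypotheses are met: we need $k \ge \ell(\lambda) - 1$ and $m \ge \ell(\nu) - 1$. The first is part of the hypothesis $\lambda \eqv{k}{\ell} \mu$ (which presupposes $k \ge \ell(\lambda)-1$ by Definition~\ref{defn:eqv}), and the second is part of the hypothesis $\mu \eqv{\ell}{m} \nu$. So the hypotheses transfer without any work. Strictly, the statement of the lemma uses $k,\ell,m \in \N$ and asserts $\lambda \eqv{k}{m} \nu$; one should note that the definition of $\eqv{k}{m}$ requires the partitions to be non-empty, which is again inherited from the given equivalences.

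I would write this as a three-line argument: quote Theorem~\ref{thm:eqvConds}(e) twice, compose the two scalar relations, and quote Theorem~\ref{thm:eqvConds}(e) once more in the reverse direction. No genuine obstacle arises here — the only thing to be careful about is bookkeeping the exponent shifts $d_1, d_2 \in \Z$ and confirming that their sum lies in $\Z$, which is immediate. (One could alternatively phrase the proof via condition~(c), composing the equalities of $Q$-characters $\Psi_{\nabla^\lambda \SSym^k \E} = \Psi_{\nabla^\mu \SSym^\ell \E} = \Psi_{\nabla^\nu \SSym^m \E}$, but the version through~(e) avoids even mentioning the representations and is the cleanest.)
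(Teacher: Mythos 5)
Your proof is correct and follows essentially the same strategy as the paper: reduce to one of the equivalent conditions in Theorem~\ref{thm:eqvConds} and use the fact that that condition is trivially transitive. The paper itself cites condition~(b) (isomorphism of $\SL_2(\C)$-representations, which is an equivalence relation), whereas you route through condition~(e) and track the exponent shift, but both are one-line applications of the portmanteau theorem and the hypothesis bookkeeping you spell out is exactly right.
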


\begin{proof}
This is immediate from Theorem~\ref{thm:eqvConds}(b).
\end{proof}

\begin{proposition}\label{prop:prime}
Let $\lambda$ and $\mu$ be partitions. Then $\lambda \eqv{\ell(\lambda)-1}{\ell(\mu)-1} \mu$
if and only if $\overline{\lambda} \eqv{\ell(\lambda)-1}{\ell(\mu)-1} \overline{\mu}$.
\end{proposition}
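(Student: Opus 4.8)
The plan is to derive the proposition from Lemma~\ref{lemma:columnRemoval} and the `transitivity' Lemma~\ref{lemma:transitive}, together with the fact --- immediate from Theorem~\ref{thm:eqvConds}(b), since isomorphism of representations is symmetric --- that $\nu \eqv{k}{\ell} \rho$ holds if and only if $\rho \eqv{\ell}{k} \nu$. Write $a = \ell(\lambda)-1$ and $b = \ell(\mu)-1$. Every column of $\overline{\lambda}$ has length strictly less than $\ell(\lambda)$, so $\ell(\overline{\lambda}) \le a$, and likewise $\ell(\overline{\mu}) \le b$; hence, assuming for the moment that $\overline{\lambda}$ and $\overline{\mu}$ are non-empty, Lemma~\ref{lemma:columnRemoval} supplies the prime equivalences $\lambda \eqv{a}{a} \overline{\lambda}$ and $\mu \eqv{b}{b} \overline{\mu}$, and all $\eqv{}{}$-instances appearing below will satisfy the length hypotheses of Definition~\ref{defn:eqv} because of these inequalities.

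For the forward implication I would assume $\lambda \eqv{a}{b} \mu$ and chain twice: applying Lemma~\ref{lemma:transitive} (middle parameter $a$) to $\overline{\lambda} \eqv{a}{a} \lambda$ and $\lambda \eqv{a}{b} \mu$ yields $\overline{\lambda} \eqv{a}{b} \mu$; then applying Lemma~\ref{lemma:transitive} (middle parameter $b$) to $\overline{\lambda} \eqv{a}{b} \mu$ and $\mu \eqv{b}{b} \overline{\mu}$ yields $\overline{\lambda} \eqv{a}{b} \overline{\mu}$. The converse is the mirror image: compose $\lambda \eqv{a}{a} \overline{\lambda}$, the hypothesis $\overline{\lambda} \eqv{a}{b} \overline{\mu}$, and $\overline{\mu} \eqv{b}{b} \mu$, again by two applications of Lemma~\ref{lemma:transitive}.

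It remains to dispose of the degenerate cases, and this is the only point requiring any care since the `reduced' relation then falls outside Definition~\ref{defn:eqv}. If $\overline{\lambda} = \varnothing$ then $\lambda$ is the $a(\lambda) \times \ell(\lambda)$ rectangle, so every column of a tableau in $\SSYT_{\le \ell(\lambda)-1}(\lambda)$ is forced to read $0,1,\ldots,\ell(\lambda)-1$ downward; hence $\nabla^\lambda \SSym^{\ell(\lambda)-1} \E$ is one-dimensional, and trivial as an $\SL_2(\C)$-representation. Thus $\lambda \eqv{a}{b} \mu$ holds if and only if $\nabla^\mu \SSym^{\ell(\mu)-1} \E$ is also one-dimensional, i.e.\ if and only if $\mu$ is the $a(\mu) \times \ell(\mu)$ rectangle, i.e.\ if and only if $\overline{\mu} = \varnothing$, and in that situation both sides of the asserted equivalence say the same thing (under the convention $\nabla^\varnothing \SSym^n \E = \SSym^0 \E$); the case $\overline{\mu} = \varnothing$ is symmetric. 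I expect no genuine obstacle here: the argument is just the bookkeeping of the two parameters in Lemma~\ref{lemma:transitive} plus the symmetry of $\eqv{}{}$. As an alternative avoiding the degenerate-case discussion one can run the whole proof at the level of symmetric functions via Theorem~\ref{thm:eqvConds}(e): if $\lambda$ has exactly $c$ columns of length $\ell(\lambda)$ then those columns are forced in every tableau of $\SSYT_{\le \ell(\lambda)-1}(\lambda)$ and contribute weight $c\binom{\ell(\lambda)}{2}$, so $s_\lambda(1,q,\ldots,q^{\ell(\lambda)-1}) = q^{c\binom{\ell(\lambda)}{2}} s_{\overline{\lambda}}(1,q,\ldots,q^{\ell(\lambda)-1})$ and similarly for $\mu$, and the two monomial prefactors are absorbed into the `$q^d$' of Theorem~\ref{thm:eqvConds}(e).
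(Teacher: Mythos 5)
Your proof is correct and takes the same route as the paper, which dismisses the proposition with the single line ``This is immediate from Lemma~\ref{lemma:columnRemoval} and Lemma~\ref{lemma:transitive}.'' You have additionally spelled out the symmetry of $\eqv{k}{\ell}$ and the degenerate rectangle case $\overline{\lambda} = \varnothing$, which the paper leaves implicit; that is a reasonable extra precaution, since Definition~\ref{defn:eqv} is stated only for non-empty partitions, but it does not constitute a different method.
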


\begin{proof}
This is immediate from Lemma~\ref{lemma:columnRemoval} and Lemma~\ref{lemma:transitive}.
\end{proof}

\begin{lemma}\label{lemma:removable}
Let $\lambda$ and $\mu$ be partitions and let $\ell$, $m \in \N$ be such that
$\ell \ge \ell(\lambda) -1$ and $m \ge \ell(\mu) - 1$.  Let $\lambda^\star$ and $\mu^\star$
be the partitions obtained from $\lambda$ and $\mu$ by removing
all columns of length $\ell+1$ and $m+1$, respectively. If $\lambda \eqv{\ell}{m} \mu$
then $\lambda^\star$ and $\mu^\star$ have the same number of removable boxes.
\end{lemma}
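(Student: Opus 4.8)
My plan is to exploit condition (h) of Theorem~\ref{thm:eqvConds}, which says that $\lambda \eqv{\ell}{m} \mu$ is equivalent to the equality of difference multisets $(C(\lambda)+\ell+1)/H(\lambda) = (C(\mu)+m+1)/H(\mu)$. The number of removable boxes of a partition has a clean description in terms of hook lengths: a box $(i,j) \in [\lambda]$ is removable if and only if its hook length is $1$, so the number of removable boxes of $\lambda$ equals the multiplicity of $1$ in the multiset $H(\lambda)$. Thus it suffices to show that the multiplicity of $1$ in $H(\lambda)$ equals the multiplicity of $1$ in $H(\mu)$, given the multiset identity in (h).

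First I would reduce to $\lambda^\star$ and $\mu^\star$. Removing a column of length $\ell+1$ from $\lambda$ changes neither the multiplicity of $1$ in $H(\lambda)$ nor, by an easy check, the validity of the hypothesis (this is essentially Lemma~\ref{lemma:removable}'s own setup, or one can invoke Lemma~\ref{lemma:columnRemoval} together with Theorem~\ref{thm:eqvConds}(h)); so without loss of generality we may assume $\lambda = \lambda^\star$ has no column of length $\ell+1$ and $\mu = \mu^\star$ has no column of length $m+1$. Now I would extract information from (h) by looking at the behaviour of the difference multisets near $q = 0$, i.e.\ at the \emph{smallest} elements. The point is that $C(\lambda)+\ell+1$ consists of positive integers all of which are $\ge \ell+1-\ell(\lambda)+1 = \ell - \ell(\lambda) + 2 \ge 2$ (using $\ell \ge \ell(\lambda)-1$), with equality to $2$ possible only in a controlled way; in particular, since we have removed columns of length $\ell+1$, one checks that $1 \notin C(\lambda)+\ell+1$. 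Therefore the multiplicity of $1$ in the difference multiset $(C(\lambda)+\ell+1)/H(\lambda)$ is exactly $-(\text{mult.\ of }1\text{ in }H(\lambda))$, that is, minus the number of removable boxes of $\lambda$. The same computation on the $\mu$ side gives minus the number of removable boxes of $\mu$, and (h) forces these to be equal.

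The main obstacle, and the step I would be most careful about, is verifying that after removing all columns of length $\ell+1$ the set $C(\lambda)+\ell+1$ contains no $1$, equivalently that $-\ell \notin C(\lambda)$, equivalently that $[\lambda]$ has no box of content $-\ell$; since the minimal content in $[\lambda]$ is $1-\ell(\lambda)$, this needs $\ell(\lambda) \le \ell$, i.e.\ precisely that $\lambda$ has no column of length $\ell+1$, which is exactly the normalization we arranged. One should also confirm that the minimum-weight shift (the power of $q$ absorbed in passing between Theorem~\ref{thm:eqvConds}(f) and (h)) does not interfere: it does not, because (h) is already a statement purely about difference multisets and not about any power of $q$. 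Finally I would note that removable boxes are unaffected by the column removals performed at the start — a column of length $\ell+1$ (the full current height) never contains a removable box except possibly the bottom one, and removing all such columns simply shifts which boxes are removable in a way that preserves their count — so the conclusion transfers back from $\lambda^\star, \mu^\star$ to $\lambda, \mu$. Actually the statement of the lemma already phrases the conclusion in terms of $\lambda^\star$ and $\mu^\star$, so this last transfer is not even needed; the argument terminates once the multiplicities of $1$ in $H(\lambda^\star)$ and $H(\mu^\star)$ are shown equal.
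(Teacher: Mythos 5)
Your proof is correct and takes a genuinely different route from the paper's. The paper invokes Theorem~\ref{thm:eqvConds}(g): it looks at the number of semistandard tableaux of shape $\lambda^\star$ whose weight is one more than minimal, and notes this count equals the number of removable boxes of $\lambda^\star$ (each such tableau is the minimum-weight tableau with the entry in one removable box increased by $1$). You instead invoke Theorem~\ref{thm:eqvConds}(h) and read off the number of removable boxes of $\lambda^\star$ as $-1$ times the multiplicity of $1$ in the difference multiset $\bigl(C(\lambda^\star)+\ell+1\bigr) / H(\lambda^\star)$: after the column-removal normalization one has $\ell \ge \ell(\lambda^\star)$, hence the shifted content multiset contains no $1$'s, so the multiplicity of $1$ is governed entirely by $-H(\lambda^\star)$, and a hook length of $1$ is precisely the removability condition. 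Both arguments extract ``first-order'' data near the minimum weight and are morally dual through the equivalence (f)$\iff$(h) in Theorem~\ref{thm:eqvConds}; yours is the more algebraic reading, the paper's the more combinatorial.

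One small slip, which you catch yourself later in the paragraph but is worth flagging: you first assert that the smallest element of $C(\lambda)+\ell+1$ is $\ell-\ell(\lambda)+2 \ge 2$ ``using $\ell \ge \ell(\lambda)-1$,'' but that hypothesis only gives $\ell-\ell(\lambda)+2 \ge 1$, which is not enough. The correct justification, as you note afterwards, is that after removing all columns of length $\ell+1$ one has $\ell(\lambda^\star) \le \ell$, and then $\ell - \ell(\lambda^\star) + 2 \ge 2$ holds; this is where the normalization genuinely earns its keep, so the first inequality chain should be cut or rewritten to avoid suggesting the weaker hypothesis suffices.
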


\begin{proof}
If $\ell = \ell(\lambda) - 1$ then $\lambda^\star = \overline{\lambda}$ 
and by Lemma~\ref{lemma:columnRemoval}
$\lambda \eqv{\ell}{\ell} {\lambda^\star}$. Otherwise $\ell > \ell(\lambda) - 1$
and $\lambda = \lambda^\star$. Hence $\lambda \eqv{\ell}{\ell} \lambda^\star$
and $\mu \eqv{m}{m} \mu^\star$.  By Lemma~\ref{lemma:transitive},
$\lambda^\star \eqv{\ell}{m} \mu^\star$. By Theorem~\ref{thm:eqvConds}(g)
and the final statement in this theorem, 
\[ \bigl| \Se{b(\lambda)+1}{\ell}{\lambda^\star} \bigr| = \bigl|\Se{b(\mu)+1}{m}{\mu^\star} \bigr|. \]
For each removable box in $\lambda^\star$, there is a
corresponding semistandard tableau of shape $\lambda^\star$ and
weight $b(\lambda^\star)+1$, obtained from the unique semistandard
tableau of shape $\lambda^\star$ and minimal weight $b(\lambda^\star)$ 
by increasing the entry in the removable box by $1$.
Conversely, every element of $\Se{\b(\lambda^\star)}{\ell}{\lambda^\star}$ arises
in this way.
A similar result holds for~$\mu^\star$. The displayed equation therefore implies
that the numbers of removable boxes are the same.
\end{proof}

\begin{lemma}\label{lemma:selfEquivalence}
Let $\lambda$ be a non-empty partition and let $\ell$, $m \in \N$
be such that $\ell, m \ge \ell(\lambda) - 1$. Then $\lambda \eqv{\ell}{m} \lambda$ 
if and only if $\ell = m$.
\end{lemma}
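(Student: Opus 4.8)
The plan is to pass to the combinatorial form of the equivalence and compare the extreme $q$-degrees. The ``if'' direction is trivial: when $\ell = m$ the relation $\lambda \eqv{\ell}{m} \lambda$ simply asserts $\nabla^\lambda\SSym^\ell\E \cong \nabla^\lambda\SSym^\ell\E$, which certainly holds (the hypothesis $\ell \ge \ell(\lambda)-1$ is needed only so that the relation is defined).

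For the converse, suppose $\lambda \eqv{\ell}{m} \lambda$. By Theorem~\ref{thm:eqvConds}(e) there is $d \in \Z$ with $s_\lambda(1,q,\ldots,q^\ell) = q^d\, s_\lambda(1,q,\ldots,q^m)$. I would then record the two extreme exponents of $s_\lambda(1,q,\ldots,q^N)$ for any $N \ge \ell(\lambda)-1$. Its minimum-degree term is $q^{\b(\lambda)}$, as noted after Definition~\ref{defn:b}. For the top degree, observe that by~\eqref{eq:SchurSpec} and Lemma~\ref{lemma:SLchar} the Laurent polynomial $q^{-N|\lambda|/2} s_\lambda(1,q,\ldots,q^N) = \Psi_{\nabla^\lambda\SSym^N\E}(q^{1/2})$ is invariant under $q \mapsto q^{-1}$, hence palindromic about its constant term; together with the bottom exponent this forces the top exponent of $s_\lambda(1,q,\ldots,q^N)$ to be $N|\lambda| - \b(\lambda)$. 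Consequently the \emph{span} (top degree minus bottom degree) of $s_\lambda(1,q,\ldots,q^N)$ equals $N|\lambda| - 2\b(\lambda)$.

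Multiplying a polynomial by $q^d$ does not change its span, so equating the spans of the two sides of $s_\lambda(1,q,\ldots,q^\ell) = q^d s_\lambda(1,q,\ldots,q^m)$ gives $\ell|\lambda| - 2\b(\lambda) = m|\lambda| - 2\b(\lambda)$, that is $(\ell - m)|\lambda| = 0$; since $\lambda$ is non-empty, $|\lambda| \ge 1$ and hence $\ell = m$. As an alternative to the span computation one can set $q = 1$ in Theorem~\ref{thm:HCF}, obtaining $s_\lambda(1,\ldots,1) = \prod_{(i,j)\in[\lambda]} (\ell+1+j-i)/h_{(i,j)}(\lambda)$; each factor $\ell+1+j-i$ is a positive integer strictly increasing in $\ell$ (note $\ell+1-i \ge \ell(\lambda)-i \ge 0$), so the product is strictly increasing in $\ell$, again forcing $\ell = m$.

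There is no genuine obstacle here; the only point needing a little care is justifying that $s_\lambda(1,q,\ldots,q^N)$ is palindromic with top exponent $N|\lambda|-\b(\lambda)$. This is precisely the central-symmetry observation already extracted from Lemma~\ref{lemma:SLchar} (equivalently, from~\eqref{eq:SLirredChar}) in the proof of Theorem~\ref{thm:eqvConds}, and it holds unconditionally, not using any plethystic equivalence. The hypothesis that $\lambda$ is non-empty enters only at the final step, to guarantee $|\lambda| \neq 0$.
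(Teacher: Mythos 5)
Your proof is correct, and both of your arguments (the span comparison and the $q\to 1$ specialization) go through. However, you take a different route from the paper. The paper's proof is shorter and more structural: it invokes Theorem~\ref{thm:eqvConds}(h) directly, giving the difference-multiset identity $(C(\lambda)+\ell+1)/H(\lambda) = (C(\lambda)+m+1)/H(\lambda)$; since $H(\lambda)$ appears identically on both sides it cancels, leaving $C(\lambda)+\ell+1 = C(\lambda)+m+1$, which forces $\ell=m$ because $C(\lambda)$ is non-empty. Your first argument instead uses condition~(e) and the span (difference of extreme $q$-degrees), a quantity invariant under multiplication by $q^d$; this is elementary and does not need the Hook Content Formula at all, only the unimodality/symmetry fact already embedded in Theorem~\ref{thm:eqvConds}. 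Your alternative $q=1$ argument, using that $s_\lambda(1^{\,\ell+1})$ is strictly increasing in $\ell$, is also a nice one-line finish, though it does rely on Theorem~\ref{thm:HCF}. The paper's multiset cancellation is closest in spirit to how later results (e.g.\ Proposition~\ref{prop:aRelation}) are proved, so it fits the surrounding development better, but your span argument is arguably more self-contained.
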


\begin{proof}
By Theorem~\ref{thm:eqvConds}(h) we have $\bigl( C(\lambda) + \ell + 1 \bigr) / H(\lambda)
= \bigl( C(\lambda) + m + 1 \bigr) / H(\lambda)$. Cancelling the equal sets of hook lengths
we have $C(\lambda) + \ell + 1 = C(\lambda) + m + 1$. Since $\lambda$ is non-empty
it follows that $\ell = m$.
\end{proof}

Recall that $a(\lambda)$ denotes the first part of a partition $\lambda$.

\begin{lemma}\label{lemma:aGreatest}
Let $\lambda$ be a partition and let $\ell \in \N$ be such that $\ell \ge \ell(\lambda)$.
The unique greatest element of $C(\lambda) + \ell + 1$ is $a(\lambda) + \ell$.
\end{lemma}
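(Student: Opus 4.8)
The statement to prove is Lemma~\ref{lemma:aGreatest}: if $\ell \ge \ell(\lambda)$, then the unique greatest element of the multiset $C(\lambda)+\ell+1$ is $a(\lambda)+\ell$. Since $C(\lambda) = \{j-i : (i,j) \in [\lambda]\}$, shifting by $\ell+1$ just shifts every content by a constant, so it suffices to show that the unique greatest element of $C(\lambda)$ itself is $a(\lambda)-1$, and then add $\ell+1$. The hypothesis $\ell \ge \ell(\lambda)$ is not actually needed for the identification of the maximum element; it is presumably recorded because it guarantees $C(\lambda)+\ell+1 \subseteq \N$ (as noted just after Definition~\ref{defn:hlContent}), so that $C(\lambda)+\ell+1$ is a legitimate multiset of positive integers in the sense required for the difference-multiset formalism. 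I would mention this but focus the argument on the content multiset.

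\textbf{Key steps.} First, the content of a box $(i,j) \in [\lambda]$ is $j-i$. Among all boxes, this is clearly maximized by making $j$ as large as possible and $i$ as small as possible; the box $(1, a(\lambda))$ — the rightmost box of the first row — achieves $j-i = a(\lambda)-1$, and it exists because $\lambda$ is understood here to be a partition with $a(\lambda) = \lambda_1 \ge 1$ (the case $\lambda = \varnothing$ being vacuous or excluded by the ambient conventions, since $a(\varnothing)=0$ and the claim would be about an empty multiset). Second, I claim no other box attains content $a(\lambda)-1$: if $(i,j) \in [\lambda]$ with $j - i = a(\lambda)-1$ and $i \ge 2$, then $j = a(\lambda)-1+i \ge a(\lambda)+1 > \lambda_1 \ge \lambda_i$, contradicting $(i,j) \in [\lambda]$ (which requires $j \le \lambda_i$). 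Hence the only box with content $a(\lambda)-1$ is $(1,a(\lambda))$, so $a(\lambda)-1$ occurs in the multiset $C(\lambda)$ with multiplicity exactly one, and every other element of $C(\lambda)$ is strictly smaller: for any box $(i,j)$ with $i \ge 1$ and $j \le \lambda_i \le \lambda_1 = a(\lambda)$, we have $j - i \le a(\lambda) - 1$, with equality only in the excluded case. Third, adding the constant $\ell+1$ to every element of $C(\lambda)$ preserves both the position of the maximum and its multiplicity, so the unique greatest element of $C(\lambda)+\ell+1$ is $a(\lambda)-1+\ell+1 = a(\lambda)+\ell$, as required.

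\textbf{Main obstacle.} There is essentially no obstacle: the argument is a direct inequality chase on contents. The only point requiring a moment's care is confirming uniqueness of the maximizing box — i.e., that the maximum is attained with multiplicity one rather than merely that $a(\lambda)+\ell$ is an element — and this follows immediately from the observation that $j = a(\lambda)-1+i$ exceeds $\lambda_i$ whenever $i \ge 2$. I would therefore present this as a short two- or three-sentence proof, remarking that the hypothesis $\ell \ge \ell(\lambda)$ serves only to place $C(\lambda)+\ell+1$ inside $\N$ so that it is a bona fide multiset in the ambient formalism of \S\ref{subsec:differenceMultiset}.
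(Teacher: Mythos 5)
Your proof is correct and takes essentially the same route as the paper: both identify the box $(1,a(\lambda))$ as having the unique greatest content $a(\lambda)-1$ and then add $\ell+1$. Your version just spells out the uniqueness argument (that $i\ge 2$ forces $j>\lambda_i$) that the paper leaves implicit.
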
 

\begin{proof}
The box $\bigl( 1,a(\lambda) \bigr)$ of $[\lambda]$ 
has the unique greatest content of any box in $\lambda$, namely of $a(\lambda) - 1$.
\end{proof}

Recall that a plethystic equivalence $\lambda \eqv{\ell}{m} \mu$ is \emph{prime} if
$\ell \ge \ell(\lambda)$ and $m \ge \ell(\mu)$.

\begin{proposition}\label{prop:aRelation}
Let $\lambda$ and $\mu$ be partitions.
If $\lambda \eqv{\ell}{m} \mu$ is a prime equivalence then $a(\lambda) + \ell = a(\mu) + m$.
\end{proposition}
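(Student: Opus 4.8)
The plan is to use the characterization of prime plethystic equivalence via difference multisets in Theorem~\ref{thm:eqvConds}(h), namely that $\lambda \eqv{\ell}{m} \mu$ implies $(C(\lambda) + \ell + 1)/H(\lambda) = (C(\mu) + m + 1)/H(\mu)$ as difference multisets, and then to identify $a(\lambda) + \ell$ as the unique greatest element appearing with positive multiplicity on the left-hand side.

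First I would rewrite the equality of difference multisets as an equality of multiset unions: $(C(\lambda) + \ell + 1) \cup H(\mu) = (C(\mu) + m + 1) \cup H(\lambda)$. Next I would locate the greatest element of each side. By Lemma~\ref{lemma:aGreatest}, since the equivalence is prime we have $\ell \ge \ell(\lambda)$ and $m \ge \ell(\mu)$, so the unique greatest element of $C(\lambda) + \ell + 1$ is $a(\lambda) + \ell$ and the unique greatest element of $C(\mu) + m + 1$ is $a(\mu) + m$. I then need to compare these with the greatest hook lengths. Recall (as noted after Definition~\ref{defn:hlContent}) that the greatest hook length of a non-empty partition $\nu$ is $h_{(1,1)}(\nu) = a(\nu) + \ell(\nu) - 1$. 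Since $\ell \ge \ell(\lambda)$ we get $a(\lambda) + \ell \ge a(\lambda) + \ell(\lambda) > h_{(1,1)}(\lambda) \ge h$ for every $h \in H(\lambda)$; similarly $a(\mu) + m$ strictly exceeds every element of $H(\mu)$.

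Putting these together: the greatest element of the left-hand union $(C(\lambda) + \ell + 1) \cup H(\mu)$ is $\max\{a(\lambda)+\ell,\ \max H(\mu)\} = a(\lambda) + \ell$, because $a(\lambda)+\ell$ also dominates $a(\mu)+m$ (by symmetry the greatest element of the right-hand union is $a(\mu)+m$, and the two unions are equal, forcing $a(\lambda)+\ell = a(\mu)+m$ once we know each is the max of its side). Concretely, the greatest element of the left union equals the greatest element of the right union; the former is $a(\lambda)+\ell$ (as $a(\lambda)+\ell > \max H(\mu)$) and the latter is $a(\mu)+m$ (as $a(\mu)+m > \max H(\lambda)$), so $a(\lambda)+\ell = a(\mu)+m$, which is the claim. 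A small edge case to dispatch is when $\lambda$ or $\mu$ is empty, but the hypotheses of a prime equivalence with $\ell \ge \ell(\lambda)$, $m \ge \ell(\mu)$ together with Definition~\ref{defn:eqv} requiring non-empty partitions rule this out, so no separate treatment is needed.

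The only mild obstacle is being careful about whether the maximum of a multiset union could be a hook length rather than a shifted content; the inequality $\ell \ge \ell(\lambda)$ (primeness, used crucially) is exactly what makes $a(\lambda)+\ell$ strictly beat $h_{(1,1)}(\lambda)$, and likewise on the $\mu$ side. Everything else is bookkeeping with difference multisets, for which Corollary~\ref{cor:uniqueFactorization} and the discussion of multiset unions in \S\ref{subsec:differenceMultiset} provide the needed formalism.
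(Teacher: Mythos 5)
Your approach matches the paper's: both invoke Theorem~\ref{thm:eqvConds}(h) and Lemma~\ref{lemma:aGreatest}, then use primeness ($\ell \ge \ell(\lambda)$, $m \ge \ell(\mu)$) to conclude that $a(\lambda)+\ell$ strictly exceeds every hook length of $\lambda$ and hence is the unique greatest element with positive multiplicity in $(C(\lambda)+\ell+1)/H(\lambda)$, and likewise $a(\mu)+m$ for $\mu$, after which comparing these gives the result. The paper argues directly with the difference multisets rather than passing to the equivalent union equality, which sidesteps the slightly tangled justification in your penultimate sentence (you assert $a(\lambda)+\ell > \max H(\mu)$ before it is available); your argument does close correctly, but the clean way to finish from the union form is to observe that $a(\mu)+m$ is the maximum of the right-hand union, hence of the left-hand union, and since $a(\mu)+m > \max H(\mu)$ that maximum must be $a(\lambda)+\ell$.
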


\begin{proof}
By hypothesis, each hook length of $\lambda$ 
is at most $a(\lambda) + \ell(\lambda) - 1$. By Lemma~\ref{lemma:aGreatest},
the unique greatest element of $\bigl( C(\lambda) + \ell + 1 \bigr) / H(\lambda)$ is $a(\lambda) + \ell$.
The lemma now follows from Theorem~\ref{thm:eqvConds}(h). 
\end{proof}

\section{Conjugate partitions}
\label{sec:conjugates}

\subsection{Background}
The \emph{rank} of a non-empty partition $\lambda$, denoted $\rank(\lambda)$, 
is the maximum $r$ such that $\lambda_r \ge r$.
The \emph{Durfee square} of $\lambda$ is the subset $\{(i,j) : 1 \le i, j \le \rank(\lambda)\}$
of its Young diagram.
Theorem~\ref{thm:conjugates} also requires 
the following less standard definitions.

\begin{definition}\label{defn:southEastPartitions}
Let $\lambda$ be a partition and let $d = \rank(\lambda)$.
\begin{itemize}
\item[(i)] The \emph{south-rank} of $\lambda$, denoted $\srank(\lambda)$, 
is the maximum $j \in \N_0$ such that $\lambda_{d+j} = d$.
\item[(ii)] The \emph{south-partition} of $\lambda$, denoted $\spar(\lambda)$ is
$(\lambda_{d + \srank(\lambda) + 1}, \ldots, \lambda_{\ell(\lambda)})$.
\item[(iii)] The \emph{east-rank} of $\lambda$, denoted $\erank(\lambda)$, is $\srank(\lambda')$.
\item[(iv)] The \emph{east-partition} of $\lambda$, denoted $\epar(\lambda)$, is $\spar(\lambda')'$.
\end{itemize}
\end{definition}


\noindent These quantities are shown in Figure~1.
For example, the partition $\lambda = (8,6,5,3,3,1)$ shown in Figure~2
has
$\rank(\lambda) = 3$, $\erank(\lambda) = 2$, $\epar(\lambda) = (3,1)$,
$\srank(\lambda) = 2$ and $\spar(\lambda) = (1)$.

\begin{figure}[h]\label{figure:Durfee}

\begin{tikzpicture}[x=0.45cm,y=-0.45cm]
\renewcommand{\l}{-0.8}
\newcommand{\lh}{-0.6}
\newcommand{\h}{-0.6}
\newcommand{\e}{0.6}
\newcommand{\eh}{0.8}
\draw (0,0)--(14,0)--(14,1)--(12,1)--(12,2);
\draw (11,3)--(10,3)--(10,4)--(9,4)--(9,6)--(6,6)--(6,8)--(4,8)--(4,10)--(3,10);
\draw (0,0)--(0,12)--(2,12)--(2,11);
\draw(0,6)--(6,6); \draw(6,0)--(6,6);
\draw (9,0)--(9,6); \draw(0,8)--(6,8);
\node at (\l,3) {$\scriptstyle \rank(\lambda)$};
\draw[->] (\l,3-\e)--(\l,0.25); \draw[->] (\l,3+\e)--(\l,5.75);
\node at (3,\lh) {$\scriptstyle \rank(\lambda)$};
\draw[->] (3-\eh,\lh)--(0.25,\lh); \draw[->] (3+\eh,\lh)--(5.75,\lh);
\node at (7.5,\lh) {$\scriptstyle \erank(\lambda)$};
\draw[->] (7.5+\eh,\lh)--(9,\lh); \draw[->] (7.5-\eh,\lh)--(6,\lh);
\node at (\l,7) {$\scriptstyle \srank(\lambda)$};
\draw[->] (\l,7.5)--(\l,8); \draw[->] (\l,6.5)--(\l,6);

\draw (8,6)--(9,6)--(9,5)--(8,5)--(8,6);

\draw (6,7)--(6,8)--(5,8)--(5,7)--(6,7);

\draw[->, thick] (9.5,5.5)--(8.5,5.5);
\node at (10.4,5.5) {$\scriptstyle \erank(\lambda)$};

\draw[->, thick] (6.5,7.5)--(5.5,7.5);
\node at (7.75,7.5) {$\scriptstyle -\srank(\lambda)$};

\hatchbox{-8}{6}{6pt}
\hatchbox{-6}{9}{6pt}

\node at (10.5,1.75) {$\scriptstyle \epar(\lambda)$};
\node at (1.75,9.5) {$\scriptstyle \spar(\lambda)$};
\node at (2.5,10.25) {$\iddots$};
\node at (11.5,2.25) {$\iddots$};

\end{tikzpicture}

\caption{The statistics $\rank(\lambda)$, $\erank(\lambda)$, $\srank(\lambda)$
and partitions $\epar(\lambda)$, $\spar(\lambda)$. Two boxes have their content indicated. }

\end{figure}

We begin with three equivalent conditions for the existence of infinitely many plethystic
equivalences between distinct partitions $\lambda$ and $\mu$. 
We then prove a fourth equivalent condition, namely that
$\mu = \lambda'$ and $\epar(\lambda) = \spar(\lambda)'$,
obtaining Theorem~\ref{thm:KingStrongest} and, \emph{a fortiori}, Theorem~\ref{thm:conjugates}.

\begin{proposition}\label{prop:King}
Let $\lambda$ and $\mu$ be non-empty partitions. The following
are equivalent.
\begin{thmlist}
\item There exist infinitely many
pairs $(\ell,m)$ such that $\lambda \eqv{\ell}{m} \mu$.
\item There exists $l^\dagger \ge a(\lambda) + 2\bigl( \ell(\lambda) - 1\bigr)$ and
$m^\dagger \ge a(\mu) + 2\bigl( \ell(\mu) - 1\bigr)$
such that $\lambda \eqv{l^\dagger}{m^\dagger} \mu$.
\item $H(\lambda) = H(\mu)$ and there exists
 $\sh \in \Z$ such that $C(\lambda) + \sh = C(\mu)$.
\end{thmlist} 
\end{proposition}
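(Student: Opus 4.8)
The plan is to prove the cyclic chain of implications (i) $\Rightarrow$ (iii) $\Rightarrow$ (ii) $\Rightarrow$ (i), using Theorem~\ref{thm:eqvConds}(h) as the bridge between the plethystic equivalence and the arithmetic of the multisets $C(\lambda)$ and $H(\lambda)$. Recall that condition (h) says $\lambda \eqv{\ell}{m} \mu$ iff $(C(\lambda)+\ell+1)/H(\lambda) = (C(\mu)+m+1)/H(\mu)$ as difference multisets, and that $C(\lambda)+\ell+1$ and $C(\mu)+m+1$ are genuine multisubsets of $\N$ whenever $\ell \ge \ell(\lambda)-1$ and $m \ge \ell(\mu)-1$.

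For (i) $\Rightarrow$ (iii): suppose there are infinitely many pairs $(\ell,m)$ with $\lambda \eqv{\ell}{m} \mu$. Since $|\lambda|\ell = $ degree $= |\mu|m$ matches up only in a controlled way, and more simply since by Proposition~\ref{prop:aRelation} (once $\ell,m$ are large enough that the equivalences are prime) $a(\lambda)+\ell = a(\mu)+m$, the difference $\ell - m = a(\mu)-a(\lambda) =: \sh$ is forced to be constant across all the prime equivalences; infinitely many pairs therefore means $\ell$ (hence $m = \ell - \sh$) ranges over an infinite set. Now fix two such large pairs, say $(\ell_1, \ell_1-\sh)$ and $(\ell_2, \ell_2-\sh)$ with $\ell_1 < \ell_2$. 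From (h) applied to each, $(C(\lambda)+\ell_i+1)/H(\lambda) = (C(\mu)+\ell_i-\sh+1)/H(\mu)$, i.e. $(C(\lambda)+\ell_i+1)/(C(\mu)+\ell_i-\sh+1) = H(\lambda)/H(\mu)$ for $i=1,2$; the right-hand side is independent of $i$. Writing $X = C(\lambda)$, $Y = C(\mu)$, and $c = \ell_i+1$, this says $(X+c)/(Y+c-\sh)$ is the same difference multiset for two distinct values of $c$. Subtracting (in the free abelian group on $\N$) gives $(X+c_1) - (X+c_2) = (Y+c_1-\sh) - (Y+c_2-\sh)$, which is automatic; the real leverage is that for $c$ large enough that both $X+c$ and $Y+c-\sh$ are honest subsets of $\N$ with all elements exceeding any cancellation threshold, equality of the difference multiset $H(\lambda)/H(\mu)$ — which is fixed — together with positivity forces $H(\lambda)=H(\mu)$ and then $X+c = Y+c-\sh$, i.e. $C(\lambda)+\sh' = C(\mu)$ for $\sh' = \sh$. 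I would make this precise by the following standard argument: a difference multiset $A/B$ with $A,B\subseteq\N$ of the same cardinality equals a fixed difference multiset $M$; if we can choose the equivalences so that $A$ and $B$ are disjoint (achievable by taking $\ell$ so large that $C(\lambda)+\ell+1$ and $H(\lambda)$, resp. $C(\mu)+m+1$ and $H(\mu)$, are disjoint), then $M$ has no cancellation, so its positive part is $C(\lambda)+\ell+1$ and $C(\mu)+m+1$ and its negative part is $H(\lambda)$ and $H(\mu)$; comparing the positive parts gives $C(\lambda)+\ell+1 = C(\mu)+m+1$, hence $C(\lambda) + \sh = C(\mu)$ with $\sh = m-\ell$, and comparing the negative parts gives $H(\lambda) = H(\mu)$.

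For (iii) $\Rightarrow$ (ii): given $H(\lambda) = H(\mu)$ and $C(\lambda)+\sh = C(\mu)$, pick any $\ell^\dagger$ with $\ell^\dagger \ge a(\lambda) + 2(\ell(\lambda)-1)$ (this threshold guarantees $\ell^\dagger \ge \ell(\lambda)-1$, so (h) applies, and also — as I would check — that $C(\lambda)+\ell^\dagger+1$ is disjoint from $H(\lambda)$, since the largest hook length is $a(\lambda)+\ell(\lambda)-1$ while the smallest element of $C(\lambda)+\ell^\dagger+1$ is $\ell^\dagger+1-(\ell(\lambda)-1) \ge a(\lambda)+\ell(\lambda)-1 +1$, strictly larger). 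Set $m^\dagger = \ell^\dagger + \sh$; one checks $m^\dagger \ge a(\mu)+2(\ell(\mu)-1)$ using $a(\mu) = a(\lambda)+\sh$ and $\ell(\mu) = \ell(\lambda)$ (both forced by $C(\lambda)+\sh = C(\mu)$, since $\ell(\lambda) = 1 + \max(-C(\lambda))$ and $a(\lambda) = 1 + \max C(\lambda)$). Then $(C(\lambda)+\ell^\dagger+1)/H(\lambda) = (C(\mu)-\sh+\ell^\dagger+1)/H(\mu) = (C(\mu)+m^\dagger+1)/H(\mu)$, so by (h), $\lambda \eqv{\ell^\dagger}{m^\dagger} \mu$, giving (ii).

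Finally (ii) $\Rightarrow$ (i) is the easy direction: if $\lambda \eqv{\ell^\dagger}{m^\dagger} \mu$ with $\ell^\dagger, m^\dagger$ large as in (ii), I would use a ``column-stabilization'' argument — adding a full column of length $\ell^\dagger+1$ to $\lambda$ and a full column of length $m^\dagger+1$ to $\mu$ does not change the difference multiset in (h) beyond a uniform shift, so one obtains new equivalences; more directly, from (h) the equality $(C(\lambda)+\ell^\dagger+1)/H(\lambda) = (C(\mu)+m^\dagger+1)/H(\mu)$ persists after incrementing both $\ell^\dagger$ and $m^\dagger$ by $1$, because adding the same constant to the greatest element on each side while the disjointness (verified via the large threshold in (ii)) guarantees the difference multiset simply shifts its positive part. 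Iterating gives $\lambda \eqv{\ell^\dagger + k}{m^\dagger + k} \mu$ for all $k \in \N_0$, which is infinitely many pairs, establishing (i). The main obstacle I anticipate is the bookkeeping in (i) $\Rightarrow$ (iii): one must carefully justify that among infinitely many pairs there are arbitrarily large prime ones with a common value of $m-\ell$, and then extract genuine multiset equalities from equalities of difference multisets by exploiting disjointness — this is where the threshold hypotheses in (ii) and the content/hook-length size estimates (Lemma~\ref{lemma:aGreatest} and the bound on the greatest hook length) do the real work.
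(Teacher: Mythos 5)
Your proof is essentially the paper's proof, reordered as (i)$\Rightarrow$(iii)$\Rightarrow$(ii)$\Rightarrow$(i) rather than (i)$\Rightarrow$(ii)$\Rightarrow$(iii)$\Rightarrow$(i): both rest on Theorem~\ref{thm:eqvConds}(h) together with the observation that once $\ell$ exceeds the threshold $a(\lambda)+2(\ell(\lambda)-1)$, the multisets $C(\lambda)+\ell+1$ and $H(\lambda)$ are disjoint (likewise for $\mu$), so the difference-multiset equality splits cleanly into $C(\lambda)+\ell+1 = C(\mu)+m+1$ and $H(\lambda)=H(\mu)$. Two small slips in your (iii)$\Rightarrow$(ii) step should be corrected: you want $m^\dagger = \ell^\dagger - \sh$ rather than $\ell^\dagger + \sh$ (your displayed chain of equalities already implicitly uses the minus sign), and $C(\lambda)+\sh = C(\mu)$ forces $\ell(\mu) = \ell(\lambda) - \sh$, not $\ell(\mu) = \ell(\lambda)$: compare minima using your own formula $\ell(\nu) = 1 - \min C(\nu)$. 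These two errors cancel each other in the bound check, since $a(\mu) + 2\bigl(\ell(\mu)-1\bigr) = a(\lambda)+2\bigl(\ell(\lambda)-1\bigr) - \sh$, so $m^\dagger = \ell^\dagger - \sh$ meets its threshold precisely when $\ell^\dagger$ meets its own; but as written the asserted relation $\ell(\mu)=\ell(\lambda)$ is false whenever $\sh\ne 0$. Finally, your (ii)$\Rightarrow$(i) is more roundabout than necessary: the paper's (iii)$\Rightarrow$(i) is immediate, since once $H(\lambda)=H(\mu)$ and $C(\lambda)+\sh = C(\mu)$ are known, every admissible pair $(\ell,m)$ with $\ell - m = \sh$ gives an equivalence directly via~(h), whereas your incrementing/column-stabilization argument effectively re-derives (iii) from (ii) before closing the cycle.
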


\begin{proof} 
Suppose (i) holds. By Theorem~\ref{thm:eqvConds}(h), 
there exist arbitrarily large $\ell$ such that, for some $m$,
\begin{equation}
\label{eq:hcKing}
\bigl( C(\lambda) + \ell + 1 \bigr) / H(\lambda) =  \bigl( C(\mu) + m + 1 \bigr) / H(\mu). \end{equation}
When $\ell$ is very large $C(\lambda) + \ell + 1$ is disjoint
from $H(\lambda)$, and by Lemma~\ref{lemma:aGreatest}
the greatest element with non-zero multiplicity in the left-hand side
is \hbox{$a(\lambda) + \ell$}. Hence $m$ is also very large and (ii) holds.
If $\ell^\dagger$ and $m^\dagger$ satisfy~(ii) then, by~\eqref{eq:hcKing},
$\min (C(\lambda) + \ell^\dagger + 1) = -\ell(\lambda) + \ell^\dagger + 2
> a(\lambda) + \ell(\lambda) - 1 = \max H(\lambda)$,
and similarly $\min ( C(\mu) + m^\dagger + 1) > \max H(\mu)$.
Hence, the multisets $C(\lambda) + \ell^\dagger + 1$ and $H(\lambda)$ 
are disjoint, as are the multisets $C(\mu) + m^\dagger + 1$ and $H(\mu)$,
and so we have $H(\lambda) = H(\mu)$ and $C(\lambda) + \ell^\dagger + 1 = C(\mu) + m^\dagger + 1$.
Moreover, comparing minimum elements in~\eqref{eq:hcKing} we have
\[ -\ell(\lambda) + \ell^\dagger = -\ell(\mu) + m^\dagger. \]
Hence (iii) holds taking $\sh = \ell^\dagger - m^\dagger$.
Finally if (iii) holds, then (i) holds whenever $\ell - m = \sh$.
\end{proof}

We remark that the bound in (ii) is tight: for example, by the Hermite reciprocity
seen in~\S\ref{subsec:Hermite}, if $\lambda = (n)$ and $\mu = (n+1)$ where $n \in \N$, then
$\lambda\eqv{n+1}{n} \mu$ and $n+1 \ge a(\lambda) + 2\bigl( \ell(\lambda) -1\bigr) = n$.
As expected from (ii), $n \not\ge a(\mu) + 2\bigl( \ell(\mu) - 1) = n+1$.

Work of Craven~\cite{CravenHookLengths} shows that there is no
simple characterization of when $H(\lambda) = H(\mu)$. Fortunately
the second condition in (iii) is much more tractable.

\begin{figure}[t]
\begin{center}
\begin{tikzpicture}[x=0.575cm,y=-0.575cm]
\renewcommand{\ss}{\scriptstyle}
\foreach \j in {1,2,3,4,5,6,7,8} 
	\youngbox{5}{\j};
\foreach \j in {1,2,3,4,5}
	\foreach \i in {3,4,5}
		\fillbox{\i}{\j}{grey};	
\foreach \j in {1,2,3}
	\fillbox{2}{\j}{grey};	
\foreach \j in {1,2,3}
	\fillbox{1}{\j}{grey}; 
\foreach \j in {1,2,3,4,5,6} 
	\youngbox{4}{\j};
\foreach \j in {1,2,3,4,5} 
	\youngbox{3}{\j};
\foreach \j in {1,2,3} 
	\youngbox{2}{\j};
\foreach \j in {1,2,3} 
	\youngbox{1}{\j};
\youngbox{0}{1}

\younglabel{1}{5}{$\ss 0$}; \younglabel{2}{5}{$\ss1$}; \younglabel{3}{5}{$\ss2$};  \younglabel{4}{5}{$\ss3$}; 
\younglabel{5}{5}{$\ss4$};\younglabel{6}{5}{$\ss5$};\younglabel{7}{5}{$\ss6$};\younglabel{8}{5}{$\ss7$};
\younglabel{1}{4}{$\ss-1$}; \younglabel{2}{4}{$\ss0$}; \younglabel{3}{4}{$\ss1$};  \younglabel{4}{4}{$\ss2$}; 
\younglabel{5}{4}{$\ss3$};\younglabel{6}{4}{$\ss4$};
\younglabel{1}{3}{$\ss-2$}; \younglabel{2}{3}{$\ss-1$}; \younglabel{3}{3}{$\ss0$};  \younglabel{4}{3}{$\ss1$}; 
\younglabel{5}{3}{$\ss2$};

\younglabel{1}{2}{$\ss-3$}; \younglabel{2}{2}{$\ss-2$}; \younglabel{3}{2}{$\ss-1$};
\younglabel{1}{1}{$\ss-4$}; \younglabel{2}{1}{$\ss-3$}; \younglabel{3}{1}{$\ss-2$};
\younglabel{1}{0}{$\ss-5$};


\end{tikzpicture}\quad
\begin{tikzpicture}[x=0.575cm,y=-0.575cm]
\renewcommand{\ss}{\scriptstyle}
\foreach \j in {1,2,3,4,5,6,7,8,9} 
	\youngbox{5}{\j};
\foreach \j in {1,2,3,4,5}
	\foreach \i in {3,4,5}
		\fillbox{\i}{\j}{grey};	
\foreach \i in {3,4,5}
	\fillbox{\i}{6}{grey};			 
\foreach \j in {1,2,3,4,5,6,7} 
	\youngbox{4}{\j};
\foreach \j in {1,2,3,4,5,6} 
	\youngbox{3}{\j};
\foreach \j in {1,2,3} 
	\youngbox{2}{\j};
\foreach \j in {1,2,3}
	\fillbox{2}{\j}{grey};
\youngbox{1}{1};
\younglabel{1}{5}{$\ss 0$}; \younglabel{2}{5}{$\ss1$}; \younglabel{3}{5}{$\ss2$};  \younglabel{4}{5}{$\ss3$}; 
\younglabel{5}{5}{$\ss4$};\younglabel{6}{5}{$\ss5$};\younglabel{7}{5}{$\ss6$};\younglabel{8}{5}{$\ss7$};
\younglabel{9}{5}{$\ss8$};
\younglabel{1}{4}{$\ss-1$}; \younglabel{2}{4}{$\ss0$}; \younglabel{3}{4}{$\ss1$};  \younglabel{4}{4}{$\ss2$}; 
\younglabel{5}{4}{$\ss3$};\younglabel{6}{4}{$\ss4$};\younglabel{7}{4}{$\ss5$};
\younglabel{1}{3}{$\ss-2$}; \younglabel{2}{3}{$\ss-1$}; \younglabel{3}{3}{$\ss0$};  \younglabel{4}{3}{$\ss1$}; 
\younglabel{5}{3}{$\ss2$};\younglabel{6}{3}{$\ss3$};

\younglabel{1}{2}{$\ss-3$};\younglabel{2}{2}{$\ss-2$};\younglabel{3}{2}{$\ss-1$};
\younglabel{1}{1}{$\ss-4$};
\younglabel{1}{0}{$\rule{0pt}{9pt}$};
\end{tikzpicture}
\end{center}
\caption{The content of
the partition obtained from $\lambda$ by deleting a part of size $\rank(\lambda)$ and inserting it as a new column 
is $C(\lambda) + 1$ is obtained
by adding $1$ to the content of each box of $[\lambda]$; this can be seen here by comparing the shaded and unshaded boxes.}
\end{figure}
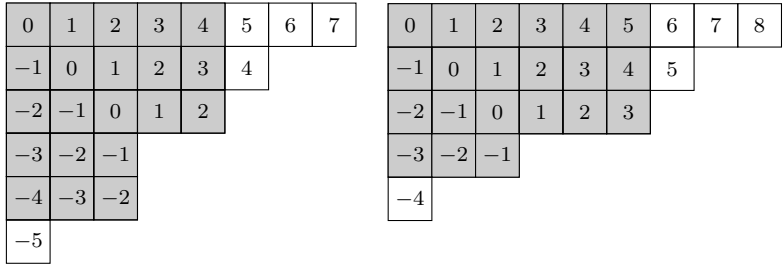

\begin{lemma}\label{lemma:contentShift}
Let $\lambda$ and $\mu$ be non-empty partitions and let $\sh \in \Z$. 
Then $C(\lambda) + \sh = C(\mu)$
if and only if $\rank(\lambda) = \rank(\mu)$, 
$\epar(\lambda) = \epar(\mu)$, $\spar(\lambda) = \spar(\mu)$ and
\[ \sh = -\erank(\lambda) + \erank(\mu) = \srank(\lambda) - \srank(\mu). \]
\end{lemma}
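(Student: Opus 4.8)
Here is the plan I would follow.

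\medskip

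The approach is to recast the condition in terms of the \emph{content multiplicity function}
$N_c(\nu)=\#\{(i,j)\in[\nu]:j-i=c\}$, the number of boxes of $[\nu]$ on the diagonal of content~$c$. Then $C(\nu)$ is the multiset in which $c$ occurs with multiplicity $N_c(\nu)$, and the identity $C(\lambda)+\sh=C(\mu)$ is exactly the statement that $N_{c+\sh}(\mu)=N_c(\lambda)$ for every $c\in\Z$. The plan is to describe $N_\bullet(\nu)$ completely in terms of the five invariants $\rank(\nu)$, $\erank(\nu)$, $\srank(\nu)$, $\epar(\nu)$, $\spar(\nu)$, and then to read off the lemma by matching the two translated step functions.

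The crucial preliminary step is to make the ``staircase'' shape of $\nu$ around its Durfee square explicit. Writing $r=\rank(\nu)$, the definition of $\erank(\nu)$ (the number of columns of length exactly $r$ immediately to the right of the Durfee square) forces $\nu_r=r+\erank(\nu)$, and reconciling this with the ``delete the maximal rectangle'' description of $\epar(\nu)$ in Figure~1 and with the formula $\epar(\nu)=\spar(\nu')'$ gives
\[ \nu_i=r+\erank(\nu)+\epar(\nu)_i\qquad(1\le i\le r), \]
where $\epar(\nu)_i=0$ for $i>\ell(\epar(\nu))$ and $\ell(\epar(\nu))\le r-1$. Since $\nu_i\le r<i$ for $i>r$, for $c\ge 0$ we obtain $N_c(\nu)=\#\{1\le i\le r:\nu_i-i\ge c\}=\#\{1\le i\le r:\epar(\nu)_i+(r-i)\ge c-\erank(\nu)\}$, and for $c<0$ we use $N_c(\nu)=N_{-c}(\nu')$ together with $\rank(\nu')=r$, $\erank(\nu')=\srank(\nu)$ and $\epar(\nu')=\spar(\nu)'$. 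Because the sequence $\bigl(\epar(\nu)_i+(r-i)\bigr)_{i=1}^{r}$ is strictly decreasing with distinct non-negative terms and last term $0$, three facts follow: $N_c(\nu)\le r$ for every $c$; the set $\{c\in\Z:N_c(\nu)=r\}$ is exactly the integer interval $[-\srank(\nu),\erank(\nu)]$; and for each $u\ge1$ the value $N_{\erank(\nu)+u}(\nu)=\#\{1\le i\le r:\epar(\nu)_i+(r-i)\ge u\}$ depends only on $r$ and $\epar(\nu)$ and, conversely, determines $\epar(\nu)$ once $r$ is known (this is the standard recovery of a partition from its $\beta$-set $\{\epar(\nu)_i+(r-i):1\le i\le r\}$). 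The mirrored statements, obtained by conjugation, describe the left-hand tail $N_{-\srank(\nu)-u}(\nu)$ in terms of $\spar(\nu)$.

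Granting this structural description, both directions are short. For the ``if'' direction, assuming $\rank(\lambda)=\rank(\mu)$, $\epar(\lambda)=\epar(\mu)$, $\spar(\lambda)=\spar(\mu)$ and $\sh=-\erank(\lambda)+\erank(\mu)=\srank(\lambda)-\srank(\mu)$, the identity $N_{c+\sh}(\mu)=N_c(\lambda)$ is checked in three cases according as $c$ lies in, to the right of, or to the left of $[-\srank(\lambda),\erank(\lambda)]$, each case being immediate from the choice of $\sh$ and the formulas above. For the ``only if'' direction, the sequences $N_\bullet(\lambda)$ and $N_\bullet(\mu)$ are translates of one another, so equality of their maxima gives $\rank(\lambda)=\rank(\mu)$; equality of the integer intervals on which the maximum is attained gives $[-\srank(\lambda),\erank(\lambda)]=[-\srank(\mu)-\sh,\erank(\mu)-\sh]$, which is precisely the condition $\sh=-\erank(\lambda)+\erank(\mu)=\srank(\lambda)-\srank(\mu)$; and comparing the right-hand tails $N_{\erank(\lambda)+u}(\lambda)=N_{\erank(\mu)+u}(\mu)$ for all $u\ge1$ forces $\epar(\lambda)=\epar(\mu)$ by the recovery statement, with $\spar(\lambda)=\spar(\mu)$ following from the mirrored argument on the left-hand tails.

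I expect essentially all the real work to be in the second paragraph: deriving $\nu_i=r+\erank(\nu)+\epar(\nu)_i$ for $i\le r$ by matching the two descriptions of $\epar$, keeping the small inequalities straight ($\ell(\epar(\nu))\le r-1$ and $\nu_r=r+\erank(\nu)$), and verifying that the right-tail step function $u\mapsto\#\{i\le r:\epar(\nu)_i+(r-i)\ge u\}$ pins down $\epar(\nu)$. Once that is in place, the lemma is just bookkeeping with shifted step functions, and the ``if'' direction in particular is a routine three-case check.
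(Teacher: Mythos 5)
Your proposal is correct, and it shares the paper's two pivotal observations: that no content can occur in $C(\nu)$ with multiplicity exceeding $\rank(\nu)$, and that the set of contents attaining this maximum multiplicity is exactly the integer interval $[-\srank(\nu),\erank(\nu)]$. Matching these across the shift $\sh$ already yields $\rank(\lambda)=\rank(\mu)$ and the two relations defining $\sh$, just as in the paper. Where you diverge is in the remaining bookkeeping. For the ``if'' direction the paper reduces to the elementary move $\erank(\mu)=\erank(\lambda)+1$, $\srank(\mu)=\srank(\lambda)-1$ (delete the lowest row of length $\rank(\lambda)$ and reinsert it as a column) and checks box-by-box that contents shift by $1$; you instead verify the translated step-function identity $N_{c+\sh}(\mu)=N_c(\lambda)$ directly from the explicit formula $\nu_i=r+\erank(\nu)+\epar(\nu)_i$. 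For the ``only if'' direction the paper recovers $\epar$ and $\spar$ by peeling off the first row and column of each partition and inducting (using the fact that this removes one box of each content), whereas you read them off the right-hand and left-hand tails of $N_\bullet$ via the $\beta$-set $\{\epar(\nu)_i+(r-i):1\le i\le r\}$, which avoids induction entirely. Both finishes are sound; yours makes the dependence of $C(\nu)$ on the five invariants fully explicit, at the cost of the small preliminary derivation of $\nu_i=r+\erank(\nu)+\epar(\nu)_i$ (using $\nu_r=r+\erank(\nu)$ and hence $\ell(\epar(\nu))\le r-1$), which you have correctly identified as where the real work lies.
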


\begin{proof}
The `if' direction is implied by the special case when
$\erank(\mu) = \erank(\lambda) + 1$
and $\srank(\mu) = \srank(\lambda) - 1$. In this case
$\mu$ is obtained from $\lambda$ by
deleting the lowest of the $\srank(\lambda)$ parts of $\lambda$ of size $\rank(\lambda)$ 
and inserting $\rank(\lambda)$ boxes
as a new column at the right of the $\erank(\lambda)$ columns of $\lambda$ of size $\rank(\lambda)$.
We must show that $C(\mu) = C(\lambda) + 1$.
It is clear that adding $1$ to the content of the boxes $(i,j) \in [\lambda]$
with $i > \rank(\lambda) + \srank(\lambda) $ or $j > \rank(\lambda) + \erank(\lambda)$
gives the content of a corresponding box $(i-1,j)$ or $(i,j+1) \in [\mu]$. 
Moreover, as the shaded squares in Figure 2 show in a special case, 
adding $1$ to the content of each
remaining box in $[\lambda]$ gives the contents of the remaining boxes in $[\mu]$.

Conversely, suppose that $C(\lambda) + \sh = C(\mu)$.
It is clear that no member of $C(\lambda)$ can have multiplicity exceeding $\rank(\lambda)$.
As can be seen from the content of the two marked boxes in Figure~1,
the contents of multiplicity $\rank(\lambda)$ are precisely
$-\srank(\lambda),  \ldots, \erank(\lambda)$. 
Similarly in $C(\mu)$ the contents of maximum multiplicity are
$-\srank(\mu), \ldots, \erank(\mu)$, each with multiplicity $\rank(\mu)$. Therefore $\rank(\lambda) = \rank(\mu)$, $\erank(\lambda)
+ \sh = \erank(\mu)$ and $-\srank(\lambda) +\sh = -\srank(\mu)$.

The greatest element of $C(\lambda) + \sh$ is $\rank(\lambda) + \erank(\lambda) + 
a\bigl( \epar(\lambda)\bigr)- 1+ \sh$
and the greatest element of $C(\mu)$  is $\rank(\mu) + \erank(\mu) + a\bigl(\epar(\mu)\bigr) - 1$. Since $\rank(\lambda) = \rank(\mu)$ and
$\erank(\lambda) + \sh = \erank(\mu)$ it follows that $a\bigl(\epar(\lambda)\bigr) = a\bigl(\epar(\mu)\bigr)$.
Similarly comparing $C(\lambda) + \sh$ and $C(\mu)$ on 
their least elements shows that $\ell\bigl( \spar(\lambda) \bigr) = \ell\bigl( \spar(\mu) \bigr)$.
Let $\lambda^\star$ and $\mu^\star$ be the partitions obtained by
removing both the first row and column from $\lambda$ and $\mu$, respectively.
In each case this removes one box of each content between the least and greatest.
Therefore the hypothesis $C(\lambda) + \sh = C(\mu)$ implies that 
$C(\lambda^\star) + \sh = C(\mu^\star)$. If both sides are empty, we are done.
Otherwise, it follows by induction that $\epar(\lambda^\star) = 
\epar(\mu^\star)$
and $\spar(\lambda^\star) = \spar(\mu^\star)$, and hence
$\epar(\lambda) = \epar(\mu)$ and $\spar(\lambda) = \spar(\mu)$, as required.
\end{proof}

\begin{figure}[b]

\hspace*{-0.25in}\makebox[\textwidth]{
\begin{tikzpicture}[x=0.4cm,y=-0.4cm]
\renewcommand{\l}{-0.8}
\newcommand{\lh}{-0.6}
\newcommand{\h}{-0.6}
\newcommand{\e}{0.6}
\newcommand{\eh}{0.8}
\draw[very thick] (0,0)--(6,0)--(6,8)--(0,8)--(0,0);
\draw (0,0)--(14,0)--(14,1)--(12,1)--(12,2);
\draw (11,3)--(10,3)--(10,4)--(9,4)--(9,6)--(6,6)--(6,10)--(4,10)--(4,12)--(3,12);
\draw (0,0)--(0,14)--(2,14)--(2,13);
\draw(0,6)--(6,6); \draw(6,0)--(6,6);
\draw (9,0)--(9,6); \draw(0,8)--(6,8);
\draw (0,10)--(6,10);
\draw (6,0)--(6,10);
\node at (\l,3) {$\scriptstyle R$};
\draw[->] (\l,3-\e)--(\l,0.25); \draw[->] (\l,3+\e)--(\l,5.75);
\node at (3,\lh) {$\scriptstyle R$};
\draw[->] (3.3-\eh,\lh)--(0.25,\lh); \draw[->] (2.7+\eh,\lh)--(5.75,\lh);
\node at (7.5,\lh) {$\scriptstyle E$};
\draw[->] (7.2+\eh,\lh)--(9,\lh); \draw[->] (7.8-\eh,\lh)--(6,\lh);
\node at (\l,7) {$\scriptstyle \sh $};
\draw[->] (\l,7.4)--(\l,7.9); \draw[->] (\l,6.6)--(\l,6);
\node at (\l,9) {$\scriptstyle S$};
\draw[->] (\l,9.25-\e)--(\l,8.1);
\draw[->] (\l,8.75+\e)--(\l,10);

\node at (10.5,1.75) {$\scriptstyle \kappa$};
\node at (1.75,11.5) {$\scriptstyle \nu$};
\node at (2.5,12.25) {$\iddots$};
\node at (11.5,2.25) {$\iddots$};

\end{tikzpicture}\quad
\begin{tikzpicture}[x=0.4cm,y=-0.4cm]
\renewcommand{\l}{-0.8}
\newcommand{\lh}{-0.6}
\newcommand{\h}{-0.6}
\newcommand{\e}{0.6}
\newcommand{\eh}{0.8}
\draw[very thick] (0,0)--(8,0)--(8,6)--(0,6)--(0,0);
\draw (0,0)--(16,0)--(16,1)--(14,1)--(14,2);
\draw (13,3)--(12,3)--(12,4)--(11,4)--(11,6)--(6,6)--(6,8)--(4,8)--(4,10)--(3,10);
\draw (0,0)--(0,12)--(2,12)--(2,11);
\draw(0,6)--(6,6); \draw(6,0)--(6,6);
\draw (8,0)--(8,6); \draw(0,8)--(6,8);
\draw (11,0)--(11,6);
\node at (\l,3) {$\scriptstyle R$};
\draw[->] (\l,3-\e)--(\l,0.25); \draw[->] (\l,3+\e)--(\l,5.75);
\node at (3,\lh) {$\scriptstyle R$};
\draw[->] (3.3-\eh,\lh)--(0.25,\lh); \draw[->] (2.7+\eh,\lh)--(5.75,\lh);
\node at (7,\lh) {$\scriptstyle \sh$};
\draw[->] (6.5+\eh,\lh)--(7.9,\lh); \draw[->] (7.5-\eh,\lh)--(6,\lh);
\node at (9.5,\lh) {$\scriptstyle E$};
\draw[->] (9.2+\eh,\lh)--(11,\lh); \draw[->] (9.8-\eh,\lh)--(8.1,\lh);
\node at (\l,7) {$\scriptstyle S$};
\draw[->] (\l,7.4)--(\l,7.9); \draw[->] (\l,6.6)--(\l,6);

\node at (12.5,1.75) {$\scriptstyle \kappa$};
\node at (1.75,9.5) {$\scriptstyle \nu$};
\node at (2.5,10.25) {$\iddots$};
\node at (13.5,2.25) {$\iddots$};

\node at (0,13.5) {$\phantom{x}$};

\end{tikzpicture}}
\caption{The partitions $\lambda$ and $\mu$ in Lemma~\ref{lemma:hooksAfterContent}.}
\end{figure}
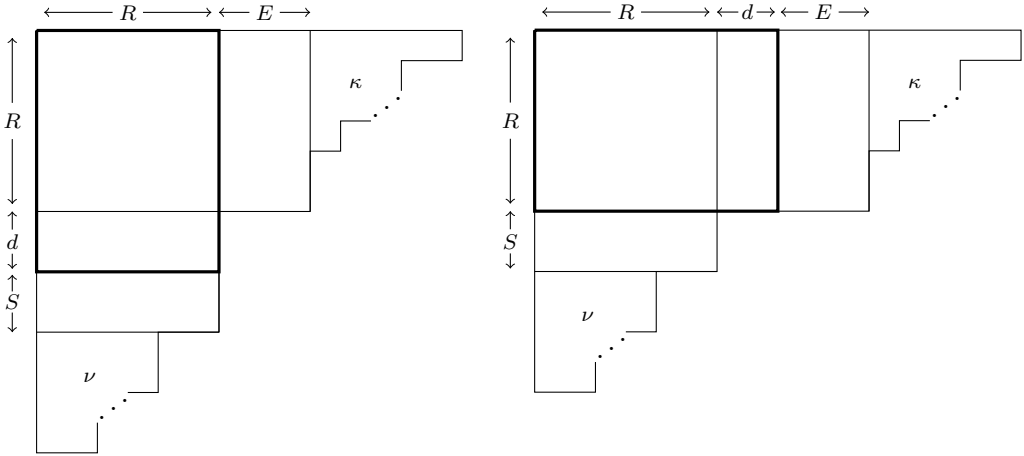

\begin{lemma}\label{lemma:hooksAfterContent}
Let $\lambda$ and $\mu$ be partitions such that
$\rank(\lambda) = \rank(\mu)$.
Suppose that for some $\sh \in \N$, we have
$\sh = -\erank(\lambda)  + \erank(\mu)  = \srank(\lambda) - \srank(\mu)$,
$\epar(\lambda) = \epar(\mu)$
and $\spar(\lambda) = \spar(\mu)$. Then $H(\lambda) = H(\mu)$
if and only if $\erank(\lambda) = \srank(\mu)$ and
$\spar(\lambda) = \epar(\lambda)'$.
\end{lemma}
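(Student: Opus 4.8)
The plan is to compute the multiset $H(\lambda)$ of hook lengths explicitly. Put $r = \rank(\lambda) = \rank(\mu)$, $\kappa = \epar(\lambda) = \epar(\mu)$, $\nu = \spar(\lambda) = \spar(\mu)$, and split $[\lambda]$ into five regions (see Figure~3): the $r \times r$ Durfee square; the \emph{east strip} of boxes in rows $1,\ldots,r$ and columns $r+1,\ldots,r+\erank(\lambda)$; the boxes in rows $1,\ldots,r$ and columns $>r+\erank(\lambda)$, forming a translate of $[\kappa]$; the \emph{south strip} of boxes in columns $1,\ldots,r$ and rows $r+1,\ldots,r+\srank(\lambda)$; and the boxes in columns $1,\ldots,r$ and rows $>r+\srank(\lambda)$, forming a translate of $[\nu]$. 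A direct computation of hook lengths in each region — where one must track that a hook based in the Durfee square or in a strip reaches across the $\kappa$- or $\nu$-appendage — gives
\[ H(\lambda) = H(\kappa) \sqcup H(\nu) \sqcup M_\lambda \sqcup \bigsqcup_{i=1}^{r} I_i \sqcup \bigsqcup_{j=1}^{r} J_j, \]
where $M_\lambda = \{\, 2r + \erank(\lambda) + \srank(\lambda) + 1 + \kappa_i + \nu'_j - i - j : 1 \le i,j \le r \,\}$, where $I_i$ is the set of $\erank(\lambda)$ consecutive integers with least element $\kappa_i + r - i + 1$, and $J_j$ is the set of $\srank(\lambda)$ consecutive integers with least element $\nu'_j + r - j + 1$; the same identity holds for $\mu$.

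Since $r$, $\kappa$ and $\nu$ are shared and $\erank(\lambda) + \srank(\lambda) = \erank(\mu) + \srank(\mu)$ (the shift $\sh$ cancels in the sum), we get $M_\lambda = M_\mu$, while $H(\kappa)$ and $H(\nu)$ are common to both. Thus $H(\lambda) = H(\mu)$ if and only if $\bigsqcup_i I_i \sqcup \bigsqcup_j J_j$ is unchanged on passing from $\lambda$ to $\mu$. Encoding multisets by their generating polynomials in $\C[q]$ and setting $P_\kappa(q) = \sum_{i=1}^{r} q^{\kappa_i + r - i}$ and $P_\nu(q) = \sum_{j=1}^{r} q^{\nu'_j + r - j}$, this says
\[ q\,[\erank(\lambda)]_q P_\kappa(q) + q\,[\srank(\lambda)]_q P_\nu(q) = q\,[\erank(\mu)]_q P_\kappa(q) + q\,[\srank(\mu)]_q P_\nu(q). \]
Using $\erank(\mu) = \erank(\lambda)+\sh$, $\srank(\mu) = \srank(\lambda)-\sh$ and the identity $[a+\sh]_q - [a]_q = q^a[\sh]_q$, this equation rearranges to $q^{\erank(\lambda)+1}[\sh]_q P_\kappa(q) = q^{\srank(\mu)+1}[\sh]_q P_\nu(q)$; cancelling $q[\sh]_q$, which is non-zero in $\C[q]$ since $\sh \ge 1$, leaves $q^{\erank(\lambda)} P_\kappa(q) = q^{\srank(\mu)} P_\nu(q)$.

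Finally, by Definition~\ref{defn:southEastPartitions}, $\spar(\lambda)$ has first part less than $\rank(\lambda) = r$ and $\epar(\lambda) = \spar(\lambda')'$ has fewer than $\rank(\lambda') = r$ parts, so $\kappa_r = 0$ and $\nu'_r = 0$; hence $P_\kappa$ and $P_\nu$ both have non-zero constant term. Comparing lowest-degree terms in $q^{\erank(\lambda)} P_\kappa(q) = q^{\srank(\mu)} P_\nu(q)$ therefore forces $\erank(\lambda) = \srank(\mu)$ and $P_\kappa(q) = P_\nu(q)$; and since a partition $\pi$ with at most $r$ parts is recovered from the exponent set $\{\pi_i + r - i : 1 \le i \le r\}$, the identity $P_\kappa = P_\nu$ is equivalent to $\kappa = \nu'$, i.e. to $\spar(\lambda) = \epar(\lambda)'$. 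All the steps above are equivalences, which also yields the converse. I expect the main difficulty to be the five-region hook-length bookkeeping and the verification that $M_\lambda$, $H(\kappa)$ and $H(\nu)$ cancel from $H(\lambda) = H(\mu)$; once the identity $q^{\erank(\lambda)} P_\kappa(q) = q^{\srank(\mu)} P_\nu(q)$ has been reached, the conclusion is immediate.
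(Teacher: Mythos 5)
Your proof is correct, and it takes a genuinely different route from the paper's. Both proofs begin by computing hook lengths in the same five regions (Durfee square, east strip, $\kappa$-translate, south strip, $\nu$-translate), and both observe that the Durfee-square hook lengths for $\lambda$ and $\mu$ agree because $\erank + \srank$ is shift-invariant. After that the methods diverge. The paper directly pairs up boxes of $[\lambda]$ with boxes of $[\mu]$: it matches the $\kappa$- and $\nu$-regions, matches the last $\erank(\lambda)$ columns of $\mu$'s east strip with $\lambda$'s east strip, matches the last $\srank(\mu)$ rows of $\lambda$'s south strip with $\mu$'s south strip, and is then left to compare an $\sh \times R$ block (extra south rows of $\lambda$) with an $R \times \sh$ block (extra east columns of $\mu$); it extracts $E=S$ by comparing minimum hook lengths of those blocks and finishes by comparing maxima and cancelling inductively. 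You instead express $H(\lambda)$ entirely as a multiset built from the invariants $(r,\erank,\srank,\kappa,\nu)$, so that the comparison with $H(\mu)$ reduces after cancellation to $\bigsqcup_i I_i \sqcup \bigsqcup_j J_j$ being unchanged, and you encode this as a polynomial identity in $\C[q]$. The resulting identity $q^{\erank(\lambda)}P_\kappa(q) = q^{\srank(\mu)}P_\nu(q)$ is then solved in one step by comparing lowest-degree terms, using $\kappa_r = 0 = \nu'_r$ (which both proofs need, and which follows from the definitions of $\erank$ and $\srank$). Your generating-function step replaces the paper's induction with a single clean algebraic cancellation, while requiring a slightly more elaborate up-front decomposition of $H(\lambda)$; the trade-off is reasonable and arguably the statement $q^{\erank(\lambda)}P_\kappa = q^{\srank(\mu)}P_\nu$ is a nicer pivot point than the paper's multiset of $\sh R$ shifted differences. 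One presentational point: you should make explicit in the write-up that the steps really are equivalences (as you assert at the end), since the "if" direction is one line in the paper (if $E=S$ and $\kappa=\nu'$ then $\mu = \lambda'$, so the hook multisets trivially agree) and a reader might prefer that shortcut to re-running the $q$-polynomial argument backwards.
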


\begin{proof}
Let $R = \rank(\lambda)$, $E = \erank(\lambda)$, $S = \srank(\mu)$,
$\kappa = \epar(\lambda)$ and $\nu = \spar(\lambda)$. 
Figure~3 shows the partitions $\lambda$ and $\mu$. 
Clearly if $E = S$ and $\kappa = \nu'$ then $\lambda' = \mu$ and
so $H(\lambda) = H(\mu)$. 
Conversely, suppose that $H(\lambda) = H(\mu)$. The hook lengths outside
the two thick rectangles in Figure~3 agree. 
If $(i,j)$ is a box in the Durfee square of $\lambda$ 
then 
\[ h_{(i,j)}(\lambda) = (R-j+E + \kappa_i) + (R-i + \sh + S + \nu'_j) + 1 
\]
where the parentheses indicate the arm and leg lengths. Similarly
if $(i,j)$ is in a box in the Durfee square of $\mu$ then
\[  h_{(i,j)}(\mu) = (R-j+\sh + E + \kappa_i ) + (R -i + S + \nu'_j) + 1. 
\]
Hence $h_{(i,j)}(\lambda) = h_{(i,j)}(\mu)$.
It remains to compare the hook lengths
\begin{align*}
h_{(R+i,j)}(\lambda) &= (R-j) + (S - i + \sh + \nu'_j) +1\\
h_{(i,R+j)}(\mu) &= (E-j + \sh + \kappa_{i}) + (R - i) + 1 \end{align*}
for $(R+i,j) \in \{R+1, \ldots, R+\sh\} \times \{1,\ldots, R\}$
and $(i,R+j) \in \{1,\ldots, R\} \times \{R+1,\ldots, R+\sh\}$.
Since $R > a(\nu)$ and $R > \ell(\kappa)$, 
the least such hook length for $\lambda$ is $h_{(R+\sh,R)}(\lambda) = S + 1$
and the least such hook length for $\mu$ is $h_{(R,R+\sh)}(\mu) = E + 1$.
Therefore $E = S$. 
Subtracting $R+S+\sh+1$ from the multisets of $\sh R$ hook lengths 
in the two
previous displayed equations, we see that
$H(\lambda) = H(\mu)$ if and only if there is an equality of multisets
\[ 
\begin{split}
\bigl\{ \nu'_j - i {}&{}- j:  (R+i,j) \in  \{R+1,\ldots, R+\sh\} \times \{1,\ldots, R\} \bigr\}
\\
&=
\bigl\{ \kappa_{i} - i - j: (i,R+j) \in \{1,\ldots, R\} \times \{R+1,\ldots, R+\sh \} \bigr\}.\end{split} \]
The unique greatest elements of each side are $\nu'_1 - 2$ and $\kappa_1 - 2$,
respectively.
Therefore $\nu'_1 = \kappa_1$. Cancelling 
the equal sets 
\begin{align*}
&\bigl\{ \nu'_1 - i - j : i \in \{1,\ldots, \sh\}, j \in \{1,\ldots, R\} \bigr\} \\
&\bigl\{ \kappa_1 - i - j : i \in \{1,\ldots, R\}, j \in \{1,\ldots, \sh\} \bigr\} \end{align*}
from each side we may repeat this argument inductively,
as in the proof of Lemma~\ref{lemma:contentShift},
to get $\nu' = \kappa$, as required.
\end{proof}

We are now ready to prove the slightly stronger version
of Theorem~\ref{thm:conjugates} stated below.

\begin{theorem}\label{thm:KingStrongest}
Let $\lambda$ and $\mu$ be distinct non-empty partitions. The following
are equivalent:
\begin{thmlist}
\item there exist infinitely many
pairs $(\ell,m)$ such that $\lambda \eqv{\ell}{m} \mu$;
\item $H(\lambda) = H(\mu)$ and there exists
 $\sh\in \Z$ such that $C(\lambda) + \sh= C(\mu)$;
\item $\lambda = \mu'$ and $\spar(\lambda) = \epar(\lambda)'$.
\end{thmlist} 
Moreover,
if any of these condition holds then $\lambda \eqv{\ell}{m} \mu$ if and only if
$\ell = \ell(\lambda)-1 + k$ and $m = \ell(\mu) - 1 + k$ for some $k \in \N_0$
and in \emph{(ii)} $\sh= \ell(\lambda) - \ell(\mu)$.
Finally, if $\lambda = \mu'$ but $\spar(\lambda) \not= \epar(\lambda')$  
then there are no plethystic equivalences between $\lambda$ and $\mu$.
\end{theorem}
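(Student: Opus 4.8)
The plan is to obtain the chain (i)~$\Leftrightarrow$~(ii)~$\Leftrightarrow$~(iii) from results already in hand. The equivalence (i)~$\Leftrightarrow$~(ii) is immediate: conditions (i) and (ii) here are exactly conditions (i) and (iii) of Proposition~\ref{prop:King}, which is proved without assuming $\lambda\ne\mu$. So all the real work lies in (ii)~$\Leftrightarrow$~(iii), and this is where distinctness will be used. As a preliminary I would record one elementary fact from Figure~1: a non-empty partition $\lambda$ is determined by the five Durfee statistics $\rank(\lambda)$, $\erank(\lambda)$, $\srank(\lambda)$, $\epar(\lambda)$, $\spar(\lambda)$, since its row $i\le\rank(\lambda)$ has length $\rank(\lambda)+\erank(\lambda)+\epar(\lambda)_i$ and the rows below the Durfee square are $\srank(\lambda)$ copies of $\rank(\lambda)$ followed by the parts of $\spar(\lambda)$.

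For (ii)~$\Rightarrow$~(iii), assume $H(\lambda)=H(\mu)$ and $C(\lambda)+\sh=C(\mu)$ for some $\sh\in\Z$. Lemma~\ref{lemma:contentShift} gives $\rank(\lambda)=\rank(\mu)$, $\epar(\lambda)=\epar(\mu)$, $\spar(\lambda)=\spar(\mu)$ and $\sh=\srank(\lambda)-\srank(\mu)=\erank(\mu)-\erank(\lambda)$. If $\sh=0$ then all five Durfee statistics of $\lambda$ and $\mu$ coincide, so $\lambda=\mu$, contrary to hypothesis; hence $\sh\ne0$. Since both conditions (ii) and (iii) are symmetric under swapping $\lambda$ and $\mu$ (under which $\sh\mapsto-\sh$), I may assume $\sh\ge1$ and invoke Lemma~\ref{lemma:hooksAfterContent}. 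Because $H(\lambda)=H(\mu)$ is assumed, that lemma yields $\erank(\lambda)=\srank(\mu)$ and $\spar(\lambda)=\epar(\lambda)'$; combining with the displayed identity for $\sh$ gives also $\erank(\mu)=\srank(\lambda)$. Feeding all of this, together with the conjugation rules $\rank(\nu')=\rank(\nu)$, $\erank(\nu')=\srank(\nu)$ and $\epar(\nu')=\spar(\nu)'$, into the preliminary fact shows that $\mu'$ has the same five Durfee statistics as $\lambda$, so $\lambda=\mu'$; with $\spar(\lambda)=\epar(\lambda)'$ this is~(iii). For the converse (iii)~$\Rightarrow$~(ii): conjugation preserves hook multisets, so $\lambda=\mu'$ gives $H(\lambda)=H(\mu)$; and the same conjugation rules together with $\spar(\lambda)=\epar(\lambda)'$ show $\rank(\lambda)=\rank(\mu)$, $\epar(\lambda)=\epar(\mu)$ and $\spar(\lambda)=\spar(\mu)$, whereupon Lemma~\ref{lemma:contentShift} supplies the required $\sh$.

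For the ``moreover'' clause I would argue as follows. From the preliminary fact, $\ell(\lambda)=\rank(\lambda)+\srank(\lambda)+\ell\bigl(\spar(\lambda)\bigr)$ and similarly for $\mu$, so, as $\rank$ and $\spar$ agree, $\ell(\lambda)-\ell(\mu)=\srank(\lambda)-\srank(\mu)=\sh$, which is the asserted value of $\sh$ in~(ii). For the list of pairs, apply Theorem~\ref{thm:eqvConds}(h): cancelling the equal hook multisets, $\lambda\eqv{\ell}{m}\mu$ is equivalent to $C(\lambda)+\ell+1=C(\mu)+m+1$, i.e. $C(\lambda)+(\ell-m)=C(\mu)$; by the uniqueness of the content shift in Lemma~\ref{lemma:contentShift} this holds precisely when $\ell-m=\sh=\ell(\lambda)-\ell(\mu)$, i.e. when $\ell-(\ell(\lambda)-1)=m-(\ell(\mu)-1)$. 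This common value $k$ lies in $\N_0$ because $\eqv{\ell}{m}$ requires $\ell\ge\ell(\lambda)-1$, and conversely every such $k$ yields a valid pair.

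Finally, suppose $\lambda=\mu'$ (with $\lambda\ne\mu$) and $\lambda\eqv{\ell}{m}\mu$ for some $\ell,m$. Conjugation gives $H(\lambda)=H(\mu)$, so Theorem~\ref{thm:eqvConds}(h) collapses to $C(\lambda)+(\ell-m)=C(\mu)$, and Lemma~\ref{lemma:contentShift} then forces $\epar(\lambda)=\epar(\mu)$; since $\mu=\lambda'$, the rule $\epar(\lambda')=\spar(\lambda)'$ turns this into $\spar(\lambda)=\epar(\lambda)'$, i.e. the second condition of~(iii) holds. Contrapositively, if $\lambda=\mu'$ but that condition fails, then no pair $(\ell,m)$ can exist. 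The one step that demands genuine care is the bookkeeping in the second paragraph — keeping the Durfee statistics of $\lambda$, $\mu$ and $\mu'$ straight and correctly handling the sign of $\sh$ before applying Lemma~\ref{lemma:hooksAfterContent}; everything else is a direct appeal to Proposition~\ref{prop:King}, Lemma~\ref{lemma:contentShift}, Lemma~\ref{lemma:hooksAfterContent} and Theorem~\ref{thm:eqvConds}.
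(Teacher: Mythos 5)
Your proof is correct and follows the same skeleton as the paper: (i)$\Leftrightarrow$(ii) is Proposition~\ref{prop:King}, (ii)$\Leftrightarrow$(iii) rests on Lemma~\ref{lemma:contentShift} and Lemma~\ref{lemma:hooksAfterContent}, and the ``finally'' clause follows from Theorem~\ref{thm:eqvConds}(h) together with Lemma~\ref{lemma:contentShift}. There is one genuine variation, and it works in your favour. For the converse half of the ``moreover'' clause the paper appeals to Proposition~\ref{prop:aRelation}, which compares first parts to conclude $\ell - m = a(\mu) - a(\lambda)$; but that proposition is stated only for \emph{prime} equivalences, whereas the theorem's parametrization includes $k=0$, i.e.\ $\ell = \ell(\lambda)-1$, which is not prime. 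You instead observe that under (iii) the hook multisets coincide, cancel them in Theorem~\ref{thm:eqvConds}(h) to get $C(\lambda) + (\ell - m) = C(\mu)$, and then invoke the uniqueness of the content shift in Lemma~\ref{lemma:contentShift}; this argument applies uniformly for all $\ell \ge \ell(\lambda)-1$ and so avoids the issue. A smaller cosmetic difference: you record explicitly (your ``preliminary fact'') that a non-empty partition is determined by its five Durfee statistics, and use this to extract $\lambda = \mu'$ from the conclusion of Lemma~\ref{lemma:hooksAfterContent}. The paper compresses this into ``by Lemma~\ref{lemma:contentShift} followed by Lemma~\ref{lemma:hooksAfterContent} we get $\lambda' = \mu$'', which relies on the reader seeing, as is noted only inside the proof of Lemma~\ref{lemma:hooksAfterContent}, that $E = S$ and $\kappa = \nu'$ already force $\lambda' = \mu$. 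Your version is the more self-contained of the two.
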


\begin{proof}[Proof of Theorem~\ref{thm:KingStrongest}]
By Proposition~\ref{prop:King}, (i) and (ii) are equivalent.
Suppose that (ii) holds.
By swapping
$\lambda$ and $\mu$ if necessary, we may suppose that $\sh\in \N_0$.
Since $\lambda$ and $\mu$ are distinct and so $C(\lambda) \not= C(\mu)$, we have 
$\sh\in \N$. Now by Lemma~\ref{lemma:contentShift} 
followed by Lemma~\ref{lemma:hooksAfterContent} we get $\lambda' = \mu$
and $\spar(\lambda) = \epar(\lambda)'$,
as required. Conversely, these lemmas show that (iii) implies (ii) with $\sh= \ell(\lambda) -
\ell(\mu)$. For the `moreover' part, 
assuming~(ii), it follows from Theorem~\ref{thm:eqvConds}(h) that
$\lambda \eqv{\ell}{m} \mu$ whenever $\ell \ge \ell(\lambda) - 1$, $m \ge \ell(\mu) - 1$
and $\ell - m = \sh$, giving the claimed plethystic equivalences.
Conversely, suppose that $\lambda \eqv{\ell}{m} \mu$ and that (iii) holds. 
By Proposition~\ref{prop:aRelation} and~(iii), we have 
\[ \ell - m = a(\mu) -a(\lambda)  = \ell(\lambda) - \ell(\mu) \]
so $\ell = \ell(\lambda) -1 + k$ and $m = \ell(\mu) - 1 + k$ for some $k \in \N_0$, as required.
For the `finally part', the hypotheses imply that $H(\lambda) = H(\mu')$ so by Theorem~\ref{thm:eqvConds}(h),
if there is a plethystic equivalence then $C(\lambda) + d = C(\mu)$ for some $d \in \Z$. But this
contradicts Lemma~\ref{lemma:contentShift}.
\end{proof}

We end this section by remarking that, by Proposition~\ref{prop:eqvCondsMoreover},
a plethystic isomorphism 
$\nabla^\lambda \SSym^{\ell(\lambda) -1 +k} \E \cong
\nabla^{\lambda'} \SSym^{a(\lambda) -1 +k} \E$ 
given by Theorem~\ref{thm:KingStrongest}
extends to the overgroup $\MGL{d}_2(\C)$ of $\SL_2(\C)$ if and only
if $d$ divides $|\lambda|\bigl( a(\lambda) - \ell(\lambda) \bigr)/2$.
(Note this condition does not involve $k$.)
In particular, there is a $\GL_2(\C)$ isomorphism if and only
if $a(\lambda) = \ell(\lambda)$. But in this case, since $\epar(\lambda) = \spar(\lambda)'$,
we have $\erank(\lambda) = \srank(\lambda)$,
and so $\lambda = \lambda'$. 
We conclude that that there are infinitely many plethystic
isomorphisms of $\GL_2(\C)$-representations
$\nabla^\lambda \SSym^\ell \E \cong \nabla^\mu \SSym^m \E$ 
 if and only if $\lambda = \mu$.

\section{Multiple equivalences}
\label{sec:multipleEquivalences}

We need two lemmas on difference multisets.

\begin{lemma}\label{lemma:diff1} Let $X$ and $Y$ be finite multisubsets of $\Z$ and 
let $a$, $b$, $c \in \N_0$.
If $(X+a)/ X = (Y+b) / (Y+c)$ then \emph{either} $a=0$ and $b=c$ \emph{or} $a \not=0$, $b > c$,
$\max X + a = \max Y + b$ and $\min X = \min Y + c$.
\end{lemma}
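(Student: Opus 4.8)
The plan is to read each difference multiset as a finitely supported function $\Z \to \Z$, writing $m_X(z)$ for the multiplicity of $z$ in a multiset $X$, so that the multiplicity of $z$ in $(X+a)/X$ is $m_X(z-a) - m_X(z)$ and similarly on the other side; I then compare, on the two sides of the hypothesised equality, the greatest and least integers occurring with nonzero multiplicity together with the sign of that multiplicity. Throughout I take $X$ and $Y$ non-empty, as holds in every application. First I dispose of the case $a = 0$: here $(X+a)/X$ is identically zero, so $Y+b = Y+c$ as multisets, and comparing greatest elements (which exist and equal $\max Y + b$ and $\max Y + c$ since $Y$ is finite and non-empty) forces $b = c$, the first alternative.

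So suppose $a \neq 0$, i.e.\ $a \ge 1$. Then $\max(X+a) = \max X + a > \max X$, so $\max X + a$ does not occur in $X$; hence its multiplicity in $(X+a)/X$ is $m_X(\max X) > 0$, and no larger integer occurs in $X+a$ or in $X$. Thus $\max X + a$ is the unique greatest integer occurring in $(X+a)/X$ with nonzero multiplicity, and that multiplicity is positive. Symmetrically, $\min(X+a) = \min X + a > \min X$, so $\min X$ does not occur in $X+a$, its multiplicity in $(X+a)/X$ is $-m_X(\min X) < 0$, and $\min X$ is the unique least integer occurring with nonzero multiplicity. In particular $(X+a)/X$ is not identically zero.

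Consequently $(Y+b)/(Y+c)$ is not identically zero, so $b \neq c$. If $b < c$ then $\max(Y+c) = \max Y + c > \max Y + b = \max(Y+b)$, so the greatest integer occurring in $(Y+b)/(Y+c)$ is $\max Y + c$, which lies in $Y+c$ but not $Y+b$ and hence has multiplicity $-m_Y(\max Y) < 0$ --- contradicting the fact established above that on the left-hand side the greatest such integer has positive multiplicity. Hence $b > c$. Running the argument of the previous paragraph with $Y+b$ in place of $X+a$ and $Y+c$ in place of $X$ (legitimate because $b > c$), the greatest integer occurring in $(Y+b)/(Y+c)$ with nonzero multiplicity is $\max Y + b$, and the least is $\min Y + c$. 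Matching greatest, then least, elements across $(X+a)/X = (Y+b)/(Y+c)$ now gives $\max X + a = \max Y + b$ and $\min X = \min Y + c$, completing the proof.

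This is mostly bookkeeping, so I do not expect a genuine obstacle; the two points that need care are (i) verifying that each claimed extremal integer really carries nonzero multiplicity of the asserted sign --- this is exactly where $a \ge 1$, respectively $b > c$, is used, to prevent the shifted and unshifted copies of the multiset from cancelling at the extreme position --- and (ii) the non-emptiness of $X$ and $Y$, without which the extremal elements, and the stated conclusion, can fail to exist.
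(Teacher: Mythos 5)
Your proof is correct and follows essentially the same route as the paper's: dispose of the case $a=0 \iff b=c$ first, then for $a \ge 1$ identify the unique extremal elements of $(X+a)/X$ with nonzero multiplicity (namely $\max X + a$ with positive multiplicity and $\min X$ with negative multiplicity), deduce $b > c$ from the sign of the greatest element, and match extremes across the equality. Your version spells out the multiplicity bookkeeping and the non-emptiness caveat a bit more explicitly than the paper, but there is no substantive difference.
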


\begin{proof}
Clearly $a=0$ if and only if $b=c$. Suppose neither is the case.
Since $a > 0$ the maximum element with non-zero multiplicity in the left-hand side is $\max X + a$.
Since it has positive multiplicity,
$\max X + a = \max Y + b$ and $b > c$. Similarly the minimum element in the left-hand side 
with non-zero multiplicity
is $\min X$, with negative multiplicity, and so $\min X  = \min Y + c$.
\end{proof}

\begin{lemma}\label{lemma:diff2} Let $Z$ and $W$ be finite multisubsets of $\Z$ and
let $t \in \Z$ be non-zero. If $Z/W = (Z+t)/(W+t)$ then $Z= W$.
\end{lemma}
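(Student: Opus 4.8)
The plan is to encode the difference multiset $Z/W$ by its multiplicity function $g\colon \Z \to \Z$, where $g(x)$ is the multiplicity of $x$ in $Z$ minus the multiplicity of $x$ in $W$. Since $Z$ and $W$ are finite, $g$ has finite support, and by definition $Z = W$ if and only if $g$ is identically zero. The multiplicity of $x$ in $Z+t$ equals the multiplicity of $x-t$ in $Z$, and similarly for $W$, so the multiplicity function of $(Z+t)/(W+t)$ is $x \mapsto g(x-t)$. Hence the hypothesis $Z/W = (Z+t)/(W+t)$ is exactly the assertion that $g(x) = g(x-t)$ for all $x \in \Z$; that is, $g$ is periodic with period $t$.

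It then remains to observe that a function $\Z \to \Z$ with finite support that is periodic with nonzero period must vanish. Indeed, suppose $g$ is not identically zero, and assume first $t > 0$. Let $x_0$ be the greatest element of the (finite, nonempty) support of $g$. Then $g(x_0 + t) = g(x_0) \neq 0$, so $x_0 + t$ also lies in the support, contradicting the maximality of $x_0$. The case $t < 0$ is identical after replacing ``greatest'' by ``least''. Therefore $g \equiv 0$, so $x$ has the same multiplicity in $Z$ as in $W$ for every $x \in \Z$, i.e.\ $Z = W$.

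There is no genuine obstacle here: this is the same ``compare extreme elements'' device already used in Lemma~\ref{lemma:uniqueFactorization} and Lemma~\ref{lemma:diff1}. Equivalently one could iterate the hypothesis to get $Z/W = (Z+t)/(W+t) = (Z+2t)/(W+2t) = \cdots$, and note that for $|n|$ large enough the multiset $Z+nt$ is disjoint from $W$, forcing the common value to be the empty difference multiset unless $Z=W$ already. I would present the periodicity formulation, since it reaches the conclusion in a single step and fits the style of the surrounding lemmas.
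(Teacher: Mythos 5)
Your proof is correct and is essentially the same argument as the paper's: both compare the greatest (or least) element of non-zero multiplicity in $Z/W$ and in its translate to force $t=0$ unless $Z=W$. Your reformulation in terms of the multiplicity function $g$ being periodic with finite support is a clean way to package the same idea, and explicitly handling both signs of $t$ is a small improvement in rigour over the paper, which tacitly treats the greatest element without addressing the sign.
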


\begin{proof}
Suppose, for a contradiction, that $Z \not= W$. Let $x$ to be greatest element with non-zero multiplicity
in $Z/W$. Clearly $x+t$ is the greatest element with non-zero multiplicity
in $(Z+t)/(W+t)$. But $Z/W = (Z+t) / (W+t)$ so $x = x+t$, hence $t=0$, a contradiction.
\end{proof}

\begin{proof}[Proof of Theorem~\ref{thm:multipleEquivalences}(i) and (ii)]
If $\ell = \ellp$ then from $\lambda \eqv{\ell}{m} \mu$ and $\lambda \eqv{\ellp}{\mp} \mu$ we get
$\mu \eqv{m}{\ell} \lambda$ and $\lambda \eqv{\ell}{\mp}{\mu}$, and so by Lemma~\ref{lemma:transitive},
$\mu \eqv{m}{\mp} \mu$. But now, by Lemma~\ref{lemma:selfEquivalence}, we have $m = \mp$,
contradicting that the pairs $(\ell,m)$ and $(\ellp, \mp)$ are distinct. Therefore
we may suppose that $\ell < \ellp$,
and in (ii) of the three
plethystic equivalences, two are prime. It therefore suffices to prove (i).

For (i), since $\ell \ge \ell(\lambda)$, Proposition~\ref{prop:aRelation} implies that
$a(\lambda) +\ell = a(\mu) + m$ and $a(\lambda) + \elld = a(\mu) + \mp$.
Let $t = \elld - \ell = \mp - m \in \N$ denote the common difference.
By Theorem~\ref{thm:eqvConds}(h), 
we have equalities of difference multisets
\begin{align} \bigl( C(\lambda) + \ell + 1 \bigr) / H(\lambda) &=
\bigl( C(\mu) + m +1 \bigr)/ H(\mu) \label{eq:multipleFirst} 
\\ 
\bigl( C(\lambda) + \ell + t + 1\bigr) / H(\lambda)  &= \bigl(  C(\mu ) + m + t + 1 \bigr) \notag/ H(\mu).
\end{align}
Hence 
\[ \bigl( C(\lambda) + \ell + 1 \bigr) / 
\bigl( C(\mu) + m+ 1 \bigr) 
= \bigl( C(\lambda) + \ell + t + 1 \bigr) / \bigl( C(\mu) + m + t + 1 \bigr).\] 
By Lemma~\ref{lemma:diff2}, we deduce that $C(\lambda) + \ell + 1 = C(\mu) + m +1$.
Writing $Z$ for this multiset, \eqref{eq:multipleFirst} can be restated as
$Z / H(\lambda) = Z / H(\mu)$, which 
implies that $H(\lambda) = H(\mu)$. 
Therefore either $\lambda = \mu$, or
the hypotheses for Theorem~\ref{thm:KingStrongest}(ii) hold,
and we may conclude that $\mu = \lambda'$ and $\spar(\lambda) = \epar(\lambda)'$. 
\end{proof}

\begin{proof}[Proof of Theorem~\ref{thm:multipleEquivalences}(iii)]
By Theorem~\ref{thm:eqvConds}(h) the hypotheses imply
\begin{align*} 
\bigl( C(\lambda) + n + 1 \bigr) / H(\lambda) &= \bigl( C(\mu) + n + 1\bigr) / H(\mu) \\
\bigl( C(\lambda) + \np + 1 \bigr) / H(\lambda) &= \bigl( C(\mu) + \np + 1\bigr) / H(\mu).
\end{align*}
Hence
\[ \bigl( C(\lambda) + n+1 \bigr) \cup \bigl( C(\mu) + \np +1 \bigr) = 
\bigl( C(\mu) + n+1 \bigr) \cup \bigl( C(\lambda) + \np + 1 \bigr). \]
Subtracting $n+1$ from every element of these multisets we obtain
\[
C(\lambda) / C(\mu) = \bigl( C(\lambda) + \np - n \bigr) / \bigl(C (\mu) + \np - n\bigr).\]
By Lemma~\ref{lemma:diff2} applied with $Z = C(\lambda)$, $W = C(\mu)$ and $t = \np - n$
we have $C(\lambda) = C(\mu)$. Therefore $\lambda = \mu$, as required. 
\end{proof}



\section{Complementary partitions}
Let $\lambda$ be a partition such that $\ell(\lambda) \le r$.
Recall that $\lambda^{\circ r}$ denotes
the complementary partition to $\lambda$ in the $r \times a(\lambda)$ box.
Equivalently, 
$\lambda^{\circ r}_i = a(\lambda) - \lambda_{r+1-i}$
for each $i \in \{1,\ldots, r\}$.
In~\S\ref{subsec:tableaux} we defined
the set $\SSYT_{\le \ell}(\lambda)$ of semistandard
$\lambda$-tableaux with entries in $\{0,1,\ldots, \ell\}$.
We extend this notation in the obvious way to define $\SSYT_{< r}(\lambda)$.
Given $t \in \SSYT_{< r}(\lambda)$, let $t^{\circ r}$ be the
unique column standard $\lambda^{\circ r}$-tableau $t^{\circ r}$ having
as its entries in column $j$ the complement in $\{0, 1,\ldots, r-1\}$ of the entries of 
$t$ in column $a(\lambda)+1-j$.
For example if 
$\lambda = (3,2,2,1)$ then $\lambda^{\circ 5} = (3,2,1,1)$ and under the map $t \mapsto t^{\circ 5}$
we have
\[ \young(000,11,22,3) \longmapsto\; \young(134,24,3,4). \]
The following proposition is
implicit in \cite[\S 4]{KingSU2Plethysms}.

\begin{proposition}\label{prop:KingBijection}
The map $t \mapsto t^{\circ r}$ is a self-inverse bijection
\[ \SSYT_{< r}(\lambda) \rightarrow \SSYT_{< r}(\lambda^{\circ r}). \]
\end{proposition}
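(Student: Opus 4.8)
The plan is to work one column at a time. Index the columns of $\lambda$ by $1,\ldots,a$, where $a=a(\lambda)$, so that column $k$ has length $\lambda'_k$; for a column standard $\lambda$-tableau $t$ with entries in $\{0,1,\ldots,r-1\}$ write $A_k(t)\subseteq\{0,1,\ldots,r-1\}$ for the set of entries of $t$ in its $k$th column. This set has size $\lambda'_k$ and, since the entries increase down a column, it determines the column. First I would record the elementary identity $(\lambda^{\circ r})'_j=r-\lambda'_{a+1-j}$, which is immediate from $\lambda^{\circ r}_i=a(\lambda)-\lambda_{r+1-i}$. This says exactly that the prescribed $j$th column of $t^{\circ r}$, namely $\{0,1,\ldots,r-1\}\setminus A_{a+1-j}(t)$, has the correct size to fill column $j$ of $[\lambda^{\circ r}]$; together with the fact that $t^{\circ r}$ is column standard by construction and has entries in $\{0,1,\ldots,r-1\}$, this reduces everything to showing that $t^{\circ r}$ is row semistandard.

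For that I would use a cumulative-count reformulation of semistandardness. For $S\subseteq\{0,1,\ldots,r-1\}$ and $v\in\{0,1,\ldots,r-1\}$ set $S_{\le v}=|\{s\in S:s\le v\}|$. The claim is that a column standard $\lambda$-tableau $u$ is semistandard if and only if $A_k(u)_{\le v}\ge A_{k+1}(u)_{\le v}$ for every $k$ and every $v$. This is routine: the $i$th entry from the top of column $k$ is the $i$th smallest element of $A_k(u)$, and it is $\le v$ precisely when $A_k(u)_{\le v}\ge i$; row $i$ being weakly increasing amounts to the $i$th smallest element of $A_k(u)$ being at most the $i$th smallest element of $A_{k+1}(u)$ for all $i\le|A_{k+1}(u)|$, and one checks directly that this holds for all such $i$ exactly when the displayed inequality of cumulative counts holds for all $v$.

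The key point is that complementation reverses this order: for $B=\{0,1,\ldots,r-1\}\setminus A$ one has $B_{\le v}=(v+1)-A_{\le v}$, so $A_{\le v}\ge A'_{\le v}$ if and only if $B_{\le v}\le B'_{\le v}$. Now fix adjacent columns $j$ and $j+1$ of $t^{\circ r}$. By construction $A_j(t^{\circ r})$ and $A_{j+1}(t^{\circ r})$ are the complements in $\{0,1,\ldots,r-1\}$ of $A_{a+1-j}(t)$ and $A_{a-j}(t)$ respectively; since $a-j<a+1-j$, column $a-j$ lies to the left of column $a+1-j$ in $t$, so semistandardness of $t$ and the reformulation give $A_{a-j}(t)_{\le v}\ge A_{a+1-j}(t)_{\le v}$ for all $v$. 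Complementing yields $A_j(t^{\circ r})_{\le v}\ge A_{j+1}(t^{\circ r})_{\le v}$ for all $v$, so by the reformulation again $t^{\circ r}$ is semistandard. Hence $t\mapsto t^{\circ r}$ is a well-defined map $\SSYT_{<r}(\lambda)\to\SSYT_{<r}(\lambda^{\circ r})$.

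Finally, self-inversion is a bookkeeping check: applying the construction twice sends the $j$th column to the complement in $\{0,1,\ldots,r-1\}$ of the complement of the $j$th column of $t$, which is the $j$th column of $t$ as a set; since both $t$ and $(t^{\circ r})^{\circ r}$ are column standard this gives $(t^{\circ r})^{\circ r}=t$, where one keeps track of the length-$r$ columns of $\lambda$ (equivalently the empty columns of $\lambda^{\circ r}$) by regarding both tableaux inside the fixed $r\times a$ box. A self-inverse map is automatically a bijection. I expect the only real work to be the cumulative-count lemma of the second paragraph and the verification that the inequalities flip correctly once one complements and reverses the column order; the rest is formal.
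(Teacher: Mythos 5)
Your proof is correct, and it takes a genuinely different route from the paper. The paper argues by contradiction: assuming a violation of row-semistandardness in columns $a(\lambda)-j-1$ and $a(\lambda)-j$ of $t^{\circ r}$, it chooses indices $i$ and $h$ so that $m_i^\circ$ exceeds $i+h$ distinct elements of $\{0,\ldots,r-1\}$ all strictly smaller than $m_i^\circ$, a contradiction by counting. You instead reformulate semistandardness of a column-standard filling as the family of cumulative-count inequalities $A_k(u)_{\le v} \ge A_{k+1}(u)_{\le v}$, and then observe the identity $B_{\le v} = (v+1) - A_{\le v}$ for $B = \{0,\ldots,r-1\}\setminus A$, so complementation reverses every inequality simultaneously; combined with the reversal of column order in the construction, the inequalities for $t$ translate exactly into those for $t^{\circ r}$. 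Both arguments are sound; the paper's is shorter and needs no auxiliary lemma, while yours isolates a reusable characterization (essentially the Gale--Ryser style formulation of semistandardness) that makes it transparent \emph{why} complementing columns and reversing their order preserves semistandardness, and it proves the claim directly rather than by contradiction. One further point in your favour: you explicitly flag that self-inversion requires regarding both $\lambda$ and $\lambda^{\circ r}$ inside the fixed $r \times a(\lambda)$ box (so that the length-$r$ columns of $\lambda$ correspond to empty columns of $\lambda^{\circ r}$); without this identification $(\lambda^{\circ r})^{\circ r}$ equals $\lambda$ with its $\lambda_r$ columns of length $r$ deleted, so the double-complement bookkeeping would otherwise shift column indices by $\lambda_r$. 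The paper leaves this implicit.
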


\begin{proof}
The only non-obvious claim is that $t^{\circ r}$ is semistandard.
Suppose, for a contradiction, that columns
$a(\lambda)-j-1$ and $a(\lambda)-j$ of 
$t^{\circ r}$ have entries 
$m_1^\circ \le k_1^\circ$, \ldots, $m_{i-1}^\circ \le k_{i-1}^\circ$ and
$m_i^\circ > k_i^\circ$ when read from top to bottom. 
Let columns $j$ and $j+1$ of $t$ read
from top to bottom have entries $k_1 \le m_1$, \ldots, $k_h \le m_h$ where~$h$ is greatest 
such that $m_h < m_i^\circ$.
Then $\{m_1^\circ, \ldots, m_{i-1}^\circ, m_1, \ldots, m_h\}$ are 
all the numbers strictly less than $m_i^\circ$ in $\{0,1,\ldots, r-1\}$, 
since, by choice of $h$, if $m_{h+1}$ is defined then $m_{h+1} > m_i^\circ$.
But from the chain $m_i^\circ > k_i^\circ > \ldots > k_1^\circ$ and the inequalities 
$m_i^\circ > m_h \ge m_j \ge k_j$
for $j \in \{1,\ldots, h\}$, we see that $m_i^\circ$ is strictly greater than 
$i+h$ distinct numbers, a contradiction.
\end{proof}

\begin{proof}[Proof of Theorem~\ref{thm:complements}]
For the `if' direction we give a slightly
simplified version of the argument in \cite{KingSU2Plethysms}.
By construction of $t^{\circ r}$ we have $|t| + |t^{\circ r}| = a(\lambda) r(r-1)/2$.
Therefore Proposition~\ref{prop:KingBijection}
implies that $|\Se{e}{r-1}{\lambda^{\circ r}}| = |\Se{ar(r-1)/2 - e}{r-1}{\lambda}|$ for each $e \in \N_0$.
Recall from~\eqref{eq:Se} that $s_\lambda(1,q,\ldots, q^\ell) = \sum_{e\in \N_0} |\Se{e}{\ell}{\lambda}|q^e$.
Thus the coefficient of $q^e$ in $s_{\lambda^{\circ r}}(1,q,\ldots, q^{r-1})$
agrees with the coefficient of $q^{a(\lambda) r(r-1)/2 -e}$ in $s_{\lambda}(1,q,\ldots, q^{r-1})$, and so
\[ s_{\lambda^{\circ r}}(1,q,\ldots, q^{r-1}) = q^{a(\lambda)r(r-1)/2} s_\lambda(1,q^{-1},\ldots, q^{-(r-1)}). \]
By the central symmetry in the `moreover' part of Theorem~\ref{thm:eqvConds}, 
\[ q^{-(r-1)|\lambda|/2} s_\lambda(1,q,\ldots, q^{r-1}) = q^{(r-1)|\lambda|/2}s_\lambda(1,q^{-1},
\ldots, q^{-(r-1)}). \]
Hence
\[ s_{\lambda^{\circ r}}(1,q,\ldots, q^{r-1}) = q^{(r-1)(a(\lambda)r/2 - |\lambda|)} 
s_\lambda(1,q,\ldots, q^{r-1}).\] 
By Theorem~\ref{thm:eqvConds}(e), we have $\lambda^{\circ r} \eqv{r-1}{r-1} \lambda$, as required.

For the `only if' direction suppose that $\lambda \neq \lambda^{\circ r}$ and $\lambda \eqv{\ell}{\ell} \lambda^{\circ r}$.
From the `if' direction, we have  $\lambda \eqv{r-1}{r-1} \lambda^{\circ r}$.
By Theorem~\ref{thm:multipleEquivalences}(ii) we deduce that $\ell = r-1$, as required.
\end{proof}

\label{sec:complements}

\section{Rectangular equivalences and $q$-binomial identities}
\label{sec:rectangles}

In this section we determine all plethystic equivalences $\lambda \eqv{\ell}{m} \mu$
in which one or both of $\lambda$ and $\mu$ is a \emph{rectangle}, of the form $(a^b)$ with
$a$, $b \in \N$. Our main result is as follows.

\begin{theorem}\label{thm:rectanglesFull}
Let $\lambda$ be a partition and let $a$, $b$, $c \in \N$. 
Then $\lambda \eqv{\ell}{b+c-1} (a^b)$ if and only if
$\lambda$ is obtained by adding columns of length $\ell+1$ to a rectangle
$(\ap^\bp)$ with $\bp \le \ell$ and $(\ap, \bp, \ell-\bp+1)$ is a permutation of $(a,b,c)$.
\end{theorem}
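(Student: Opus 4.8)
The plan is to reduce both directions to the \emph{prime} case in which $\lambda$ is itself a rectangle, and then to reduce that case to a single polynomial identity to which Lemma~\ref{lemma:uniqueFactorization} applies.

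First I would peel off the columns of length $\ell+1$. Suppose $\lambda \eqv{\ell}{b+c-1} (a^b)$ and write $\mu = (a^b)$, $m = b+c-1$. Since $c \ge 1$ the partition $\mu$ has no column of length $m+1$, so $\mu^\star = \mu$, which has exactly one removable box; hence by Lemma~\ref{lemma:removable} the partition $\lambda^\star$ obtained from $\lambda$ by deleting all columns of length $\ell+1$ also has exactly one removable box, and is therefore a non-empty rectangle $(\ap^\bp)$. By construction $\lambda^\star$ has no column of length $\ell+1$, whereas $\ell(\lambda^\star) \le \ell(\lambda) \le \ell+1$ (the second inequality because $\lambda \eqv{\ell}{m} \mu$ forces $\ell \ge \ell(\lambda)-1$); so $\bp \le \ell$, and $\lambda$ is obtained from $(\ap^\bp)$ by inserting zero or more columns of length $\ell+1$. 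Then Lemma~\ref{lemma:columnRemoval} (or triviality, if no columns were inserted) gives $\lambda \eqv{\ell}{\ell} (\ap^\bp)$, and Lemma~\ref{lemma:transitive} gives the prime equivalence $(\ap^\bp) \eqv{\ell}{b+c-1} (a^b)$. Setting $\cp = \ell-\bp+1 \ge 1$, so that $\ell = \bp+\cp-1$, the task reduces to proving
\[ (\ap^\bp) \eqv{\bp+\cp-1}{b+c-1} (a^b) \iff \{\ap,\bp,\cp\} = \{a,b,c\} \text{ as multisets}. \]
Reading this chain backwards — the column insertions being legitimate precisely because $\bp \le \ell$ makes $\ell+1$ at least as large as every column length of $(\ap^\bp)$ — then yields the `if' direction of the theorem.

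For the displayed rectangular equivalence I would use condition (h) of Theorem~\ref{thm:eqvConds}. A direct computation of the contents and hook lengths of an $\ap\times\bp$ rectangle, followed by reindexing, shows that $C\bigl((\ap^\bp)\bigr)+\ell+1$ is the multiset $\{\, i+j+\cp-1 : 1\le i\le\bp,\ 1\le j\le\ap\,\}$ and that $H\bigl((\ap^\bp)\bigr)$ is the multiset $\{\, i+j-1 : 1\le i\le\bp,\ 1\le j\le\ap\,\}$, with the analogous statements for $(a^b)$ and $m+1 = b+c$. Thus Theorem~\ref{thm:eqvConds}(h) asserts that two difference multisets $X_1/Z_1$ and $X_2/Z_2$ coincide, equivalently that the ordinary multisets $X_1\cup Z_2$ and $X_2\cup Z_1$ coincide, where $X_1,Z_1$ and $X_2,Z_2$ are these two pairs. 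Passing to generating polynomials $\sum_x q^x$, and writing $[k]_q = 1+q+\cdots+q^{k-1}$, this becomes
\[ q^{\cp+1}[\ap]_q[\bp]_q + q\,[a]_q[b]_q = q^{c+1}[a]_q[b]_q + q\,[\ap]_q[\bp]_q, \]
which after clearing the factors $(q-1)$ is exactly
\[ (q^{\ap}-1)(q^{\bp}-1)(q^{\cp}-1) = (q^{a}-1)(q^{b}-1)(q^{c}-1). \]
By Lemma~\ref{lemma:uniqueFactorization} this holds if and only if $\{\ap,\bp,\cp\} = \{a,b,c\}$, which settles both directions. (Equivalently, Theorem~\ref{thm:HCF} identifies $s_{(\ap^\bp)}(1,q,\ldots,q^{\bp+\cp-1})$, up to a power of $q$, with MacMahon's generating function $\prod_{i,j,k}\frac{1-q^{i+j+k-1}}{1-q^{i+j+k-2}}$ for plane partitions in the $\ap\times\bp\times\cp$ box, which is visibly symmetric in $\ap,\bp,\cp$; this is the conceptual form of the six-fold symmetry and, as developed in \S\ref{subsec:rectanglesIf}, gives the `if' direction together with the new $q$-binomial identity that realizes the swap of $b$ and $c$.)

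I expect the main obstacle to be bookkeeping rather than substance: getting the reindexing of the contents and hook lengths of a rectangle exactly right — there is an off-by-one between the $\ell+1$ occurring in Theorem~\ref{thm:eqvConds}(h) and the third parameter $\cp = \ell-\bp+1$ — and checking that the reduction in the second paragraph really does present $\lambda$ as a rectangle with columns of length $\ell+1$ appended, in particular that $\bp = \ell+1$ cannot occur (this is exactly where the hypothesis $\ell \ge \ell(\lambda)-1$ built into $\eqv{\ell}{m}$ is used). Once $X_1,Z_1,X_2,Z_2$ are correctly pinned down, the remainder is a short generating-function manipulation ending in an appeal to Lemma~\ref{lemma:uniqueFactorization}.
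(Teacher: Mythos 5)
Your proof is correct, and for the prime case it takes a genuinely different route from the paper.  The reduction in your first paragraph --- peeling off columns of length $\ell+1$ with Lemma~\ref{lemma:removable} to see $\lambda^\star$ is a rectangle, then Lemmas~\ref{lemma:columnRemoval} and~\ref{lemma:transitive} to move to the prime equivalence $(\ap^\bp)\eqv{\ell}{b+c-1}(a^b)$ --- is the same reduction the paper records (it is stated as "follows routinely from Theorem~\ref{thm:rectangles}" with a pointer to Example~\ref{ex:ex}), and you have handled the boundary cases ($\lambda^\star$ non-empty, $\bp\le\ell$, $\cp\ge1$) carefully.

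Where you diverge is in the prime case itself.  The paper proves Theorem~\ref{thm:rectangles} in two asymmetric halves: the `if' direction by the plane-partition bijection~\eqref{eq:SSYTtoPP} together with Theorem~\ref{thm:eqvConds}(e), and the `only if' direction by a rather lengthy case analysis of the content--hook functions $\ch{(a^b)}{\sh}$ using Lemmas~\ref{lemma:complementReduction}--\ref{lemma:chGraphs} and the six interleaving patterns of Figure~4.  You instead observe that for a rectangle both $C\bigl((\ap^\bp)\bigr)+\ell+1$ and $H\bigl((\ap^\bp)\bigr)$ are, after reindexing, of the form $\{i+j+\text{const} : 1\le i\le\bp,\,1\le j\le\ap\}$, so that Theorem~\ref{thm:eqvConds}(h) reduces in one line to
\[(q^{\ap}-1)(q^{\bp}-1)(q^{\cp}-1)=(q^{a}-1)(q^{b}-1)(q^{c}-1),\]
and then Lemma~\ref{lemma:uniqueFactorization} finishes \emph{both} directions simultaneously.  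I checked the reindexing and the $q$-manipulations; they are correct, and the step from Theorem~\ref{thm:eqvConds}(h) to the polynomial identity is reversible since one only multiplies or divides by $q$ and $(q-1)^2$ in $\Q(q)$.  Your argument is shorter, treats the two directions symmetrically, and makes the role of Lemma~\ref{lemma:uniqueFactorization} explicit (the paper uses this lemma only inside the proof of Theorem~\ref{thm:eqvConds}, not in the rectangle classification).  What the paper's longer route buys is a technique --- the content--hook graphs --- that it reuses for non-rectangular shapes in \S\ref{sec:twoRowAndHook}, plus the explicit connection to plane-partition symmetry developed in \S\ref{subsec:rectanglesIf} and the $q$-binomial identity of Corollary~\ref{cor:det}.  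Your parenthetical at the end correctly identifies that connection but is not needed for the proof.
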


Clearly this implies Theorem~\ref{thm:rectangles}. Conversely,
as seen in Example~\ref{ex:ex}, by using
Lemma~\ref{lemma:columnRemoval} and Lemma~\ref{lemma:transitive} 
one may reduce to the case when $\ell \ge \ell(\lambda)$ 
of a prime plethystic equivalence. 
Therefore Theorem~\ref{thm:rectanglesFull} follows routinely
from Theorem~\ref{thm:rectangles}.

In the following subsection we 
use Theorem~\ref{thm:eqvConds}(e) to show that the
`if' direction of Theorem~\ref{thm:rectangles} is the representation-theoretic realization of the six-fold
symmetry group of plane partitions.
Next we prove a new determinantal
formula using $q$-binomial coefficients 
of MacMahon's generating function of plane partitions.
We then prove the `only if' direction
of Theorem~\ref{thm:rectangles} using certain unimodal graphs to keep track
of the contents of rectangles.
The section ends
with the corollary for the case $b=1$; this generalizes the Hermite reciprocity
seen in~\S\ref{subsec:Hermite}.

\subsection{Plane partitions}\label{subsec:rectanglesIf}

Recall that a \emph{plane partition}
of shape $\lambda$ is a $\lambda$-tableau with entries from $\N$ whose
rows and columns are weakly decreasing, when read 
left to right and top to bottom.
Let $\PP(a,b,c)$ denote the set of plane partitions~$\pi$ with
entries in $\{1,\ldots, c\}$ whose shape $\shape(\pi)$ is contained in $[(a^b)]$.
Assigning~$0$ to each box of $[(a^b)] \, \backslash \shape(\pi)$
defines a bijection between $\PP(a,b,c)$
and the set of $(a^b)$-tableaux with
entries from $\N_0$ and weakly decreasing rows and columns.
Observe that if $t$ is such a tableau then rotating
$t$ by a half-turn and adding $j-1$ to 
every entry in row $j$ gives a semistandard tableau of shape $(a^b)$
with entries in $\{0,1, \ldots, b+c-1\}$. Again this map is bijective. Hence we have
\begin{equation}
\label{eq:SSYTtoPP}
q^{-a\binom{b}{2}} s_{(a^b)}(1,q,\ldots,q^{b+c-1}) = \sum_{\pi \in \PP(a,b,c)} q^{|\pi|} 
\end{equation}
where, extending our usual notation, $|\pi|$ denote the sum of entries of a plane partition $\pi$.

\begin{proof}[Proof of `if' direction of Theorem~\ref{thm:rectangles}]
Representing elements of $\PP(a,b,c,)$ plane partitions 
by three-dimensional Young diagrams contained
in the $a \times b \times c$ cuboid, 
it is clear that the right-hand side of~\eqref{eq:SSYTtoPP} 
is invariant under permutation of $a,b,c$.
The `if' direction now follows from Theorem~\ref{thm:eqvConds}(e).
\end{proof}

\subsection{MacMahon's identity}

In \cite[page 659]{MacMahon1896}, MacMahon proved that 
\begin{equation}
\label{eq:MacMahon} \sum_{\pi \in \PP(a,b,c)} q^{|\pi|} = \prod_{i=1}^a\prod_{j=1}^b \prod_{k=1}^c
\frac{q^{i+j+k-1}-1}{q^{i+j+k-2}-1}. \end{equation}
This makes the invariance of~\eqref{eq:SSYTtoPP} under
permutation of $a$, $b$ and $c$ algebraically obvious. 
For a modern proof using~\eqref{eq:SSYTtoPP} and Stanley's Hook Content Formula
see (7.109) and (7.111) in \cite{StanleyII}. 
In this section we prove Corollary~\ref{cor:det}, which gives
a new $q$-binomial form for the right-hand side of MacMahon's formula.
Specializing $q$ to $1$ in this corollary we obtain the attractive identity 
\[ \prod_{i=1}^a\prod_{j=1}^b \prod_{k=1}^c \frac{i+j+k-1}{i+j+k-2} =
\det\binom{b+c+i+j}{b+j}_{0 \le i, j \le a-1}. \]
Proving the invariance of the right-hand side
under permutation of $a$, $b$ and~$c$ was
asked as a MathOverflow question by T. Amdeberhan\footnote{See
\url{mathoverflow.net/q/322894/7709}.} in 2019.

\subsubsection*{Hermite reciprocity and $q$-binomial coefficients}
Recall from~\eqref{eq:quantumInteger} that $[m]_q = (q^m-1)/(q-1) \in \C[q]$ for $m \in \N_0$.
Set $[m]_q = 0$ if $m < 0$.
For $m$, $\ell \in \N_0$, we define the $q$-binomial coefficient $\qbinom{m}{\ell} \in \C[q]$ by
\[ \qbinom{m}{\ell} = \frac{[m]_q \ldots [m-\ell+1]_q}{[\ell]_q \ldots [1]_q}. \]
As motivation, we note that, by Stanley's Hook Content Formula (Theorem~\ref{thm:HCF}), we have $s_{(n)}(1,q,\ldots, q^\ell) = \qbinom{n+\ell}{\ell}$.
We saw in the first proof of Hermite reciprocity 
in \S\ref{subsec:Hermite} that $s_{(n)}(1,q,\ldots, q^\ell)$ is the
generating function for partitions contained in the $n \times \ell$ box. Thus
the well-known invariance of $\qbinom{n+\ell}{\ell}$
under swapping $n$ and $\ell$ is  equivalent to Hermite reciprocity.

\subsubsection*{Jacobi--Trudi}
Let $e_m$ be the elementary symmetric function of degree $m$.
By \cite[Proposition 7.8.3]{StanleyII} 
we have $e_\ell(1,q,\ldots, q^{m-1}) = q^{\binom{\ell}{2}} \qbinom{m}{\ell}$.
It will be convenient to denote the right-hand side by $\sqbinom{m}{\ell}$.
Using this notation, 
the dual Jacobi--Trudi formula (see \cite[Corollary 7.16.2]{StanleyII}) implies that
\begin{equation}
\label{eq:firstPP} s_{(a^b)}(1,q,\ldots,q^{b+c-1}) = \det \Bigl(\, 
\sqbinom{b+c}{b+j-i}\, \Bigr)_{0 \le i, j \le a-1}.\end{equation}

\subsubsection*{A determinantal form of MacMahon's identity}
We now apply row and column operations to the matrix in~\eqref{eq:firstPP} using the following two 
versions of the Chu--Vandermonde identity for our scaled $q$-binomial coefficients. To make the article
self-contained we include
bijective proofs
using that $\sqbinom{m}{\ell}$ is
the generating function enumerating $\ell$-subsets of 
$\{0,\ldots, m-1\}$ by their sum of entries. (This easily follows
from \cite[Proposition 1.7.3]{StanleyI}.)
A different proof of~\eqref{eq:vdmCols} using the $q$-binomial theorem
is given in the solution to Exercise~100 in \cite{StanleyI}.

\newcommand{\ml}{\ell}
\newcommand{\nm}{m}
\begin{lemma}\label{lemma:vdm} We have
\begin{align}
\sqbinom{\nm+r}{\ml+r} &= \sum_\jj q^{\nm\jj} \sqbinom{r}{\jj}\, \sqbinom{\nm}{\ml+r-\jj}, \label{eq:vdmCols}\\
 \sqbinom{\nm+r}{\ml+r} &= \sum_\jj q^{r(\ml+r-\jj)}\sqbinom{r}{\jj}\, \sqbinom{\nm}{\ml+r-\jj}. \label{eq:vdmRows} 
\end{align}
\end{lemma}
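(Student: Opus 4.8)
The plan is to prove both identities simultaneously from the combinatorial description flagged just before the lemma: $\sqbinom{m}{\ell}$ is the generating function $\sum_S q^{\sigma(S)}$ in which $S$ ranges over the $\ell$-element subsets of $\{0,1,\ldots,m-1\}$ and $\sigma(S)$ is the sum of the elements of $S$. (This is the expansion $e_\ell(1,q,\ldots,q^{m-1}) = \sum_{0 \le j_1 < \cdots < j_\ell \le m-1} q^{j_1 + \cdots + j_\ell}$; cf.\ \cite[Proposition 1.7.3]{StanleyI}.) Under this reading the left-hand side $\sqbinom{m+r}{\ell+r}$ enumerates the $(\ell+r)$-element subsets $T$ of $\{0,1,\ldots,m+r-1\}$ by $\sigma(T)$, and each of \eqref{eq:vdmCols} and \eqref{eq:vdmRows} will come out of cutting this ground set into an initial and a final block in the two obvious ways and tracking how $\sigma$ splits across the cut.

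For \eqref{eq:vdmCols} I would cut after the first $m$ elements, so that $\{0,\ldots,m+r-1\} = \{0,\ldots,m-1\} \sqcup \{m,\ldots,m+r-1\}$. Given $T$, put $A = T \cap \{0,\ldots,m-1\}$ and $B = T \cap \{m,\ldots,m+r-1\}$, and let $k = |B|$, so $|A| = \ell+r-k$. Translating $B$ down by $m$ produces a $k$-element subset $B' \subseteq \{0,\ldots,r-1\}$ with $\sigma(B) = \sigma(B') + mk$, while $A$ is an arbitrary $(\ell+r-k)$-element subset of $\{0,\ldots,m-1\}$; since $T \leftrightarrow (A, B')$ is a bijection for each fixed $k$, and $\sigma(T) = \sigma(A) + \sigma(B') + mk$, summing $q^{\sigma(T)}$ and grouping by $k$ factors the result as $\sum_k q^{mk}\sqbinom{r}{k}\sqbinom{m}{\ell+r-k}$, which is \eqref{eq:vdmCols}. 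For \eqref{eq:vdmRows} I would instead cut after the first $r$ elements: with $C = T \cap \{0,\ldots,r-1\}$, $D = T \cap \{r,\ldots,m+r-1\}$ and $k = |C|$, the set $C$ is now an arbitrary $k$-element subset of $\{0,\ldots,r-1\}$ contributing $\sqbinom{r}{k}$, whereas translating $D$ down by $r$ gives an $(\ell+r-k)$-element subset $D' \subseteq \{0,\ldots,m-1\}$ with $\sigma(D) = \sigma(D') + r(\ell+r-k)$; grouping by $k$ exactly as before yields $\sum_k q^{r(\ell+r-k)}\sqbinom{r}{k}\sqbinom{m}{\ell+r-k}$, which is \eqref{eq:vdmRows}.

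I expect no real obstacle here beyond bookkeeping. The two points to keep straight are that the exponent shift always records the starting position of the \emph{translated} block — hence $mk$ in the first identity but $r(\ell+r-k)$ in the second, and these must not be interchanged — and that the summation index $k$ needs no stated bounds, since $\sqbinom{r}{k}$ vanishes outside $0 \le k \le r$ and $\sqbinom{m}{\ell+r-k}$ outside $0 \le \ell+r-k \le m$, so the displayed sums over $k \in \Z$ are automatically finite. In writing this up I would present the argument in full once, for \eqref{eq:vdmCols}, and deduce \eqref{eq:vdmRows} from it by the single change of cut point rather than repeating a near-identical computation.
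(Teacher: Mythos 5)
Your proof is correct and is essentially the paper's own argument: the paper likewise interprets $\sqbinom{m}{\ell}$ as the sum-of-entries generating function for $\ell$-subsets of $\{0,\ldots,m-1\}$ and proves \eqref{eq:vdmCols} by splitting a $(\ell+r)$-subset of $\{0,\ldots,m+r-1\}$ at the threshold $m$ (translating the top block down by $m$ to get the factor $q^{mk}$), then proves \eqref{eq:vdmRows} by the analogous split at the threshold $r$. The only difference is presentational — you spell out the translations and the automatic finiteness of the sum a little more explicitly than the paper does.
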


\begin{proof}
For~\eqref{eq:vdmCols},
observe that a $(\ml+r)$-subset of $\{0,1,\ldots, \nm+r-1\}$ containing exactly $\jj$
elements of  $\{\nm,\ldots, \nm+r-1\}$ has a unique decomposition as $Y \cup Z$
where $Y$ is a $\jj$-subset of $\{m,\ldots, \nm+r-1\}$
and $Z$ is an $(\ell+r-\jj)$-subset of $\{0,1,\ldots, \nm-1\}$. These pairs
are enumerated, according to their sum of entries,
by $q^{\nm\jj} \sqbinom{r}{\jj}$ and $\sqbinom{\nm}{\ml+r-\jj}$, respectively.
Identity~\eqref{eq:vdmRows} can be proved similarly by splitting  the subset as $Y' \cup Z'$
where $Y'$ is a $\jj$-subset of $\{0,\ldots, r-1\}$ and $Z'$ is an $(\ml+r-\jj)$-subset
of $\{r,\ldots, \nm+r-1\}$.
\end{proof}

\begin{proposition}\label{prop:dets}
For any $a, b, c \in \N$ we have
\begin{align*} 
 q^{a \binom{b}{2}} \prod_{i=1}^a\prod_{j=1}^b \prod_{k=1}^c
\frac{q^{i+j+k-1}-1}{q^{i+j+k-2}-1} 
&= \det \Bigl( \,\sqbinom{b+c+j}{b+j-i}\, \Bigr)
\\
&= q^{-A} \det \Bigl( \,q^{-ij} \sqbinom{b+c+i+j}{b+j}\, \Bigr)
\end{align*}
where each determinant is of an $a \times a$ matrix with entries
defined by taking $0 \le i,j \le a-1$, and $A = \binom{a}{2}b - \binom{a+1}{3}$.
\end{proposition}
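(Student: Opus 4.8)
The plan is to read the middle quantity off from results already established and then to pass between the three expressions by triangular column and row operations built from the two Chu--Vandermonde identities in Lemma~\ref{lemma:vdm}. Concretely, there are only two genuinely new steps: one column operation for the first claimed equality and one row operation for the second.

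\emph{First equality.} Combining \eqref{eq:SSYTtoPP} with MacMahon's formula \eqref{eq:MacMahon} gives
\[ q^{a\binom{b}{2}}\prod_{i=1}^a\prod_{j=1}^b\prod_{k=1}^c \frac{q^{i+j+k-1}-1}{q^{i+j+k-2}-1} \;=\; s_{(a^b)}(1,q,\ldots,q^{b+c-1}), \]
and by the dual Jacobi--Trudi determinant \eqref{eq:firstPP} the right-hand side equals $\det\bigl(\sqbinom{b+c}{b+j-i}\bigr)_{0\le i,j\le a-1}$. So it remains to identify this with $\det\bigl(\sqbinom{b+c+j}{b+j-i}\bigr)_{0\le i,j\le a-1}$. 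Applying \eqref{eq:vdmCols} with $m=b+c$, $r=j$ and $\ell=b-i$ gives
\[ \sqbinom{b+c+j}{b+j-i} \;=\; \sum_{k\ge 0} q^{(b+c)k}\,\sqbinom{j}{k}\,\sqbinom{b+c}{\,b+(j-k)-i\,}, \]
which says that column $j$ of $\bigl(\sqbinom{b+c+j}{b+j-i}\bigr)$ is column $j$ of $\bigl(\sqbinom{b+c}{b+j-i}\bigr)$ plus a combination of columns $0,\dots,j-1$ of the latter (note $\sqbinom{j}{0}=1$ and $\sqbinom{j}{k}=0$ for $k>j$). This column operation is unitriangular, so the two determinants coincide.

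\emph{Second equality.} Here I would pass from $\bigl(\sqbinom{b+c+j}{b+j-i}\bigr)$ to $\bigl(q^{-ij}\sqbinom{b+c+i+j}{b+j}\bigr)$ by a row operation. Applying \eqref{eq:vdmRows} with $m=b+c+j$, $r=i$ and $\ell=b+j-i$ gives $\sqbinom{b+c+i+j}{b+j}=\sum_{k=0}^{i} q^{i(b+j-k)}\sqbinom{i}{k}\sqbinom{b+c+j}{b+j-k}$, hence after multiplying through by $q^{-ij}$
\[ q^{-ij}\sqbinom{b+c+i+j}{b+j} \;=\; \sum_{k=0}^{i} q^{i(b-k)}\,\sqbinom{i}{k}\,\sqbinom{b+c+j}{b+j-k}. \]
Thus row $i$ of $\bigl(q^{-ij}\sqbinom{b+c+i+j}{b+j}\bigr)$ is a combination of rows $0,\dots,i$ of $\bigl(\sqbinom{b+c+j}{b+j-i}\bigr)$; the row operation is lower triangular, and its diagonal coefficient in row $i$ is $q^{i(b-i)}\sqbinom{i}{i}=q^{i(b-i)+\binom{i}{2}}=q^{ib-\binom{i+1}{2}}$, using $\sqbinom{i}{i}=e_i(1,q,\ldots,q^{i-1})=q^{\binom{i}{2}}$. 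Taking determinants therefore multiplies by $q^{\sum_{i=0}^{a-1}(ib-\binom{i+1}{2})}=q^{b\binom{a}{2}-\binom{a+1}{3}}=q^{A}$, where the last step uses $\sum_{i=0}^{a-1}i=\binom{a}{2}$ and the hockey-stick identity $\sum_{i=0}^{a-1}\binom{i+1}{2}=\binom{a+1}{3}$. Rearranging gives $\det\bigl(\sqbinom{b+c+j}{b+j-i}\bigr)=q^{-A}\det\bigl(q^{-ij}\sqbinom{b+c+i+j}{b+j}\bigr)$, as claimed.

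There is no real obstacle beyond bookkeeping once Lemma~\ref{lemma:vdm} is in place: the delicate points are choosing the specializations in \eqref{eq:vdmCols} and \eqref{eq:vdmRows} so that the induced column and row operations come out triangular, tracking the $q^{\binom{\ell}{2}}$ discrepancy between $\sqbinom{m}{\ell}$ and $\qbinom{m}{\ell}$ when reading off the diagonal coefficient of the row operation, and the exponent arithmetic that assembles $A=\binom{a}{2}b-\binom{a+1}{3}$.
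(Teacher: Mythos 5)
Your proof is correct and follows essentially the same route as the paper: read off the middle determinant from \eqref{eq:SSYTtoPP}, \eqref{eq:MacMahon} and \eqref{eq:firstPP}, then pass to each of the other two expressions by a unitriangular column operation (from \eqref{eq:vdmCols} with $m=b+c$, $\ell=b-i$, $r=j$) and a lower-triangular row operation (from \eqref{eq:vdmRows} with $m=b+c+j$, $\ell=b+j-i$, $r=i$), with the same diagonal-coefficient bookkeeping producing $A=\binom{a}{2}b-\binom{a+1}{3}$. The only differences from the paper's write-up are cosmetic (choice of summation variable and whether the $q^{i(b-i)+\binom{i}{2}}=q^{ib-\binom{i+1}{2}}$ simplification is made inline).
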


\begin{proof}
Let $M$ be the matrix with entries $\sqbinom{b+c}{b+j-i}$ for $0 \le i,j \le a-1$
appearing in~\eqref{eq:firstPP}.
Let $C_j$ denote the $j$th column of $M$, where columns are numbered from $0$ up to $a-1$.
Let $M'$ be the matrix obtained from $M$ by replacing $C_j$ with the linear combination
\[ \sum_{j'=0}^j q^{(b+c)(j-j')} \sqbinom{j}{j-j'} C_{j'} \]
for each $j \in \{0,\ldots, a-1\}$. Since $\sqbinom{j}{0} = 1$, we have $\det M' = \det M$.
By~\eqref{eq:vdmCols}, taking $\nm=b+c$, $\ml = b-i$ and $r = j$ and replacing
the summation variable~$k$ with $j-j'$, we have
\[ \sqbinom{b+c+j}{b+j-i} = \sum_{j'} q^{(b+c)(j-j')} \sqbinom{j}{j-j'}\, 
\sqbinom{b+c}{b+j'-i}. \]
Therefore $M'$ has entries $\sqbinom{b+c+j}{b+j-i}$.
as required for the first equality.
Let $R'_i$ denote the $i$th row of $M'$. Let $M''$ be the matrix obtained from $M'$ by
replacing $R'_i$ with the linear combination
\[ \sum_{i'=0}^i q^{i(b-i')} \sqbinom{i}{i'} R'_{i'} \]
for each $i \in \{0,\ldots,a-1\}$. Since $q^{i(b-i)}\sqbinom{i}{i} = q^{i(b-i) + \binom{i}{2}}$
and 
\[ \sum_{i=0}^{a-1} \Bigl( i(b-i) + \binom{i}{2} \Bigr)
= \sum_{i=0}^{a-1} ib - \sum_{i=0}^{a-1} \binom{i+1}{2} = b\binom{a}{2} - \binom{a+1}{3} \]
we have $\det M'' = q^{\binom{a}{2}b - \binom{a+1}{3}} \det M'$.
By~\eqref{eq:vdmRows} taking $\nm=b+c+j$, $\ml = b+j-i$ and $r = i$ and replacing
the summation variable $k$ with $i'$ we have
\[ \sqbinom{b+c+i+j}{b+j} = \sum_{j'} q^{i(b+j-i')} \sqbinom{i}{i'}\, 
\sqbinom{b+c+j}{b+j-i'}. \]
Multiplying through by $q^{-ij}$, we see
that $M''$ has entries $q^{-ij}\sqbinom{b+c+i_j}{b+j}$. The final equality follows.
\end{proof}


\begin{corollary}\label{cor:det}
For any $a,b,c \in \N$ we have
\[ \prod_{i=1}^a\prod_{j=1}^b \prod_{k=1}^c
\frac{q^{i+j+k-1}-1}{q^{i+j+k-2}-1} = q^{1^2+ \cdots + (a-1)^2} 
 \det \Bigl( q^{-ij} \qbinom{b+c+i+j}{b+j} \Bigr)_{0 \le i,j < a}. \]
\end{corollary}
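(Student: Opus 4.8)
The plan is to deduce Corollary~\ref{cor:det} directly from Proposition~\ref{prop:dets}, which already does all the combinatorial and determinantal work. Comparing the second and third expressions in Proposition~\ref{prop:dets}, we have
\[ q^{a\binom{b}{2}} \prod_{i=1}^a\prod_{j=1}^b\prod_{k=1}^c \frac{q^{i+j+k-1}-1}{q^{i+j+k-2}-1} = q^{-A} \det\Bigl( q^{-ij}\sqbinom{b+c+i+j}{b+j}\Bigr)_{0 \le i,j \le a-1}, \]
with $A = \binom{a}{2}b - \binom{a+1}{3}$. So the only genuine tasks are: (i) rewrite $\sqbinom{b+c+i+j}{b+j}$ in terms of the ordinary $q$-binomial $\qbinom{b+c+i+j}{b+j}$, pulling out the resulting power of $q$ from each entry of the $a \times a$ matrix and hence from the determinant; and (ii) collect all the powers of $q$ and check that the exponent simplifies to $1^2 + 2^2 + \cdots + (a-1)^2$.

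First I would recall the definition $\sqbinom{m}{\ell} = q^{\binom{\ell}{2}}\qbinom{m}{\ell}$ introduced just before~\eqref{eq:firstPP}. Thus $\sqbinom{b+c+i+j}{b+j} = q^{\binom{b+j}{2}}\qbinom{b+c+i+j}{b+j}$, and the power $q^{\binom{b+j}{2}}$ depends only on the column index $j$. Factoring $q^{\binom{b+j}{2}}$ out of column $j$ for each $j \in \{0,1,\ldots,a-1\}$ multiplies the determinant by $q^{\sum_{j=0}^{a-1}\binom{b+j}{2}}$. Hence
\[ \det\Bigl( q^{-ij}\sqbinom{b+c+i+j}{b+j}\Bigr)_{0 \le i,j \le a-1} = q^{\sum_{j=0}^{a-1}\binom{b+j}{2}} \det\Bigl( q^{-ij}\qbinom{b+c+i+j}{b+j}\Bigr)_{0 \le i,j \le a-1}. \]
Substituting this into the displayed identity above and solving for the product over $i,j,k$, the claimed formula will hold with the power of $q$ equal to
\[ -a\binom{b}{2} - A + \sum_{j=0}^{a-1}\binom{b+j}{2} = -a\binom{b}{2} - \binom{a}{2}b + \binom{a+1}{3} + \sum_{j=0}^{a-1}\binom{b+j}{2}. \]

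The last step is the routine but load-bearing computation that this exponent equals $\sum_{k=1}^{a-1}k^2$. Using the hockey-stick identity $\sum_{j=0}^{a-1}\binom{b+j}{2} = \binom{b+a}{3} - \binom{b}{3}$ and expanding $\binom{b+a}{3} - \binom{b}{3} - a\binom{b}{2}$, the terms involving $b$ should cancel against $-\binom{a}{2}b$, leaving a polynomial in $a$ alone; adding $\binom{a+1}{3}$ and simplifying via $\sum_{k=0}^{a-1}k^2 = \frac{(a-1)a(2a-1)}{6}$ should close the identity. I expect no conceptual obstacle here — the only thing to watch is keeping the index conventions ($0 \le i,j \le a-1$ throughout, matching Proposition~\ref{prop:dets}) and the several binomial manipulations consistent, since an off-by-one in any of the three binomial sums would spoil the clean exponent $1^2 + \cdots + (a-1)^2$.
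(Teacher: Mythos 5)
Your proposal is correct and is essentially the paper's own proof: both start from the second equality of Proposition~\ref{prop:dets}, replace $\sqbinom{b+c+i+j}{b+j}$ by $q^{\binom{b+j}{2}}\qbinom{b+c+i+j}{b+j}$, pull the column factors out of the determinant, evaluate $\sum_{j=0}^{a-1}\binom{b+j}{2}=\binom{a+b}{3}-\binom{b}{3}$, and reduce the remaining exponent to $\frac{a(a-1)(2a-1)}{6}=1^2+\cdots+(a-1)^2$. No discrepancy.
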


\begin{proof}
Let $N$ be the $a \times a$ matrix on the right-hand side.
Since $\smash{\sqbinom{b+c+i+j}{b+j}} = q^{\binom{b+j}{2}}\qbinom{b+c+i+j}{b+j}$
and \smash{$\sum_{j=0}^{a-1} \binom{b+j}{2} = \binom{a+b}{3} - \binom{b}{3}$},
it follows from the second equality in Proposition~\ref{prop:dets} 
that 
\[ q^{a\binom{b}{2}} \prod_{i=1}^a\prod_{j=1}^b \prod_{k=1}^c
\frac{q^{i+j+k-1}-1}{q^{i+j+k-2}-1} = q^{-\binom{a}{2}b + \binom{a+1}{3} +
\binom{a+b}{3} - \binom{b}{3}} \det N. \]
By direct calculation one finds that
\[ -a \binom{b}{2} - \binom{a}{2} b + \binom{a+1}{3} + \binom{a+b}{3} - \binom{b}{3}
= \frac{a(a-1)(2a-1)}{6}. \]
The identity now follows using $1^2 + \cdots + (a-1)^2 = a(a-\mfrac{1}{2})(a-1)/3$.
\end{proof}

\subsection{Plethystic equivalences between rectangles}
\renewcommand{\c}[2]{\mathrm{c}_{#1}^{(#2)}\hskip-0.5pt}
\newcommand{\h}[1]{\mathrm{h}_{#1}}
\newcommand{\ch}[2]{\mathrm{ch}_{#1}^{(#2)}}

In this subsection we prove the `only if' direction of Theorem~\ref{thm:rectangles}.
The following lemma gives one useful reduction.

\begin{lemma}\label{lemma:complementReduction}
Let $a$, $b \in \N$ and let $\mu$ be a partition. If $\sh \ge b$ and $m \ge \ell(\mu)-1$
then $(a^b) \eqv{\sh-1}{m} \mu$ if and only if $(a^{\sh-b}) \eqv{\sh-1}{m} \mu$.
\end{lemma}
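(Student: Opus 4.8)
The plan is to reduce everything to the content multiset characterization in Theorem~\ref{thm:eqvConds}(h). First I would observe that $(a^b)$ and $(a^{\sh-b})$ are complementary partitions: since $\sh - b \ge 0$, we have $(a^b)^{\circ \sh} = (a^{\sh - b})$, the complement of $(a^b)$ in the $\sh \times a$ box. (Here we must allow $\sh - b = 0$, i.e.\ the empty partition, which is the degenerate case where the statement is vacuous or reduces to a trivial column removal; I would handle this boundary separately at the start.) By Theorem~\ref{thm:complements} — or more directly, by the `if' direction argument already given in the proof of Theorem~\ref{thm:complements} — we have the plethystic equivalence
\[ (a^b) \eqv{\sh-1}{\sh-1} (a^{\sh - b}). \]
This is exactly the regime $r = \ell + 1$ with $r = \sh$, $\ell = \sh - 1$, which is covered by Theorem~\ref{thm:complements}.

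Given this, the lemma follows from transitivity. Suppose $(a^b) \eqv{\sh - 1}{m} \mu$. From the displayed equivalence we get $(a^{\sh-b}) \eqv{\sh-1}{\sh-1} (a^b)$, and then Lemma~\ref{lemma:transitive} (with $k = \sh - 1$, the middle index $\ell = \sh - 1$, and the final index $m$) yields $(a^{\sh - b}) \eqv{\sh-1}{m} \mu$. The converse direction is symmetric: from $(a^{\sh-b}) \eqv{\sh-1}{m} \mu$ and $(a^b) \eqv{\sh-1}{\sh-1} (a^{\sh-b})$ we again apply Lemma~\ref{lemma:transitive} to obtain $(a^b) \eqv{\sh-1}{m} \mu$. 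Note that the hypothesis $\sh \ge b$ is what guarantees $(a^{\sh-b})$ is a genuine (possibly empty) partition, i.e.\ that $\sh - b \ge 0$, and the hypothesis $m \ge \ell(\mu) - 1$ is just the standing assumption needed for $\nabla^\mu\SSym^m\E$ to make sense; neither plays any deeper role.

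The only real subtlety — and the step I would be most careful about — is the edge case $\sh = b$, where $(a^{\sh-b}) = \varnothing$. Strictly speaking Theorem~\ref{thm:complements} is stated for partitions with $\ell(\lambda) \le r$ and the complement is taken in the $r \times a(\lambda)$ box; when $\sh = b$ the partition $(a^b)$ fills the entire $\sh \times a$ box and its complement is empty. In that situation $s_{(a^b)}(1,q,\ldots,q^{\sh-1})$ is a single monomial $q^{\b((a^b))}$, so $(a^b) \eqv{\sh-1}{m} \mu$ forces $s_\mu(1,q,\ldots,q^m)$ to be a monomial too, i.e.\ $\mu = \varnothing$ (or $\overline{\mu} = \varnothing$), and the claimed equivalence with $(a^{0}) = \varnothing$ is immediate from Theorem~\ref{thm:eqvConds}(e). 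I would dispose of this degenerate case in one line before invoking Theorem~\ref{thm:complements} and Lemma~\ref{lemma:transitive} for the generic case $\sh > b$.
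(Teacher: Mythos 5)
Your proposal is correct and takes essentially the same route as the paper: both identify $(a^{\sh-b})$ as the complement of $(a^b)$ in the $\sh\times a$ box, invoke Theorem~\ref{thm:complements} to get $(a^b)\eqv{\sh-1}{\sh-1}(a^{\sh-b})$, and then finish by Lemma~\ref{lemma:transitive}. Your extra care over the boundary case $\sh=b$ (where $(a^{\sh-b})=\varnothing$) is a reasonable addition that the paper leaves implicit, but it does not change the argument.
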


\begin{proof}
Since $(a^{\sh-b})$ is the complement of $(a^b)$ in the $\sh \times a$ box,
we have $(a^b)\eqv{\sh-1}{\sh-1} (a^{\sh-b}) $ by 
Theorem~\ref{thm:complements}. Now apply Lemma~\ref{lemma:transitive}.
\end{proof}

As seen here, it is most convenient to work with the
shift applied to the content multiset: thus $\sh = \ell+1$ in the usual notation.
Recall from Definition~\ref{defn:hlContent} that $h_{(i,j)}(\lambda)$ denotes the hook length
of the box $(i,j) \in [\lambda]$.

\begin{definition}
Let $\lambda$ be a non-empty partition and 
let $\sh \ge \ell(\lambda)$. Define $\c{\lambda}{\sh} : \N_0 \rightarrow \N_0$ and $\h{\lambda} : \N_0 \rightarrow \N_0$ by
\begin{align*}
\c{\lambda}{\sh}(k) &= \bigl| \{ (i,j) \in [\lambda] : j-i+\sh = k \} \bigr|  \\
\h{\lambda}(k) &= \bigl| \{ (i,j) \in [\lambda] : h_{(i,j)} = k \}  \bigr|
\end{align*}
and the \emph{content-hook function} $\ch{\lambda}{\sh} : \N_0 \rightarrow \Z$ by 
$\ch{\lambda}{\sh} = \c{\lambda}{\sh} - \h{\lambda}$.
\end{definition}

By Theorem~\ref{thm:eqvConds}(h), we have 
\begin{equation} (a^b) \eqv{\sh-1}{\shp-1} ({a'}^{b'}) \iff
\ch{(a^b)}{\sh-1} = \ch{(\ap^\bp)}{\shp - 1}. \label{eq:chiff} \end{equation}

The equalities on the right-hand side of~\eqref{eq:chiff}
can easily be classified
from the graphs of the content-hook functions. 
We include full details to save the
reader case-by-case checking. 
As a visual guide, in inequalities
and graphs we 
write $x$-coordinates relevant to the content in bold.

\renewcommand{\blue}{}
\newcommand{\blues}{}
\renewcommand{\red}{\bf}
\newcommand{\reds}{\bf\scriptstyle}
\newcommand{\redsv}{}

\begin{lemma}\label{lemma:chgraph}
Let $b \le \sh$. If $b \le a$ then the graphs of $\c{(a^b)}{\sh}$ and $-\h{(a^b)}$ are

\begin{center}
\begin{tikzpicture}[x=\gscale cm,y=\gscale cm]
\draw[->](0,-0.5)--(0,2.5); \draw[->](-0.5,0)--(9.5,0);
\draw(3,0)--(5,2)--(7,2)--(9,0);
\bigdot{(3,0)}\htick{3}
\bigdot{(5,2)}\htick{5}\vtick{2}
\bigdot{(7,2)}\htick{7}
\bigdot{(9,0)}\htick{9}
\node[left] at (-\vaxessep,2) {$\redsv b$};
\node at (3,-\axessep) {$\reds -b+\sh$};
\node at (5,-\axessep) {$\reds \sh$};
\node at (7,-\axessep) {$\reds a-b+\sh$};
\node at (9,-\axessep) {$\reds a+\sh$};
\end{tikzpicture}

\begin{tikzpicture}[x=\gscale cm,y=\gscale cm]
\draw[->](0,-2.5)--(0,0.5); \draw[->](-0.5,0)--(9.5,0);
\draw(0,0)--(2,-2)--(4,-2)--(6,0);
\bigdot{(0,0)}\htick{0}
\bigdot{(2,-2)}\htick{2}\vtick{-2}
\bigdot{(4,-2)}\htick{4}
\bigdot{(6,0)}\htick{6}
\node[left] at (-\vaxessep,-2) {$\blues -b$};
\node at (2,\axessep) {$\blues b$};
\node at (4,\axessep) {$\blues a$};
\node at (6,\axessep) {$\blues a+b$};
\end{tikzpicture}\hspace*{12pt}
\end{center}
If $b \ge a$ then the graphs of $\c{(a^b)}{\sh}$ and $\h{(a^b)}$ are
\begin{center}
\hspace*{35pt}\begin{tikzpicture}[x=\gscale cm,y=\gscale cm]
\draw[->](0,-0.5)--(0,2.5); \draw[->](-0.5,0)--(11.5,0);
\draw(5,0)--(7,2)--(9,2)--(11,0);
\bigdot{(5,0)}\htick{5}
\bigdot{(7,2)}\htick{7}\vtick{2}
\bigdot{(9,2)}\htick{9}
\bigdot{(11,0)}\htick{11}
\node[left] at (-\vaxessep,2) {$\redsv a$};
\node at (5,-\axessep) {$\reds -b+\sh\hspace*{18pt}$};
\node at (7,-\axessep) {$\reds -b+a+\sh$};
\node at (9,-\axessep) {$\reds \sh$};
\node at (11,-\axessep) {$\reds a+\sh$};
\end{tikzpicture}

\hspace*{35pt}\begin{tikzpicture}[x=\gscale cm,y=\gscale cm]
\draw[->](0,-2.5)--(0,0.5); \draw[->](-0.5,0)--(11.5,0);
\draw(0,0)--(2,-2)--(4,-2)--(6,0);
\bigdot{(0,0)}\htick{0}
\bigdot{(2,-2)}\htick{2}\vtick{-2}
\bigdot{(4,-2)}\htick{4}
\bigdot{(6,0)}\htick{6}
\node[left] at (-\vaxessep,-2) {$\blues -a$};
\node at (2,\axessep) {$\blues a$};
\node at (4,\axessep) {$\blues b$};
\node at (6,\axessep) {$\blues a+b$};
\end{tikzpicture}\hspace*{12pt}
\end{center}
\end{lemma}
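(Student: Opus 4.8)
The plan is to prove the lemma by a direct count. Both $\c{(a^b)}{\sh}$ and $\h{(a^b)}$ are governed by a single linear statistic on the boxes of $[(a^b)]$: the content $j-i$ in the first case, and — as will emerge — the anti-diagonal sum $i+j$ in the second. Note that the hypothesis $b \le \sh$ is precisely the condition $\sh \ge \ell\bigl((a^b)\bigr)$ required in the definition for $\c{(a^b)}{\sh}$ to make sense as a function $\N_0 \rightarrow \N_0$, since the least content of $(a^b)$ is $1-b$.

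The first step is to treat the content. Since $[(a^b)] = \{(i,j) : 1 \le i \le b,\ 1 \le j \le a\}$, the value $\c{(a^b)}{\sh}(k)$ counts the boxes with $j - i = k - \sh$. Writing $d = k-\sh$, the number of $(i,j)$ with $1 \le i \le b$, $1 \le j \le a$ and $j-i = d$ is $\max\bigl(0, \min(b+d, a-d, a, b)\bigr)$; as $d$ increases from $-b$ to $a$ this rises by $1$ per unit step until the diagonal first spans the shorter side of the rectangle, is constant on the ``long'' diagonals, and then falls by $1$ per unit step. When $b \le a$ the rise ends at $d = 0$ (with value $b$) and the plateau ends at $d = a-b$; when $b \ge a$ the rise ends at $d = a-b$ (with value $a$) and the plateau ends at $d = 0$. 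Reading these breakpoints through $k = d + \sh$ gives exactly the two content graphs drawn.

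The second step is to compute the hook lengths. For the rectangle we have $\lambda_i = a$ whenever $i \le b$ and $\lambda'_j = b$ whenever $j \le a$, so Definition~\ref{defn:hlContent} gives $h_{(i,j)}\bigl((a^b)\bigr) = (a-i)+(b-j)+1 = (a+b+1)-(i+j)$, which depends only on $i+j$. Hence $\h{(a^b)}(k)$ counts the boxes with $i+j = a+b+1-k$, and as $i+j$ runs over $\{2,\ldots,a+b\}$ this count rises by $1$ per step to the plateau value $\min(a,b)$, reached when $i+j = \min(a,b)+1$, stays constant until $i+j = \max(a,b)+1$, then falls back to $1$ at $i+j = a+b$. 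Substituting $k = a+b+1-(i+j)$ turns this into the second graph in each case: the plateau occupies $b \le k \le a$ when $b \le a$ and $a \le k \le b$ when $b \ge a$, with value $0$ at $k = 0$ and $k = a+b$.

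I do not anticipate any genuine obstacle here: the lemma is a finite bookkeeping exercise, and the only care required is to keep the regimes $b \le a$ and $b \ge a$ separate and to pin down the four breakpoints of each trapezoid, which the counts above do. Finally, the content-hook function $\ch{(a^b)}{\sh} = \c{(a^b)}{\sh} - \h{(a^b)}$ is then the superposition of the two displayed graphs — this is the form in which the lemma is applied in~\eqref{eq:chiff} and in the classification of rectangular equivalences that follows.
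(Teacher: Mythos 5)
Your proof is correct and follows essentially the same route as the paper's: a direct count of boxes in the rectangle along diagonals of constant content and (for hooks) constant $i+j$, yielding trapezoidal graphs with the stated breakpoints. The paper's proof is terser — it names the four breakpoints and asserts linearity in between, then says hooks can be found similarly — whereas you supply closed-form counts such as $\max\bigl(0,\min(b+d,a-d,a,b)\bigr)$; this is a matter of presentation, not substance.
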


\begin{proof}
Suppose that $b \le a$.
The unique least and greatest elements of $C\bigl( (a^b) \bigr) + d$ are
$-b+1+d$ and $a-1+d$, respectively. Moreover $d$ and $a-b+d$ are the least
and greatest element of the maximum multiplicity~$b$.
Thus $({\red -b+d},0)$, $({\red b}, d)$,
$({\red a-b+d},0)$ and $({\red a+d},0)$ are points on the graph of
$\c{(a^b)}{d}$. It is easily seen that, between each adjacent
pair of points, the graph is linear.
The graph of $\h{(a^b)}$ can be found similarly.
\end{proof}

\newcommand{\casei}{{\blue b} \le {\red -b+\sh} \le {\red \sh} \le {\blue a} 
< {\blue a+b} \le {\red a-b+\sh} < {\red a+\sh}}
\newcommand{\caseii}{{\blue b} \le {\red -b+\sh} \le {\blue a} < {\red \sh}
\le {\blue a+b} \le {\red a-b+\sh} < {\red a+\sh}}
\newcommand{\caseiii}{{\blue b} \le {\blue a} < {\red -b+\sh} 
< {\blue a+b} < {\red \sh} \le {\red a-b+\sh} < {\red a+\sh}}
\newcommand{\caseiiip}{{\blue \bp} \le {\blue \ap} < {\red -\bp+\shp} 
< {\blue \ap+\bp} \le {\red \shp} \le {\red \ap-\bp+\shp} \le {\red \ap+\shp}}
\newcommand{\caseiv}{{\blue b} \le {\blue a} < {\blue a+b} \le {\red -b+\sh}
< {\red \sh} \le {\red a-b+\sh} < {\red a+\sh}}
\newcommand{\caseivd}{{\blue \bp} \le {\blue \ap} < {\blue \ap+\bp} \le {\red -\bp+\shp}
\le {\red \shp} \le {\red \ap-\bp+\shp} < {\red \ap+\shp}}
\newcommand{\casev}{{\blue a} < {\blue b} \le {\red -b+\sh} \le {\blue a+b}
\le {\red a-b+\sh} < {\red \sh} < {\red a+\sh}}
\newcommand{\casevd}{{\blue \ap} < {\blue \bp} \le {\red -\bp+\shp} \le {\blue \ap+\bp}
\le {\red \ap-\bp+\shp} < {\red \shp} < {\red \ap+\shp}}
\newcommand{\casevi}{{\blue a} < {\blue b} <  {\blue a+b} < {\red -b+\sh}
\le {\red a-b+\sh} \le {\red \sh} < {\red a+\sh}}
\newcommand{\casevid}{{\blue \ap} < {\blue \bp} <{\blue \ap+\bp} < {\red -\bp+\shp}
\le {\red \ap-\bp++\shp} \le {\red \shp} < {\red \ap+\shp}}

By Lemma~\ref{lemma:complementReduction}, we may reduce to the case where  $2b \le \sh$, which we now consider.

\begin{lemma}\label{lemma:orderInterleave}
Let $2b \le \sh$. If $b \le a$ then precisely one of:
\begin{thmlist}
\item $\casei$;
\item $\caseii$;
\item $\caseiii$;
\item $\caseiv$.
\end{thmlist}
If $a < b$ then precisely one of
\begin{thmlist}\setcounter{thmlistcnt}{4}
\item $\casev$;
\item $\casevi$.
\end{thmlist}
\end{lemma}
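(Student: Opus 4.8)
The plan is to reduce the statement to locating the single real number $\sh$ within the chain $a < a+b < a+2b$, since every other inequality occurring in (i)--(vi) will turn out to be forced by the standing hypotheses $a, b \in \N$ and $2b \le \sh$.

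First I would record the inequalities that hold unconditionally. Among the breakpoints $0, a, b, a+b$ of $\h{(a^b)}$ read off from Lemma~\ref{lemma:chgraph}, we have $0 < \min(a,b) \le \max(a,b) < a+b$ because $a, b \ge 1$; and among the breakpoints of $\c{(a^b)}{\sh}$ we always have $-b+\sh < \sh < a+\sh$ and $-b+\sh < a-b+\sh < a+\sh$, the two interior values $\sh$ and $a-b+\sh$ being ordered according to $\sgn(a-b)$ (this is exactly why the lemma is split into two lists). The hypothesis $2b \le \sh$ then contributes $b \le -b+\sh$, so that $b$ lies weakly below every content breakpoint; $a+b \le a-b+\sh$; and $b < \sh$, so $a+b < a+\sh$, making $a+\sh$ the largest breakpoint overall. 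With all of this in hand the only comparisons still undetermined are $a$ versus $-b+\sh$, $a$ versus $\sh$, and $a+b$ versus $-b+\sh$, and by the equivalences $a < -b+\sh \Leftrightarrow a+b < \sh$ and $a+b < -b+\sh \Leftrightarrow a+2b < \sh$ each of these is a comparison of $\sh$ with one of $a$, $a+b$, $a+2b$.

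I would then split into cases by the position of $\sh$. When $b \le a$, the four sets $\sh \le a$, $a < \sh \le a+b$, $a+b < \sh < a+2b$ and $a+2b \le \sh$ partition $\R$ since $a < a+b < a+2b$; a short check shows each one forces precisely the chain displayed in (i), (ii), (iii) and (iv) respectively, while conversely each chain visibly contains the defining inequality of its set, so exactly one of (i)--(iv) holds. When $a < b$ the $\h$-breakpoints reorder as $0 < a < b < a+b$ and the interior content breakpoints as $-b+\sh < a-b+\sh < \sh$; now $\R$ is partitioned by $\sh \le a+2b$ and $a+2b < \sh$, and the same check identifies these with (v) and (vi).

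I do not anticipate a genuine obstacle: the argument is pure bookkeeping, and the only points needing care are tracking which inequalities are strict (several rely on $b \ge 1$ rather than merely $b \ge 0$, and $b < \sh$ needs $2b \le \sh$ together with $b \ge 1$) and remembering that the relative order of $\sh$ and $a-b+\sh$ flips with the sign of $a-b$, which is the sole reason the $b \le a$ and $a < b$ cases look different.
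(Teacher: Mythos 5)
Your proposal is correct and follows essentially the same route as the paper: both proofs observe that the standing hypotheses ($a,b\ge 1$ and $2b\le \sh$) settle all comparisons between hook and content breakpoints except those equivalent to placing $\sh$ relative to $a$, $a+b$, $a+2b$, and then check case by case that each region forces (and is forced by) the corresponding chain. The only cosmetic difference is that the paper presents the $b\le a$ split as a nested binary case analysis ($\sh$ vs $a+b$, then $\sh$ vs $a$ or $-b+\sh$ vs $a+b$) rather than as a four-way partition.
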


\begin{proof}
We have ${\red -b+d} < {\red d} < {\red a-b+d} < {\red a + d}$
and $b\le a \le a+b$. We must consider how these chains interleave.
By our reduction ${\blue b} \le {\red -b+\sh}$.

Suppose that $b \le a$.  
By the reduction, ${\blue a+b} \le {\red a-b+\sh}$, and so the interleaved chain
ends ${\red a-b+d} < {\red a+b}$.
If ${\red d} \le a+b$ then either
${\red d} \le a$, giving (i), or $a < {\red d} \le a+b$, giving (ii).
Otherwise $a+b < {\red d}$ and so $a < -{\red b} +{\red d}$. Either ${\red -b+d} < a+b$ giving~(iii)
or  $a+b \le {\red -b+d}$ giving~(iv).
 
Suppose that $a < b$. By the reduction,
$a+b \le {\red a-b+d} < {\red d} < {\red a} + {\red d}$,
so the interleaved chain ends ${\red a-b+d} < {\red d} < {\red a} + {\red d}$.
If ${\red -b+d} \le a+b$ we have (v), otherwise (vi).
\end{proof}

\begin{lemma}\label{lemma:chGraphs}
The graphs of $\ch{(a^b)}{\sh}$ in each of the cases in Lemma~\ref{lemma:orderInterleave} 
are as shown in Figure~4 overleaf.
\end{lemma}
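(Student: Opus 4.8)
The approach is to read off each graph of the content--hook function
\[ \ch{(a^b)}{\sh} = \c{(a^b)}{\sh} - \h{(a^b)} \]
by superimposing the two polygonal graphs already described in Lemma~\ref{lemma:chgraph} and subtracting. Each of $\c{(a^b)}{\sh}$ and $\h{(a^b)}$ is supported on a single interval, vanishes outside it, and on that interval is a trapezoid: it rises with slope $+1$ from a breakpoint to the start of a plateau, is constant on the plateau, then falls with slope $-1$ to a final breakpoint. All four breakpoints of each, and the values taken there, are given in Lemma~\ref{lemma:chgraph}. Hence $\ch{(a^b)}{\sh}$ is piecewise linear; its breakpoints lie among $0$ and the eight breakpoints of $\c{(a^b)}{\sh}$ and $\h{(a^b)}$; on each maximal interval between consecutive breakpoints its slope is the slope of $\c{(a^b)}{\sh}$ there minus the slope of $\h{(a^b)}$ there, so it lies in $\{-2,-1,0,1,2\}$; and since both trapezoids have total signed slope $0$, the function $\ch{(a^b)}{\sh}$ starts and ends at the value $0$.

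The essential input is the interleaving of these breakpoints: in each of the six cases Lemma~\ref{lemma:orderInterleave} fixes the linear order of $b$, $a$, $a+b$, $-b+\sh$, $\sh$, $a-b+\sh$ and $a+\sh$, while the reduction $2b \le \sh$ coming from Lemma~\ref{lemma:complementReduction} gives $0 < b \le -b+\sh$, so $0$ is the leftmost breakpoint and $\ch{(a^b)}{\sh}$ vanishes on $(-\infty,0]$. For a fixed case I would list all the breakpoints in increasing order, use Lemma~\ref{lemma:chgraph} to record the two slopes of $\c{(a^b)}{\sh}$ and $\h{(a^b)}$ on each successive interval, subtract, and then integrate from the left, noting the value reached at every breakpoint; the resulting sequence of vertices is precisely the polygon drawn in Figure~4 for that case. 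Outside the overlap of the two supports the graph of $\ch{(a^b)}{\sh}$ agrees with that of $\c{(a^b)}{\sh}$ or with that of $-\h{(a^b)}$, which fixes its shape near the two ends, and the interleaving from Lemma~\ref{lemma:orderInterleave} controls what happens in between.

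The verification is pure bookkeeping and carries no conceptual obstacle; the only point requiring care is that several of the inequalities in Lemma~\ref{lemma:orderInterleave} are weak, so two breakpoints may coincide --- for instance $b = -b+\sh$ when $2b = \sh$, or $\sh = a$, or $a = b$. When this happens the corresponding edge of the generic polygon in Figure~4 contracts to a point, and one checks only that the displayed graph remains correct as the continuous limit of the generic configuration; no separate argument is needed. I would present the six computations as six short tables of breakpoint--value pairs, each ordering justified by Lemma~\ref{lemma:orderInterleave}, and conclude by matching them against Figure~4.
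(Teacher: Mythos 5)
Your proposal is correct and follows exactly the approach the paper intends: the paper's own proof of Lemma~\ref{lemma:chGraphs} is just the sentence ``This is routine from Lemma~\ref{lemma:chgraph} and Lemma~\ref{lemma:orderInterleave}'', and what you have written is a careful spelling-out of that routine --- superimposing the two trapezoidal graphs, subtracting slopes on each interval determined by the interleaving, and noting that degenerate coincidences of breakpoints merely contract an edge. No discrepancy.
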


\begin{proof}
This is routine from Lemma~\ref{lemma:chgraph} and Lemma~\ref{lemma:orderInterleave}.
\end{proof}

\begin{figure}
\newcommand{\pnode}{\node[left] at (-\vaxessep,-2) {$\phantom{\scriptstyle -2b+s-a}$};}

\begin{itemize}
\item[(i)] \centerfigure{\begin{tikzpicture}[x=\ggscale cm,y=\ggscale cm]
\draw[->](0,-2.5)--(0,2.5); \draw[->](-0.5,0)--(14.5,0);
\draw (0,0)--(2,-2)--(4,-2)--(6,0)--(8,0)--(10,2)--(12,2)--(14,0);
\bigdot{(0,0)}\htick{0}
\bigdot{(2,-2)}\htick{2}\vtick{-2}
\bigdot{(4,-2)}\htick{4}
\bigdot{(6,0)}\htick{6}
\bigdot{(8,0)}\htick{8}
\bigdot{(10,2)}\htick{10}\vtick{2}
\bigdot{(12,2)}\htick{12}
\bigdot{(14,0)}\htick{14}
\node[left] at (-\vaxessep,-2) {$-b$};
\node[left] at (-\vaxessep,2) {$b$};
\pnode
\node at (2,\axessep) {$\blue b$};
\node at (4,-\axessep) {$\reds -b+\sh$};
\node at (6.4,-\axessep) {$\reds \sh\phantom{-b}$};
\node at (8,\axessep) {$\blue a$};
\node at (10,\axessep) {$\blue a+b$};
\node at (11.75,-\axessep) {$\reds a-b+\sh$};
\node at (14,-\axessep) {$\reds a+\sh$};
\end{tikzpicture}}

\medskip

\item[(ii)] \centerfigure{\begin{tikzpicture}[x=\ggscale cm,y=\ggscale cm]
\draw[->](0,-3.5)--(0,3.5); \draw[->](-0.5,0)--(15.5,0);
\draw (0,0)--(3,-3)--(5,-3)--(6,-2)--(8,2)--(9,3)--(12,3)--(15,0);
\bigdot{(0,0)}\htick{0}
\bigdot{(3,-3)}\htick{3}\vtick{-3}
\bigdot{(5,-3)}\htick{5}
\bigdot{(6,-2)}\htick{6}\vtick{-2}
\bigdot{(8,2)}\htick{8}\vtick{2}
\bigdot{(9,3)}\htick{9}\vtick{3}
\bigdot{(12,3)}\htick{12}
\bigdot{(15,0)}\htick{15}
\pnode
\node[left] at (-\vaxessep,-3) {$-b$};
\node[left] at (-\vaxessep,3) {$b$};
\node[left] at (-\vaxessep,-2) {$a-\sh$};
\node[left] at (-\vaxessep,2) {$\sh-a$};
\node at (3,\axessep) {$\blue b$};
\node at (5,-\axessep) {$\reds -b+\sh$};
\node at (6,\axessep) {$\blue a$};
\node at (8.4,-\axessep) {$\reds \sh\phantom{-b}$};
\node at (9,\axessep) {$\blue a+b$};
\node at (12,-\axessep) {$\reds a-b+\sh$};
\node at (15,-\axessep) {$\reds a+\sh$};
\end{tikzpicture}}

\medskip
\item[(iii)] \centerfigure{\begin{tikzpicture}[x=\ggscale cm,y=\ggscale cm]
\draw[->](0,-3.5)--(0,3.5); \draw[->](-0.5,0)--(15.5,0);
\draw (0,0)--(3,-3)--(5,-3)--(6,-2)--(8,2)--(9,3)--(12,3)--(15,0);
\bigdot{(0,0)}\htick{0}
\bigdot{(3,-3)}\htick{3}\vtick{-3}
\bigdot{(5,-3)}\htick{5}
\bigdot{(6,-2)}\htick{6}\vtick{-2}
\bigdot{(8,2)}\htick{8}\vtick{2}
\bigdot{(9,3)}\htick{9}\vtick{3}
\bigdot{(12,3)}\htick{12}
\bigdot{(15,0)}\htick{15}
\node[left] at (-\vaxessep,-3) {$-b$};
\node[left] at (-\vaxessep,3) {$b$};
\node[left] at (-\vaxessep,-2) {$\scriptstyle -a-2b+\sh$};
\node[left] at (-\vaxessep,2) {$\scriptstyle a+2b-\sh$};
\node at (3,\axessep) {$\blue b$};
\node at (5,\axessep) {$\blue a$};
\node at (5.8,-\axessep) {$\reds -b+\sh$};
\node at (8.1,\axessep) {$\blue a+b$};
\node at (9.4,-\axessep) {$\reds \sh\phantom{-b}$};
\node at (12,-\axessep) {$\reds a-b+\sh$};
\node at (15,-\axessep) {$\reds a+\sh$};
\end{tikzpicture}}

\medskip
\item[(iv)] \centerfigure{\begin{tikzpicture}[x=\ggscale cm,y=\ggscale cm]
\draw[->](0,-2.5)--(0,2.5); \draw[->](-0.5,0)--(14.5,0);
\draw (0,0)--(2,-2)--(4,-2)--(6,0)--(8,0)--(10,2)--(12,2)--(14,0);
\bigdot{(0,0)}\htick{0}
\bigdot{(2,-2)}\htick{2}\vtick{-2}
\bigdot{(4,-2)}\htick{4}
\bigdot{(6,0)}\htick{6}
\bigdot{(8,0)}\htick{8}
\bigdot{(10,2)}\htick{10}\vtick{2}
\bigdot{(12,2)}\htick{12}
\bigdot{(14,0)}\htick{14}
\node[left] at (-\vaxessep,-2) {$-b$};
\node[left] at (-\vaxessep,2) {$b$};
\pnode
\node at (2,\axessep) {$\blue b$};
\node at (4,\axessep) {$\blue a$};
\node at (6,\axessep) {$\blue a+b$};
\node at (8,-\axessep) {$\reds -b+\sh$};
\node at (10.4,-\axessep) {$\reds \sh\phantom{-b}$};
\node at (11.75,-\axessep) {$\reds a-b+\sh$};
\node at (14,-\axessep) {$\reds a+\sh$};
\end{tikzpicture}}

\medskip
\item[(v)] \centerfigure{\begin{tikzpicture}[x=\ggscale cm,y=\ggscale cm]
\draw[->](0,-3.5)--(0,3.5); \draw[->](-0.5,0)--(15.5,0);
\draw (0,0)--(3,-3)--(5,-3)--(6,-2)--(8,2)--(9,3)--(12,3)--(15,0);
\bigdot{(0,0)}\htick{0}
\bigdot{(3,-3)}\htick{3}\vtick{-3}
\bigdot{(5,-3)}\htick{5}
\bigdot{(6,-2)}\htick{6}\vtick{-2}
\bigdot{(8,2)}\htick{8}\vtick{2}
\bigdot{(9,3)}\htick{9}\vtick{3}
\bigdot{(12,3)}\htick{12}
\bigdot{(15,0)}\htick{15}
\node[left] at (-\vaxessep,-3) {$-a$};
\node[left] at (-\vaxessep,3) {$a$};
\node[left] at (-\vaxessep,2) {$\scriptstyle a+2b-\sh$};
\node[left] at (-\vaxessep,-2) {$\scriptstyle -a-2b+\sh$};
\node at (3,\axessep) {$\blue a$};
\node at (5,\axessep) {$\blue b$};
\node at (5.8,-\axessep) {$\reds -b+\sh$};
\node at (8.1,\axessep) {$\blue a+b$};
\node at (9.5,-\axessep) {$\reds -b+a+\sh$};
\node at (12.4,-\axessep) {$\reds \sh\phantom{-b}$};
\node at (15,-\axessep) {$\reds a+\sh$};
\end{tikzpicture}}

\medskip
\item[(vi)] \centerfigure{\begin{tikzpicture}[x=\ggscale cm,y=\ggscale cm]
\draw[->](0,-2.5)--(0,2.5); \draw[->](-0.5,0)--(14.5,0);
\draw (0,0)--(2,-2)--(4,-2)--(6,0)--(8,0)--(10,2)--(12,2)--(14,0);
\bigdot{(0,0)}\htick{0}
\bigdot{(2,-2)}\htick{2}\vtick{-2}
\bigdot{(4,-2)}\htick{4}
\bigdot{(6,0)}\htick{6}
\bigdot{(8,0)}\htick{8}
\bigdot{(10,2)}\htick{10}\vtick{2}
\bigdot{(12,2)}\htick{12}
\bigdot{(14,0)}\htick{14}
\node[left] at (-\vaxessep,-2) {$-a$};
\node[left] at (-\vaxessep,2) {$a$};
\pnode
\node at (2,\axessep) {$\blue a$};
\node at (4,\axessep) {$\blue b$};
\node at (6,\axessep) {$\blue a+b$};
\node at (7.0,-\axessep) {$\reds -b+\sh$};
\node at (10.1,-\axessep) {$\reds -b+a+\sh$};
\node at (12.4,-\axessep) {$\reds \sh\phantom{-b}$};
\node at (14,-\axessep) {$\reds a+\sh$};
\end{tikzpicture}}

\end{itemize}
\caption{Graphs
 of $\ch{(a^b)}{\sh}$ in each of the cases in Lemma~\ref{lemma:orderInterleave}.}
\end{figure}
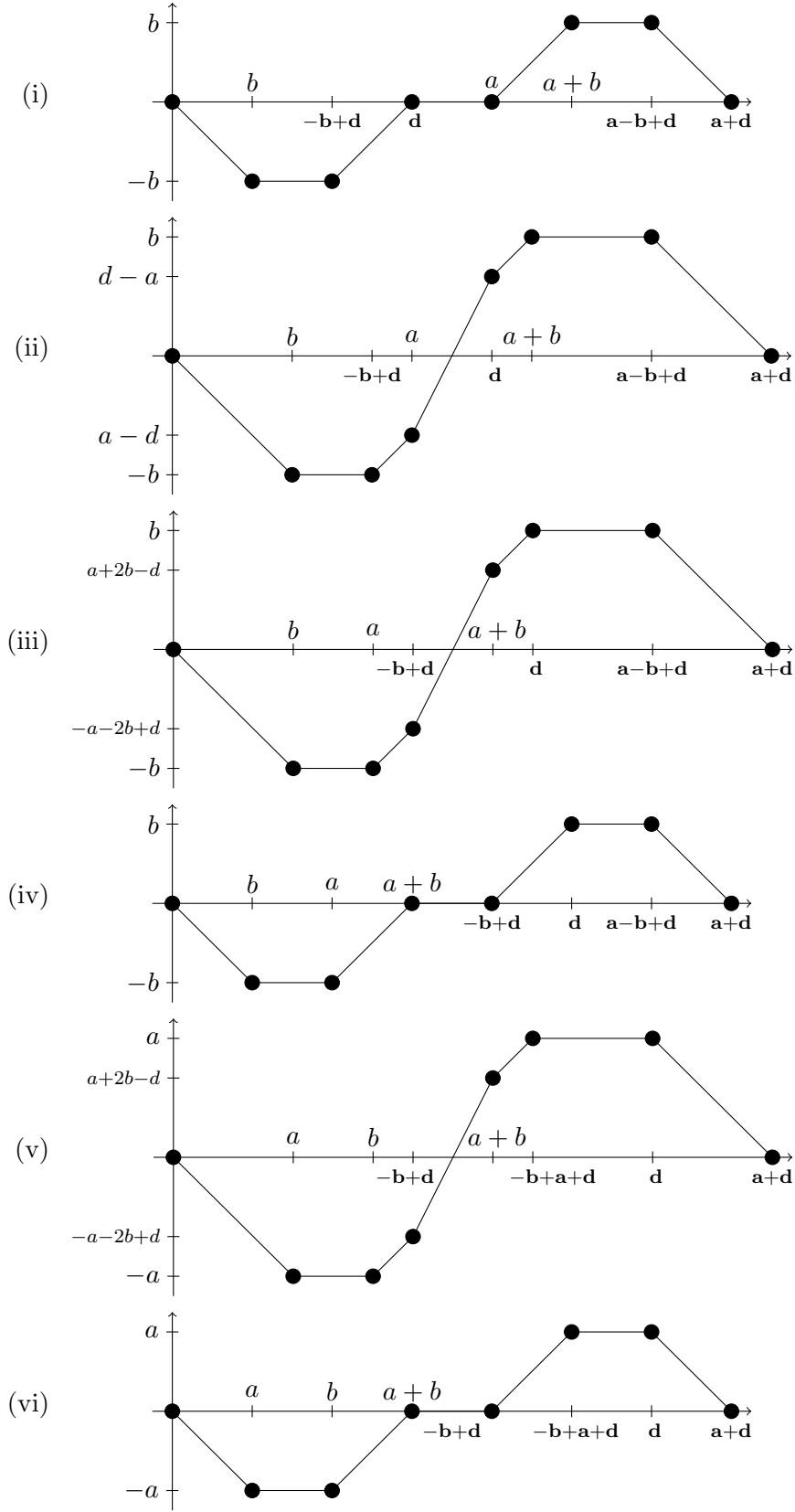

We are now ready to prove the `only if' direction
of Theorem~\ref{thm:rectangles}.


\begin{proof}
By hypothesis $\ell \ge \ell(\lambda)$ and $(a^b) \eqv{b+c-1}{\ell} \lambda$.
Let $d = b+c$. It follows from
Lemma~\ref{lemma:removable} that $\lambda$ is a rectangle.
Let \smash{$\lambda = ({a'}^{b'})$} and let $\ell = b' + c' - 1$ where $c' \in \N$. Set $d' = b'+c'$.
Since the six-fold equivalences given by the `if' direction of Theorem~\ref{thm:rectangles} form
a group, we may use Lemma~\ref{lemma:complementReduction} to assume that
$2b \le d$ and $2b' \le d'$.
Using~\eqref{eq:chiff}, it suffices to show that \smash{$\ch{(a^b)}{d} = \ch{({a'}^{b'})}{d'}$}
only if $(a',b',c')$ is a permutation of $(a,b,c)$. 

Say that a graph  in Lemma~\ref{lemma:chGraphs} is \emph{generic} if it has
a piecewise-linear part of gradient $0$ or $2$ in its middle. For example,
the graph in (i) is generic if and only if ${\red d} < a$.
For each generic graph there are two other generic graphs with which it may agree,
giving six cases we must check.
%
These are surprisingly simple to resolve. 
To give a typical instance,
suppose that the hook-content 
functions in case (i) for $(a^b)$ and shift $\sh$
and case (iv) for $\smash{(\ap^\bp)}$ and shift $\shp$ agree.
Comparing the inequality chains from Lemma~\ref{lemma:orderInterleave}, namely
\begin{align*}
&\casei \\
&\caseivd
\end{align*} 
shows that $\bp = b$, $\ap = -b+\sh = c$ and $\shp = a+b$.
Hence $\cp = \shp - \bp = a$ and
$(\ap,\bp,\cp) = (c,b,a)$. The corresponding equivalence is
$(a^b) \eqv{b+c-1}{a+b-1} (c^b)$.
The remaining generic cases are similar.
The equations satisfied by $\ap$, $\bp$, $\shp$,
the permutation of $(a,b,c)$ and the corresponding equivalence are shown in the table below;
the first line is the case already considered.

\begin{center}
\begin{tabular}{llllll} \toprule 
(i), (iv) & $\scriptstyle \ap = -b + \sh$ & $\scriptstyle \bp = b$ & $\scriptstyle \shp = a+b$ 
& $(c,b,a)$ 
& $(a^b)\eqv{b+c-1}{a+b-1}(c^b)$ \\
(i), (vi)  & $\scriptstyle \ap = b$ & $\scriptstyle \bp = -b+\sh$ & $\scriptstyle \shp = a-b+\sh$ 
& $(b,c,a)$ 
& $(a^b)\eqv{b+c-1}{a+c-1}(b^c)$ \\
(iv), (vi) & $\scriptstyle \ap = b$ & $\scriptstyle \bp = a$ & $\scriptstyle \shp = a-b+ \sh$ 
& $(b,a,c)$
& $(a^b)\eqv{b+c-1}{a+c-1}(b^a)$ \\ \midrule
(ii), (iii) & $\scriptstyle \ap = -b+\sh$ & $\scriptstyle \bp = b$ & $\scriptstyle \shp = a+b$ 
& $(c,b,a)$
& $(a^b)\eqv{b+c-1}{a+b-1}(c^b)$ \\
(ii), (v) & $\scriptstyle \ap = b$ & $\scriptstyle \bp = -b+\sh$ & $\scriptstyle \shp = a-b+\sh$ 
& $(b,c,a)$
& $(a^b)\eqv{b+c-1}{a+c-1}(b^c)$ \\
(iii), (v) & $\scriptstyle \ap = b$ & $\scriptstyle \bp = a$ & $\scriptstyle  \shp = a-b+\sh$ 
& $(b,a,c)$
& $(a^b)\eqv{b+c-1}{a+c-1}(b^a)$  \\
\bottomrule
\end{tabular}
\end{center}

In the non-generic cases
(i) and (ii) agree when $a=\sh$; (iii) and~(iv) agree when
$-b+\sh = a+b$; (v) and (vi) agree when $-b+\sh = a+b$. Therefore we need
only compare cases (i), (iii) and (v) using Lemma~\ref{lemma:chGraphs}. 
If (i) and~(iii) agree
then $\sh=a=-b+\sh = a+b$, hence $b = 0$, a contradiction. If (i) and~(v) agree then
$\sh=a=-b+\sh = a+b$, hence $b=0$, again a contradiction. It is impossible
for (iii) and (iv) to agree because $b \le a$ in (iii) and $a < b$ in~(v).
\end{proof}

\subsection{One-row partitions}

The special case of Theorem~\ref{thm:rectanglesFull}
for plethystic equivalences with a one-row partition 
is a natural generalization of Hermite reciprocity.
It was stated as Corollary~\ref{cor:oneRow} in the introduction.

\begin{proof}[Proof of Corollary~\ref{cor:oneRow}]
By definition, there is an isomorphism $\nabla^\lambda \SSym^\ell \E \cong \Sym^a \SSym^c \E$
of $\SL_2(\C)$-representations if and only if
$\lambda \eqv{\ell}{c} (a)$.
By Theorem~\ref{thm:rectanglesFull}, this
holds if and only if $\lambda$ is obtained by adding columns of length $\ell+1$
to a rectangle $(\ap^\bp)$  and $(\ap, \bp, \cp)$ is a permutation of $(a,b,c)$. After the usual reduction using Lemma~\ref{lemma:columnRemoval}
and Lemma~\ref{lemma:transitive}, we may assume the plethystic equivalence is
$(\ap^\bp) \eqv{\ell}{c} (a)$. Thus by Theorem~\ref{thm:rectanglesFull},
$(\ap,\bp, \ell-\bp+1)$ is a permutation of $(a,1,c)$.
Considering rectangles in conjugate pairs, we see that $(\ap^\bp)$ is one
of $(a)$, $(1^a)$, $(c)$, $(1^c)$, $(a^c)$, $(c^a)$ and the equivalence 
is respectively $(a) \eqv{c}{c} (a)$, $(1^a) \eqv{a+c-1}{c} (a)$,
$(c) \eqv{a}{c} (a)$, $(1^c) \eqv{a+c-1}{c}(a)$, $(a^c) \eqv{c}{c} (a)$,
$(c^a) \eqv{a}{c} (a)$, as required.
\end{proof}

\section{Irreducible skew-Schur functions}\label{sec:irreducible}

In this section we work in the more general setting of skew Schur functions. Recall that 
$\lambda / \lambda^\star$ is a \emph{skew partition} if $\lambda$ and $\lambda^\star$ are partitions
with $[\lambda^\star] \subseteq [\lambda]$.
Let $\SSYT_{\le \ell}(\lambda/\lambda^\star)$ be
the set of semistandard tableaux of shape $\lambda /\lambda^\star$ with
entries in $\{0,1,\ldots, \ell\}$, defined as in \S\ref{subsec:tableaux} but
replacing $[\lambda]$ with $[\lambda] / [\lambda^\star]$. 
The \emph{weight} of a skew tableau $t$, denoted $|t|$ is, as expected, its sum of entries.
Extending Definition~\ref{defn:SchurFunction} in the obvious way, the \emph{skew Schur function}
$s_{\lambda / \lambda^\star}$ is the symmetric function defined by
\begin{equation}
\label{eq:skewSchur}
s_{\lambda / \lambda^\star}(x_0,x_1,\ldots, x_n) 
= \sum_{t \in \SSYT_{\le \ell}(\lambda / \lambda^\star)} x^{|t|}.\end{equation}
Similarly, let
$\Se{e}{\ell}{\lambda / \lambda^\star}$ be the subset of  $\SSYT_{\le \ell}(\lambda/\lambda^\star)$ consisting of tableaux of weight~$e$.
Then
\begin{equation}
\label{eq:SeSkew} 
s_{\lambda / \lambda^\star}(1,q,\ldots, q^\ell) = \sum_{e\in\N_0} |\Se{e}{\ell}{\lambda /\lambda^\star}| 
q^e. \end{equation}

\begin{definition}\label{defn:ellIrred}
Let $\ell \in \N_0$ and let $\lambda /\lambda^\star$ be a skew-partition.
We say that $s_{\lambda / \lambda^\star}$ is \emph{$\ell$-irreducible} if 
there exists $b \in \N_0$ and $m \in \N_0$ such that $s_{\lambda/\lambda^\star}(1,q,\ldots, q^\ell)
= q^b(1 + q + \cdots + q^m)$. 
\end{definition}

In~\eqref{eq:skewIrred} we show that $b$ and $m$ are determined
in a simple way by $\lambda/\lambda^\star$ and~$\ell$. This result and Lemma~\ref{lemma:centralSkew}
can also be proved using 
the following remark; it is not
logically essential, but should help to motivate Definition~\ref{defn:ellIrred}.

\begin{remark}\label{remark:skewNabla}
The $\GL$-polytabloids $F(t)$ defined in~\S\ref{subsec:Schur} for tableaux $t$ of partition shape
with entries from $\{0,1,\ldots, \ell\}$
generalize in the obvious way to skew partitions. Using them we may
define $\nabla^{\lambda / \lambda^\star}V$, where as before $V = \langle v_0, \ldots, v_\ell \rangle$ 
is an $(\ell+1)$-dimensional complex vector space, to be the submodule
of $\bigotimes_{i=1}^{\ell(\lambda)} \Sym^{\lambda_i - \lambda_i^\star} E$
spanned by the $F(t)$ for $t$ a $\lambda/\lambda^\star$-tableau with entries
from $\{0,1, \ldots, \ell\}$. This defines skew Schur functors $\nabla^{\lambda / \lambda^\star}$
in a way that does not depend on Littlewood--Richardson coefficients, or the complete
reducibility of representations of $\SL_2(\C)$.
Generalizing Lemma~\ref{lemma:SchurChar}, we have
\[ \Phi_{\nabla^{\lambda / \lambda^\star} \SSym^\ell \E}(1,q) = 
s_{\lambda /\lambda^\star}(1,q,\ldots, q^\ell).\] 
By a generalization
of the equivalence of (a) and (e) in Theorem~\ref{thm:eqvConds}, 
$s_{\lambda / \lambda^\star}$ is $\ell$-irreducible in the sense of Definition~\ref{defn:ellIrred}
if and only if the polynomial representation 
$\nabla^{\lambda/\lambda^\star} \SSym^\ell \E$ of $\SL_2(\C)$ is irreducible.
\end{remark}

Note that $\ell =0$ is permitted in Definition~\ref{defn:ellIrred}
and in the previous remark. 
Since $s_{\lambda/\lambda^\star}(1,q,\ldots,q^\ell) $ is non-zero
if and only if every column of $[\lambda/\lambda^\star]$ has length at most $\ell+1$,
the $0$-irreducible skew-partitions are precisely those with at most one box
in each column.

\subsection{Irreducible skew Schur functions}

In this section we state a classification of all
skew partitions $\lambda /\lambda^\star$ and $\ell \in \N$
such that $s_{\lambda / \lambda^\star}$ is $\ell$-irreducible.
We then deduce Corollary~\ref{cor:irreducible}.
The following definition leads to a useful reduction.

\begin{definition}\label{defn:proper}
We say that a skew partition $\lambda / \lambda^\star$ is \emph{proper} if $\lambda_1 > \lambda^\star_1$
and $\lambda_1' > \lambdastarp_1$.
\end{definition}

Given a non-empty skew partition $\pi / \pi^\star$ 
one may repeatedly remove the longest rows and columns from each of $\pi$ and $\pi^\star$
to obtain the Young diagram of
a unique proper skew partition $\lambda / \lambda^\star$ such
that $[\lambda /\lambda^\star] = [\pi / \pi^\star]$, as illustrated
in Figure~5.
\begin{figure}[t]
\begin{center}
\scalebox{0.9}{\includegraphics{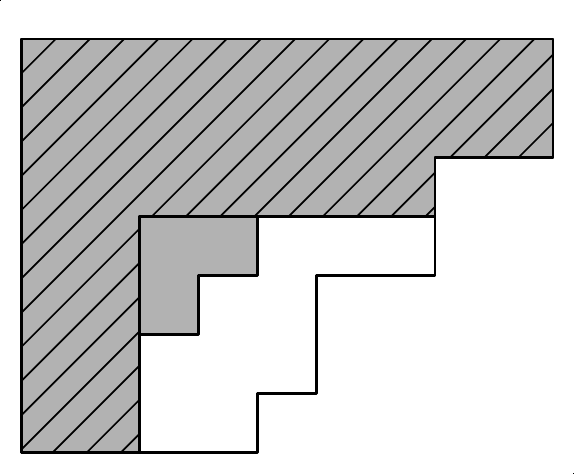}}
\caption{The Young diagram of a skew partition $\pi /\pi^\star$ with $\pi^\star$
shaded in grey is shown. 
Deleting the hatched boxes 
leaves the Young diagram of the proper skew partition $\lambda / \lambda^\star$, where $[\lambda^\star]$ consists
of the shaded unhatched boxes.}
\end{center}
\end{figure}
There is an obvious
bijection between
$\SSYT_{\{0,\ldots, \ell\}}(\pi / \pi^\star)$ and
$\SSYT_{\{0,\ldots, \ell\}}(\lambda / \lambda^\star)$. Therefore, by~\eqref{eq:skewSchur}, we have
$s_{\pi / \pi^\star}(1,q,\ldots, q^\ell) = s_{\lambda / \lambda^\star}(1,q,\ldots, q^\ell)$.
Thus there is no loss of generality
in restricting to proper skew partitions.
Our classification in this case uses the following definition.

\begin{definition}\label{defn:nearRectangle}
Given a proper skew partition $\lambda / \lambda^\star$ with $a(\lambda) = p$,
we define the \emph{column lengths} $c(\lambda / \lambda^\star) \in \N^p$ by $c(\lambda / \lambda^\star)_j = 
\lambda'_j - \lambdastarp_j$ for $1 \le j \le p$. 
We say that $\lambda / \lambda^\star$~is
\begin{itemize}
\item[(a)] a \emph{skew $\ell$-rectangle} where $\ell \in \N_0$ if $\clambda{1} = \ldots = \clambda{p} = \ell+1$;
\item[(b)] a \emph{skew $1$-near rectangle} of \emph{width} $d \in \N_0$ if there exist $y$ 
such that 
\begin{align*} \clambda{y} &= \ldots = \clambda{y+d-1} = 1, \\
\lambda'_y &= \ldots = \lambda'_{y+d-1}, \end{align*}
 and $\clambda{j} = 2$ if $1 \le j < y$
or $y+d \le j \le p$;
\item[(c)] a  \emph{skew $\ell$-near rectangle} where $\ell \ge 2$ if
there exists 
$\z$ such that $\clambda{\z} \in \{1, \ell\}$
and $\clambda{j} = \ell+1$ if $1 \le j \le p$ and $j \not= \z$.
\end{itemize}
\end{definition}

The Young diagrams of a skew $0$-rectangle,
a skew $1$-near rectangle of width $3$ and a skew $2$-near rectangle are
shown below; the final diagram fails
the second displayed condition in (ii), so is not a skew $1$-near rectangle.
\[ \renewcommand{\b}{\ }\young(:::\b\b,:\b\b,\b)\, \quad\quad\! \young(:::::\b,:\b\b\b\b\b,\b\b,\b) \, 
\qquad \young(::::\b,:\b\b\b\b,\b\b\b\b\b,\b\b\b,\b) \qquad
\young(:::::\b,::::\b\b,\b\b\b\b,\b) \]
We can now state the classification.

\begin{theorem}\label{thm:irreducible}
Let $\lambda / \lambda^\star$ be a proper skew partition.
Then $s_{\lambda / \lambda^\star}$ is $\ell$-irreducible
if and only if $\lambda / \lambda^\star$ is a skew $\ell$-rectangle
\emph{or} $\lambda / \lambda^\star$ is a skew $\ell$-near rectangle.
\end{theorem}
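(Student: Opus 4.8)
The plan is to translate $\ell$-irreducibility into a statement about the multiset $\Delta_\ell$ of "pyramid" entries attached to the skew shape, mirroring the passage from Theorem~\ref{thm:HCF} to Corollary~\ref{cor:delta}, and then to classify exactly which column-length vectors $c(\lambda/\lambda^\star)$ produce the required cancellation. Since $s_{\lambda/\lambda^\star}(1,q,\dots,q^\ell)$ depends only on the sequence of column lengths $(\clambda{1},\dots,\clambda{p})$ together with the relative vertical offsets of the columns, and since $s_{\lambda/\lambda^\star}$ is $\ell$-irreducible precisely when $s_{\lambda/\lambda^\star}(1,q,\dots,q^\ell) = q^b[m+1]_q$ for some $b,m$, the first step will be to record the Hook-Content--type product formula for the skew case. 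For the rectangular case, each column of length $\ell+1$ contributes trivially (the ``column removal'' phenomenon of Lemma~\ref{lemma:columnRemoval}, applied columnwise), so a skew $\ell$-rectangle reduces to the empty shape, whose Schur specialization is $1 = [1]_q$; this gives the ``if'' direction in case (a) immediately. For case (c), a skew $\ell$-near rectangle with the anomalous column of length $1$ or $\ell$ reduces, after deleting all full columns of length $\ell+1$, to a single column of length $1$ (giving $[1]_q$, after noting adjacent full columns force the offsets) or to a single column of length $\ell$, whose specialization is $q^{\binom{\ell}{2}}[\ell+1]_q \big/ \text{(something)}$ — more carefully, a single column of height $\ell$ has Schur specialization $\qbinom{\ell+1}{\ell}_q = [\ell+1]_q$ up to a power of $q$, which is exactly of the required form. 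Case (b) is the delicate one: a block of $d$ adjacent columns of length $1$ at the same height, flanked by columns of length $2$, forms a shape whose specialization I would compute directly via the Jacobi--Trudi/$e$-function expansion or via a small determinant, getting a product that telescopes to $q^b[m+1]_q$; the flanking columns of length $2$ must be handled by observing that a length-$2$ column sitting directly atop the next contributes $[2]_q$ to numerator and denominator in a way that cancels against the neighbouring content factor.

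The ``only if'' direction is the substantial part, and I expect it to be the main obstacle. Here I would argue by induction on the number of columns, using the graph-of-the-content-hook-function technique from \S\ref{sec:rectangles} adapted to skew shapes. The key observation is the analogue of Lemma~\ref{lemma:aGreatest} and the unimodality in the ``moreover'' clause of Theorem~\ref{thm:eqvConds}: if $s_{\lambda/\lambda^\star}(1,q,\dots,q^\ell) = q^b[m+1]_q$, then after stripping the minimum-weight monomial the polynomial $[m+1]_q$ has all coefficients equal to $1$, which is an extremely rigid condition. Concretely, I would examine the coefficient of $q^{b(\lambda/\lambda^\star)+1}$, which by the skew analogue of the argument in Lemma~\ref{lemma:removable} counts the ``addable-to-minimal'' boxes, i.e.\ essentially the number of distinct column lengths appearing (more precisely the number of places where a box-increment is legal in the minimal tableau); for $[m+1]_q$ this coefficient is $1$ (if $m\ge 1$) or $0$ (if $m=0$), forcing the column lengths to be nearly constant. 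A second pass, looking at the coefficient of $q^{b+2}$ and at the top of the polynomial, pins down how much the one exceptional column may deviate and forces the offset conditions in Definition~\ref{defn:nearRectangle}(b). The genuinely fiddly sub-case is distinguishing $\ell=1$ from $\ell\ge 2$: when $\ell=1$ every column has length $1$ or $2$, contents interact additively, and one must rule out, for instance, two separated blocks of length-$1$ columns — this is exactly where the condition $\lambda'_y = \dots = \lambda'_{y+d-1}$ (all the length-$1$ columns at a common height, forming one contiguous block) becomes necessary, and I would verify it by exhibiting an internal ``gap'' in the coefficient sequence of $s_{\lambda/\lambda^\star}(1,q)$ whenever it fails.

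Finally, I would deduce Corollary~\ref{cor:irreducible} by specializing Theorem~\ref{thm:irreducible} to the case $\lambda^\star = \emptyset$. A partition $\lambda$ is a (proper, for $a(\lambda)\ge 1$) skew $\ell$-rectangle iff $\lambda = (p^{\ell+1})$; it is a skew $\ell$-near rectangle with $\ell\ge 2$ iff one column has length $1$ or $\ell$ and the rest have length $\ell+1$, which for an ordinary partition forces $\lambda = (p,(p-1)^\ell)$ (the short column, of length $1$, is the last) or $\lambda = (p^\ell,p-1)$ (the short column, of length $\ell$, is the last), recovering part~(ii); and for $\ell = 1$ the skew $1$-rectangles and skew $1$-near rectangles that happen to be ordinary partitions are exactly those with $\ell(\lambda)\le 2$, giving part~(i). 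One should double-check that these are the \emph{only} partition-shaped near rectangles — e.g.\ case (b) with $\lambda^\star=\emptyset$ and $\ell=1$ gives a shape whose first $y-1$ columns and last $p-y-d+1$ columns have length $2$ and whose middle $d$ columns have length $1$; for this to be a partition (column lengths weakly decreasing) we need $y=1$, i.e.\ $\lambda = (d, 0^{\,?})$ padded by length-$2$ columns on the right only when those come first, so in fact $\lambda = (p,q)$ with $q\le p$ and $p-q = d$, which is just $\ell(\lambda)\le 2$ again. This bookkeeping is routine once Theorem~\ref{thm:irreducible} is in hand.
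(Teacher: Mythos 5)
Your plan for the hard (necessity) direction captures the paper's key idea: the coefficient of $q^{\b(\lambda/\lambda^\star)+1}$ counts the positions where a box-increment is legal in the minimum-weight tableau (the paper calls these \emph{bumpable boxes}), unimodality (via Lemma~\ref{lemma:safeToIncrease}) guarantees this coefficient is strictly less than a later coefficient whenever it exceeds~$1$, so irreducibility forces a unique bumpable box. This is exactly Lemma~\ref{lemma:oneBump} and Proposition~\ref{prop:oneBump}; the ``second pass'' at $q^{\b+2}$ that you sketch is precisely what the paper's final case analysis (a)/(b)/(c) does to distinguish $1 \le k < \ell-1$, $k=\ell-1$ and $k=0$. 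So for the necessity direction your proposal is essentially the paper's proof.

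However, two points deserve caution. First, the mention of ``the graph-of-the-content-hook-function technique from \S\ref{sec:rectangles} adapted to skew shapes'' is a dead end: Stanley's Hook Content Formula does not extend to skew shapes as a simple product (the paper explicitly cites the much more involved Naruse/Morales--Pak--Panova formula and then avoids using it). Fortunately nothing in your concrete plan actually depends on such a formula — the bumpable-box argument is a pure tableau count — so this is a framing error rather than a logical gap, but it should be cut. Second, your sufficiency argument is sketchier than you acknowledge. For the skew $\ell$-rectangle and $\ell$-near rectangle with $\ell \ge 2$ you invoke a ``delete the full columns'' reduction analogous to Lemma~\ref{lemma:columnRemoval}, but in the skew setting the full columns are not vertically aligned and there is no determinant twist to factor out; the genuine content is that each full column is forced to contain $0,\ldots,\ell$, that the anomalous column's entries are constrained only by its own semistandardness (a fact that needs a short offset computation, not a column deletion), and that the resulting entries are determined by the weight alone. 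For the skew $1$-near rectangle you propose a Jacobi--Trudi determinant, but the dual Jacobi--Trudi matrix here is $a(\lambda) \times a(\lambda)$, not small, and the telescoping is not obviously easier than the direct enumeration the paper uses (bump the length-$1$ boxes from right to left across the contiguous block, using that they sit at a common height). Neither issue is fatal, but as written your sufficiency argument has unresolved gaps, whereas the necessity argument is sound and matches the paper.
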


We immediately deduce the corollary for Schur functors labelled by partitions stated in the introduction.

\begin{proof}[Proof of Corollary~\ref{cor:irreducible}]
By Lemma~\ref{lemma:SchurChar}, 
$\nabla^\lambda \Sym^\ell \E \cong \Sym^n \E$ for some $n \in \N_0$
if and only if $s_\lambda$ is $\ell$-irreducible.
The only skew $0$-rectangles are one-part partitions.
If, as in (b), $\ell = 1$ and so $\lambda$ is either a skew $1$-rectangle,
in which case $\lambda = (n/2, n/2)$ for some even $n$, or a skew $1$-near rectangle,
in which case $\lambda = (n-m,m)$, for some $1 \le m \le n/2$ and $\ell(\lambda) = 2$.
This gives case (i) of the corollary.
If, as in~(c), $\ell \ge 2$, then $\lambda$ is either
a skew $\ell$-rectangle, of the form $(p^{\ell+1})$, or
a skew $\ell$-near rectangle; then all but the final
column of $\lambda$ has length $\ell+1$ and the final
column (which may be the only column)
has length either $1$ or~$\ell$. This gives case (ii).
\end{proof}

To prove Theorem~\ref{thm:irreducible} we need the preliminary results
in the following subsection.

\subsection{Unimodality of specialized skew Schur functions}
Fix a skew partition $\lambda / \lambda^\star$ of size $n$.
The minimum weight defined in 
Definition~\ref{defn:b} generalizes as follows to skew tableaux.
%
\begin{definition}\label{defn:minimumDegreeSkew}
We define the \emph{minimum weight} of $\lambda / \lambda^\star$ by
\[ \b(\lambda/\lambda^\star) = \sum_{j=1}^{a(\lambda)}  \binom{\lambda_j'-\lambdastarp_j}{2} 
.\]
\end{definition}
Equivalently, $b(\lambda/\lambda^\star) = \sum_{j=1}^{a(\lambda)} \binom{c(\lambda/\lambda^\star)_j}{2}$.
Observe that $\b(\lambda/\lambda^\star)$ is the weight
of the tableau $t(\lambda/\lambda^\star)$ having entries $0,1,\ldots \lambda'_j-1$
in column $j$, for $1 \le j \le a(\lambda)$. It is easily seen
that this tableau is semistandard and has the minimum 
weight of any tableau in $\SSYT_{\le \ell}(\lambda/\lambda^\star)$.

\begin{lemma}\label{lemma:centralSkew}
The specialization
$s_{\lambda /\lambda^\star}(1,q,\ldots, q^\ell)$
is unimodal and centrally symmetric about $\mfrac{\ell n}{2}$.
\end{lemma}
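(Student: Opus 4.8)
The plan is to reduce the statement to the decomposition of the polynomial $\GL_2(\C)$-representation $\nabla^{\lambda/\lambda^\star}\SSym^\ell\E$ into irreducibles, in the style of the ``moreover'' part of Theorem~\ref{thm:eqvConds}. First I would record, via the construction in Remark~\ref{remark:skewNabla}, that
\[ s_{\lambda/\lambda^\star}(1,q,\ldots,q^\ell) = \Phi_{\nabla^{\lambda/\lambda^\star}\SSym^\ell\E}(1,q), \]
and that $\nabla^{\lambda/\lambda^\star}\SSym^\ell\E$ is homogeneous of degree $\ell n$: writing $V = \SSym^\ell\E$, it is a subrepresentation of $\bigotimes_{i}\SSym^{\lambda_i-\lambda^\star_i}V$, and each matrix coefficient of $g\in\GL(E)$ acting on $V$ is a homogeneous polynomial of degree $\ell$ in the entries of $g$, so every matrix coefficient of $g$ acting on this tensor product is homogeneous of degree $\ell\sum_i(\lambda_i-\lambda^\star_i)=\ell n$.

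Next I would invoke complete reducibility of polynomial $\GL_2(\C)$-representations to write $\nabla^{\lambda/\lambda^\star}\SSym^\ell\E \cong \bigoplus_i \bigl(\SSym^{k_i}\E\otimes{\det}^{j_i}\bigr)$ with $k_i, j_i \in \N_0$. Homogeneity of degree $\ell n$ forces $2j_i+k_i=\ell n$ for every $i$. Taking characters at $(1,q)$ and using $\Phi_{\det}(1,q)=q$ together with~\eqref{eq:GLirredChar}, this gives
\[ s_{\lambda/\lambda^\star}(1,q,\ldots,q^\ell) = \sum_i q^{j_i}\bigl(1+q+\cdots+q^{k_i}\bigr), \]
a sum of polynomials each of which is symmetric and unimodal about $j_i+\tfrac{k_i}{2}=\tfrac{\ell n}{2}$. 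A sum of polynomials that are symmetric and unimodal about a common centre is again symmetric and unimodal about that centre, and the lemma follows.

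I expect no real obstacle here; the two points to get right are the homogeneity degree and the elementary fact about sums sharing a centre, the latter genuinely using the common value $\tfrac{\ell n}{2}$ forced by homogeneity, since a sum of unimodal sequences with different modes need not be unimodal. Complete reducibility of $\GL_2(\C)$-representations is used only as standard external input, consistent with Remark~\ref{remark:skewNabla}, which merely asserts that the \emph{construction} of $\nabla^{\lambda/\lambda^\star}$ needs no such input. If one prefers to bypass Remark~\ref{remark:skewNabla} for the symmetry half, central symmetry follows directly from the observation that rotating $[\lambda/\lambda^\star]$ through a half-turn and replacing each entry $b$ by $\ell-b$ is a weight-complementing bijection from $\SSYT_{\le\ell}(\lambda/\lambda^\star)$ onto the semistandard tableaux of the rotated skew shape, which has the same principal specialization; unimodality, though, still rests on the $\SL_2(\C)$-decomposition.
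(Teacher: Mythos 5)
Your proof is correct, but it takes a genuinely different route from the paper's. The paper expands $s_{\lambda/\lambda^\star}$ as a linear combination of Schur functions $s_\nu$ labelled by partitions $\nu$ of $n$, and then quotes the ``moreover'' clause of Theorem~\ref{thm:eqvConds}, which says each $q^{-\ell n/2}s_\nu(1,q,\ldots,q^\ell)$ is centrally symmetric and unimodal about its constant term. That argument tacitly requires the coefficients in the Schur expansion to be non-negative (Littlewood--Richardson positivity), since a signed sum of symmetric unimodal polynomials need not be unimodal. You instead decompose the polynomial $\GL_2(\C)$-representation $\nabla^{\lambda/\lambda^\star}\SSym^\ell E$ directly into irreducibles $\SSym^{k_i}E\otimes\det^{j_i}$, and use homogeneity of degree $\ell n$ to force $2j_i+k_i=\ell n$; the common centre $\tfrac{\ell n}{2}$ then drops out immediately and non-negativity of multiplicities is automatic. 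The two proofs are equivalent at heart---the ``moreover'' clause of Theorem~\ref{thm:eqvConds} is itself proved by exactly this kind of $\SL_2$-decomposition---but yours is more direct: it avoids routing through the Schur basis and makes the positivity input explicit. Your homogeneity computation is correct, and your closing bijective observation (half-turn rotation plus entry-complementation) is a valid elementary proof of the symmetry half; as you note it does not yield unimodality, which genuinely needs the representation-theoretic input.
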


\begin{proof}
Like any symmetric function, $s_{\lambda / \lambda^\star}$ can be expressed as a
linear combination of Schur functions labelled by partitions. 
The lemma therefore follows from 
the `moreover' part of Theorem~\ref{thm:eqvConds}.
\end{proof}

By Lemma~\ref{lemma:centralSkew} and~\eqref{eq:SeSkew}, 
$s_{\lambda /\lambda^\star}$ is $\ell$-irreducible if and only if
\begin{equation}
\label{eq:skewIrred} 
|\Se{\b(\lambda)}{\ell}{\lambda/\lambda^\star}| 
= |\Se{\b(\lambda)+1}{\ell}{\lambda/\lambda^\star}| = 
\ldots = |\Se{\lfloor \ell n /2\rfloor}{\ell}{\lambda/\lambda^\star}| = 1 
\end{equation}
Moreover, if~\eqref{eq:skewIrred} holds
then $s_{\lambda / \lambda^\star}(1,q,\ldots, q^\ell) = 
q^{\b(\lambda/\lambda^\star)}(1+q + \cdots + q^m)$ where
$m = \ell n - \b(\lambda/\lambda^\star)$. 
We also obtain the following  lemma.

\begin{lemma}\label{lemma:safeToIncrease}
If $|\Se{e}{\ell}{\lambda/\lambda^\star} |< |\Se{e+1}{\ell}{\lambda/\lambda^\star}|$
then $e < \mfrac{\ell n}{2}$.
\end{lemma}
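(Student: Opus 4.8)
The plan is to exploit the central symmetry established in Lemma~\ref{lemma:centralSkew}. Since $s_{\lambda/\lambda^\star}(1,q,\ldots,q^\ell) = \sum_{e} |\Se{e}{\ell}{\lambda/\lambda^\star}| q^e$ by~\eqref{eq:SeSkew}, the coefficient sequence $\bigl( |\Se{e}{\ell}{\lambda/\lambda^\star}| \bigr)_{e}$ is unimodal and symmetric about $\mfrac{\ell n}{2}$. Unimodality about $\mfrac{\ell n}{2}$ means precisely that the sequence is weakly increasing for $e \le \lfloor \mfrac{\ell n}{2} \rfloor$ and weakly decreasing for $e \ge \lceil \mfrac{\ell n}{2} \rceil$; in particular, once we are at or past the centre, consecutive coefficients can only stay equal or decrease.

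First I would argue by contraposition: suppose $e \ge \mfrac{\ell n}{2}$ and show $|\Se{e}{\ell}{\lambda/\lambda^\star}| \ge |\Se{e+1}{\ell}{\lambda/\lambda^\star}|$, which contradicts the strict inequality in the hypothesis. If $e \ge \mfrac{\ell n}{2}$ then $e+1 > \mfrac{\ell n}{2}$, so both $e$ and $e+1$ lie in the weakly-decreasing part of the unimodal sequence (here one must be slightly careful with the floor/ceiling when $\ell n$ is odd: if $\ell n$ is odd and $e = \mfrac{\ell n}{2}$ is not an integer, then the smallest relevant integer index is $\lceil \mfrac{\ell n}{2}\rceil = e + \tfrac12$, so in fact the first index $\ge e$ that occurs is already on the decreasing side — handle the two parities in one line each). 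In every case $e$ and $e+1$ are indices $\ge \lceil \mfrac{\ell n}{2} \rceil$ or straddle the exact centre symmetrically, and unimodality gives $|\Se{e}{\ell}{\lambda/\lambda^\star}| \ge |\Se{e+1}{\ell}{\lambda/\lambda^\star}|$. This is exactly the negation of the hypothesis, so $e < \mfrac{\ell n}{2}$, as claimed.

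The only real subtlety — hardly an obstacle — is making the floor/ceiling bookkeeping clean when $\ell n$ is odd, so that ``unimodal and centrally symmetric about $\mfrac{\ell n}{2}$'' is correctly translated into the statement that the integer-indexed coefficient sequence is weakly decreasing from $\lceil \mfrac{\ell n}{2}\rceil$ onwards (and weakly increasing up to $\lfloor \mfrac{\ell n}{2}\rfloor$). Once that translation is in place, the proof is a two-line deduction from Lemma~\ref{lemma:centralSkew} together with~\eqref{eq:SeSkew}. No further machinery is needed.
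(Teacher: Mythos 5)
Your argument is correct and follows exactly the same route as the paper, which cites unimodality and central symmetry from Lemma~\ref{lemma:centralSkew} together with~\eqref{eq:SeSkew} and calls the conclusion immediate. You have simply spelled out the contrapositive and the floor/ceiling bookkeeping that the paper leaves implicit.
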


\begin{proof}
This is immediate from the unimodality property in 
Lemma~\ref{lemma:centralSkew} and~\eqref{eq:SeSkew}.
\end{proof}


\subsection{Bumping and the proof of Theorem~\ref{thm:irreducible}}

\begin{definition}\label{defn:bump}
Given $t \in \SSYT_{\le \ell}(\lambda/\lambda^\star)$
and a box $(i,j) \in [\lambda/\lambda^\star]$, we define the \emph{bump} of $t$ in box $(i,j)$
to be the $\lambda/\lambda^\star$-tableau $t^+$ that agrees with~$t$ except in this box,
where $t^+_{(i,j)} = t_{(i,j)} + 1$. We say that $t$ is \emph{bumpable} in box $(i,j)$
if $t^+ \in 
\SSYT_{\le \ell}(\lambda/\lambda^\star)$.
\end{definition}

Equivalently, $t$ is bumpable in box $(i,j)$ if and only if $t_{(i,j)} < \ell$,
and increasing the entry of $t$ in position $(i,j)$ by $1$ does not violate the semistandard condition. 
The following example shows the use of Definition~\ref{defn:bump} in the harder `only if' part of the proof
of Theorem~\ref{thm:irreducible}.


\begin{example}\label{ex:bumping}
Let $\ell = 3$. Suppose that $\lambda/\lambda^\star = (3^4,1) / (2)$, so $\lambda / \lambda^\star$
is a skew $3$-near rectangle. Then
$\b(\lambda / \lambda^\star) = 15$ and 
\[ 
\young(::0,001,112,223,3)\, , \quad
\young(::0,001,112,233,3)\, , \quad
\young(::0,001,122,233,3)\, , \quad
\young(::0,011,122,233,3)\, \phantom{.} \]
are the unique tableaux in $\Seb{e}{3}{(3^4,1) / (2)}$ for
$e \in \{15,16,17,18\}$.
The first tableau  is $t(\lambda / \lambda^\star)$, and the rest
are obtained by successive bumps in positions $(4,2)$, $(3,2)$ and $(2,2)$.
By~\eqref{eq:skewIrred}, $s_{(3^4,1)/(2)}$ is $3$-irreducible.
Suppose instead that $\lambda / \lambda^\star = (3^4,1^2) / (2^2)$. Then
$\b(\lambda / \lambda^\star) = 13$ and
\[ 
\young(::0,::1,002,113,2,3)\, , \quad
\young(::0,::1,002,123,2,3)\, , \quad
\young(::0,::1,012,123,2,3)\, , \quad
\young(::0,::1,002,133,2,3)\,  \]
are the unique tableaux in $\Seb{e}{3}{(3^4,1^2)/(2^2)}$ for $e \in \{13,14\}$,
and the two tableaux in $\Seb{15}{3}{(3^4,1^2)/(2^2)}$.
Again the first tableau is $t(\lambda / \lambda^\star)$. The second is its bump
in position $(4,2)$, and the third and fourth both of weight
$|t(\lambda / \lambda^\star)| + 2$ are the bumps of the second
in positions $(3,2)$ and $(4,2)$, respectively.
Since the condition in~\eqref{eq:skewIrred} fails, $s_{(3^4,1^2)/(2^2)}$ is not $3$-irreducible.
Note that, as implied
by Lemma~\ref{lemma:safeToIncrease}, $b(\lambda/\lambda^\star) + 1 < \mfrac{\ell n}{2}$.
\end{example}

\subsubsection*{Sufficiency}
To illuminate the condition in Theorem~\ref{thm:irreducible},
we prove a slightly stronger result.

\begin{lemma}\label{lemma:sufficient}
 If $\lambda / \lambda^\star$ is a skew $\ell$-rectangle,
a skew $1$-near rectangle, or a skew $\ell$-near rectangle where $\ell \ge 2$ then
$s_{\lambda / \lambda^\star}$ is $\ell$-irreducible. Moreover,
$s_{\lambda / \lambda^\star}(1,q,\ldots,q^\ell)$ is respectively
$q^{\ell n /2}$, $q^{\b(\lambda)} + q^{\b(\lambda)+1} + \cdots + q^{\b(\lambda) + d}$
and $q^{\b(\lambda)} + q^{\b(\lambda)+1} + \cdots + q^{\b(\lambda) + \ell}$.
\end{lemma}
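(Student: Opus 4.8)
The plan is to establish $\ell$-irreducibility in each of the three cases by exhibiting, for every weight $e$ between $\b(\lambda/\lambda^\star)$ and $\lfloor \ell n/2\rfloor$, a \emph{unique} semistandard tableau in $\Se{e}{\ell}{\lambda/\lambda^\star}$, and then invoking~\eqref{eq:skewIrred}. Since~\eqref{eq:skewIrred} only requires the counts $|\Se{e}{\ell}{\lambda/\lambda^\star}|$ to equal $1$ up to the centre, and by Lemma~\ref{lemma:centralSkew} the full polynomial is unimodal and centrally symmetric about $\ell n/2$, it suffices to produce a chain of bumps
\[ t(\lambda/\lambda^\star) = t_0,\ t_1,\ \ldots,\ t_N \]
of strictly increasing weight which exhausts all semistandard tableaux of weight at most $\lfloor \ell n/2\rfloor$, where $t(\lambda/\lambda^\star)$ is the minimum-weight tableau from Definition~\ref{defn:minimumDegreeSkew}. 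I would deal with the skew $\ell$-rectangle case first: here every column has length $\ell+1$, so $t(\lambda/\lambda^\star)$ is forced (column $j$ must contain $0,1,\ldots,\ell$), and in fact it is the \emph{only} semistandard tableau, because semistandardness forces every column to be $(0,1,\ldots,\ell)^{\mathsf{T}}$. Hence $s_{\lambda/\lambda^\star}(1,q,\ldots,q^\ell) = q^{\b(\lambda/\lambda^\star)}$; one checks $\b(\lambda/\lambda^\star) = \binom{\ell+1}{2}a(\lambda)$ and that $\ell n = 2\b(\lambda/\lambda^\star)$ since $n = (\ell+1)a(\lambda)$, giving the claimed value $q^{\ell n/2}$.

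Next I would treat the skew $1$-near rectangle of width $d$. The columns of $\lambda/\lambda^\star$ have length $2$ except for $d$ consecutive columns, say columns $y,\ldots,y+d-1$, of length $1$; moreover these $d$ singleton boxes all lie in the same row. The length-$2$ columns are forced to be $(0,1)^{\mathsf{T}}$ by semistandardness, so the only freedom is in the $d$ singleton boxes; since they form a contiguous horizontal strip in one row and their neighbours above/below (in adjacent columns) are fixed, the row-weak-increasing and column-strict conditions reduce to: the entries in these $d$ boxes are weakly increasing and each lies in $\{0,1\}$ (the upper bound coming from the length-$2$ column immediately to the left, whose top entry is $0$, or the one to the right). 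A weakly increasing $\{0,1\}$-string of length $d$ is determined by the number of $1$'s, which ranges over $0,1,\ldots,d$; so $|\Se{e}{1}{\lambda/\lambda^\star}| = 1$ for $e = \b(\lambda/\lambda^\star), \b(\lambda/\lambda^\star)+1, \ldots, \b(\lambda/\lambda^\star)+d$ and is $0$ otherwise. Thus $s_{\lambda/\lambda^\star}(1,q) = q^{\b(\lambda/\lambda^\star)}(1 + q + \cdots + q^d)$, which is the claimed value; I should also check the edge cases where $y=1$ or $y+d-1 = p$ (so the singleton strip touches the boundary), but the same argument goes through with the bound $\{0,1\}$ enforced by whichever length-$2$ column is present, and the case where there are no length-$2$ columns at all ($\lambda/\lambda^\star$ a single row of $d$ boxes) is the classical $s_{(d)}(1,q) = 1+q+\cdots+q^d$.

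The skew $\ell$-near rectangle with $\ell \ge 2$ is the substantive case and I expect it to be the main obstacle. Here all columns have length $\ell+1$ except one column, say column $\z$, of length $c \in \{1,\ell\}$. Every full column is forced to $(0,1,\ldots,\ell)^{\mathsf{T}}$, so all freedom lives in column $\z$. The semistandard constraints on column $\z$ come from: its own strict-increase; the row-weak-increase against the fixed full columns on either side, whose entries in rows $1,\ldots,\ell+1$ are $0,1,\ldots,\ell$; and the requirement that entries lie in $\{0,\ldots,\ell\}$. When $c = \ell$, column $\z$ occupies some $\ell$ consecutive rows $i,\ldots,i+\ell-1$ within the ambient $(\ell+1)$-row rectangle; the fixed neighbouring columns pin the entry in row $r$ to be $\ge r-1$ (from the left neighbour) if $r \le \ell$... more carefully, the left neighbour of box $(r,\z)$ has entry $r-1$ and the right neighbour has entry $r-1$, forcing the entry $a_r$ of column $\z$ in row $r$ to satisfy $r-1 \le a_r$; combined with $a_r \le \ell$ and strict increase down the column, a short counting argument (essentially counting lattice paths / weakly increasing sequences squeezed between two staircases differing by $1$) shows there is exactly one such column of each total weight, with the weights forming a contiguous interval of length $\ell+1$ starting at the minimum. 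The case $c=1$ is similar but easier: column $\z$ is a single box whose entry ranges over an interval of $\ell+1$ consecutive values $\{r-1,\ldots,r-1+\ell\}\cap\{0,\ldots,\ell\}$ determined by its row $r$ and the fixed neighbours, again giving one tableau per weight over an interval of length $\ell+1$. In both subcases one then reads off $s_{\lambda/\lambda^\star}(1,q,\ldots,q^\ell) = q^{\b(\lambda/\lambda^\star)}(1+q+\cdots+q^\ell)$ and concludes $\ell$-irreducibility via~\eqref{eq:skewIrred}. The technical heart is the claim that, after fixing all full columns, the remaining column admits exactly one completion of each weight in a length-$(\ell+1)$ interval; I would prove this by the explicit bump chain (increase the bottom-most bumpable entry of column $\z$ repeatedly), verifying at each step that the bump is the unique semistandard successor, exactly as illustrated in Example~\ref{ex:bumping}, and that after $\ell$ bumps no box of column $\z$ is bumpable, so the chain terminates at the maximal weight $\b(\lambda/\lambda^\star)+\ell$, which — one checks using Lemma~\ref{lemma:safeToIncrease} and central symmetry — does not overshoot $\lfloor \ell n/2\rfloor$ except to meet it from below.
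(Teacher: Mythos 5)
Your proof is correct and follows essentially the same route as the paper's: enumerate the forced full columns, observe that all remaining freedom lives in the singleton strip (width-$d$ row for $\ell=1$) or the single short column (for $\ell\ge2$), and read off that the admissible fillings form a chain of length $d+1$ or $\ell+1$ with distinct consecutive weights, then invoke~\eqref{eq:skewIrred}. One caution on the details: in the $\ell\ge 2$ cases the ``staircase'' bounds $k-1\le a_k\le k$ (and the bound $\{0,\ldots,\ell\}$ when $c_z=1$) are \emph{not} generally imposed by the neighbouring full columns, whose boxes sit at varying vertical offsets in a skew shape and hence give weaker row inequalities --- rather, these bounds hold automatically for any strictly increasing sequence of length $\ell$ (resp.\ $1$) in $\{0,\ldots,\ell\}$, and one should check separately that the neighbours never impose anything \emph{stronger}, which is what actually makes every one of the $\ell+1$ candidate columns admissible.
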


\begin{proof}
Write $c$ for $c(\lambda /\lambda^\star) \in \N^p$.
If $\lambda / \lambda^\star$ is a skew $\ell$-rectangle
then $c = (\ell+1, \ldots, \ell+1)$ and $\b(\lambda) = p\binom{\ell+1}{2} =
p \ell (\ell+1)/2 = \mfrac{\ell n}{2}$, 
so~\eqref{eq:skewIrred} obviously holds. By~\eqref{eq:SeSkew},
$s_{\lambda / \lambda^\star}(1,q,\ldots,q^\ell) = q^{\ell n /2}$.

Suppose that $\ell=1$ and $\lambda / \lambda^\star$ is a skew $1$-near rectangle of width $d$.
Let $y$ be as in  Definition~\ref{defn:nearRectangle},
so $c_y = \ldots = c_{y + d-1} = 1$ and $c_j = 2$ if $0 \le j < y$ or $y+d \le j \le p$. 
The minimum weight tableau $t(\lambda/\lambda^\star)$ has 
entries of~$0$ in the boxes $(1,y), \ldots, (1,y+d-1)$;
all other boxes are in a column $j$ with $c_j = 2$, having entries $0$ and $1$.
More generally,
for each $k$ such that $0\le k \le d$, the unique
tableau in $\SSYT_{\le 1}(\lambda / \lambda^\star)$ of weight $\b(\lambda / \lambda^\star)  + k$
has entries of $0$ in the boxes $(1,y), \ldots, (1,y+d-k-1)$ and entries
of $1$ in the boxes $(1,y+d-k), \ldots, (1,y+d-1)$. 
Hence~\eqref{eq:skewIrred} holds. 
By~\eqref{eq:SeSkew},
$s_{\lambda / \lambda^\star}(1,q) = q^{\b(\lambda / \lambda^\star)} +
q^{\b(\lambda / \lambda^\star)+1} + \cdots + q^{\b(\lambda / \lambda^\star)+d}$.

Now suppose that $\ell \ge 2$ and that $\lambda /\lambda^\star$ is a skew $\ell$-near rectangle.
Let $z$ be unique such that $c_z \in \{1,\ell\}$.
Let $t\in \SSYT_{\le \ell }(\lambda / \lambda^\star)$.
If $j\not= z$ then since $c_j = \ell+1$, the entries in column $j$ of $t$
are $0,\ldots, \ell$, and $t$ agrees with $t(\lambda / \lambda^\star)$ in these columns.
We now consider the two cases for $c_z$.
\begin{itemize}
\item[(i)] When $c_z = 1$ the unique entry in column $k$ of $t$
is determined by~$|t|$.
Moreover $|t|$ takes all values in $\{ \b(\lambda), \ldots, \b(\lambda) + \ell \}$
so by~\eqref{eq:SeSkew},
$s_{\lambda / \lambda^\star}(1,q,\ldots, q^\ell) = q^{\b(\lambda / \lambda^\star)} +
q^{\b(\lambda / \lambda^\star)+1} + \cdots + q^{\b(\lambda / \lambda^\star)+\ell}$.
In particular~\eqref{eq:skewIrred} holds. 

\smallskip
\item[(ii)] When $c_z = \ell$, we have $\b(\lambda) = (p-1) \binom{\ell+1}{2} + \binom{\ell}{2}
= p \ell(\ell+1)/2 - \ell$ and $\mfrac{\ell n}{2} = p \ell (\ell+1)/2 - \ell/2$.
Let $b \in \{0, 1, \ldots, \ell\}$. The unique tableau in
$\SSYT_{\le \ell }(\lambda/\lambda^\star)$ of weight $\b(\lambda) + b$
 has entries $\{0,\ldots, \ell\} \backslash \{ \ell - b\}$ in column $z$.
Thus again~\eqref{eq:skewIrred} holds. A similar argument to (i)
shows that $s_\lambda(1,q,\ldots,q^\ell)$ has the
required $\ell+1$ summands.\hfill$\qedhere$
\end{itemize}
\end{proof}

\subsubsection*{Necessity}
The following lemma implies that if $s_{\lambda/\lambda^\star}$ is $\ell$-irreducible
then $t(\lambda/\lambda^\star)$  has at most one bumpable box.

\begin{lemma}\label{lemma:oneBump}
Let $B$ be the number of boxes $(i,j) \in [\lambda/\lambda^\star]$ such that $t(\lambda/\lambda^\star)$ 
is bumpable in box $(i,j)$. If $B \ge 2$ then
$|\Se{\b(\lambda/\lambda^\star)+1}{\ell}{\lambda/\lambda^\star}|
= B$ and
$\b(\lambda/\lambda^\star) < \mfrac{\ell n}{2}$.
\end{lemma}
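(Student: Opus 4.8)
The plan is to show that the $B$ bumps of $t(\lambda/\lambda^\star)$ in its bumpable boxes account for \emph{all} of $\Se{\b(\lambda/\lambda^\star)+1}{\ell}{\lambda/\lambda^\star}$, and that these $B$ tableaux are pairwise distinct; once $B \ge 2$ this forces $|\Se{\b(\lambda/\lambda^\star)+1}{\ell}{\lambda/\lambda^\star}| = B \ge 2 > 1$, so that $s_{\lambda/\lambda^\star}$ fails the condition~\eqref{eq:skewIrred} at $e = \b(\lambda/\lambda^\star) + 1$; by Lemma~\ref{lemma:safeToIncrease} applied to the inequality $|\Se{\b(\lambda/\lambda^\star)}{\ell}{\lambda/\lambda^\star}| = 1 < B = |\Se{\b(\lambda/\lambda^\star)+1}{\ell}{\lambda/\lambda^\star}|$ we then get $\b(\lambda/\lambda^\star) < \mfrac{\ell n}{2}$. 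The distinctness of the bumps is immediate: two bumps in distinct boxes differ in those boxes. So the real content is the claim that every $t \in \Se{\b(\lambda/\lambda^\star)+1}{\ell}{\lambda/\lambda^\star}$ is a bump of $t(\lambda/\lambda^\star)$ in some bumpable box.

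For that claim, first I would recall from Definition~\ref{defn:minimumDegreeSkew} and the remark following it that $t(\lambda/\lambda^\star)$ is the \emph{unique} tableau of minimum weight $\b(\lambda/\lambda^\star)$: its column $j$ has entries $0,1,\ldots,c(\lambda/\lambda^\star)_j - 1$ read top to bottom, and any semistandard $\lambda/\lambda^\star$-tableau has each entry at least as large as the corresponding entry of $t(\lambda/\lambda^\star)$ (the entry in the $k$th box from the top of a column is at least $k-1$ by strict increase down columns). Now let $t$ have weight $\b(\lambda/\lambda^\star)+1$. Entrywise $t \ge t(\lambda/\lambda^\star)$, and the total excess is $1$, so $t$ agrees with $t(\lambda/\lambda^\star)$ in every box except one box $(i,j)$, where $t_{(i,j)} = t(\lambda/\lambda^\star)_{(i,j)} + 1$. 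Thus $t$ is exactly the bump $t(\lambda/\lambda^\star)^+$ in box $(i,j)$ in the sense of Definition~\ref{defn:bump}, and since $t$ is semistandard, $t(\lambda/\lambda^\star)$ is bumpable in $(i,j)$ by the characterization right after that definition. Hence the map (bumpable box) $\mapsto$ (bump) is a bijection onto $\Se{\b(\lambda/\lambda^\star)+1}{\ell}{\lambda/\lambda^\star}$, giving $|\Se{\b(\lambda/\lambda^\star)+1}{\ell}{\lambda/\lambda^\star}| = B$.

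I do not expect a serious obstacle here; the one point needing a little care is confirming that $t(\lambda/\lambda^\star)$ really is the unique minimum-weight tableau — i.e. that the entrywise bound $t \ge t(\lambda/\lambda^\star)$ is valid for skew shapes — but this is exactly the content of the sentence following Definition~\ref{defn:minimumDegreeSkew}, so I may quote it. The only thing to watch in the final deduction is that $\Se{\b(\lambda/\lambda^\star)}{\ell}{\lambda/\lambda^\star}$ is a singleton (again by uniqueness of the minimum-weight tableau), so that the strict inequality $1 < B$ needed to invoke Lemma~\ref{lemma:safeToIncrease} with $e = \b(\lambda/\lambda^\star)$ genuinely holds when $B \ge 2$.
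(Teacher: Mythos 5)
Your proof is correct and follows the same route as the paper: the paper dismisses the first equality as ``immediate from the definition of bumpable'' and then invokes Lemma~\ref{lemma:safeToIncrease} with $e = \b(\lambda/\lambda^\star)$ exactly as you do. You have merely unpacked the ``immediate'' step, correctly, via the entrywise lower bound $t \ge t(\lambda/\lambda^\star)$ coming from strict column increase, which forces any weight-$(\b(\lambda/\lambda^\star)+1)$ tableau to be a single bump of the minimum-weight tableau.
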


\begin{proof}
The first equality is immediate from the definition of bumpable in Definition~\ref{defn:bump}.
The inequality $\b(\lambda/\lambda^\star) < \mfrac{\ell n}{2}$ now follows by 
taking $e = \b(\lambda) $
 in Lemma~\ref{lemma:safeToIncrease}.
\end{proof}

\begin{proposition}\label{prop:oneBump}
Let $\lambda / \lambda^\star$ 
be a proper skew partition.
If $s_{\lambda / \lambda^\star}$ is $\ell$-irreducible then
\emph{either}
\begin{thmlist}
\item $c(\lambda / \lambda^\star) = (\ell+1,\ldots, \ell+1)$ \emph{or} 
\item there exists
$k < \ell$ such that
$c(\lambda / \lambda^\star) = (\ell+1,\ldots, \ell+1, k+1, \ldots, k+1, \ell+1, \ldots, \ell+1)$
and $\lambda'$ is constant in the positions in which $c(\lambda /\lambda^\star)$ is $k+1$.
\end{thmlist}
\end{proposition}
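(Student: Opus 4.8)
The plan is to recover the shape of $c(\lambda/\lambda^\star)$ directly from the positions of the bumpable boxes of the minimum-weight tableau $t(\lambda/\lambda^\star)$. By the remark preceding the proposition (a consequence of Lemma~\ref{lemma:oneBump}), $\ell$-irreducibility of $s_{\lambda/\lambda^\star}$ forces $t(\lambda/\lambda^\star)$ to have \emph{at most one} bumpable box, since two or more would make the coefficient of $q^{\b(\lambda/\lambda^\star)+1}$ in $s_{\lambda/\lambda^\star}(1,q,\ldots,q^\ell)$ at least $2$. Write $p=a(\lambda)$ and $c_j=c(\lambda/\lambda^\star)_j=\lambda'_j-\lambdastarp_j$. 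I would also note at the outset that $\ell$-irreducibility forces $c_j\le\ell+1$ for every $j$, since otherwise $[\lambda/\lambda^\star]$ has a column of length exceeding $\ell+1$, $\SSYT_{\le\ell}(\lambda/\lambda^\star)$ is empty, and the specialization vanishes.

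The first step is to identify the bumpable boxes of $t(\lambda/\lambda^\star)$. In this tableau the box $(i,j)$ carries the entry $i-\lambdastarp_j-1$ (the number of boxes of column $j$ lying strictly above it), so two vertically adjacent entries of a column differ by exactly $1$; hence a box with a box directly below it in $[\lambda/\lambda^\star]$ is never bumpable, and the only candidates are the bottom boxes $(\lambda'_j,j)$. For such a box, raising its entry never conflicts with its left neighbour and there is nothing below it, so it is bumpable exactly when the new entry stays $\le\ell$, i.e.\ $c_j\le\ell$, and does not collide with its right neighbour. Since $\lambda'$ and $\lambdastarp$ are weakly decreasing, $(\lambda'_j,j+1)\in[\lambda/\lambda^\star]$ if and only if $\lambda'_{j+1}=\lambda'_j$, and then it is the bottom box of column $j+1$, with entry $c_{j+1}-1$, so the collision is avoided exactly when $c_j<c_{j+1}$. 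Thus $(\lambda'_j,j)$ is bumpable if and only if $c_j\le\ell$ and, in case $\lambda'_{j+1}=\lambda'_j$, also $c_j<c_{j+1}$; for $j=p$ or $\lambda'_{j+1}<\lambda'_j$ the condition is simply $c_j\le\ell$. I expect this step --- a careful but routine check of the semistandard conditions at the column bottoms, tracking the monotonicity of $\lambdastarp$ --- to be the main technical point.

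To finish, I would partition $\{1,\ldots,p\}$ into the maximal runs $I_1<\cdots<I_r$ on which $\lambda'$ is constant; along each run $\lambdastarp$ is weakly decreasing, so $c$ is weakly increasing. By the rule just established, the bumpable boxes are: one at $(\lambda'_j,j)$ for each step $j,j+1$ inside a common run with $c_j<c_{j+1}$, together with one at the right end $\max I_t$ of each run at which $c\le\ell$; these are distinct and exhaust the bumpable boxes. As there is at most one bumpable box, three cases arise. If there are none, $c$ is constant on each run and equals $\ell+1$ at each run end, so $c\equiv\ell+1$: conclusion (i). If the single bumpable box is a run end, $c$ is still constant on each run, exactly one run has constant value $k+1$ with $k+1\le\ell$ and all other runs have constant value $\ell+1$; this distinguished run is a contiguous block of columns on which $\lambda'$ is constant, giving conclusion (ii) with $k<\ell$. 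If the single bumpable box is an internal strict increase, then inside its run the only jump is from some $v\le\ell$ straight to $\ell+1$ (the run must reach $\ell+1$ at its end, else that end is a second bumpable box, and a second internal jump would be a second bumpable box too), while every other run is constant $\ell+1$; so $c$ again has the form in (ii), with $(k+1)$-block the initial $v$-stretch of that run, $\lambda'$ constant on it and $k=v-1<\ell$. These exhaust the possibilities.
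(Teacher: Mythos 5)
Your proposal is correct and takes essentially the same approach as the paper: both reduce to the consequence of Lemma~\ref{lemma:oneBump} that the minimum weight tableau $t(\lambda/\lambda^\star)$ has at most one bumpable box, and then argue from the positions of bumpable column bottoms. The organisation differs slightly --- you first characterise all bumpable boxes explicitly and then decompose $\{1,\ldots,p\}$ into maximal runs on which $\lambda'$ is constant, whereas the paper locates the distinguished block directly via minimality of the first column $j$ with $c(\lambda/\lambda^\star)_j\le\ell$ and the implicit observation that $\lambda'_{j+1}=\lambda'_j$ forces $c(\lambda/\lambda^\star)_{j+1}\ge c(\lambda/\lambda^\star)_j$ --- but the underlying idea is the same.
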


\begin{proof}
Suppose that $s_{\lambda / \lambda^\star}$ is $\ell$-irreducible. 
Since $s_{\lambda/\lambda^\star}(1,q,\ldots,q^\ell) \not=0$,
the minimum weight tableau 
$t(\lambda/\lambda^\star)$ has entries in $\{0,\ldots, \ell\}$, and so $\clambda{j} \le \ell + 1$
for each $j$.
By Lemma~\ref{lemma:oneBump},
$t(\lambda / \lambda^\star)$ is bumpable in at most one box. If~(i) does not hold then 
there exists a column $j$ such that~\hbox{$\clambda{j} \le \ell$}. 
Take~$y$ minimal with this
property and let $z$ be greatest such that
$\clambda{y} = \ldots = \clambda{z}$. Thus
\[ t(\lambda/\lambda^\star)_{(\lambda'_y,y)}, \ldots, t(\lambda/\lambda^\star)_{(\lambda'_{z},z)} < \ell. \]
Either $(\lambda'_z, z+1) \not\in [\lambda/\lambda^\star]$
or $\clambda{z+1}>\clambda{z}$.
 In either case $t(\lambda/\lambda^\star)$ is bumpable in the box
$(\lambda'_z, z)$.
Since $\lambda'$ is a partition, $\lambda'_y \ge \ldots \ge \lambda'_z$.
Suppose that $\lambda'_j > \lambda'_{j+1}$ where $y \le j < z$. Then $(\lambda'_j, j+1) \not\in [\lambda/\lambda^\star]$
so $t(\lambda /\lambda^\star)$ is bumpable in the box $(\lambda'_j, j)$, as well as in
the box $(\lambda'_z, z)$, a contradiction. Therefore $\lambda'_y = \ldots = \lambda'_z$. 
If there exists a column $j$ such that $\clambda{j} \le \ell$
and $j \not\in \{y,\ldots, z\}$ then repeating this argument gives
another box in which $t(\lambda/\lambda^\star)$ is bumpable, again a contradiction. Therefore
$c(\lambda/\lambda^\star)$ is as claimed in (ii).
\end{proof}


\begin{proof}[Proof of Theorem~\ref{thm:irreducible}]
We have already shown the condition is sufficient.
Suppose that  $s_{\lambda / \lambda^\star}$ is $\ell$-irreducible but $\lambda / \lambda^\star$ is not a skew $\ell$-rectangle and that
$\lambda / \lambda^\star$ is not a skew $\ell$-near rectangle. 
By Proposition~\ref{prop:oneBump}, there exists $y$, $z \in \{1,\ldots, p\}$
and $k < \ell$ such that
$c_1 = \ldots = c_{y-1} = \ell+1$, $c_y = \ldots = c_z = k+1$,
$c_{z+1} = \ldots = c_p = \ell + 1$ and
$\lambda_{y}' = \ldots = \lambda_z'$.
If $\ell = 1$ then $\lambda / \lambda^\star$
is a $1$-near rectangle, as required.
Suppose that $\ell \ge 2$.  
Note that $t(\lambda/\lambda^\star)_{(\lambda_z', z)} = k$
and that $(\lambda_z', z)$ is the unique box in which $t(\lambda/\lambda^\star)$ is bumpable.
 Let $u$ be the bump of $t(\lambda/\lambda^\star)$ in this box; thus
 $\Se{\b(\lambda / \lambda^\star)+1}{\ell}{\lambda / \lambda^\star} = \{ u \}$.
We consider three cases.

\begin{itemize}
\item[(a)] Suppose that $1\le k < \ell-1$. (This is the case in the second
example in Example~\ref{ex:bumping}.) Since $u_{(\lambda_z', z)} = k+1 < \ell$
and either
 $u_{(\lambda_{z}', z+1)} = \ell$ or $(\lambda_{z}', z+1) \not\in [\lambda/\lambda^\star]$,
$u$ is bumpable in position
$(\lambda_z', z)$. Similarly, either 
$u_{(\lambda_{z}'-1, z+1)}  \ge \ell-1$ or $(\lambda_{z}'-1, z+1) \not\in [\lambda/\lambda^\star]$. 
Therefore
$u$ is bumpable in box  $(\lambda_z'-1, z)$.
Thus $|\Se{\b(\lambda/\lambda^\star)+2}{\ell}{\lambda/\lambda^\star}| \ge 2$ and 
since $|\Se{\b(\lambda/\lambda^\star)+1}{\ell}{\lambda/\lambda^\star}| = 1$, it follows
from Lemma~\ref{lemma:safeToIncrease} that $\b(\lambda/\lambda^\star) +1 <\mfrac{\ell n}{2}$.
Therefore~\eqref{eq:skewIrred} does not hold.

\smallskip
\item[(b)] Suppose that $k = \ell-1$. Since $\lambda / \lambda^\star$ is not a 
skew $\ell$-near rectangle, we have $y < z$. 
 As in (a), $u$ is bumpable 
in position $(\lambda_z'-1, z)$.
Moreover, since $\lambda_{z-1}' = \lambda_z'$ and $c_{z-1} = c_z$,
we have $u_{(\lambda_{z-1}', z-1)} = \ell-1$ and so~$u$ is bumpable in position $(\lambda_{z-1}', z-1)$.
Thus $|\Se{\b(\lambda/\lambda^\star)+2}{\ell}{\lambda/\lambda^\star}| \ge 2$ and as in (a)
we conclude that~\eqref{eq:skewIrred} does not hold.

\smallskip
\item[(c)] Suppose $k=0$. Then  $u_{(\lambda_{z}', z)} = 1$ and box $(\lambda_{z}', z)$ is bumpable as $\ell \ge 2$. But if
$y<z$ then box $(\lambda_{z}', z-1)$ is also bumpable in $u$, giving that $|\Se{\b(\lambda/\lambda^\star)+2}{\ell}{\lambda/\lambda^\star}| \ge 2$ and~\eqref{eq:skewIrred} again  fails to hold.
\end{itemize}

This completes the proof.
\end{proof}

\section{Two row, two column and hook equivalences}
\label{sec:twoRowAndHook}

We say that a partition of hook shape $(a+1,1^b)$ is \emph{proper} if $a$, $b \in \N$.

%

\begin{theorem}\label{thm:hook2row}
Let $\lambda$ and $\mu$ be partitions that each, separately, 
have either precisely two rows, precisely two columns or are of proper hook shape. 
Let $\ell$, $m \in \N$ be such that $\ell \ge \ell(\lambda)$ and $m \ge \ell(\mu) $. 
 Then all plethystic equivalences $\lambda \eqv{\ell}{m} \mu$ are
 listed in one of the cases in
 Table~1.
\end{theorem}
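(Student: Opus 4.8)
The plan is to reduce the classification to a finite computation with the content--hook (or pyramid) invariant from Theorem~\ref{thm:eqvConds}. By Theorem~\ref{thm:eqvConds}(h), a prime equivalence $\lambda \eqv{\ell}{m} \mu$ holds if and only if $(C(\lambda)+\ell+1)/H(\lambda) = (C(\mu)+m+1)/H(\mu)$, so I would first record, for each of the three shape types, an explicit description of the multisets $C$ and $H$:
\begin{itemize}
\item for a two-row partition $(a,b)$ with $a \ge b \ge 1$: $C = \{1-b,\ldots,a-1\} \cup \{0,1,\ldots,b-1\}$ wait---more precisely the contents of row $1$ are $0,\ldots,a-1$ and of row $2$ are $-1,\ldots,b-2$; and $H$ consists of $\{b+1,b,\ldots\}$-type arm+leg data that is easy to write down ($h_{(1,j)} = a-j+1$ for $j>b$, $h_{(1,j)}=a-j+2$ for $j\le b$, and $h_{(2,j)}=b-j+1$);
\item for a two-column partition, the conjugate of the above;
\item for a proper hook $(a+1,1^b)$: $C = \{0,1,\ldots,a\} \cup \{-1,-2,\ldots,-b\}$, $H = \{a+b+1\} \cup \{1,\ldots,a\} \cup \{1,\ldots,b\}$.
\end{itemize}
Then I would split into the (at most) $\binom{3}{2}+3 = 6$ pairs of types---two-row/two-row, two-column/two-column, hook/hook, two-row/two-column, two-row/hook, two-column/hook---noting that conjugation (Lemma~\ref{lemma:columnRemoval} is not needed here, but Theorem~\ref{thm:eqvConds}(h) is symmetric under $\lambda\mapsto\lambda'$, $C\mapsto -C$, which swaps two-row with two-column) collapses several of these, so really only three or four genuinely new cases remain. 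Within each case I would write the difference-multiset equation, use Lemma~\ref{lemma:aGreatest}/Proposition~\ref{prop:aRelation} to pin down $a(\lambda)+\ell = a(\mu)+m$, and then compare largest and smallest surviving elements iteratively (exactly the cancellation technique used repeatedly in Lemmas~\ref{lemma:contentShift}, \ref{lemma:hooksAfterContent} and in the proof of Theorem~\ref{thm:rectangles}) to force the parameters into the finitely many families listed in Table~1.

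A cleaner organizing device, which I would use to avoid error-prone case work, is the pyramid/difference picture of Corollary~\ref{cor:delta}: each of these shapes has very short difference sequence $\delta(\lambda)$ (length $\le 3$ for two-row or hook, and the conjugate statement for two-column), so the pyramid $\Delta_\ell(\lambda)$ has a transparent structure---it is essentially determined by one or two ``jump'' parameters---and Theorem~\ref{thm:eqvConds}(i) turns $\lambda\eqv{\ell}{m}\mu$ into equality of two such structured multisets after removing the common staircase $\{1^{\cdot},2^{\cdot},\ldots\}$. Matching the small number of ``non-staircase'' entries on each side is a finite linear-algebra problem. I expect the mechanics to be routine but lengthy, so in the write-up I would prove two or three representative cases in full (e.g.\ two-row vs.\ hook, which yields the interesting sporadic family $(3\ell-3,2\ell-1)$ appearing in the stated theorem, and two-row vs.\ two-column, which recovers the $(c^b)\eqv{}{}(b^c)$ rectangular equivalence as a degenerate endpoint) and indicate that the remaining cases are ``entirely similar''.

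The main obstacle, as in the rectangular case of \S\ref{sec:rectangles}, will be the sporadic solutions: besides the expected ``generic'' families---column-removal reductions to rectangles, the King--Manivel $(a,b)\eqv{a-b+1}{a}(a-b+1,1^b)$ type identities, and the self-conjugate/complement coincidences from Theorem~\ref{thm:complements}---there are a handful of accidental equalities of pyramids (the $(8,7,2,2)\eqv{5}{5}(8,6,3)$ phenomenon of Example~\ref{ex:pyramid} is the prototype, generalized in Proposition~\ref{prop:exceptionalEqv}) that only occur for small, specific parameter values. Ruling out any further sporadic solutions requires showing that once the shapes are large enough the largest few and smallest few entries of the two pyramids force a rigid match; I would do this by a monotonicity/``eventually the inequality chains interleave in only one way'' argument, exactly paralleling Lemma~\ref{lemma:orderInterleave} and Lemma~\ref{lemma:chGraphs}, and then handle the finitely many remaining small cases by direct inspection. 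Having Table~1 already in hand makes this a verification rather than a discovery task: for each listed family one checks the pyramid identity directly (or cites Proposition~\ref{prop:exceptionalEqv}), and for completeness one checks no other parameter tuple survives the head/tail comparison.
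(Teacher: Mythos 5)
Your proposal matches the paper's proof in method and spirit: the paper applies Theorem~\ref{thm:eqvConds}(h), uses Proposition~\ref{prop:aRelation} to pin down $a(\lambda)+\ell = a(\mu)+m$, then iteratively cancels and compares extremal multiset elements to force the parameters, working the two-row/hook case out in full and declaring the remaining families similar (it also mentions software verification of the smaller cases, as you anticipate for completeness). One peripheral misdirection: the sporadic equal-degree equivalences of Example~\ref{ex:pyramid} and Proposition~\ref{prop:exceptionalEqv} involve partitions with at least four parts and so never arise for the two-row/two-column/hook shapes at hand; the genuine sporadics you need are already cases (f), (k), (m), (p) of Table~1, which the head-and-tail cancellation does detect.
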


\begin{table}
\begin{center}
\begin{tabular}{lll}\toprule 
\multicolumn{3}{l}{\emph{Equivalences between hook partitions}}  \\
(a) & $(a+1, 1^b)  \eqv{\ell}{\ell} (a+1, 1^b)$ \\
(b) & $(a+1, 1^b)  \eqv{\ell}{\ell+a-b} (b+1, 1^a)$ & (conjugate, Theorem~\ref{thm:conjugates}) \\[3pt]
\midrule
\multicolumn{3}{l}{\emph{Equivalences between two-row non-hook partitions}} \\
(c) & $(a,b)  \eqv{\ell}{\ell} (a,b)$ \\
(d) & $(a,a)  \eqv{c+1}{a+1} (c,c)$ & (rectangular, Theorem~\ref{thm:rectangles}) \\
(e) &  $(a,b)  \eqv{2}{2} (a,a-b)$  & (complement, Theorem~\ref{thm:complements}) \\
(f) & $(2\ell,\ell+2) \eqv{\ell} {\ell+2}(2\ell-2, \ell-2) $ \\ \midrule
\multicolumn{3}{l}{\emph{Equivalences between two-column non-hook partitions}} \\
(g) & $(2^a,1^b)   \eqv{\ell}{\ell} (2^a,1^b) $ \\
(h) & $(2^a)  \eqv{a-1}{c-1} (2^c)$ & (rectangular, Theorem~\ref{thm:rectangles}) \\
(i) & $(2^a,1^b)  \eqv{a+b+c-1}{a+b+c-1} (2^c,1^b)$ & (complement, Theorem~\ref{thm:complements}) \\[3pt]
\midrule
\multicolumn{3}{l}{\emph{Equivalences between a two-row non-hook and a hook  partition}} \\
(j) & $(a,b) \raisebox{3pt}{$\begin{matrix*}[l]\eqv{a-b+1}{a} (a-b+1,1^b) \\ 
                                               \eqv{a-b+1}{2(a-b)} (b+1,1^{a-b})\end{matrix*}$}$
                                               \\
(k) & $(3\ell-3, 2\ell-1) 
\raisebox{3pt}{$\begin{matrix*}[l]\eqv{\ell}{3\ell-4} (\ell+1,1^{\ell-2}) \\
                                  \eqv{\ell}{3\ell-2} (\ell-1,1^{\ell})\end{matrix*}$}$ \\[9pt]
                                  \midrule
\multicolumn{3}{l}{\emph{Equivalences between a two-column non-hook and a  hook  partition}} \\
(l) & $(2^a,1^b)  \eqv{a+b}{a+b} (2,1^b)$  &  (complement, Theorem~\ref{thm:complements}) \\
(m) & $(2^a,1^c)  \eqv{a+c}{a+1} (c+1,1)$ \\[3pt] \midrule
\multicolumn{3}{p{4.4in}}{\emph{Equivalences between a  two-row and a  two-column partition
both non-hooks}}\\
(n) & $(a,a)  \eqv{\ell}{\ell+a-2} (2^a)$ &  (conjugate, Theorem~\ref{thm:conjugates}) \\
(o) & $(a,a)  \eqv{b+1}{a+b-1} (2^b)$  & (rectangular, Theorem~\ref{thm:rectangles}) \\
(p) & $(6,5)  \eqv{3}{7} (2^4,1^3)$.
\\[3pt] \bottomrule
\end{tabular}
\end{center}

\medskip
\caption{All plethystic equivalences between partitions that, separately,
have either precisely two rows, precisely two columns, or are of proper hook shape.
In cases (j) and (k) the two  hook partitions on the right are conjugates.}
\end{table}

\enlargethispage{15pt}

\begin{proof}
The proofs for each family in Table~1 are similar.
We illustrate the method by finding all 
plethystic equivalences between a two-row non-hook partition 
$\lambda=(a,b)$ and a proper hook 
$\mu=(c+1, 1^d)$. 
By our assumption,  
$\ell \ge 2$ and $m \ge d+1$.
By Lemma~\ref{lemma:transitive} and Theorem~\ref{thm:conjugates}
we may assume that $c \ge d$.
By Theorem~\ref{thm:eqvConds}(h), $\lambda \eqv{\ell}{m} \mu$ if and only if 
there is an equality of multisets 
$(C(\lambda) + \ell + 1) \cup H(\mu) = (C(\mu) + m + 1) \cup H(\lambda)$. Equivalently,
$$\left\{
\begin{array}{l}
 \ell+1, \ldots, \ell+a, \\
\ell,\ldots, \ell+b-1,\\
1, \ldots, d, 1 \ldots, c,\\
c+d+1 \end{array}
\right\}
= \left\{\begin{array}{l} 
  m-d+1, \ldots, m+c+1,\\
  1,\ldots, b,\\
  1\ldots, a-b, a-b+2, \ldots a+1
  \end{array}
\right\}.
$$
Comparing the greatest element of each side as in Proposition~\ref{prop:aRelation}
 shows that if equality holds then
 $ \ell+a= m+c+1$, that is $ m=\ell+a-c-1$. Substituting for $m$ using this relation, and
 inserting $a-b+1$ into each multisubset,
 we find that $\lambda \eqv{\ell}{\ell+a-c-1} \mu$ if and only if
 \enlargethispage{24pt}
\begin{equation} \label{eqn:hook2rowmultisets}
\left\{
\begin{array}{l}
\ell+1, \ldots, \ell+a, \\
\ell,\ldots, \ell+b-1,\\
1, \ldots, d, 1 \ldots, c,\\
c+d+1 , a-b+1 \end{array}
\right\}
= \left\{\begin{array}{l} 
  \ell+a-c-d, \ldots, \ell+a,\\
  1,\ldots, b,\\
  1\ldots, a+1
  \end{array}
\right\}.
\end{equation}

Firstly consider the case when $a-c-d \ge 1$. 
We may cancel the 
elements $\ell+a-c-d, \ldots, \ell+a$ from each side to get that
$\lambda \eqv{\ell}{\ell+a-c-1} \mu$ if and only~if
$$\left\{
\begin{array}{l}
\ell+1, \ldots, \ell+a-c-d-1, \\
\ell,\ldots, \ell+b-1,\\
1, \ldots, d, 1 \ldots, c,\\
c+d+1 , a-b+1 \end{array}
\right\}
= \left\{\begin{array}{l} 
  1,\ldots, b,\\
  1,\ldots, a+1
  \end{array}
\right\}.
$$
We claim that this multiset equality implies $a-c-d \le b$. Indeed, if $a-c-d>b$ then, 
on the left hand side, $\ell+1 \le \ell+b-1 < \ell+a-c-d-1$. 
Therefore the multiplicity of $\ell+b-1$ is two, and comparing with the multiset on the right shows
that  $\ell=1$, contrary to our initial assumption.

As $a-c-d \le b$,  we may 
compare greatest elements of the above multisets to show that $a+1=\ell+b-1$, 
that is $\ell=a-b+2$. We substitute for $\ell$ using this relation 
and cancel the elements  
\[ \{ a-b+1, \ell, \ldots, \ell+b-1\} = \{a-b+1, a-b+2, \ldots, a+1 \} \] 
from each side to reduce to
$$\left\{
\begin{array}{l}
 a-b+3, \ldots , 2a-b-c-d+1, \\
1, \ldots, d, 1 \ldots, c,\\
c+d+1  \end{array}
\right\}
= \left\{\begin{array}{l} 
  1,\ldots, b,\\
  1\ldots, a-b
  \end{array}
\right\}.
$$
Since $c+d+1 \ge c+2 \ge d+2$, for the multiset on the left to equal a union of two intervals 
each containing $1$, we must have $c = a-b+2$ and $d = a-b$.
Then we have an equality of multisets if and only if $c+d+1=b$.
We obtain the case with $c \ge d$ in (k), namely
$$(3\ell-3, 2\ell-1) \eqv{\ell}{3\ell-4}(\ell+1, 1^{\ell-2}).$$

In the remaining case $a-c-d \le 0$, and so $c+d \ge a \ge b$. 
We cancel the elements  $\ell+1, \ldots, \ell+a$ from each side in~\eqref{eqn:hook2rowmultisets} 
to see that $\lambda \eqv{\ell}{\ell+a-c-1} \mu$ if and only if
$$
\left\{
\begin{array}{l}
\ell,\ldots, \ell+b-1,\\
1, \ldots, d, 1 \ldots, c,\\
c+d+1 , a-b+1 \end{array}
\right\}
= \left\{\begin{array}{l} 
  \ell+a-c-d, \ldots, \ell,\\
  1,\ldots, b,\\
  1\ldots, a+1
  \end{array}
\right\}.
$$
Since $b \ge 2$, the element $\ell+1$ lies in the left hand side.
Hence the greatest element of the right hand side is $a+1$ rather than $\ell$. 
But $a+1 \le c+d+1$ which appears on the left; 
hence $a=c+d$, and on the right $\{\ell+a-c-d, \ldots, \ell\} = \{\ell\}$. 
After cancelling $\{\ell, c+d+1\} = \{\ell, a+1\}$
from each side, the multiset equation becomes
$$
\left\{
\begin{array}{l}
\ell+1,\ldots, \ell+b-1,\\
1, \ldots, d, 1 \ldots, c ,\\ c+d-b+1\end{array}
\right\}
= \left\{\begin{array}{l} 
  1,\ldots, b,\\
  1\ldots, c+d
  \end{array}
\right\}.
$$
Since $b \ge 2$, and $c+d$ is in the right-hand side, if equality holds then
the greatest element on the left-hand side
is not $c+d-b+1$. Hence it is
$\ell+b-1=c+d$ and
$$
\left\{
\begin{array}{l}
c+d-b+2,\ldots, c+d,\\
1, \ldots, d, 1 \ldots, c, \\ c+d-b+1 \end{array}
\right\}
= \left\{\begin{array}{l} 
  1,\ldots, b,\\
  1\ldots, c+d
  \end{array}
\right\}.
$$
Either $c+d-b+1=c+1$ and $d=b$, or $c+d-b+1=d+1$ and $c=b$.
Using that $a= c+d$ we have $c = a-b$, $d = b$ or $c = b$, $d=a-b$, respectively.
The corresponding plethystic equivalences are
$(a,b) \eqv{a-b+1}{a}(a-b+1, 1^b)$
and
$(a,b) \eqv{a-b+1}{2(a-b)}(b+1, 1^{a-b})$,
respectively, as in (j).
\end{proof}

We remark that the
Haskell \cite{Haskell98} software \texttt{HookContent.hs} available 
from the second author's website\footnote{See \url{www.ma.rhul.ac.uk/~uvah099/}}
was used to discover many of the  equivalences appearing in Table 1.
It has also been used to verify the more fiddly part of the authors' proof, 
by showing that every plethystic equivalence between two partitions of the types
above, each of size at most $30$, appears in our classification. 
Finally we observe that 
it follows from Proposition~\ref{prop:eqvCondsMoreover} and elementary number-theoretic arguments
that the only plethystic equivalences in Table~1 involving distinct partitions
that lift to isomorphisms of $\GL_2(\C)$-representations are the infinite families
\[ \bigl(\mfrac{d(d+1)}{2}-1,\mfrac{d(d-1)}{2}\bigr)\eqv{d}{2(d-1)} \bigl( \mfrac{d(d-1)}{2}+1, 1^{d-1} \bigr); \]
for $d > 2$ from the second case in (j),
and 
$\bigl( b(b-1),b(b-1) \bigr) \eqv{b+1}{b^2-1} (2^b)$ for $b>2$
from~(o). 

\section{Equal degree equivalences}
\label{sec:equalDegree}

Let $\lambda$ be a partition. 
By Theorem~\ref{thm:complements} we have $\lambda \eqv{\ell}{\ell} \lambda^{\circ (\ell+1)}$
for any $\ell \ge \ell(\lambda)$, where $\lambda^{\circ (\ell+1)}$ denotes the complement
of $\lambda$ in the $(\ell +1) \times a(\lambda)$ box. 
We say that a plethystic equivalence $\lambda \eqv{\ell}{\ell} \mu$, where
$\ell \ge \ell(\mu)$, is \emph{exceptional} 
if $\lambda \not=\mu$ and $\lambda^{\circ (\ell+1)} \not= \mu$.
Thus Theorem~\ref{thm:equalDegree} asserts that there are exceptional equivalences if and only
if $\ell \ge 5$.

\subsection{Ruling out exceptional equivalences}
To prove Theorem~\ref{thm:equalDegree}(a) we use Theorem~\ref{thm:eqvConds}(i), that
$\lambda\eqv{\ell}{\ell}\mu$ if and only if $\Delta_\ell(\lambda) = \Delta_\ell(\mu)$.
Recall that the differences $\delta(\lambda)_j = \lambda_j - \lambda_{j+1} + 1$
and the multiset $\Delta_\ell(\lambda) = \{ \delta(\lambda)_j + \cdots + \delta(\lambda)_{k-1} : 1 \le j < k \le \ell+1\}$
were defined in Definition~\ref{defn:differences}. 
From the definition of $\lambda^{\circ r}$ before Theorem~\ref{thm:complements},
we have 
\[ \delta(\lambda^{\circ r})_j = \bigl( a(\lambda) - \lambda_{r+1-j} \bigr) -
\bigl( a(\lambda) - \lambda_{r-j} \bigr) +1 = \lambda_{r-j} - \lambda_{r+1-j}+1
= \delta(\lambda)_{r-j} \]
and so the difference sequence $\bigl( \delta(\lambda^{\circ (\ell+1)})_1, 
\ldots, \delta(\lambda^{\circ(\ell+1)})_\ell \bigr)$ for $\lambda^{\circ (\ell+1)}$ is 
the reverse of the difference sequence $\bigl( \delta(\lambda)_1, \ldots, \delta(\lambda)_{\ell} 
\bigr)$ for $\lambda$. Thus, as expected, the multisets $\Delta_\ell(\lambda)$
and $\Delta_{\ell}(\lambda^{\circ (\ell+1)})$ agree.
For the small $\ell$ cases, it is surprisingly useful that
 this multiset determines the
minimum weight~$\b(\lambda)$ defined in Definition~\ref{defn:b}.
To prove this we use the following statistic: let
\[ d(\lambda) = \sum_{j=1}^\ell \frac{j(\ell+1-j)}{2} \delta(\lambda)_j - \frac{1}{2}\binom{\ell+2}{3} .\]

%
%
%
%

\begin{lemma}\label{lemma:dStat}
Let $\lambda$ be a partition of $n$ such that $\ell \ge \ell(\lambda)$. 
Then $-\frac{\ell n}{2} + \b(\lambda) = -d(\lambda)$.
\end{lemma}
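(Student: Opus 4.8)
The plan is to compute both sides of the claimed identity directly in terms of the difference sequence $\delta(\lambda)_1,\ldots,\delta(\lambda)_\ell$, using the two basic combinatorial expressions for $n = |\lambda|$ and $\b(\lambda)$ recorded earlier in the paper. Recall that $\b(\lambda) = \sum_{i=1}^{\ell(\lambda)} (i-1)\lambda_i$ by the remark after Definition~\ref{defn:b}, and of course $n = \sum_{i} \lambda_i$. The first step is to re-express $\lambda_i$ in terms of the differences: since $\delta(\lambda)_j = \lambda_j - \lambda_{j+1} + 1$, we have $\lambda_i = \sum_{j=i}^{\ell} \bigl(\delta(\lambda)_j - 1\bigr)$ (using $\lambda_{\ell+1}=0$, which is legitimate because $\ell \ge \ell(\lambda)$). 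Substituting this into $n$ and into $\b(\lambda)$ and interchanging the order of summation will turn each of $n$ and $\b(\lambda)$ into an explicit linear combination $\sum_{j=1}^{\ell} \alpha_j \,\delta(\lambda)_j$ plus a constant depending only on $\ell$, where the $\alpha_j$ are elementary polynomials in $j$ and $\ell$.

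Concretely, I would compute $\sum_{i=1}^{\ell} \lambda_i = \sum_{i=1}^\ell \sum_{j=i}^\ell (\delta(\lambda)_j - 1) = \sum_{j=1}^\ell j\bigl(\delta(\lambda)_j - 1\bigr) = \sum_{j=1}^\ell j\,\delta(\lambda)_j - \binom{\ell+1}{2}$, and similarly $\b(\lambda) = \sum_{i=1}^\ell (i-1)\lambda_i = \sum_{j=1}^\ell \binom{j}{2}\bigl(\delta(\lambda)_j - 1\bigr) = \sum_{j=1}^\ell \binom{j}{2}\delta(\lambda)_j - \binom{\ell+1}{3}$ (the inner sum $\sum_{i=1}^j (i-1) = \binom{j}{2}$). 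Then
\[ -\frac{\ell n}{2} + \b(\lambda) = \sum_{j=1}^\ell \Bigl( -\frac{\ell j}{2} + \binom{j}{2} \Bigr)\delta(\lambda)_j - \frac{\ell}{2}\binom{\ell+1}{2} + \binom{\ell+1}{3}. \]
The coefficient of $\delta(\lambda)_j$ simplifies: $-\frac{\ell j}{2} + \frac{j(j-1)}{2} = \frac{j(j-1-\ell)}{2} = -\frac{j(\ell+1-j)}{2}$, which is exactly the negative of the coefficient appearing in the definition of $d(\lambda)$. It then remains only to check that the two constants match, i.e. that $-\frac{\ell}{2}\binom{\ell+1}{2} + \binom{\ell+1}{3} = \frac12\binom{\ell+2}{3}$; this is a one-line binomial identity (both sides equal $-\frac{(\ell-1)\ell(\ell+1)}{12}$, or one can expand directly).

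There is no real obstacle here — the argument is a direct manipulation — but the one point requiring a little care is the bookkeeping of the finite sums and the justification for truncating at $j=\ell$: one must use the hypothesis $\ell \ge \ell(\lambda)$ to know that $\delta(\lambda)_j = 1$ for $\ell(\lambda) < j \le \ell$ (so those terms of $\Delta_\ell(\lambda)$-type contribute nothing extra) and that $\lambda_i = 0$ for $i > \ell(\lambda)$, so the sums over $i$ and $j$ may be taken up to $\ell$ without changing their values. The only genuinely "checkable" step is the final constant comparison, which I would verify by a short explicit computation of both sides as cubic polynomials in $\ell$.
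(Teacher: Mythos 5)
Your approach is exactly the paper's: express $\lambda_i$ in terms of the differences $\delta(\lambda)_j$, deduce $n = \sum_j j\,\delta(\lambda)_j - \binom{\ell+1}{2}$ and $\b(\lambda) = \sum_j \binom{j}{2}\delta(\lambda)_j - \binom{\ell+1}{3}$, and then compare coefficients of $\delta(\lambda)_j$ and the constant with the definition of $d(\lambda)$. Your coefficient computation $-\tfrac{\ell j}{2}+\binom{j}{2}=-\tfrac{j(\ell+1-j)}{2}$ is correct.

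However, there is a sign slip in the constant term. Since $-\tfrac{\ell n}{2}$ multiplies the constant $-\binom{\ell+1}{2}$ in $n$ by $-\tfrac{\ell}{2}$, the constant of $-\tfrac{\ell n}{2} + \b(\lambda)$ is $+\tfrac{\ell}{2}\binom{\ell+1}{2} - \binom{\ell+1}{3}$, not $-\tfrac{\ell}{2}\binom{\ell+1}{2} + \binom{\ell+1}{3}$ as you wrote. With the correct signs, the constant equals $\tfrac{\ell(\ell+1)(\ell+2)}{12} = \tfrac{1}{2}\binom{\ell+2}{3}$, which is exactly the constant of $-d(\lambda)$, as needed. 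The binomial identity you proposed to verify, $-\tfrac{\ell}{2}\binom{\ell+1}{2} + \binom{\ell+1}{3} = \tfrac{1}{2}\binom{\ell+2}{3}$, is in fact false (the left side is the negative of the right), and the claimed common value $-\tfrac{(\ell-1)\ell(\ell+1)}{12}$ does not equal either side. So the argument is right in outline — and you flagged this exact step as the one to verify carefully — but the ``one-line check'' as stated would fail, and the sign in the displayed formula for $-\tfrac{\ell n}{2}+\b(\lambda)$ needs to be fixed first.
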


\begin{proof}
We have $\lambda_i = \delta_i(\lambda) + \cdots + \delta_\ell(\lambda) - (\ell - i + 1)$
for $1 \le \lambda \le \ell$. 
Hence the coefficient of $\delta(\lambda)_j$ in $\sum_{i=1}^\ell \lambda_i$
is $j$ and we have
\[ n = \sum_{j=1}^\ell j \delta(\lambda)_j - \sum_{j=1}^\ell (\ell - j+1) = 
\sum_{j=1}^\ell j \delta(\lambda)_j - \binom{\ell+1}{2}. \]
Similarly, since $\b(\lambda) = \sum_{i=1}^{\ell} (i-1)\lambda_i$, the coefficient
of $\delta(\lambda)_j$ in $\b(\lambda)$ is $\sum_{i=1}^j (i-1) = \binom{j}{2}$ and, using
$\sum_{i=1}^\ell (i-1)(\ell - i +1) = \sum_{k=1}^{\ell-1} k(\ell-k) = 
\mfrac{1}{2} \ell^2 (\ell-1) - \mfrac{1}{6} \ell(\ell-1)(2\ell-1) =
\binom{\ell+1}{3}$
we have
\[
\b(\lambda)  = \sum_{j=1}^\ell \binom{j}{2} \delta(\lambda)_j - \binom{\ell+1}{3} .
\]
The result now follows from the two displayed equations.
\end{proof}



\begin{proof}[Proof of Theorem~\ref{thm:equalDegree}(a)]
By Theorem~\ref{thm:eqvConds}(i) 
if $\lambda \eqv{\ell}{\ell} \mu$ then 
$\Delta_\ell(\lambda) = \Delta_{\ell}(\mu)$.
By the final part of this theorem, 
$-\mfrac{\ell|\lambda|}{2} + \b(\lambda) = -\mfrac{\ell |\mu|}{2} + \b(\mu)$.
Hence by Lemma~\ref{lemma:dStat},
we also have $d(\lambda) = d(\mu)$.

For $1 \le j \le \ell$, let
 $\delta_j = \delta(\lambda)_j$ and let $\epsilon_j = \delta(\mu)_j$.
 Observe that the greatest elements of $\Delta_\ell(\lambda)$
and $\Delta_{\ell}(\mu)$ are $\delta_1 + \cdots + \delta_\ell = \lambda_1 + \ell$ and
$\epsilon_1 + \cdots + \epsilon_\ell = \mu_1 + \ell$, respectively.
Hence, as also follows from Proposition~\ref{prop:aRelation}, we have
\begin{equation}\label{eq:aRelationD}
\delta_1 + \cdots + \delta_\ell = \epsilon_1 + \cdots + \epsilon_\ell. \end{equation}
If $\ell = 1$ then $\lambda_1 = \delta_1 = \epsilon_1 = \mu_1$ and $\lambda = \mu$ as required.
If $\ell \ge 2$ then
the least two elements of $\Delta_\ell(\lambda)$ are $\delta_c$ and $\delta_{c'}$
for some distinct $c$ and $c'$, and similarly for $\Delta_\ell(\mu)$.
Hence the multisubsets of the least two elements in $\Delta_\ell(\lambda)$
and $\Delta_\ell(\mu)$ agree.

Suppose that $\ell = 2$. We have just seen that 
$\{\delta_1, \delta_2 \} = \{\epsilon_1, \epsilon_2\}$. This is the case
if and only if  $\lambda = \mu$
or $\lambda = \mu^{\circ 3}$.

Suppose that $\ell = 3$. The multisets $\{\delta_1,\delta_2,\delta_3\}$
and $\{\epsilon_1,\epsilon_2,\epsilon_3\}$ have the same least two elements,
and, by~\eqref{eq:aRelationD}, the same sum. Hence they are equal.
By Lemma~\ref{lemma:dStat}, 
$3\delta_1 + 4\delta_2 + 3\delta_3 = 3\epsilon_1 + 4\epsilon_2
+ 3\epsilon_3$. Hence, again using~\eqref{eq:aRelationD}, we
have $\delta_2 = \epsilon_2$. Now either $\delta_1 = \epsilon_1$,
and so $(\delta_1,\delta_2,\delta_3) = (\epsilon_1,\epsilon_2,\epsilon_3)$
and $\lambda = \mu$, or $\delta_1 = \epsilon_3$ and so
$(\delta_1,\delta_2,\delta_3) = (\epsilon_3,\epsilon_2,\epsilon_1)$ and
$\lambda = \mu^{\circ 4}$.

Suppose that $\ell = 4$. By replacing $\lambda$ and $\mu$ with their complements
if necessary, we may assume that $\delta_1 \le \delta_4$ and $\epsilon_1 \le \epsilon_4$.
By Lemma~\ref{lemma:dStat},
$2\delta_1 + 3\delta_2 + 3\delta_3 + 2\delta_4 = 
  2\epsilon_1 + 3\epsilon_2 + 3\epsilon_3 + 2\epsilon_4$. 
Hence by~\eqref{eq:aRelationD} we have
\begin{equation}\label{eq:l4}
\delta_1 + \delta_4 = \epsilon_1 + \epsilon_4\text{ and } 
 \delta_2 + \delta_3 = \epsilon_2 + \epsilon_3 . \end{equation}
Since $\delta_1 \le \delta_4$,
after $\delta_1 + \delta_2 + \delta_3 + \delta_4$, the second greatest
element of $\Delta_4(\lambda)$ is $\delta_2 + \delta_3 + \delta_4$.
Similarly the second greatest element of $\Delta_4(\mu)$ is $\epsilon_2 + \epsilon_3 + \epsilon_4$. Therefore $\delta_1 = \epsilon_1$ and so by~\eqref{eq:l4}, $\delta_4 = \epsilon_4$.
Cancelling the equal elements $\delta_2 + \delta_3 = \epsilon_2 + \epsilon_3$,
$\delta_1 + \delta_2 +\delta_3 = \epsilon_1 + \epsilon_2 + \epsilon_3$,
$\delta_2 + \delta_3 + \delta_4 = \epsilon_2 + \epsilon_3 + \epsilon_4$ and
$\delta_1 + \delta_2 + \delta_3 + \delta_4 = \epsilon_1 + \epsilon_2 + \epsilon_3 + \epsilon_4$
 from the multisets $\Delta_4(\lambda)$ and $\Delta_4(\mu)$ we obtain
\begin{equation}\label{eq:l4b} \{ \delta_2, \delta_3, \delta_1 + \delta_2, \delta_3 + \delta_4
 \}
= \{ \epsilon_2, \epsilon_3, \delta_1 + \epsilon_2, \epsilon_3 + \delta_4
\}. \end{equation}
The least element on the left is either $\delta_2$ or $\delta_3$, and the least
element on the right is either $\epsilon_2$ or $\epsilon_3$.
If $\delta_2 = \epsilon_2$ then from~\eqref{eq:l4} we get $\delta_3 = \epsilon_3$.
Hence $(\delta_1,\delta_2,\delta_3,\delta_4) = (\epsilon_1,\epsilon_2,\epsilon_3,\epsilon_4)$
and $\lambda = \mu$. A symmetric argument, swapping $2$ and $3$, applies if $\delta_3 = \epsilon_3$.
In the remaining case we may suppose, by swapping $\lambda$ and $\mu$ if necessary,
that $\delta_2 = \epsilon_3$. From~\eqref{eq:l4} we get $\delta_3 = \epsilon_2$. 
Hence $(\delta_1,\delta_2,\delta_3,\delta_4) = (\epsilon_1, \epsilon_3,\epsilon_2, \epsilon_4)$.
From~\eqref{eq:l4b} we now get $\{\delta_1+\delta_2,\delta_3+\delta_4\} = \{\delta_1 + \delta_3,
\delta_2 + \delta_4\}$. Hence either $\delta_2 = \delta_3$ and $\lambda = \mu$ or
$\delta_1 = \delta_4$ and $\lambda = \mu^{\circ 5}$.

\end{proof}

\subsection{Existence of exceptional equivalences}
To prove Theorem~\ref{thm:equalDegree}(ii)
we use the pyramid notation  seen in Example~\ref{ex:pyramid}, applied
to the following partitions.


\begin{definition}\label{defn:exceptionalPartitions}
For $\k$, $\ell \in \N_0$ with $\ell \ge 5$ 
we define 
$\lambdae{\k}{\ell}$ and $\mue{\k}{\ell}$ by
\begin{align*}
\lambdae{\k}{\ell} &= \begin{cases} (8+5\k, 7+4\k, 2+2\k, 2+\k)\hspace*{0.385in} & \text{if $\ell = 5$} \\
(6+\k, 4+\k, 3+\k, 3+\k, 3) & \text{if $\ell = 6$} \\
(6+\k, 4+\k, 4+\k, 3+\k, 3+\k, 3) & \text{if $\ell = 7$} \\
(5+\k, 4+\k, (3+\k)^4, 3, 1^{\ell-8}) & \text{if $\ell \ge 8$} \end{cases}
\intertext{and}
\mue{\k}{\ell} &= \begin{cases} (8+5\k, 6+4\k, 3+2\k, \k) & \text{if $\ell = 5$} \\
(6+\k, 3+\k, 3+\k, 3, 1) & \text{if $\ell = 6$} \\
(6+\k, 3+\k, 3+\k, 3, 1, 1) & \text{if $\ell = 7$} \\
(5+\k, (3+\k)^{\ell -8}, 2+\k, 2+\k, 2, 1) & \text{if $\ell \ge 8$.} \end{cases}
\end{align*}
%
\end{definition}

The partition $\mue{0}{6}$ is the lexicographically least partition in 
an exceptional equivalence when $\ell =  6$. The other partitions were discovered 
by a computer search using the software already mentioned.
The special case $\ell = 5$ and $\k=0$ 
of the following proposition was seen
in Example~\ref{ex:pyramid}.

\begin{proposition}\label{prop:exceptionalEqv}
For all $\k \in \N_0$ and $\ell \ge 5$ there is an exceptional  equivalence $\lambdae{\k}{\ell} 
\eqv{\ell}{\ell} \mue{\k}{\ell}$. 
\end{proposition}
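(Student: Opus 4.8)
The plan is to verify the claimed equivalence $\lambdae{\k}{\ell} \eqv{\ell}{\ell} \mue{\k}{\ell}$ directly via Theorem~\ref{thm:eqvConds}(i), which says that $\lambda \eqv{\ell}{\ell} \mu$ if and only if $\Delta_\ell(\lambda) = \Delta_\ell(\mu)$, and then separately check that the equivalence is \emph{exceptional}, i.e.\ that $\lambdae{\k}{\ell} \neq \mue{\k}{\ell}$ and $\lambdae{\k}{\ell} \neq (\mue{\k}{\ell})^{\circ(\ell+1)}$. The natural tool for the first part is the pyramid notation from \S\ref{subsec:pyramids}: since $\Delta_\ell(\lambda)$ is exactly the multiset of entries of the pyramid built from the difference sequence $\delta(\lambda)$, it suffices to compute the two pyramids and observe they have the same multiset of entries. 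So first I would compute the difference sequences $\delta(\lambdae{\k}{\ell})$ and $\delta(\mue{\k}{\ell})$ in each of the four regimes $\ell = 5$, $\ell = 6$, $\ell = 7$, $\ell \ge 8$, then write out the corresponding pyramids (these depend on $\k$ linearly, so the entries are affine in $\k$), and compare multisets.

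For the regimes $\ell \in \{5,6,7\}$ this is a finite check: each pyramid has a bounded number of entries, each entry is an explicit affine function of $\k$, and one simply matches them up. The case $\ell = 5$, $\k = 0$ is already done in Example~\ref{ex:pyramid}, which is a useful sanity anchor. For $\ell \ge 8$ the partitions $\lambdae{\k}{\ell}$ and $\mue{\k}{\ell}$ each have a long "tail" contributing many difference values equal to $1$ (from the string of $1$'s in $\lambdae{\k}{\ell}$, respectively the analogous flat stretch in $\mue{\k}{\ell}$), so the cleanest approach is to identify the common sub-pyramid coming from the shared tail structure, cancel it from both $\Delta_\ell$ multisets, and reduce to a fixed-size comparison of the "head" pyramids that is uniform in $\ell$ (the head being the first few differences, which stabilize once $\ell$ is large enough). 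Concretely: for $\ell \ge 8$, $\delta(\lambdae{\k}{\ell})$ looks like $(2, 1, \mbox{(three equal gaps)}, 1, 2, 1, 1, \ldots, 1)$ after accounting for the part sizes, and $\delta(\mue{\k}{\ell})$ has the long run of $1$'s in a different location; so the induction/reduction is on stripping matching rows of $1$'s and segments of the pyramid that agree termwise.

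For the "exceptional" claim I would argue as follows. First, $\lambdae{\k}{\ell} \neq \mue{\k}{\ell}$ is immediate by inspecting, say, the second part: $7 + 4\k$ versus $6 + 4\k$ for $\ell = 5$, and similarly in the other cases the partitions visibly differ. Second, to rule out $\lambdae{\k}{\ell} = (\mue{\k}{\ell})^{\circ(\ell+1)}$: recall from \S\ref{sec:equalDegree} that the difference sequence of $\mu^{\circ(\ell+1)}$ is the reverse of that of $\mu$, so it suffices to check that $\delta(\lambdae{\k}{\ell})$ is neither equal to $\delta(\mue{\k}{\ell})$ nor to its reversal. One can see this by comparing, e.g., first entries against last entries, or by noting the two sequences have genuinely different sorted multisets in at least one case and different orderings in the others; alternatively, observe $|\lambdae{\k}{\ell}| \neq |\mue{\k}{\ell}|$ is false in general (they need not have equal size), so the cleanest route is purely through the difference sequences. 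A quick count of how many entries equal $1$ in each difference sequence, and where they sit, settles it.

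The main obstacle is bookkeeping rather than conceptual difficulty: the four-case definition of $\lambdae{\k}{\ell}$ and $\mue{\k}{\ell}$, together with $\k$-dependence, means the pyramid computations are somewhat tedious, and for $\ell \ge 8$ one must handle the variable-length tail carefully so that the argument is genuinely uniform in $\ell$ and not just a per-$\ell$ check. I expect the $\ell \ge 8$ case to need the most care: the right framing is to show that both $\Delta_\ell$ multisets equal a common "stable part" (independent of $\ell$ once $\ell \ge 8$) together with $\{1^{\ell - 8}\}$ worth of extra $1$'s plus a predictable block, so that after cancellation one is left with the $\ell = 8$ comparison. Everything else is a direct finite verification that the two pyramids have matching entry multisets, exactly as in Example~\ref{ex:pyramid}.
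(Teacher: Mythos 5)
Your overall strategy is the same as the paper's: reduce the equivalence to $\Delta_\ell(\lambdae{\k}{\ell}) = \Delta_\ell(\mue{\k}{\ell})$ via Theorem~\ref{thm:eqvConds}(i), verify it by pyramid computations, and separately rule out the two non-exceptional possibilities. A small cosmetic difference is that the paper works with the complement $\etae{\k}{\ell} = \mue{\k}{\ell}^{\circ(\ell+1)}$ instead of $\mue{\k}{\ell}$ itself (harmless, since complementing reverses the difference sequence and leaves the pyramid multiset unchanged), and it rules out $\lambdae{\k}{\ell} = \mue{\k}{\ell}^{\circ(\ell+1)}$ more cleanly than you do, by simply counting parts: $\mue{\k}{\ell}$ has second part strictly less than its first, so its complement has exactly $\ell$ parts, while $\lambdae{\k}{\ell}$ has exactly $\ell-1$.

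The genuine problem is in your handling of the uniform-in-$\ell$ case. You propose to "identify the common sub-pyramid coming from the shared tail structure, cancel it from both $\Delta_\ell$ multisets, and reduce to a fixed-size comparison of the head pyramids that is uniform in $\ell$," and then to conclude that "after cancellation one is left with the $\ell=8$ comparison." This does not work as stated, for two reasons. First, $|\Delta_\ell| = \binom{\ell+1}{2}$ grows quadratically in $\ell$, so there is no way the multiset is a fixed "stable part" plus $\{1^{\ell-8}\}$. Second, and more importantly, the pyramid entries are \emph{all} contiguous partial sums $\delta_j + \cdots + \delta_{k-1}$, so the long run of $1$'s in one difference sequence interacts with the head and tail through boundary-crossing sums whose number grows linearly in $\ell$; and in fact the runs of $1$'s in $\delta(\lambdae{\k}{\ell}) = (2,2,1,1,1,1{+}\k,3,1^{\ell-9},2,1)$ and $\delta(\mue{\k}{\ell}) = (3,1^{\ell-9},2,1,1{+}\k,2,2,1,1,1)$ are not aligned, so the "common sub-pyramid" does not simply cancel. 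The paper's actual argument (for $\ell \ge 18$, with $5 \le \ell \le 18$ checked by computer) is a careful row-by-row comparison of the two pyramids, partitioned into regions as in Figure~6, with an explicit induction showing that the multiset of entries in each row not involving $\k$ matches, and a separate difference-multiset induction for the entries involving $\k$. That level of bookkeeping is not optional: the reduction you sketch would have to be replaced by something of this form, and your proposal does not contain the idea that makes it go.

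One further small inaccuracy: your displayed guess for $\delta(\lambdae{\k}{\ell})$, namely "$(2,1,\text{(three equal gaps)},1,2,1,\ldots,1)$," does not match the actual sequence $(2,2,1,1,1,1{+}\k,3,1^{\ell-9},2,1)$; while this is just a computational slip, it does suggest the proposal was written before carrying out the calculation that the rest of the argument rests on.
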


\enlargethispage{12pt}

\begin{proof}
It is clear from Definition~\ref{defn:exceptionalPartitions}
that $\lambdae{\k}{\ell} \not= \mue{\k}{\ell}$ for
any $\ell$ and $\k$. Moreover,
since the second part in each $\mue{\k}{\ell}$ 
is strictly smaller than $a(\mue{\k}{\ell})$, 
each partition $\mue{\k}{\ell}^{\circ (\ell+1)}$ has precisely $\ell$ parts.
Since each partition $\lambdae{\k}{\ell}$ has precisely $\ell-1$ parts,
it follows that $\lambdae{\k}{\ell} \not= \mue{\k}{\ell}^{\circ (\ell+1)}$
for any $k$ and $\ell$.
To proceed further, it is most convenient to work with 
the complementary partitions $\etae{\k}{\ell} = \mue{\k}{\ell}^{\circ (\ell+1)}$.
By Theorem~\ref{thm:complements} and Theorem~\ref{thm:eqvConds}(i), it suffices
to prove that $\Delta_\ell(\lambdae{\k}{\ell}) = \Delta_\ell(\etae{\k}{\ell})$ for
all $\ell$ and $\k$.

For small $\ell$ this is a routine verification using the pyramid notation seen
in Example~\ref{ex:pyramid}.
To illustrate
the method we take  $\ell = 8$. It will be useful to say that a pyramid entry
\emph{involves $\k$} if it of the form $c + \k$ for some $c \in \N$.
The difference sequences for $\lambdae{\k}{\ell}$ and $\etae{\k}{\ell}$ are
\begin{align*}
&(2,2,1,1,1,1+\k,3,1^{\ell-9},2,1) \\
&(1,1,1,2,2,1+\k,1,2,1^{\ell-9},3),
\end{align*}
respectively.
When $\ell = 8$,
the corresponding pyramids are
\[ \hskip-0.375in \scalebox{0.775}{$\begin{matrix} 2 & 2 & 1 &  1 & 1 & 1+\k & 4 & 1 \\
4 & 3 & 2 & 2 & 2+\k & 5 + \k & 5 \\
5 & 4 & 3 & 3+\k & 6+\k & 6+\k \\
6 & 5 & 4+\k & 7+\k & 7+\k \\
7 & 6+\k & 8+\k & 8+\k \\
8+\k & 10 + \k & 9 + \k \\
12 + \k & 11 + \k \\
13 + \k 
\end{matrix}$,} \quad\   \scalebox{0.775}{$\begin{matrix} 
1 & 1 & 1 & 2 & 2 & 1 + \k & 1 & 4 \\
2 & 2 & 3 & 4 & 3 +\k & 2 +\k & 5 \\
3 & 4 & 5 & 5+\k & 4+\k & 6 +\k \\
5 & 6 & 6+\k & 6+\k & 8+\k \\
7 & 7+\k & 7+\k & 10+\k \\
8+\k & 8+\k & 11+\k \\
9+\k & 12 +\k \\
13 +\k
\end{matrix}$}. \]
Note that the entries involving $m$ lie in the same positions. This helps one to see that
in either case the multiset of pyramid entries is
\[ \{1^4,2^4,3^2,4^3,5^3,6,7\} \cup  \bigl( \{1,2,3,4,5,6^3,7^2,8^3,9,10,11,12,13\} + \k\bigr). \]
Similarly one can check that $\Delta_\ell(\lambdae{k}{\ell}) = \Delta_\ell(\etae{k}{\ell})$
for all $k \in \N_0$ and all~$\ell$ such that $5 \le \ell \le 18$. This can be done programmatically 
using the {\sc Mathematica} \cite{Mathematica}
notebook \texttt{ExceptionalEquivalences.nb} available from the second author's website.

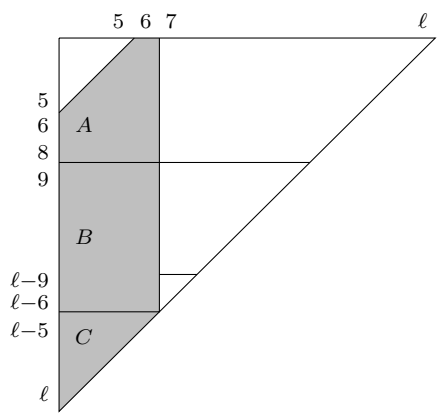
\begin{figure}[t]
\begin{center}
\begin{tikzpicture}[x=0.33cm,y=0.33cm,yscale=-1]

\fill[color = lightgray] (3,0)--(4,0)--(4,11)--(0,15)--(0,3)--(3,0);

\node[above left] at (3,0) {$\scriptstyle 5$};
\node[above left] at (0,3.2) {$\scriptstyle 5$};
\node[below left] at (0,2.8) {$\scriptstyle 6$};

\node[below left] at (0,3.9) {$\scriptstyle 8$};
\node[below left] at (0,5) {$\scriptstyle 9$};
\node[above left] at (0,11.4) {$\scriptstyle \ell-6$};
\draw(4,9.5)--(5.5,9.5);
\node[above left] at (0,10.5) {$\scriptstyle \ell-9$};

\node[below left] at (0,11) {$\scriptstyle \ell-5$};
\node[above left] at (0,15) {$\scriptstyle \ell$};

\node[above left] at (4.1,0) {$\scriptstyle 6$};
\node[above left] at (5.1,0) {$\scriptstyle 7$};

\node[above left] at (15.1,0) {$\scriptstyle \ell$};

\node at (1,3.5) {$\scriptstyle A$}; 
\node at (1,8) {$\scriptstyle B$}; 
\node at (1,12) {$\scriptstyle C$};
\draw(0,0)--(15,0)--(0,15)--(0,0);
\draw(3,0)--(0,3);
\draw(4,0)--(4,11)--(0,11);
\draw(0,5)--(10,5);

\end{tikzpicture}
\end{center}
\caption{Partition of the pyramids
for $\lambdae{\k}{\ell}$ and $\etae{\k}{\ell}$. The important
 row and column numbers are indicated; an entry involves $\k$
if and only if it is in the shaded region.}
\end{figure}

For the generic case when $\ell \ge 18$ 
we partition the pyramids $P$ and $Q$ for $\lambdae{\k}{\ell }$
and $\etae{\k}{\ell}$ as shown in Figure~6.  
Using the calculation rule from \S\ref{subsec:pyramids} one finds that
the first $8$
rows of the pyramid $P$ for $\lambdae{\k}{\ell}$ are
\newcommand{\mdots}[1]{\hskip-5pt\stackrel{#1}{\ldots}\hskip-12pt}
\[ \setcounter{MaxMatrixCols}{20}\scalebox{0.85}{$\begin{matrix} 
2    & 2    & 1   & 1   & 1   & 1+\k & 3   & 1 & \mdots{\ell-9} & \hskip-5pt 1 & 2 & 1 \\
4    & 3    & 2   & 2   & 2+\k & 4+\k & 4   & 2 & \mdots{\ell-10} &\hskip-5pt 2 & 3 & 3 \\
5    & 4    & 3   & 3+\k & 5+\k & 5+\k & 5   & 3 & \mdots{\ell-11}& \hskip-5pt3 & 4 & 4 \\
6    & 5    & 4+\k & 6+\k & 6+\k & 6+\k & 6   & 4 & \mdots{\ell-12}&\hskip-5pt 4 & 5 & 5 \\
7    & 6+\k  & 7+\k & 7+\k & 7+\k & 7+\k & 7   & 5 & \mdots{\ell-13} &\hskip-5pt 5 & 6 & 6 \\
8+\k  & 9+\k  & 8+\k & 8+\k & 8+\k & 8+\k & 8   & 6 & \mdots{\ell-14} &\hskip-5pt 6 & 7 & 7 \\
11+\k & 10+\k & 9+\k & 9+\k & 9+\k & 9+\k & 9   & 7 & \mdots{\ell-15} &\hskip-5pt 7 & 8 & 8 \\
12+\k & 11+\k & 10+\k& 10+\k& 10+\k& 10+\k& 10  & 8 & \mdots{\ell-16} &\hskip-5pt 8 & 9 & 9 \\
\end{matrix}$}
\]
where in each case the notation $c \stackrel{m}{\ldots} c$ indicates $m$ consecutive entries of $c$.
Similarly, the first $8$ rows of the pyramid $Q$ for $\eta{\k}{\ell}$ are
\[ \scalebox{0.85}{$\begin{matrix} 
1    & 1    & 1   & 2   & 2   & 1+\k & 1   & 2 & 1 & \mdots{\ell-9} &\hskip-5pt 1 & 3 \\
2    & 2    & 3   & 4   & 3+\k & 2+\k & 3   & 3 & 2 & \mdots{\ell-10} &\hskip-5pt 2 & 4 \\
3    & 4    & 5   & 5+\k & 4+\k & 4+\k & 4   & 4 & 3 & \mdots{\ell-11} & \hskip-5pt3 & 5 \\
5    & 6    & 6+\k & 6+\k & 6+\k & 5+\k & 5   & 5 & 4 & \mdots{\ell-12} &\hskip-5pt 4 & 6 \\
7    & 7+\k  & 7+\k & 8+\k & 7+\k & 6+\k & 6   & 6 & 5 & \mdots{\ell-13} &\hskip-5pt 5 & 7 \\
8+\k  & 8+\k  & 9+\k & 9+\k & 8+\k & 7+\k & 7   & 7 & 6 & \mdots{\ell-14} &\hskip-5pt 6 & 8 \\
9+\k  & 10+\k & 10+\k &10+\k& 9+\k & 8+\k & 8   & 8 & 7 & \mdots{\ell-15} &\hskip-5pt 7 & 9 \\
11+\k & 11+\k & 11+\k& 11+\k& 10+\k& 9+\k & 9   & 9 & 8 & \mdots{\ell-16} &\hskip-5pt 8 & \,10. \\
\end{matrix}$}
\]
Observe that if $r \le 5$ then the multisets of entries of $P$ and $Q$ in row $r$ not involving $\k$
are the same. Moreover, it is easily proved by induction
on~$r$ that if~$6 \le r \le \ell-9$ then the entries of $P$ and $Q$ in row $r$ not involving $\k$
are $r+2, r \stackrel{\ell-8-r}{\ldots} r, r+1, r+1$
and $r+1, r+1, r\stackrel{\ell-8-r}{\ldots} r, r+2$, respectively.
As can be seen from Figure~6, the remaining entries in $P$ and $Q$
not involving $\k$ lie in rows $\ell-9$, $\ell-8$, $\ell -7$ and $\ell-6$ 
and columns $7$, $8$, $9$, $10$. They are
\[ \hskip-0.25in \scalebox{0.85}{$\begin{matrix}
\ell-9 & \ell-11 & \ell-11 & \ell-11 & \ell-10 & \ell-10 \\ 
\ell-8 & \ell-10 & \ell-10 & \ell-9 & \ell-9 \\ \hline
\ell-7 & \ell-9 & \ell-8 & \ell-8 \\ \ell-6 & \ell-7 & \ell -7 \\ \boldsymbol\ell\boldsymbol-\boldsymbol4 & \ell-6 \\ 
\boldsymbol\ell\boldsymbol-\boldsymbol3
\end{matrix}$},\qquad \scalebox{0.85}{$\begin{matrix}
\ell-10 & \ell-10 & \ell-11 & \ell-11 & \ell-11 & \ell-9 \\ 
\ell-9 & \ell-9 & \ell-10 & \ell-10 & \ell-8 \\ \hline
\ell-8 & \ell-8 & \ell-9 & \ell-7 \\
\ell-7 & \ell-7 & \ell -6 \\ \ell-6 & \ell - 4 \\ \boldsymbol\ell\boldsymbol-\boldsymbol3 
\end{matrix}$}
\]
where the first two rows shows the known entries from rows $\ell-11$, $\ell-10$
needed to compute the following rows. Three exceptional entries are highlighted.
Again the multisets of entries 
agree row by row. Hence the multisets of entries in $P$ and $Q$
agree on entries not involving $\k$.
We note for later use that, from the pyramids immediately above,
\begin{equation}\label{eq:col7} 
\P{r}{7} = r+2 \text{ and } \QQ{r}{7} = r+1 \text{ if $8 \le r \le \ell - 8$} \end{equation}
and $\P{\ell-7}{7}  = \ell-4$, $\P{\ell-6}{7} = \ell - 3$,
$\QQ{\ell-7}{7} = \ell -6$ and $\QQ{\ell-6}{7} = \ell - 3$.

We now consider entries involving $\k$.
For $1 \le r \le \ell$, let $\mathcal{P}^{(r)}+\k$ and $\mathcal{Q}^{(r)}+\k$ 
be the multisets of entries in row $r$ of $P$ and $Q$ 
involving $\k$.
Comparing the $33$ entries involving~$\k$ in rows $r$ for $1\le r \le 8$ (region $A$ in Figure~6)
we find that 
\begin{equation}
\label{eq:regionA}
\bigcup_{r=1}^8 \mathcal{P}^{(r)} \bigl/ \, \bigcup_{r=1}^8 \mathcal{Q}^{(r)} = \{4,2\} / \{3,3\}  + 8.
\end{equation}
If $8 \le r \le \ell-6$ then the entries involving $k$
in row $r$ are precisely those in region $B$, lying in the first six columns of the pyramids.
Let $\bP^{(r)}$ and $\bQ^{(r)}$ be the $6$-tuples
defined by \smash{$\bP^{(r)}_j = \P{r}{j} - \k$} and 
\smash{$\bQ^{(r)}_j = \QQ{r}{j} -\k$}, respectively.
An induction on $r$,
using~\eqref{eq:col7} to find the entries in column $6$,
shows that if $8 \le r \le \ell-7$ then
\begin{align*}
\bP^{(r)} &= (4+r,3+r,2+r,2+r,2+r,2+r) \\
\bQ^{(r)} &= (3+r,3+r,3+r,3+r,2+r,1+r). 
\end{align*}
(We stop at $\ell-7$ because of the exceptional entry $\P{\ell-7}{7}$.)
The corresponding difference multiset is $\{4,2,2,2\}/ \{3,3,3,1\} + r$. 
We now claim that
\begin{equation}\label{eq:regionB}
\bigcup_{r=1}^{s} \mathcal{P}^{(r)} \bigl/\, \bigcup_{r=1}^{s} \mathcal{Q}^{(r)} 
=  \{4,2\} / \{3,3\} + s \end{equation} 
for $8 \le s \le \ell-7$. Indeed, when $s =8$ this follows from~\eqref{eq:regionA}, and
the inductive step is immediate from
$\bigl( \{4,2\} / \{3,3\} + s \bigr) \cup \bigl( \{4,2,2,2\} / \{3,3,3,1\} + s+1\bigr) =
\{ 4, 2 \} / \{3,3\} + (s+1)$. 
Using the exceptional
entries \smash{$\P{\ell-7}{7} = \ell-4$} and \smash{$\QQ{\ell-7}{7} = \ell-3$}  seen after~\eqref{eq:col7},
we have $\bP^{(r)} = (4+r,3+r,2+r,2+r,2+r,3+r)$
and $\bQ^{(r)} = (3+r,3+r,3+r,3+r,2+r,1+r)$ when $r = \ell -6$. 
The corresponding difference multiset is $\{4,2,2\} / \{3,3,1\} + \ell-6$.
Therefore, by~\eqref{eq:regionB},
\begin{align*} 
\bigcup_{r=1}^{\ell-6} \mathcal{P}^{(r)} \bigl/\, \bigcup_{r=1}^{\ell-6} \mathcal{Q}^{(r)} 
&= \bigl( \{4,2\} / \{3,3\} + \ell-7 \bigr) \cup 
\bigl( \{4,2,2\} / \{3,3,1\} + \ell - 6 \bigr)\\
&= \{5,4,3,3,2\} / \{4,4,3,3,2\} + \ell - 7 \\
&= \{5 \} / \{4\} + \ell-7 \\
&= \{\ell-2\} / \{\ell-3\}.
\end{align*}
The final six rows of the pyramids (region~$C$) are, with the constant factor $+\k$ removed,
\[ \scalebox{0.85}{$\begin{matrix} \ell-1 & \ell -2 & \ell - 3 & \ell -3 & \ell -2 & \ell -2 \\
\ell & \ell-1 & \ell -2 & \ell -1 & \ell -1 \\
\ell + 1& \ell & \ell & \ell \\
\ell + 2 & \ell +2 & \ell+1 \\
\ell + 4& \ell + 3 \\
\ell + 5 \end{matrix}$},
\qquad
\scalebox{0.85}{$\begin{matrix} \ell -2 & \ell -2 & \ell -2 & \ell -2 & \ell - 3 & \ell -2 \\
\ell - 1 & \ell -1 & \ell -1 & \ell -1 & \ell \\
\ell & \ell & \ell & \ell+2 \\
\ell+1 & \ell+1 & \ell+3 \\
\ell+2 & \ell+4 \\
\ell + 5 \end{matrix}$}\]
respectively. The corresponding difference multiset is
$\{\ell-3\} / \{\ell - 2\}$.
Hence the multisets of entries involving $\k$ in $P$ and $Q$ are the same.
\end{proof}

We end by remarking that, by Proposition~\ref{prop:eqvCondsMoreover},
an exceptional equivalence $\lambda \eqv{\ell}{\ell} \mu$ 
lifts to an isomorphism $\nabla^\lambda \SSym^\ell \E \cong \nabla^\mu \SSym^\mu \E$
of representations of $\GL(E)$ if and only if $|\lambda| = |\mu|$.
A computer search shows that, by
size of partitions, the smallest such example is
\[ (5,4,3^5, 1^6) \eqv{14}{14} (5,3^6,2^3,1) \]
between two partitions of $30$.
As a curiosity, we note that $146$ of the $493$ exceptional equivalences $\lambda\eqv{\ell}{\ell}{\mu}$
between partitions $\lambda$ and $\mu$
of size at most~$35$ have $\ell=8$. Next most frequent are $\ell = 11$ with $99$ equivalences and $\ell = 14$ with $56$ equivalences.




\section{Solitary partitions}
\label{sec:solitary}

\begin{definition}\label{defn:solitary}
A partition $\lambda$ is \emph{solitary} if 
whenever $\lambda\eqv{\ell}{m} \mu$
with $\ell \ge \ell(\lambda)$ and $m \ge \ell(\mu)$, we have
$\ell = m$ and either $\mu = \lambda$ or $\mu = \lambda^{\circ (\ell+1)}$.
\end{definition}

By Theorem~\ref{thm:complements}, the equivalences in Definition~\ref{defn:solitary}
exist for any partition. The solitary partitions
are therefore those with the fewest possible prime plethystic equivalences.
Using Theorem~\ref{thm:complements}, Theorem~\ref{thm:solitary} reduces to the following proposition.
Recall that $\delta(k) = (k,k-1,\ldots, 1)$.

\begin{proposition}
For each $k \in \N$, the partition $\delta(k)$ is solitary.
\end{proposition}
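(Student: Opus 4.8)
The plan is to compute the content-hook data of $\delta(k)$ and show that a prime plethystic equivalence $\delta(k) \eqv{\ell}{m} \mu$ forces $\mu$ to be a staircase complement. By Theorem~\ref{thm:eqvConds}(h), such an equivalence is equivalent to the equality of difference multisets $(C(\delta(k)) + \ell + 1)/H(\delta(k)) = (C(\mu)+m+1)/H(\mu)$. First I would record the relevant combinatorics of the staircase: the content multiset $C(\delta(k))$ consists of each value in $\{-(k-1), \ldots, k-1\}$, and crucially the hook lengths of $\delta(k)$ are $H(\delta(k)) = \{1,3,5,\ldots, 2k-1\}$, i.e.\ the first $k$ odd numbers --- this is the standard fact that the staircase is self-conjugate with one cell on each diagonal and hook lengths that are exactly the odd numbers. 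So the left-hand side is the difference multiset consisting of each of $\ell+2-k, \ldots, \ell+k$ (with multiplicity one, assuming $\ell \ge k$) together with the negatives of the odd numbers $1,3,\ldots,2k-1$.

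The heart of the argument is then a rigidity statement: if $(C(\mu)+m+1)/H(\mu)$ equals this particular difference multiset, then $\mu$ is determined up to the staircase and its complement. I would proceed as follows. By Lemma~\ref{lemma:removable}, since $\delta(k)$ has $k$ removable boxes (every box of the staircase is removable --- wait, not quite; $\delta(k)$ has exactly one removable box? No: the removable boxes of $\delta(k) = (k,k-1,\ldots,1)$ are those at the ends of the rows where the row is strictly longer than the one below, so every row is removable, giving $k$ removable boxes), $\mu$ (after the column-removal reduction, which is vacuous here since $\ell \ge k = \ell(\delta(k))$ means we are in the prime case and $\delta(k)^\star = \delta(k)$) must have the same number of removable boxes. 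Hmm --- but actually the cleaner route is to use that the equivalence is prime and invoke Proposition~\ref{prop:aRelation}: $a(\delta(k)) + \ell = a(\mu) + m$, so $k + \ell = a(\mu) + m$. Then cancel the comparison of greatest elements: the greatest element of the left difference multiset with positive multiplicity is $\ell + k$, forcing $a(\mu) + m = \ell+k$, consistent. Next I would look at the \emph{least} element: on the left it is $-(2k-1)$, with multiplicity $-1$ (it lies in $H$, not in the shifted content), which forces $\max H(\mu) = 2k-1$ --- wait, more carefully: $\min(C(\mu)+m+1) = m+2 - \ell(\mu)$, and the negative-multiplicity elements come from $H(\mu)$; matching the most-negative behaviour shows $2k-1 = \max H(\mu) = a(\mu) + \ell(\mu) - 1$, so $a(\mu) + \ell(\mu) = 2k$. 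Combined with $a(\mu) + m = \ell + k$, this is good structural information.

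From here the plan is an induction peeling off the first row and column, in the spirit of the proofs of Lemma~\ref{lemma:contentShift} and Lemma~\ref{lemma:hooksAfterContent}. Removing the first row and first column of $\delta(k)$ gives $\delta(k-2)$; removing them from $\mu$ gives some $\mu^\star$, and the content multisets transform compatibly (one box of each content between the extremes is deleted on each side). The hook-length multiset $\{1,3,\ldots,2k-1\}$ of $\delta(k)$ loses exactly the hook of the corner box, namely $2k-1$, leaving $\{1,3,\ldots,2k-3\}$, the odd hook-multiset of $\delta(k-2)$; so $\mu^\star$ must have odd-hook-multiset $\{1,3,\ldots,2k-3\}$, and after checking the content shift is consistent, induction gives that $\mu^\star \in \{\delta(k-2), \delta(k-2)^{\circ(\cdot)}\}$, hence $\mu$ or $\mu'$ is built from a staircase. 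The base cases $k=1,2$ are direct. Finally, one checks that among partitions whose row-and-column-stripping sequence of staircases is forced, the only ones actually satisfying the \emph{full} difference-multiset equation (not just its stripped version) are $\mu = \delta(k)$ with $m = \ell$, and $\mu = \delta(k)^{\circ(\ell+1)}$ with $m = \ell$ and $\ell > k$ --- the latter from Theorem~\ref{thm:complements} and Theorem~\ref{thm:multipleEquivalences}(ii), which already tells us there can be at most two prime equivalences and they must be a partition together with its complement or conjugate; here $\delta(k)' = \delta(k)$, so the conjugate option collapses and only the complement survives, with $m = \ell$ forced by equal-degree considerations (Proposition~\ref{prop:aRelation} gives $m = \ell$ once $a(\mu) = a(\delta(k)) = k$, which holds for the complement in an $(\ell+1)\times k$ box iff $\ell+1 > k$).

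\textbf{Main obstacle.} The delicate point is the rigidity/uniqueness step: showing that the hook-multiset condition $H(\mu) \subseteq \{\text{odd numbers}\}$ together with the content-shift condition pins down $\mu$ to a staircase. A priori many partitions could have hook lengths that are all odd, or the combined difference multiset could cancel in unexpected ways; the induction on stripping rows and columns is the mechanism to control this, but one must verify at each stage that the shift parameter $d$ in $C(\mu^\star) + d = (\text{content of stripped staircase})$ stays consistent, and that no "extra" boxes can hide --- exactly the kind of greatest-element/least-element bookkeeping carried out in Lemma~\ref{lemma:hooksAfterContent}. A cleaner alternative, which I would try first, is to bypass the hook analysis entirely: use Theorem~\ref{thm:multipleEquivalences}(ii) and Theorem~\ref{thm:complements} to reduce immediately to showing that $\delta(k)$ is \emph{not} its own nontrivial conjugate-type partner (true since $\delta(k) = \delta(k)'$) and that the only non-prime-degenerate possibilities are the complement; then the whole proposition follows with almost no computation, and the only thing to check by hand is that $\delta(k)^{\circ(\ell+1)} \neq \delta(k)$ when $\ell > k$ and that no equivalence exists with $\ell \neq m$, the latter from Proposition~\ref{prop:aRelation} giving $\ell - m = a(\mu) - k$ and a short argument that $a(\mu) = k$.
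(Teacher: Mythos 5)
Your preferred ``cleaner alternative'' via Theorem~\ref{thm:multipleEquivalences}(ii) does not work: that theorem bounds the \emph{number} of pairs $(\ell,m)$ giving a plethystic equivalence between a \emph{fixed} pair of partitions $(\lambda,\mu)$; it says nothing about which partitions $\mu$ can be plethystically equivalent to $\delta(k)$ at all. Solitariness requires ruling out the existence of even a single prime equivalence $\delta(k)\eqv{\ell}{m}\mu$ for every $\mu$ outside the trivial cases, and for an arbitrary non-special $\mu$ Theorem~\ref{thm:multipleEquivalences}(ii) only tells you there are at most two such equivalences --- not zero. So ``the whole proposition follows with almost no computation'' is a non sequitur, and the hard work does not disappear.

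Your first route (peel off the first row and column) is in the right spirit but is not carried out, and the combinatorial data you record for the staircase are wrong as multisets: $H(\delta(k))$ is not $\{1,3,\ldots,2k-1\}$ but $\{1^k,3^{k-1},\ldots,(2k-1)^1\}$ (the box $(i,j)$ has hook length $2(k+1-i-j)+1$), and likewise the contents of $\delta(k)$ have multiplicities. The single correct and essential observation you do extract --- that every hook length of $\delta(k)$ is odd --- is in fact the seed of the paper's proof, but it is used much more surgically there: the multiplicity of $2$ in $(C(\mu)+m+1)/H(\mu) = (C(\delta(k))+\ell+1)/H(\delta(k))$ equals (number of $2$-hooks in $\mu$) minus a correction of $0$ or $\pm 1$ coming from the content sides, and $2$ can appear in $C(\nu)+n+1$ with multiplicity at most $1$, only when $n=\ell(\nu)$. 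Combined with Lemma~\ref{lemma:removable} (which you do invoke, correctly, to get $k$ removable boxes) and the fact that $\mu\neq\delta(k)$ forces at least one $2$-hook, this pins down $u=1$, $m=\ell(\mu)$, $\ell>k$, and forces the shape of $\mu$ to be $\delta(k)$ with rows or columns inserted; a short two-case check of a specific element's multiplicity finishes it. Your inductive stripping might ultimately be made to work, but you flag it as the main obstacle and leave it unverified, while your proposed shortcut does not close the gap.
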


\begin{proof}
Suppose that $\delta(k) \eqv{\ell}{m} \mu$ where $\ell \ge k$ and $m \ge \ell(\mu)$
and that $\mu \not= \delta(k)$. 
Using
the difference multiset notation from~\S\ref{subsec:differenceMultiset}, Theorem~\ref{thm:eqvConds}(h)
implies that
\begin{equation}
\label{eq:solitary} \frac{C\bigl( \mu \bigr) + m + 1} {C\bigl( \delta(k) \bigr) + \ell + 1}
= \frac{H(\mu)}{H\bigl( \delta(k) \bigr)}. \end{equation}
Let $\Hc$ be the number of boxes $(i,j) \in [\mu]$ such that $h_{(i,j)}(\mu) = 2$;
we say that such boxes are \emph{$2$-hooks}.
Since all the hook lengths in $\delta(k)$ are odd, the multiplicity of $2$
in the right-hand side is $\Hc$. 
By Lemma~\ref{lemma:removable}, $\mu$ has precisely~$k$ removable boxes. 
Since $\mu \not= \delta(k)$, $\mu$ has at least one $2$-hook, and so $\Hc \ge 1$.
For any partition $\nu$ and $n$ such that $n \ge \ell(\nu)$, we have
$2 \in C(\nu) + n+1$ if and only if $n = \ell(\nu)$; in this
case the multiplicity is $1$. Therefore $\Hc \le 1$ and we conclude that $\mu$
has a unique $2$-hook. Moreover, $m = \ell(\mu)$ and $\ell > k$.

To identify $\ell$ we use Proposition~\ref{prop:aRelation} to get
$a\bigl( \delta(k) \bigr) + l = a(\mu) + m$. (Or one may follow the proof of this proposition
and instead compare greatest elements in~\eqref{eq:solitary}.) Hence $l = a(\mu) + m - k$.
Since $\mu$ has a unique $2$-hook and precisely $k$ removable boxes, 
it is obtained from $\delta(k)$ by inserting
either (i) $d$ new columns or (ii) $d$ new rows of a fixed
length $c \le k$. We consider these cases separately below. Observe that in either case the greatest hook length in either
$\mu$ or $\delta(k)$ is $(k+d-1) + (k-1) + 1 = 2k+d-1$, coming uniquely from the box $(1,1)$ of $\mu$.
Hence $2k+d-1$ has multiplicity $1$ in the right-hand side of~\eqref{eq:solitary}.

\begin{itemize}
\item[(i)] In this case $a(\mu) = k + d$ and $\ell(\mu) = k$. Hence $m = k$ and $\ell = k + d$.
In $C(\mu) + m+1$, the greatest element is $(k+d-1) + (m+1) = 2k+d$ and, since $\mu_1 > \mu_2$,
the next greatest element is $(k+d-1-1) + (m+1) = 2k+d-1$, also with multiplicity 
$1$. In $C\bigl( \delta(k) \bigr) + \ell+1$, the second greatest element is 
$(k-1-1) + (\ell+1) = 2k+d-1$, again with multiplicity $1$. Therefore
$2k+d-1$ has multiplicity $0$ in the left-hand side of~\eqref{eq:solitary}, a contradiction.

\item[(ii)] In this case $a(\mu) = k$ and $\ell(\mu) = k + d$. Hence $m = k+d$ and $\ell = m$.
A similar argument considering the multiplicity of $2k+d-1$ in
the left-hand side of~\eqref{eq:solitary} shows that $2k+d-1$ must appear with multiplicity~$2$
in $C(\mu) + m+1$. Hence $\mu_1 = \mu_2$ and $c = k$. This shows that $\mu$ is the complement
of $\delta(k)$ in the box with $k+d+1$ rows; that is $\mu = \delta(k)^{\circ (\ell + 1)}$.
\end{itemize}

This completes the proof.
\end{proof}

\def\cprime{$'$} \def\Dbar{\leavevmode\lower.6ex\hbox to 0pt{\hskip-.23ex
  \accent"16\hss}D} \def\cprime{$'$}
\providecommand{\bysame}{\leavevmode\hbox to3em{\hrulefill}\thinspace}
\providecommand{\MR}{\relax\ifhmode\unskip\space\fi MR }
\providecommand{\MRhref}[2]{%
  \href{http://www.ams.org/mathscinet-getitem?mr=#1}{#2}
}
\renewcommand{\MR}[1]{\relax}
\providecommand{\href}[2]{#2}


\end{document}